\definecolor{darkgreen}{rgb}{0,0.45,0}
\pgfplotsset{compat=newest}
\tikzset{
 ->-/.style = {
  decoration = {markings, mark=at position #1 with {\arrow[scale=1.3]{>}}},
  postaction = {decorate}
 }
}
\numberwithin{equation}{section}
\theoremstyle{plain}
\newtheorem{thm}[equation]{Theorem}
\newtheorem*{thm*}{Theorem} %
\newtheorem{prop}[equation]{Proposition}
\newtheorem{lem}[equation]{Lemma}
\newtheorem{cor}[equation]{Corollary}
\newtheorem*{cor*}{Corollary} %
\theoremstyle{definition}
\newtheorem{defn}[equation]{Definition}
\newtheorem{ex}[equation]{Example}
\newtheorem{nota}[equation]{Notation}
\theoremstyle{remark}
\newtheorem{ques}[equation]{Question}
\newtheorem{rem}[equation]{Remark}
\newcommand{\Eqref}[1]{\textup{\tagform@{\ref*{#1}}}}
\newcommand{\anti}{\ar@{}[dr]|{\;\;\;\boxed{\textup{\scriptsize -1}}}} %
\newcommand{\pb}{\ar@{}[dr]|{\mbox{\huge $\lrcorner$}}} %
\newcommand{\po}{\ar@{}[dr]|{\mbox{\huge $\ulcorner$}}} %
\newcommand{\Def}[1]{\textbf{\boldmath{#1}}} %
\newcommand{\Z}{\mathbb{Z}}
\newcommand{\unit}{\mathbbm{1}} %
\newcommand{\bmat}[1]{\left[\!\!\!\begin{array}{#1}} %
\newcommand{\emat}{\end{array}\!\!\right]} %
\newcommand{\al}{\alpha}
\newcommand{\de}{\delta}
\newcommand{\De}{\Delta}
\newcommand{\del}{\partial}
\renewcommand{\epsilon}{\varepsilon}
\newcommand{\ep}{\varepsilon}
\newcommand{\ga}{\gamma}
\newcommand{\Ga}{\Gamma}
\newcommand{\io}{\iota}
\newcommand{\phy}{\varphi}
\newcommand{\Si}{\Sigma}
\newcommand{\inj}{\hookrightarrow}
\newcommand{\Ra}{\Rightarrow} %
\newcommand{\ral}{\xrightarrow} %
\newcommand{\rla}{\rightleftarrows}
\newcommand{\ot}{\otimes}
\newcommand{\sm}{\wedge} %
\newcommand{\x}{\times}
\newcommand{\tild}{\widetilde}
\newcommand{\ul}{\underline}
\newcommand{\Ab}{\mathrm{Ab}}
\newcommand{\Alg}[1]{\mathrm{Alg}_{#1}} %
\newcommand{\Cat}{\mathrm{Cat}}
\newcommand{\CAT}[1]{#1\text{-}\mathrm{Cat}} %
\newcommand{\Ch}{\mathrm{Ch}}
\newcommand{\Mod}[1]{\mathrm{Mod}_{#1}} %
\newcommand{\lMod}[1]{\mathrm{Mod}_{#1}} %
\newcommand{\Set}{\mathrm{Set}}
\newcommand{\Sp}{\mathrm{Sp}} %
\newcommand{\Top}{\mathrm{Top}}
\newcommand{\cA}{\mathcal{A}}
\newcommand{\cC}{\mathcal{C}}
\newcommand{\cD}{\mathcal{D}}
\newcommand{\cO}{\mathcal{O}}
\newcommand{\cU}{\mathcal{U}}
\newcommand{\cV}{\mathcal{V}}
\newcommand{\cW}{\mathcal{W}}
\newcommand{\abs}[1]{\lvert #1 \rvert}
\DeclareMathOperator{\Cyl}{Cyl}
\DeclareMathOperator{\Fun}{Fun} %
\DeclareMathOperator{\Ho}{Ho} %
\DeclareMathOperator{\Hom}{Hom}
\DeclareMathOperator{\HOM}{\underline{Hom}} %
\DeclareMathOperator{\Path}{Path}
\DeclareMathOperator{\Triv}{Triv}
\newcommand{\AW}{\mathrm{AW}} %
\newcommand{\coloc}{\mathrm{coloc}}
\newcommand{\EZ}{\mathrm{EZ}} %
\newcommand{\Fin}{\mathrm{Fin}} %
\newcommand{\id}{\mathrm{id}}
\newcommand{\Id}{\mathrm{Id}}
\newcommand{\loc}{\mathrm{loc}}
\newcommand{\opp}{\mathrm{op}}
\def\noteson{\gdef\note##1{\noindent{\color{blue}[##1]}}}
\newcommand{\blue}[1]{{\color{blue}#1}}
\newcommand{\red}[1]{{\color{red}#1}}
\begin{document}

\title{Enriched model categories and the Dold--Kan correspondence} 
\date{\today}

\author{Martin Frankland} 
\address{University of Regina\\
3737 Wascana Parkway\\
Regina, Saskatchewan, S4S 0A2\\
Canada}
\email{Martin.Frankland@uregina.ca}

\author{Arnaud Ngopnang Ngomp\'e}
\address{University of Regina\\
3737 Wascana Parkway\\
Regina, Saskatchewan, S4S 0A2\\
Canada}
\email{Arnaud.NgopnangNgompe@uregina.ca}
\curraddr{\textsc{AIMS Cameroon\\ Ngeme\\
P.O.\ Box 608 Limbe\\ South-West,
Cameroon}}
\email{Arnaud.Ngopnang@aims-cameroon.org}

\begin{abstract}
The monoidal properties of the Dold--Kan correspondence have been studied in homotopy theory, notably by Schwede and Shipley. 
Changing the enrichment of an enriched, tensored, and cotensored category along the Dold--Kan correspondence does not preserve the tensoring nor the cotensoring. 
More generally, what happens to an enriched model category if we change the enrichment along a weak monoidal Quillen pair? 
We prove a change of base theorem that describes which properties are preserved and which are weakened. 
We also provide sources of examples of weak monoidal Quillen pairs, including in equivariant homotopy theory.
\end{abstract}

\keywords{monoidal model category, enriched model category, monoidal functor, change of base, Quillen adjunction, Dold-Kan correspondence}

\subjclass[2020]{Primary 55U35; Secondary 18N40, 18D20, 18M05, 18G31}

\maketitle

\tableofcontents

\section{Introduction}

\subsection*{Background}

Seminal work of Quillen on simplicial model categories showed that combining an enrichment in simplicial sets with a model structure provides additional tools, in particular a simpler way to compute derived mapping spaces \cite{Quillen67}*{\S II} \cite{GoerssJ09}*{\S II}. Model categories can be enriched over a more general symmetric monoidal model category $\cV$, leading to the notion of $\cV$-model category \cite{MayP12}*{\S 16} \cite{Riehl14}*{\S 11} \cite{GuillouM20}. Categories enriched in chain complexes (i.e.\ differential graded categories) are commonly used in algebraic geometry and representation theory, while categories enriched in spectra play an important role in stable homotopy theory. 
What if we want to change the base of enrichment $\cV$? Recall that changing the enrichment along a lax monoidal functor $G \colon \cV \to \cW$ sends a $\cV$-enriched category $\cC$ to a $\cW$-enriched category $G_* \cC$ obtained by applying $G$ to each hom-object. There is a change of base theorem for enriched model categories, found for instance in \cite{GuillouM20}*{Proposition~3.8}, sketched in \cite{Dugger06}*{Lemma~A.5}, %
and alluded to in \cite{Riehl14}*{\S 3.8}. %

\begin{thm}
Let $F \colon \cW \rla \cV \colon G$ be a %
Quillen adjunction between symmetric monoidal model categories. Assume that the left adjoint $F$ is strong monoidal (hence the right adjoint $G$ is lax monoidal) and that $F(Q\unit) \to F(\unit) \cong \unit$ is a weak equivalence, where $Q\unit \ral{\sim} \unit$ is a cofibrant replacement of the tensor unit. If $\cC$ is a $\cV$-model category, then $G_* \cC$ is a $\cW$-model category.
\end{thm}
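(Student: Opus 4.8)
The plan is to verify, in order, the three conditions defining a $\cW$-model category: that $G_*\cC$ is a $\cW$-enriched, tensored, and cotensored category; that it satisfies the pushout-product axiom; and that it satisfies the unit axiom. The underlying model structure on $\cC$ is untouched, so nothing is needed there. I expect the first step to be the longest (though routine), and the unit axiom to be the only genuinely delicate point, since it is the one that uses the cofibrancy hypothesis on $F(Q\unit)$.

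\emph{Enriched, tensored, cotensored structure.} Since $G$ is lax monoidal, $G_*\cC$ is a $\cW$-category with $\underline{\cC}(X,Y)$ replaced by $G\,\underline{\cC}(X,Y)$, by the change-of-base construction recalled above. I would transport the $\cV$-tensoring and $\cV$-cotensoring of $\cC$ along $F$, setting $W \odot X := FW \otimes X$ and taking $Y^{FW}$ for the $\cW$-cotensor of $Y$ by $W$, for $W \in \cW$ and $X,Y \in \cC$. To see that these give an honest $\cW$-enriched tensor and cotensor, the key input is the natural isomorphism $\underline{\cW}(W, GV) \cong G\,\underline{\cV}(FW, V)$, which follows from the adjunction $F \dashv G$ and the fact that $F$ is \emph{strong} monoidal, via Yoneda, using $\cW(W' \otimes W, GV) \cong \cV(F(W' \otimes W), V) \cong \cV(FW' \otimes FW, V)$. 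Combined with the $\cV$-tensor/cotensor adjunction isomorphisms for $\cC$, this yields the required $\cW$-enriched adjunction isomorphisms; the associativity and unit coherences are then checked directly and use only strong monoidality of $F$. This is precisely where strong monoidality of $F$ is indispensable, and — as the abstract signals — it is what breaks down for a merely lax $G$.

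\emph{Pushout-product axiom.} The $\cW$-tensoring is the composite $\cW \times \cC \xrightarrow{F \times \id} \cV \times \cC \xrightarrow{\otimes} \cC$. Here $\otimes$ is a left Quillen bifunctor by the pushout-product axiom for the $\cV$-model category $\cC$, and $F$ is a left Quillen functor, hence preserves cofibrations and acyclic cofibrations. For cofibrations $i$ in $\cW$ and $j$ in $\cC$, the pushout-product $i \mathbin{\square} j$ formed from $\odot$ is, by inspection of the defining pushout, the pushout-product $(Fi) \mathbin{\square} j$ formed from $\otimes$; applying the $\cV$-pushout-product axiom to $Fi$ and $j$ (noting $Fi$ is a cofibration, and acyclic if $i$ is) then gives exactly the pushout-product axiom for $G_*\cC$.

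\emph{Unit axiom.} Let $X \in \cC$ be cofibrant; I must show the canonical map $FQ\unit \otimes X \to F\unit \otimes X \cong X$ is a weak equivalence, where $\unit = \unit_\cW$ and $F\unit \cong \unit_\cV$ by strong monoidality. By hypothesis $FQ\unit \to F\unit \cong \unit_\cV$ is a weak equivalence in $\cV$, and $FQ\unit$ is cofibrant since $F$ is left Quillen. Taking a cofibrant replacement $Q\unit_\cV \xrightarrow{\sim} \unit_\cV$ realised as an acyclic fibration, I would lift $FQ\unit \to \unit_\cV$ through it to a map $FQ\unit \to Q\unit_\cV$, which is a weak equivalence between cofibrant objects of $\cV$ by two-out-of-three. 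Since $X$ is cofibrant, $- \otimes X \colon \cV \to \cC$ is left Quillen — on morphisms it is $(-) \mathbin{\square} (\emptyset \to X)$ — so by Ken Brown's lemma it sends this to a weak equivalence $FQ\unit \otimes X \xrightarrow{\sim} Q\unit_\cV \otimes X$. Finally $Q\unit_\cV \otimes X \to \unit_\cV \otimes X \cong X$ is a weak equivalence by the unit axiom for the $\cV$-model category $\cC$; composing (the relevant triangle commutes by construction of the lift) yields the desired weak equivalence, which finishes the verification of the unit axiom and hence of the theorem. The one step I expect to require care is this last one, as it is where the hypothesis $F(Q\unit) \xrightarrow{\sim} \unit$ is actually consumed; everything else is either formal or a direct transport of the corresponding property of $\cC$.
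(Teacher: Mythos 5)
Your proof is correct. A few remarks on how it relates to the paper.

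The paper states this theorem in the introduction and outsources the proof to the literature (Guillou--May, Dugger, Riehl); it does not reprove it. That said, the theorem follows from the machinery the paper develops for the weak case, and your three-part decomposition mirrors the relevant fragments exactly: Proposition~\ref{pr:coten-strong}\Eqref{item:StrongTensoring} (transport of the tensoring and cotensoring along a strong monoidal left adjoint, which is Riehl's Proposition~3.7.11), Lemma~\ref{lem:G*SM7} (preservation of SM7 under $G_*$), and Proposition~\ref{pr:unitpresev}\Eqref{item:unitpresev1} (preservation of the external unit axiom). Two small differences in execution are worth noting. First, for the pushout-product axiom you argue directly with the tensoring $W \odot X := FW \otimes X$, observing that $i \square j$ for $\odot$ literally equals $(Fi)\square j$ for $\otimes$; the paper's Lemma~\ref{lem:G*SM7} proves the equivalent SM7 formulation via the internal hom and the fact that $G$ preserves pullbacks. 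Both are fine; yours is a bit more direct here. Second, your unit-axiom argument is slightly roundabout: you lift $F(Q\unit_\cW) \to \unit_\cV$ through a chosen acyclic-fibration replacement $Q\unit_\cV \to \unit_\cV$ and then invoke Ken Brown. This is correct, but unnecessary. Since $F$ is left Quillen, $F(Q\unit_\cW)$ is cofibrant, and by hypothesis $F(Q\unit_\cW)\to \unit_\cV$ is a weak equivalence, so $F(Q\unit_\cW)$ is \emph{already} a cofibrant replacement of $\unit_\cV$. The external unit axiom for the $\cV$-model category $\cC$ is stated for every cofibrant replacement of the unit, so it applies to this one directly, giving $x \otimes F(Q\unit_\cW) \to x$ a weak equivalence without any lifting. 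This is how Proposition~\ref{pr:unitpresev}\Eqref{item:unitpresev1} handles it.
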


However, some interesting Quillen adjunctions do not have a strong monoidal left adjoint. %
For a commutative ring $R$, the Dold--Kan correspondence
\[
\xymatrix{
s\Mod{R} \ar@/^0.5pc/[r]^-{N} \ar@{}[r]|-{\sim} & \Ch_{\geq 0}(R) \ar@/^0.5pc/[l]^-{\Ga} \\
}
\]
between simplicial $R$-modules and non-negatively graded chain complexes of $R$-modules is lax monoidal in both directions, but neither direction is strong. In \cite{SchwedeS03equ}, Schwede and Shipley made the key observation that Dold--Kan is ``strong monoidal up to homotopy'', coining the term \emph{weak monoidal Quillen pair} for the relevant properties. Their focus was to investigate the homotopy theory of monoids in monoidal model categories. In particular, they obtained a Quillen equivalence between the monoids on both sides of Dold--Kan:
\[
\xymatrix{
s\Alg{R} \ar@/^0.5pc/[r] \ar@{}[r]|-{\sim} & \mathrm{DGA}_{R,\geq 0}, \ar@/^0.5pc/[l] \\
}
\]
simplicial $R$-algebras and differential graded $R$-algebras in non-negative degrees \cite{SchwedeS03equ}*{Theorem~1.1}. Weak monoidal Quillen pairs have been used in stable homotopy theory \cite{Shipley07} and the homotopy theory of operads and their algebras \cite{Muro14} \cite{PavlovS18} %
\cite{WhiteY19}.

\subsection*{Main results}

In this paper, we tackle this question: What happens to enriched model categories when we change the enrichment along a \emph{weak} monoidal Quillen pair? Our proposed answer is that most of the structure is preserved, except the tensoring and cotensoring, which are weakened to an ``up to homotopy'' version. Here is our main result (see Theorem~\ref{thm:maintheo}).

\begin{thm*}
Let $F \colon \cW \rla \cV \colon G$ be a weak monoidal Quillen pair such that the map \linebreak[4] \mbox{$\ep \colon F(\unit_{\cW}) \ral{\cong} \unit_{\cV}$} is an isomorphism. If $\cC$ is a weak $\cV$-model category, then $G_*\cC$ is a weak $\cW$-model category.
\end{thm*}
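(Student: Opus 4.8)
The plan is to transport the weak $\cV$-structure on $\cC$ along the left adjoint $F$, which carries an oplax comonoidal structure by virtue of $G$ being lax monoidal. Concretely, I would endow $G_*\cC$ with the tensoring $c \odot w \dfn c \otimes Fw$ and the cotensoring $c^{(w)} \dfn c^{Fw}$, where $\otimes$ and $(-)^{(-)}$ are the (weak) $\cV$-tensoring and $\cV$-cotensoring of $\cC$, the $\cW$-enrichment being $G_*\cC(c,c') \dfn G\bigl(\cC(c,c')\bigr)$ as in the usual change of base along the lax monoidal functor $G$. The first step is formal: using $F \dashv G$ together with the adjunction $c \otimes - \dashv c^{(-)}$ in $\cC$, one checks that $\odot$ is a genuine functor $\cC \times \cW \to \cC$ with ordinary right adjoint $c^{(-)}$, and that the underlying ordinary category of $G_*\cC$ is that of $\cC$ — indeed $\cW(\unit_{\cW}, G X) \cong \cV(F\unit_{\cW}, X) \cong \cV(\unit_{\cV}, X)$, the last isomorphism induced by $\ep$. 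Hence $G_*\cC$ carries, verbatim, the model structure of $\cC$, and this will be the model structure underlying the weak $\cW$-model category.

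Next I would assemble the coherence data of the $\cW$-action. The unit constraint
\[
c \odot \unit_{\cW} = c \otimes F\unit_{\cW} \xrightarrow{\;\cong\;} c \otimes \unit_{\cV} \cong c
\]
is an \emph{isomorphism}, thanks to the hypothesis that $\ep$ is invertible; this is precisely why the unit axiom is not weakened. The associativity constraint
\[
c \odot (w \otimes w') = c \otimes F(w \otimes w') \lra c \otimes (Fw \otimes Fw') \cong (c \odot w) \odot w'
\]
is induced by the comonoidal structure map $F(w \otimes w') \to Fw \otimes Fw'$; commutativity of the pentagon and the unit triangles for these constraints reduces to the comonoidal coherence of $F$ together with the coherence of the ambient $\cV$-action, and I would isolate this in a lemma. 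Likewise one records the comparison $G\bigl(\cC(c \odot w, c')\bigr) \to \HOM_{\cW}\bigl(w, G_*\cC(c,c')\bigr)$ relating the $\cW$-enrichment to $\odot$, built from the lax structure of $G$ and the unit $\id \to GF$, together with its compatibility with composition.

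The homotopical content then rests on three observations. First, $F$ is left Quillen, so it preserves cofibrations, acyclic cofibrations and cofibrant objects (dually $G$ preserves fibrations, acyclic fibrations and fibrant objects). Second, since $\odot = - \otimes F(-)$, the pushout-product over $\cW$ of a cofibration $i$ in $\cC$ with a cofibration $j$ in $\cW$ \emph{equals} the pushout-product over $\cV$ of $i$ with the cofibration $Fj$; hence the (weak) pushout-product axiom for $\cC$ over $\cV$ gives at once the pushout-product axiom for $G_*\cC$ over $\cW$, and dually on the cotensor side using that $G$ is right Quillen. Third — and this is where the word ``weak'' becomes unavoidable — the associativity constraint above is a weak equivalence on cofibrant input: for cofibrant $w, w'$ the map $F(w \otimes w') \to Fw \otimes Fw'$ is a weak equivalence between cofibrant objects by the definition of weak monoidal Quillen pair, and for cofibrant $c$ the functor $c \otimes -$ sends weak equivalences between cofibrant objects to weak equivalences, by Ken Brown's lemma applied to the $\cV$-pushout-product axiom in $\cC$; composing yields the claim. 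The enrichment comparison of the previous paragraph is treated dually, using that $G$ is homotopical on fibrant objects and that $\HOM$ is homotopical in the appropriate variables on (co)fibrant input; finally the weak unit axiom for $G_*\cC$ follows from that for $\cC$ because the unit constraint is already an isomorphism.

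The main obstacle I expect is bookkeeping rather than conceptual: matching the exact list of axioms packaged into ``weak $\cV$-model category'' and checking each survives the transport, while tracking scrupulously which objects must be cofibrant and which fibrant at every stage — the asymmetry being sharpest on the cotensor side, where $F$ (left Quillen) feeds the exponent and $G$ (right Quillen) the enrichment. A secondary subtlety is verifying the transported pentagon: this is the one point that uses the full comonoidal coherence of $F$ rather than merely the existence of a natural map $F(w \otimes w') \to Fw \otimes Fw'$, and one must check it meshes with the associativity constraint of the ambient weak $\cV$-action on $\cC$.
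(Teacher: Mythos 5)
Your overall strategy matches the paper's: set the tensoring and cotensoring on $G_*\cC$ to $x \otimes Fw$ and $x^{Fw}$, observe that the underlying category and model structure are unchanged, and push the pushout-product axiom across $F$. But your sketch verifies the wrong list of axioms. In the paper a weak tensoring (Definition~\ref{def:wtwc}) is \emph{not} packaged as a bifunctor with a weakened associativity pentagon; its essential datum is a comparison map $\varphi_{v,x,y}\colon\underline{\cC}(x\otimes v,y)\to\underline{\cV}(v,\underline{\cC}(x,y))$ required to be a weak equivalence for $v,x$ cofibrant and $y$ fibrant, and the entire homotopical content of the transport is to show that the induced map $G\underline{\cC}(x\otimes Fw,y)\to\underline{\cW}(w,G\underline{\cC}(x,y))$ still has this property. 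That is not ``treated dually'' to your associativity argument. The correct route factors it through $G\underline{\cV}(Fw,\underline{\cC}(x,y))$: the first leg is $G\varphi_{Fw,x,y}$ (a weak equivalence by Ken Brown, since $G$ is right Quillen and the objects are fibrant), and the second leg $G\underline{\cV}(Fw,v)\to\underline{\cW}(w,Gv)$ is a weak equivalence for $w$ cofibrant, $v$ fibrant because the weak monoidal Quillen pair lifts to a weak $\cW$-adjunction $F\colon\cW\rightleftarrows G_*\cV\colon G$ (Proposition~\ref{pr:prob9}). That last step is itself a nontrivial argument via the derived adjunction $\mathbf{L}F\dashv\mathbf{R}G$ and the derived tensor-hom in $\Ho(\cV)$ and $\Ho(\cW)$; your sketch never isolates it, and nothing in your proposal would produce it.

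The second, sharper gap concerns the unit axiom. Definition~\ref{def:wvmod} requires the $\pi_0$ of mapping space axiom, $[x,y]\cong[\unit,\underline{\cC}(x,y)]$, not the external unit axiom. Your claim that ``the weak unit axiom for $G_*\cC$ follows \ldots because the unit constraint is already an isomorphism'' only transports the external unit axiom, and by Proposition~\ref{pr:ImplicationsUnit} the implication from the external unit axiom back to the $\pi_0$ axiom requires set-compatibility of the weak tensoring, a hypothesis the theorem does not impose on $\cC$. What must be transported is the $\pi_0$ axiom itself, and again this is a derived-adjunction computation (Proposition~\ref{pr:unitpresev}(2)):
\[
[x,y]\cong[\unit_{\cV},\underline{\cC}(x,y)]\cong[\mathbf{L}F\unit_{\cW},\underline{\cC}(x,y)]\cong[\unit_{\cW},\mathbf{R}G\underline{\cC}(x,y)]\cong[\unit_{\cW},\underline{G_*\cC}(x,y)],
\]
using that $\mathbf{L}F\unit_{\cW}\simeq\unit_{\cV}$ by the weak monoidal Quillen hypothesis. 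So the obstacle is \emph{not} merely bookkeeping: both missing pieces live in the homotopy category and are not formal consequences of $F$ being left Quillen and $G$ right Quillen. (Conversely, the pentagon coherence you worry about at the end is not part of Definition~\ref{def:wtwc} and does not need to be checked.)
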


See Definition~\ref{def:wvmod} for the notion of weak $\cV$-model category. The novelty is that the tensoring and cotensoring have been weakened, in the sense of Definition~\ref{def:wtwc}.

In Section~\ref{sec:App}, we work out sample applications with the Dold--Kan correspondence. We recover a known homotopy equivalence between two simplicial enrichments on chain complexes and vice versa. %

\subsection*{Related work}

In this paper, we do not use the symmetry properties of the Dold--Kan correspondence. The applications to homotopy theory were investigated in detail by Richter \cite{Richter03}, and also Mandell \cite{Mandell03}*{Theorem~1.3}. 

In \cite{Tabuada10}, Tabuada showed that change of enrichment along the Dold--Kan correspondence induces a Quillen equivalence
\[
\xymatrix{
\CAT{s\Mod{R}} \ar@/^0.5pc/[r]^-{N_*} \ar@{}[r]|-{\sim} & \CAT{\Ch_{\geq 0}(R)} \ar@/^0.5pc/[l]^-{\Ga_*} \\
}
\]
between categories enriched in simplicial $R$-modules and categories enriched in non-negatively graded chain complexes of $R$-modules, where both sides are endowed with a Bergner-style model structure. In our paper, we do not consider a model structure on the category $\CAT{\cV}$ of all \mbox{$\cV$-categories} and $\cV$-functors. Rather, we work with individual $\cV$-categories equipped with a compatible model structure.

Throughout the paper, we view ``$\cV$-enriched'' as an adjective, as additional structure on the underlying category. As such, we work with model structures on underlying categories and we focus on a change of enrichment that preserves the underlying category (see Lemma~\ref{lem:sameunderlying}). To treat the more general case that changes the underlying category, it may be helpful to view ``$\cV$-enriched category'' as a noun and adopt the notion of enriched model category with enriched weak factorization systems \cite{Riehl14}*{\S 13}.

Also, we do not address the relationship between model-categorical constructions and $\infty$-categories. Given a (symmetric) monoidal model category $\cV$, the underlying symmetric monoidal $\infty$-category $\Ho_{\infty}(\cV)$ is described for instance in \cite{Lurie17}*{Example~4.1.7.6}. We surmise that the following is true, which is leave as a question to the reader.

\begin{ques}
Given a weak $\cV$-model category $\cC$ (in the sense of Definition~\ref{def:wvmod}), is its underlying $\infty$-category $\Ho_{\infty}(\cC)$ enriched, tensored, and cotensored over $\Ho_{\infty}(\cV)$?
\end{ques}

Different approaches to enriched and (co)tensored $\infty$-categories can be found in \cite{GepnerH15}, \cite{Lurie17}*{\S 4.2}, and \cite{Heine23}.

\subsection*{Organization}

In Section~\ref{sec:Ptc}, we give the conditions under which the tensoring and cotensoring structures are preserved (Proposition~\ref{pr:coten-strong}). In Section~\ref{sec:wea}, we introduce the notion of weak enriched adjunction and we show that any weak monoidal Quillen adjunction lifts to a weak enriched adjunction (Proposition~\ref{pr:prob9}). In Section~\ref{sec:swtwc}, we introduce the notions of weak tensoring and weak cotensoring, and we show that they are preserved by a weak monoidal Quillen adjunction (Proposition~\ref{pr:PropW_ten_coPreserv}). In Section~\ref{sec:ua}, we introduce equivalent formulations of the unit axiom and we prove that some implications between them still hold when the tensoring is replaced by its weak version (Proposition~\ref{pr:ImplicationsUnit}). We also show that a weak (co)tensoring induces a (co)tensoring on the homotopy category (Proposition~\ref{pr:PropHoenrich}). In Section~\ref{sec:Pems}, we introduce the notion of weak $\cV$-model category (Definition~\ref{def:wvmod}) and we show that this is preserved by a change of enrichment along a weak monoidal Quillen adjunction (Theorem~\ref{thm:maintheo}). In Section~\ref{sec:App}, we apply some of the above results in the case of the Dold--Kan correspondence. %
In Part~II, %
we describe ways to construct weak monoidal Quillen pairs using diagram categories (Proposition~\ref{pr:DiagramCat}), equivariant homotopy (Proposition~\ref{pr:GObjects} and Example~\ref{ex:EquivariantDoldKan}), or Bousfield localizations (Corollary~\ref{cor:MonoidalBousfield}).

\subsection*{Acknowledgments}

This work is based on the second author's doctoral thesis \cite{Ngopnang24enr}, supervised by the first author and Don Stanley. 
We thank Kate Ponto for many helpful suggestions.  
We also thank Chris Kapulkin, Nick Rozenblyum, Alejandro Saiz Millán, Stefan Schwede, Marc Stephan, David White, Sinan Yalin, and Donald Yau for helpful discussions. 

Frankland acknowledges the support of the Natural Sciences and Engineering Research Council of Canada (NSERC), Discovery Grant RGPIN-2019-06082, as well as support from the Fields Institute and the Max-Planck-Institut für Mathematik, report number MPIM-Bonn-2026. 
Cette recherche a \'et\'e financ\'ee par le Conseil de recherches en sciences naturelles et en g\'enie du Canada (CRSNG), subvention D\'ecouverte RGPIN-2019-06082. 
Ngopnang acknowledges the support of the Faculty of Graduate Studies and Research at the University of Regina. %
We gratefully acknowledge the support of the Pacific Institute for the Mathematical Sciences, report identifier PIMS-20250819-CRG41.

\section{Preliminaries}

\subsection{Monoidal categories and enriched categories}\label{sec:mcec}

We recall some background material on monoidal categories and enriched categories. We mostly follow \cite{Riehl14}*{\S 3} and \cite{Borceux94v2}*{\S 6}; see also \cite{Kelly05}. 
A $\cV$-enriched category will also be called a $\cV$-category.

\begin{defn}%
	A symmetric monoidal category $\cV$ is \Def{closed} if for all object $v \in \cV$, the tensor product functor $- \otimes v: \cV \to \cV$ has a right adjoint functor $[v,-] : \cV \to \cV$, 
 that is, for any $u,v,w \in \cV$, we have a natural bijection
 \begin{equation}%
     \cV(u\otimes v, w)\cong \cV(u,[v,w]),
 \end{equation}
 natural in all arguments. The object $[v,w]$ is called the \Def{internal hom} of $v$ and $w$, also denoted $\underline{\cV}(v,w):=[v,w]$ or $w^v$.
\end{defn}

\begin{prop}\label{pr:Internalize}\cite{Riehl14}*{Remark~3.3.9} 
 Given a closed symmetric monoidal category $\cV$, the internal hom satisfies an isomorphism
\begin{equation*}
        \underline{\cV}(u \otimes v, w) \cong \underline{\cV}(u,\underline{\cV}(v,w))
\end{equation*}
in $\cV$, natural in all arguments. 
This isomorphism is called the \Def{enriched tensor-hom adjunction}.
\end{prop}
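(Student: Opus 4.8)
The plan is to deduce this from the Yoneda lemma by exhibiting the asserted isomorphism as the one representing a natural bijection of ordinary hom-sets. First I would fix an arbitrary test object $x \in \cV$ and build a chain of bijections
\[
\cV\bigl(x, \underline{\cV}(u \otimes v, w)\bigr)
\cong \cV\bigl(x \otimes (u \otimes v), w\bigr)
\cong \cV\bigl((x \otimes u) \otimes v, w\bigr)
\cong \cV\bigl(x \otimes u, \underline{\cV}(v,w)\bigr)
\cong \cV\bigl(x, \underline{\cV}(u, \underline{\cV}(v,w))\bigr),
\]
where the outer bijections use the tensor-hom adjunction (once on the left, twice on the right, the last time in the variable $u$ and the middle one in the variable $v$), and the inner bijection is induced by the associativity constraint of $\cV$. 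Each step is natural in $x$, so the composite is an isomorphism between the representable functors $\cV\bigl(-, \underline{\cV}(u \otimes v, w)\bigr)$ and $\cV\bigl(-, \underline{\cV}(u, \underline{\cV}(v,w))\bigr)$; by the Yoneda lemma it is represented by a unique isomorphism $\underline{\cV}(u \otimes v, w) \cong \underline{\cV}(u, \underline{\cV}(v,w))$ in $\cV$.

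Next I would handle naturality in $u$, $v$, and $w$. Each of the four bijections above is in fact natural in all of $x, u, v, w$ simultaneously: the tensor-hom adjunction is natural in all arguments by definition of a closed symmetric monoidal category, and the associator is a natural transformation of functors $\cV^{\times 3} \to \cV$. Hence the composite bijection is natural in the tuple $(x, u, v, w)$, and the parametrized form of the Yoneda lemma (treating $u, v, w$ as parameters) upgrades the induced map in $\cV$ to a natural isomorphism in $u, v, w$.

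The only step requiring care — and the one I expect to be the main, though minor, obstacle — is the coherence bookkeeping: one must insert the associativity constraint (and, if the tensor-hom adjunction is unwound through the unit object, the unit constraints) in a consistent way, so that the assembled family of bijections is a genuine natural isomorphism rather than merely a bijection for each fixed tuple. Since $\cV$ is symmetric monoidal, Mac Lane's coherence theorem guarantees that these choices are unambiguous, so no essential difficulty arises and the argument reduces to the routine verification sketched above.
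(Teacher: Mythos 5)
The paper provides no proof of its own, simply citing \cite{Riehl14}*{Remark 3.3.9}, where the verification is in turn left as an exercise; your argument is the standard way to complete that exercise and is correct. The four-step chain of bijections is assembled properly (two applications of the $(- \otimes v) \dashv [v,-]$ adjunction peel off $v$ and then $u$, one application wraps the whole domain, and the associator bridges the two bracketings), and representability plus the parametrized Yoneda observation correctly extract an isomorphism in $\cV$ that is natural in $u,v,w$. One refinement worth noting: as written, your argument gives naturality in $u,v,w$ in the ordinary sense, whereas \cite{Riehl14} and typical downstream uses of the ``enriched tensor-hom adjunction'' assert the stronger condition of $\cV$-naturality; to upgrade, replace each ordinary hom-set $\cV(x,-)$ in the chain by the internal hom $\underline{\cV}(x,-)$, observe that each step is itself a $\cV$-natural isomorphism (the tensor-hom adjunction is a $\cV$-adjunction and the associator is a $\cV$-natural isomorphism), and invoke the enriched Yoneda lemma --- the overall structure of the argument is unchanged.
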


\begin{defn}
	For two monoidal categories $(\cV,\otimes,\unit_{\cV})$ and $(\cW,\otimes,\unit_{\cW})$, a \Def{lax monoidal functor} is a functor $G:\cV\to \cW$ together with
	\begin{itemize}
		\item a morphism $\eta: \unit_{\cW}\to G(\unit_{\cV})$, called the lax monoidal unit,
		\item a natural transformation $\mu$, given by $\mu_{x,y}:G(x)\otimes G(y)\to G(x\otimes y)$ for any objects $x,y\in \cV$, called the lax monoidal transformation,
    \end{itemize}
    satisfying 
	associativity and unitality. 
	A lax monoidal functor $G$ is said to be \Def{strong monoidal} if the transformations $\mu$ and $\eta$ are isomorphisms. 
	
	An \Def{oplax monoidal functor} $F : \cW \to \cV$ %
	is a functor together with a morphism $\ep \colon F(\unit_{\cW}) \to \unit_{\cV}$ and a natural transformation $\de_{x,y} \colon F(x \ot y) \to F(x) \ot F(y)$ satisfying coassociativity and counitality.
\end{defn}

\begin{nota}
For a monoidal category $\cV$, let $\CAT{\cV}$ denote the category of $\cV$-categories and $\cV$-functors between them. We can also view $\CAT{\cV}$ as a $2$-category, with the $\cV$-natural transformations as $2$-morphisms.
\end{nota}

The following construction is called \Def{change of base} or \Def{change of enrichment} \cite{Borceux94v2}*{Proposition~6.4.3}.

\begin{prop}%
Let $G \colon \cV \to \cW$ be a lax monoidal functor between monoidal categories.
\begin{enumerate}
	\item Any $\cV$-category $\cC$ has an associated $\cW$-category, denoted $G_*\cC$, with the same objects, and hom-objects given by $\underline{G_*\cC}(x,y) := G (\underline{\cC}(x,y))$.
	\item The assignment $\cC \mapsto G_* \cC$ forms a $2$-functor $G_* \colon \CAT{\cV} \to \CAT{\cW}$.
\end{enumerate}
\end{prop}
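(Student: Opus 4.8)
The plan is to make the $\cW$-category structure on $G_*\cC$ explicit, and then check functoriality on $1$- and $2$-cells; every verification reduces to a diagram chase combining the coherence of the $\cV$-structure with the naturality of $\mu,\eta$ and the associativity/unitality axioms of the lax monoidal functor $G$.

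\textbf{Part (1).} The composition morphism of $G_*\cC$ at a triple $(x,y,z)$ is the composite
\[
G(\underline{\cC}(y,z)) \ot G(\underline{\cC}(x,y)) \xrightarrow{\ \mu\ } G\bigl(\underline{\cC}(y,z) \ot \underline{\cC}(x,y)\bigr) \xrightarrow{\ G(c^{\cC}_{x,y,z})\ } G(\underline{\cC}(x,z)),
\]
where $c^{\cC}$ denotes composition in $\cC$, and the identity at $x$ is $\unit_{\cW} \xrightarrow{\ \eta\ } G(\unit_{\cV}) \xrightarrow{\ G(\id^{\cC}_x)\ } G(\underline{\cC}(x,x))$. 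To verify associativity I would expand both bracketings of a fourfold composite: the outer $G(-)$ parts agree after applying $G$ to the associativity axiom of $\cC$, while the reassociation of the two instances of $\mu$ against the associator is exactly the associativity coherence axiom for $G$; naturality of $\mu$ is what lets one slide $\mu$ past $G(c^{\cC}\ot\id)$. Left and right unitality are similar, using the unit axioms of $\cC$, naturality of $\mu$, and the two unit coherence axioms for $G$ relating $\mu$, $\eta$, and the unitors of $\cV$ and $\cW$.

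\textbf{Part (2).} On a $\cV$-functor $T\colon\cC\to\cD$ define $G_*T$ to have the same object map and hom-morphisms $G(T_{x,y})\colon G(\underline{\cC}(x,y))\to G(\underline{\cD}(Tx,Ty))$; compatibility with composition and units follows by applying $G$ to the corresponding axioms for $T$ and invoking naturality of $\mu$ and $\eta$. On a $\cV$-natural transformation $\al\colon T\Rightarrow T'$, with components $\al_x\colon\unit_{\cV}\to\underline{\cD}(Tx,T'x)$, set $(G_*\al)_x \dfn \bigl(\unit_{\cW} \xrightarrow{\ \eta\ } G(\unit_{\cV}) \xrightarrow{\ G(\al_x)\ } G(\underline{\cD}(Tx,T'x))\bigr)$; applying $G$ to the $\cV$-naturality square of $\al$ and pasting in the relevant instances of $\mu$ and $\eta$ yields the $\cW$-naturality square of $G_*\al$. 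It then remains to check the $2$-functoriality identities. The equalities $G_*\id_{\cC}=\id_{G_*\cC}$ and $G_*(S\circ T)=G_*S\circ G_*T$ are immediate from functoriality of $G$; preservation of horizontal composition and whiskering of $\cV$-natural transformations is another naturality chase; and preservation of \emph{vertical} composition is the one point that genuinely uses a mixed coherence identity: after cancelling a common factor $G(c^{\cD})\circ G(\al_x\ot\be_x)$ by functoriality and naturality of $\mu$, it reduces to the instance
\[
\unit_{\cW} \cong \unit_{\cW} \ot \unit_{\cW} \xrightarrow{\ \eta\ot\eta\ } G(\unit_{\cV})\ot G(\unit_{\cV}) \xrightarrow{\ \mu\ } G(\unit_{\cV}\ot\unit_{\cV}) \cong G(\unit_{\cV})
\]
equalling $\eta$, which is precisely the unit coherence axiom for $G$ (the unitors at $\unit$ being the canonical ones).

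\textbf{Main obstacle.} There is no serious obstacle here; the content is bookkeeping. The step most likely to need care is the vertical-composition identity in Part~(2), since it is the only place where the interaction of $\mu$ \emph{and} $\eta$ (rather than each separately) enters, and one must place the unitors of $\cV$ and $\cW$ correctly. Everything else follows from functoriality of $G$, naturality of $\mu$ and $\eta$, and the associativity/unitality coherence of a lax monoidal functor; alternatively one may simply cite \cite{Borceux94v2}*{Proposition~6.4.3} and record only these coherence inputs.
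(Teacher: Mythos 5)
The paper offers no proof of this proposition: it only cites \cite{Borceux94v2}*{Proposition~6.4.3}, which you also note as an alternative at the end. Your unfolding is correct and is the standard argument: the composition $G(c^{\cC})\circ\mu$ and unit $G(\id^{\cC}_x)\circ\eta$ are exactly the structure maps one must write down, associativity and unitality of $G_*\cC$ follow from the coherence axioms of $G$ together with naturality of $\mu$, and the data on $1$-cells and $2$-cells are as you define them. Your observation that preservation of vertical composition is the one place where $\mu$ and $\eta$ interact --- reducing, after naturality of $\mu$ and functoriality of $G$, to the identity $\mu_{\unit,\unit}\circ(\eta\ot\eta)$ agreeing with $\eta$ up to the unitors, i.e.\ the unit coherence axiom for $G$ --- is an accurate diagnosis of the only nontrivial coherence input in the $2$-functoriality of $G_*$, so the verification is complete.
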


\begin{lem}\label{lem:Compochange}
    The change of enrichment along a composite $(GH)_*$ is the composite of change of enrichments $G_*H_*$.
\end{lem}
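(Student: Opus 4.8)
The plan is to unwind both sides of the claimed equality and verify that it holds strictly, as an identity of $2$-functors $\CAT{\cU} \to \CAT{\cW}$. Write $H \colon \cU \to \cV$ and $G \colon \cV \to \cW$ for the two lax monoidal functors, with lax structure maps $(\eta^H,\mu^H)$ and $(\eta^G,\mu^G)$ respectively, so that the composite $GH \colon \cU \to \cW$ carries the standard lax monoidal structure whose unit is $\eta^{GH} = G(\eta^H) \circ \eta^G \colon \unit_{\cW} \to G(H(\unit_{\cU}))$ and whose structure maps are $\mu^{GH}_{x,y} = G(\mu^H_{x,y}) \circ \mu^G_{H(x),H(y)} \colon G(H(x)) \ot G(H(y)) \to G(H(x \ot y))$. (That this data is again lax monoidal is standard, and I would simply cite it.) Everything below reduces to the observation that these formulas for $\eta^{GH}$ and $\mu^{GH}$ are exactly what is produced by applying the change-of-base recipe twice.

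First I would check that the two $2$-functors agree on objects of $\CAT{\cU}$, i.e.\ on a $\cU$-category $\cC$. By construction $G_*(H_* \cC)$ and $(GH)_* \cC$ have the same objects as $\cC$, and their hom-objects coincide since
\[
  \underline{G_*(H_* \cC)}(x,y) = G\bigl(\underline{H_* \cC}(x,y)\bigr) = G\bigl(H(\underline{\cC}(x,y))\bigr) = (GH)\bigl(\underline{\cC}(x,y)\bigr) = \underline{(GH)_* \cC}(x,y).
\]
It then remains to compare the composition and identity morphisms. The composition morphism of $H_* \cC$ is $H(\circ_{\cC}) \circ \mu^H$, where $\circ_{\cC}$ denotes composition in $\cC$; applying $G$ and precomposing with $\mu^G$ gives the composition morphism of $G_*(H_* \cC)$, namely $G(H(\circ_{\cC})) \circ G(\mu^H) \circ \mu^G = (GH)(\circ_{\cC}) \circ \mu^{GH}$, which is precisely the composition morphism of $(GH)_* \cC$. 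The same computation with the unit maps $\eta^H$, $\eta^G$ in place of $\mu^H$, $\mu^G$ shows that the identity morphisms agree. Hence $G_*(H_* \cC) = (GH)_* \cC$ as $\cW$-categories.

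Finally I would run the analogous check on $1$-morphisms and $2$-morphisms. A $\cU$-functor $\Phi \colon \cC \to \cD$ is sent by both routes to the $\cW$-functor that agrees with $\Phi$ on objects and is $G(H(\Phi_{x,y})) = (GH)(\Phi_{x,y})$ on hom-objects, and compatibility with composition and units for this data is the same diagram chase as above; likewise a $\cU$-natural transformation with components $\alpha_x \colon \unit_{\cU} \to \underline{\cD}(\Phi x, \Psi x)$ is sent by both routes to the transformation with components $(GH)(\alpha_x) \circ \eta^{GH}$. Thus the two $2$-functors coincide. I do not expect any genuine obstacle here: the lemma is a strict equality and the proof is pure bookkeeping, the only point worth recording being the explicit description of the lax monoidal structure on the composite $GH$; once that is in place, each required identity is immediate, and in the written proof I would merely indicate this and leave the routine diagram chases to the reader.
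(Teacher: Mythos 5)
Your proof is correct, and it is the standard unwind-the-definitions argument; the paper itself states this lemma without proof, implicitly treating it as the routine bookkeeping you carry out. The key formulas you record for the lax monoidal structure on the composite $GH$ are exactly what is needed, and the rest follows as you say.
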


\begin{lem}\cite{Borceux94v2}*{Proposition~6.4.2}
	The underlying set functor $U = \cV(\unit,-) \colon \cV \to \Set$ is lax monoidal.
\end{lem}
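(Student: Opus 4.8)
The plan is to write down the lax monoidal structure maps of $U$ explicitly and then check the three coherence axioms, all of which reduce to naturality of the unitors of $\cV$ together with the triangle identity. First I would take the lax unit $\eta \colon \{*\} \to U(\unit) = \cV(\unit,\unit)$ to be the map selecting $\id_{\unit}$, where $\{*\}$ is the monoidal unit of $(\Set,\times)$. For objects $x,y \in \cV$ I would define
\[
\mu_{x,y} \colon \cV(\unit,x) \times \cV(\unit,y) \lra \cV(\unit, x \ot y), \qquad \mu_{x,y}(f,g) = (f \ot g) \circ \lambda_{\unit}^{-1},
\]
where $\lambda_{\unit} \colon \unit \ot \unit \ral{\cong} \unit$ is the left unitor of $\cV$ (which coincides with $\rho_{\unit}$). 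Naturality of $\mu_{x,y}$ in $x$ and $y$ is immediate from bifunctoriality of $\ot$: for $a \colon x \to x'$ and $b \colon y \to y'$ one has $((af)\ot(bg))\circ\lambda_{\unit}^{-1} = (a\ot b)\circ(f\ot g)\circ\lambda_{\unit}^{-1}$.

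It then remains to verify the coherence axioms. For the left unit axiom, applying $U(\lambda_x)$ to $\mu_{\unit,x}(\id_{\unit},f) = (\id_{\unit}\ot f)\circ\lambda_{\unit}^{-1}$ gives $\lambda_x\circ(\id_{\unit}\ot f)\circ\lambda_{\unit}^{-1} = f\circ\lambda_{\unit}\circ\lambda_{\unit}^{-1} = f$ by naturality of $\lambda$, which is exactly the canonical bijection $\{*\}\times U(x)\cong U(x)$; the right unit axiom is symmetric, using naturality of $\rho$ and $\rho_{\unit}=\lambda_{\unit}$. For the associativity axiom, one unwinds both composites on a triple $(f,g,h)$ and applies naturality of the associator $\al$; the two sides then differ only by the structural maps $\al_{\unit,\unit,\unit}\circ(\lambda_{\unit}^{-1}\ot\id_{\unit})$ and $\id_{\unit}\ot\lambda_{\unit}^{-1}$, which agree by the triangle identity of $\cV$ with all three objects equal to $\unit$. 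This finishes the proof. One could also bypass these computations entirely: every arrow that occurs is assembled from the structural isomorphisms of $\cV$, so all the relevant diagrams commute by Mac Lane's coherence theorem; or, more conceptually, $\unit$ is canonically a comonoid with comultiplication $\lambda_{\unit}^{-1}$ and counit $\id_{\unit}$, and $\cV(c,-)$ is lax monoidal for any comonoid $c$.

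I do not expect a genuine obstacle here: the statement is standard bookkeeping. The only point that calls for a little care is keeping the (essentially trivial) monoidal structure of $(\Set,\times)$ straight --- its unitors and associator are the evident reindexing bijections of sets --- so that one identifies the correct coherence diagrams for $U$; once $\eta$ and $\mu$ are set up as above, all the content lives in the coherence of $\cV$, and the verification goes through as sketched.
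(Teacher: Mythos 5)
The paper does not prove this lemma; it simply cites Borceux (Proposition 6.4.2), so there is no internal proof to compare against. Your proof is correct and is the standard argument: defining $\eta$ to pick out $\id_{\unit}$, defining $\mu_{x,y}(f,g) = (f\otimes g)\circ\lambda_{\unit}^{-1}$, and reducing the coherence axioms to Mac Lane's coherence theorem (or, as you observe, to the fact that $\unit$ is canonically a comonoid in $\cV$ and $\cV(c,-)$ is lax monoidal for any comonoid $c$, which is arguably the cleanest packaging).
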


\begin{defn}
	Given a $\cV$-category $\cC$, its \Def{underlying category} $\cC_0 = U_* \cC$ is the category obtained by change of enrichment along the underlying set functor $U \colon \cV \to \Set$. In other words, $\cC_0$ 
	has the same objects as $\cC$ and has hom-sets $\cC(x,y) := \cV(\unit, \underline{\cC}(x,y))$, for any $x,y \in \cC$. 
\end{defn}

The following statement is an instance of doctrinal adjunction \cite{Kelly74}.

\begin{lem}\label{lem:Doctrinal}
Let $F \colon \cW \rla \cV \colon G$ be an adjunction between monoidal categories. An oplax monoidal structure on the left adjoint $F$ corresponds to a lax monoidal structure on the right adjoint $G$. Given an oplax monoidal structure on $F$, the corresponding lax monoidal structure on $G$ is defined by:
\[
\xymatrix{
Gx \ot Gy \ar[d]_{\eta} \ar[r]^-{\mu}_{\blue{\mathrm{def}}} & G(x \ot y) \\
GF (Gx \ot Gy) \ar[r]^-{G(\de)} & G(FGx \ot FGy) \ar[u]_{G(\ep \ot \ep)} \\ 
}
\qquad \qquad 
\xymatrix{
\unit_{\cW} \ar[d]_{\eta} \ar[r]^-{\eta}_-{\blue{\mathrm{def}}} & G(\unit_{\cV}). \\
GF(\unit_{\cW}) \ar[ur]_-{G(\ep)} & \\
}
\]
Here $\eta \colon \id_{\cW} \Ra GF$ and $\ep \colon FG \Ra \id_{\cV}$ denote the unit and counit of the adjunction $F \dashv G$, $\eta \colon \unit_{\cW} \to G(\unit_{\cV})$ denotes the lax unit of $G$, and $\ep \colon F(\unit_{\cW}) \to \unit_{\cV}$ denotes the oplax counit of $F$.
\end{lem}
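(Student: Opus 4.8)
The plan is to read the statement as the specialization of Kelly's doctrinal adjunction \cite{Kelly74} to the $2$-category of monoidal categories, (op)lax monoidal functors, and monoidal natural transformations, and to carry it out via the calculus of mates. First I would record the transpose bijection of the adjunction $F \dashv G$: for all $c \in \cW$ and $d \in \cV$ there is a bijection $\cV(Fc,d) \cong \cW(c,Gd)$ sending $h \colon Fc \to d$ to $Gh \circ \eta_c$ and $g \colon c \to Gd$ to $\ep_d \circ Fg$, natural in $c$ and $d$ and compatible with whiskering by functors. Specializing to $c = Gx \ot Gy$ and $d = x \ot y$, the composite that defines $\mu_{x,y}$ in the statement is $G(-) \circ \eta_{Gx \ot Gy}$ applied to
\[
F(Gx \ot Gy) \ral{\de_{Gx,Gy}} FGx \ot FGy \ral{\ep_x \ot \ep_y} x \ot y,
\]
so $\mu_{x,y}$ is precisely the transpose of this map; likewise, the lax unit $\unit_{\cW} \to G(\unit_{\cV})$ is the transpose of the oplax counit $\ep \colon F(\unit_{\cW}) \to \unit_{\cV}$. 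Naturality of $\mu$ in both arguments then follows from naturality of $\de$, of $\ep$, and of the transpose.

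The heart of the argument is to verify that $(G,\mu,\eta)$ satisfies the associativity and left/right unit axioms of a lax monoidal functor. I would check each one by transposing it across $F \dashv G$: an equality of parallel morphisms with codomain $G(\text{something})$ holds in $\cW$ iff the equality of their transposes holds in $\cV$. Expanding each transposed composite and cancelling the adjunction unit via the triangle identities, the associativity hexagon for $\mu$ reduces to the coassociativity hexagon for $\de$ (relative to the associators of $\cV$ and $\cW$), and the two unit triangles for $(\mu,\eta)$ reduce to the counit triangles for $(\de,\ep)$. This diagram chase is exactly the content of doctrinal adjunction in the monoidal case.

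For the converse — that every lax monoidal structure on $G$ arises in this way from a unique oplax monoidal structure on $F$ — I would run the same recipe in reverse: given $(\mu,\eta)$ on $G$, set $\de_{x,y} \colon F(x \ot y) \to Fx \ot Fy$ to be the transpose of
\[
x \ot y \ral{\eta_x \ot \eta_y} GFx \ot GFy \ral{\mu_{Fx,Fy}} G(Fx \ot Fy)
\]
and $\ep \colon F(\unit_{\cW}) \to \unit_{\cV}$ the transpose of the lax unit of $G$. That this $(\de,\ep)$ is oplax monoidal is the mate of the verification above, and that the two assignments are mutually inverse is formal: on the underlying data they are inverse transposes, so the claim reduces to the triangle identities (equivalently, to the fact that taking mates is an involution).

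The step I expect to be the main obstacle is the associativity chase in the second paragraph: it is routine but bulky, because one has to keep track of the associators of both $\cV$ and $\cW$ at once and of the two different ways $F$ mediates the tensor products. Anyone content to cite \cite{Kelly74} gets this step for free; without that citation it is where essentially all the work lies, the unit axioms and the bijectivity being comparatively short.
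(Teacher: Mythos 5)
Your proposal is correct and takes essentially the same approach as the paper, which gives no proof of the lemma but instead cites Kelly's doctrinal adjunction \cite{Kelly74}; your argument via the calculus of mates — recognizing $\mu$ and the lax unit as adjoint transposes of $(\ep \ot \ep)\circ\de$ and the oplax counit, transposing the coherence axioms across $F \dashv G$, and inverting the recipe for the converse — is precisely the specialization of Kelly's result to the monoidal $2$-category that the citation invokes.
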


The enriched Yoneda lemma will be an important tool in some of the later proofs.

\begin{prop}[Enriched Yoneda Lemma]
    Given a $\cV$-functor $F \colon \cC \to \cV$ and an object $x \in \cC$, the set of $\cV$-natural transformations $\alpha \colon \underline{\cC}(x,-) \to F$ is in natural bijection with the set of elements of $F(x) \cong \cV(\unit, F(x))$, that is, the set of morphisms $\unit \to F(x)$, obtained by the composition $\unit \xrightarrow{1_x} \underline{\cC}(x,x) \xrightarrow{\alpha_x} F(x)$.
\end{prop}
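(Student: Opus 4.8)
The plan is to construct explicit mutually inverse maps between the two sets, following the classical argument for the enriched Yoneda lemma (as in \cite{Kelly05}*{\S 1.9} or \cite{Riehl14}*{\S 3.5}). The forward map $\Phi$ is the one named in the statement: it sends a $\cV$-natural transformation $\alpha \colon \underline{\cC}(x,-) \Ra F$ to the element $\alpha_x \circ 1_x \in \cV(\unit, F(x))$. For a backward map $\Psi$, given a morphism $a \colon \unit \to F(x)$, I would define a family $\alpha^a = (\alpha^a_y)_{y \in \cC}$ whose component $\alpha^a_y \colon \underline{\cC}(x,y) \to F(y)$ is the composite
\[
\underline{\cC}(x,y) \;\cong\; \underline{\cC}(x,y) \ot \unit \xrightarrow{\;F_{x,y}\,\ot\,a\;} \underline{\cV}(F(x),F(y)) \ot F(x) \xrightarrow{\;\mathrm{ev}\;} F(y),
\]
where $F_{x,y} \colon \underline{\cC}(x,y) \to \underline{\cV}(F(x),F(y))$ is the structure morphism of the $\cV$-functor $F$ and $\mathrm{ev}$ is the counit of the tensor--hom adjunction of Proposition~\ref{pr:Internalize} (equivalently, $\alpha^a_y$ is the transpose of $F_{x,y}$, precomposed with $a$).

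There are then four things to verify. First, that each $\alpha^a$ is $\cV$-natural: transposing its naturality square across the tensor--hom adjunction turns it into an equality of two morphisms out of a triple tensor product of hom-objects, and both sides collapse --- using the compatibility of $F$ with composition and the coherence isomorphisms of $\cV$ --- to $\mathrm{ev}$ applied after $F_{x,z} \ot a$ and the composition law of $\cC$. Second, that $\Phi \circ \Psi = \id$: evaluating $\alpha^a$ at $y = x$ and precomposing with $1_x$, the unit axiom of the $\cV$-functor $F$ (that $F_{x,x} \circ 1_x$ is the morphism $\unit \to \underline{\cV}(F(x),F(x))$ naming the identity) together with a triangle identity for the adjunction reduces the composite to $a$. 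Third, that $\Psi \circ \Phi = \id$: given a $\cV$-natural $\alpha$ and setting $a := \alpha_x \circ 1_x$, the $\cV$-naturality square of $\alpha$ at the pair $(x,y)$, precomposed with $1_x$ in the appropriate slot and transposed, identifies $\alpha_y$ with the composite defining $\alpha^a_y$, once one invokes the unit axiom of the $\cV$-category $\cC$ to rewrite the structure map $(\underline{\cC}(x,-))_{x,y}$ of the representable $\cV$-functor. Fourth, naturality of the bijection in $F$ (and in $x$): immediate, since a $\cV$-natural transformation $F \Ra F'$ acts on the element $\alpha_x \circ 1_x$ by postcomposition and $\Phi$ is visibly compatible with this.

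I expect the third point to be the main obstacle. One must keep careful track of the order of the tensor factors, of the direction of the composition morphism of $\cC$ (a symmetry isomorphism of $\cV$ is tacitly involved), and above all of the precise identification of the structure map $(\underline{\cC}(x,-))_{x,y} \colon \underline{\cC}(x,y) \to \underline{\cV}(\underline{\cC}(x,x),\underline{\cC}(x,y))$ of the representable $\cV$-functor once its first hom-object is fed the identity --- it is the unit axiom of $\cC$ that makes this simplify. The other three points are routine diagram chases with the monoidal coherence conditions and the two $\cV$-functor axioms, which I would either carry out briefly or leave to the cited references.
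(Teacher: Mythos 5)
The paper states the enriched Yoneda lemma without proof, as recalled background material in Section~2 with references to Kelly, Borceux, and Riehl, so there is no in-paper argument to compare against. Your proposal reproduces the classical direct proof from those sources: the formula you give for the inverse $\Psi$ (transpose of $F_{x,y}$ precomposed with $a$) is the standard one, and the four verification steps you enumerate are exactly what needs checking. Your diagnosis that $\Psi \circ \Phi = \id$ is the genuinely nontrivial point is accurate, and the route you sketch for it --- evaluate the $\cV$-naturality square of $\alpha$ at the pair $(x,y)$, feed in $1_x$, and invoke the unit axiom of the $\cV$-category $\cC$ to collapse the representable's structure map $(\underline{\cC}(x,-))_{x,y}$ --- is the correct key observation. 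Nothing in the outline would fail; it is a faithful, if abbreviated, rendering of the standard argument.
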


%
%

\begin{defn}
	A \Def{$\cV$-adjunction} consists of $\cV$-functors $F \colon \cC \to \cD$ and \mbox{$G \colon \cD \to \cC$} together with
	\begin{itemize}
		\item $\cV$-natural isomorphisms $\underline{\cD}(Fc,d) \cong \underline{\cC}(c,Gd)$ in $\cV$, for any $c \in \cC$ and $d \in \cD$, or equivalently
		\item $\cV$-natural transformations $\eta \colon \mathrm{id}_{\cC} \Ra GF$ (unit) and $\varepsilon \colon FG \Ra \mathrm{id}_{\cD}$ (counit) satisfying the triangle identities 
        \begin{center}
		\begin{tabular}{ccc}
			$\xymatrix{
			G \ar[dr]_-{\mathrm{id}_G} \ar[r]^-{\eta G} & GFG \ar[d]^-{G\epsilon} \\
			& G,
			}$
			& $\qquad\qquad$ &
			$\xymatrix{
			F \ar[dr]_-{\mathrm{id}_F} \ar[r]^-{F\eta} & FGF \ar[d]^-{\epsilon F} \\
			& F.
			}$
		\end{tabular}
	\end{center}
	\end{itemize}
\end{defn}

\subsection{Tensored and cotensored categories}\label{sec:tcc}

Background material on tensored and cotensored categories can be found in \cite{Kelly05}*{\S 1}, \cite{Borceux94v2}*{\S 6.5} and \cite{Riehl14}*{\S 3.7}.

\begin{defn}
	\begin{enumerate}
		\item A $\cV$-category $\cC$ is \Def{tensored} over $\cV$ if for any $v \in \cV$ and $x \in \cC$, there is an object $x \otimes v \in \cC$ together with isomorphisms in $\cV$ 
		\begin{equation*}
			\underline{\cC}(x \otimes v, y) \cong \underline{\cV}(v,\underline{\cC}(x,y)), \quad \text{$\cV$-natural in } y \in \cC.
		\end{equation*}
		\item A $\cV$-category $\cC$ is \Def{cotensored} over $\cV$ if for any $v \in \cV$ and $y \in \cC$, there is an object $y^v \in \cC$ together with isomorphisms in $\cV$ 
		\begin{equation*}
			\underline{\cC}(x, y^v) \cong \underline{\cV}(v,\underline{\cC}(x,y)), \quad \text{$\cV$-natural in } x \in \cC.
		\end{equation*}
	\end{enumerate}
\end{defn}

\begin{rem}
    Given a $\cV$-category $\cC$ tensored over $\cV$, for each object $x \in \cC$, the $\cV$-functor $x \otimes - \colon \ul{\cV} \to \ul{\cC}$ is left $\cV$-adjoint to $\underline{\cC}(x,-) \colon \ul{\cC} \to \ul{\cV}$. This follows from the enriched Yoneda lemma.
\end{rem}

\begin{rem}\label{rem:UnenrichedTensorHom}
In contrast to Proposition~\ref{pr:Internalize}, having natural isomorphisms of hom-sets 
\[
\cC(x \otimes v, y) \cong \cV(v, \underline{\cC}(x,y))
\]
is \emph{not} enough to produce a tensoring of $\cC$ over $\cV$. The issue is enriching the unenriched adjunction $x \otimes - \colon \cV \rla \cC \colon \underline{\cC}(x,-)$, cf.\ \cite{Riehl14}*{Proposition~3.7.10}.
\end{rem}

\begin{ex}\label{ex:Modules}
For a commutative ring $R$, the category of $R$-modules $\Mod{R}$ endowed with the usual tensor product $\ot_R$ over $R$ and hom modules $\Hom_R(M,N)$ is a closed symmetric monoidal category. For an arbitrary ring $R$, the category of left $R$-modules $\lMod{R}$ is enriched, tensored, and cotensored over the category $\Ab$ of abelian groups.
\end{ex}

\begin{lem}
	Let $(\cV,\otimes,\unit)$ be a closed symmetric monoidal category and suppose $\cC$ is a tensored $\cV$-category. Then the tensoring is unital and associative, i.e., there exist natural isomorphisms
	\begin{equation*}
		x \otimes \unit \cong x, \quad x \otimes (u \otimes v) \cong (x \otimes u) \otimes v , \quad \text{for all } u,v \in \cV, x \in \cC.
	\end{equation*}
\end{lem}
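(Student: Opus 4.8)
The plan is to reduce both assertions to the enriched Yoneda lemma. In each case I would produce a $\cV$-natural isomorphism between two representable $\cV$-functors of the form $\underline{\cC}(a,-)\colon\ul\cC\to\ul\cV$ and $\underline{\cC}(a',-)\colon\ul\cC\to\ul\cV$, and then invoke the enriched Yoneda lemma to conclude $a\cong a'$ in the underlying category $\cC_0$. The ingredients needed are the defining isomorphism of the tensoring, the enriched tensor--hom adjunction of Proposition~\ref{pr:Internalize}, and the unit isomorphism $\underline{\cV}(\unit,-)\cong\id_{\cV}$ in $\cV$ (itself a consequence of Proposition~\ref{pr:Internalize} by the same enriched Yoneda argument). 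I expect the only genuinely delicate point to be this last step --- extracting an isomorphism of \emph{objects} from a $\cV$-natural isomorphism of representables --- which amounts to the fact that the enriched Yoneda embedding is fully faithful, hence reflects isomorphisms; everything else is a formal concatenation of known natural isomorphisms.

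For unitality, fix $x\in\cC$. Combining the definition of the tensoring with the unit isomorphism in $\cV$ gives
\[
\underline{\cC}(x\otimes\unit,y)\;\cong\;\underline{\cV}(\unit,\underline{\cC}(x,y))\;\cong\;\underline{\cC}(x,y),\qquad\text{$\cV$-natural in }y\in\cC,
\]
so $\underline{\cC}(x\otimes\unit,-)$ and $\underline{\cC}(x,-)$ are isomorphic as $\cV$-functors $\ul\cC\to\ul\cV$. By the enriched Yoneda lemma (applied with $F=\underline{\cC}(x,-)$ and with $F=\underline{\cC}(x\otimes\unit,-)$), $\cV$-natural transformations $\underline{\cC}(x\otimes\unit,-)\to\underline{\cC}(x,-)$ are in bijection with morphisms $x\to x\otimes\unit$ in $\cC_0$, and those in the opposite direction with morphisms $x\otimes\unit\to x$; moreover this correspondence sends composites to composites and the identity transformation to the identity. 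Feeding our $\cV$-natural isomorphism and its inverse through this correspondence yields mutually inverse maps, hence an isomorphism $x\otimes\unit\cong x$ in $\cC_0$. Its naturality in $x$ follows from the naturality of each isomorphism above and of the Yoneda correspondence (using that $-\otimes\unit$ is a functor on $\cC$, which again follows from the tensoring axiom via Yoneda).

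For associativity, fix $u,v\in\cV$ and $x\in\cC$, and chain the defining isomorphism of the tensoring (twice) with the enriched tensor--hom adjunction of Proposition~\ref{pr:Internalize}:
\begin{align*}
\underline{\cC}\big((x\otimes u)\otimes v,\,y\big)
&\cong\underline{\cV}\big(v,\underline{\cC}(x\otimes u,y)\big)
\cong\underline{\cV}\big(v,\underline{\cV}(u,\underline{\cC}(x,y))\big)\\
&\cong\underline{\cV}\big(u\otimes v,\underline{\cC}(x,y)\big)
\cong\underline{\cC}\big(x\otimes(u\otimes v),\,y\big),
\end{align*}
all $\cV$-natural in $y\in\cC$. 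As before, this is a $\cV$-natural isomorphism of representable $\cV$-functors, so the enriched Yoneda lemma produces an isomorphism $(x\otimes u)\otimes v\cong x\otimes(u\otimes v)$ in $\cC_0$, natural in $x$, $u$, and $v$ since every isomorphism in the chain is natural in all of its arguments. The main obstacle, as noted, is just being careful with this final Yoneda step; the remainder is bookkeeping.
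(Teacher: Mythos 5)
The paper does not give its own proof of this lemma; it is the standard statement \cite{Riehl14}*{Lemma~3.7.7}, which the paper cites a few lines later in the proof of Proposition~\ref{pr:coten-strong}. Your argument via the enriched Yoneda lemma is correct and is essentially the same as the standard proof in Riehl: exhibit a $\cV$-natural isomorphism between the representable $\cV$-functors $\underline{\cC}(x\otimes\unit,-)\cong\underline{\cC}(x,-)$ (resp.\ $\underline{\cC}((x\otimes u)\otimes v,-)\cong\underline{\cC}(x\otimes(u\otimes v),-)$), then invoke full faithfulness of the enriched Yoneda embedding.

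One small bookkeeping point worth flagging: in the associativity chain, the step
\[
\underline{\cV}\bigl(v,\underline{\cV}(u,\underline{\cC}(x,y))\bigr)\;\cong\;\underline{\cV}\bigl(u\otimes v,\underline{\cC}(x,y)\bigr)
\]
does not follow from Proposition~\ref{pr:Internalize} alone. That proposition gives $\underline{\cV}(a\otimes b,c)\cong\underline{\cV}(a,\underline{\cV}(b,c))$, which yields $\underline{\cV}(v,\underline{\cV}(u,w))\cong\underline{\cV}(v\otimes u,w)$; you then need the symmetry isomorphism $v\otimes u\cong u\otimes v$ to land on the desired form. Since $\cV$ is assumed symmetric this is harmless, but you should say so explicitly --- without symmetry one obtains the ``one-sided'' statement $(x\otimes u)\otimes v\cong x\otimes(v\otimes u)$ rather than the associativity as written in the lemma. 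The rest, including the care you take in passing from a $\cV$-natural isomorphism of representables to an isomorphism of objects in $\cC_0$, is handled correctly.
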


\begin{prop}\label{pr:etcpreserv} \cite{Riehl14}*{Theorem~3.7.11}
    Suppose we have an adjunction $F \colon \cW \rla \cV \colon G$ between closed symmetric monoidal categories such that the left adjoint $F$ is strong monoidal. Then for any tensored and cotensored $\cV$-category $\cC$, the $\cW$-category $G_*\cC$ becomes canonically tensored and cotensored over $\cW$, given respectively by
    \begin{equation*}
        x \otimes w := x \otimes Fw \quad \text{and} \quad x^w := x^{Fw}, \ \text{for any } x \in \cC, w \in \cW.
    \end{equation*}
\end{prop}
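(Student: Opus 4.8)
The plan is to verify the defining $\cW$-natural isomorphism of a tensoring on $G_*\cC$ directly, and to obtain the cotensoring by the dual argument. Everything rests on one isomorphism extracted from the hypotheses: for all $w \in \cW$ and $v \in \cV$,
\[
  G\,\underline{\cV}(Fw, v) \;\cong\; \underline{\cW}(w, Gv) \qquad\text{in } \cW,
\]
which must be established as a $\cW$-natural isomorphism in both variables — in particular, as a $\cW$-natural isomorphism of the two evident $\cW$-functors $G_*\underline{\cV} \to \underline{\cW}$ in the variable $v$. Its object-level content is immediate: for any test object $u \in \cW$,
\begin{align*}
  \cW\bigl(u, G\,\underline{\cV}(Fw,v)\bigr) &\cong \cV\bigl(Fu, \underline{\cV}(Fw,v)\bigr) \cong \cV\bigl(Fu \otimes Fw, v\bigr) \\
  &\cong \cV\bigl(F(u \otimes w), v\bigr) \cong \cW(u \otimes w, Gv) \cong \cW\bigl(u, \underline{\cW}(w, Gv)\bigr),
\end{align*}
using $F \dashv G$, the tensor--hom adjunctions in $\cV$ and in $\cW$, and the strong monoidal structure $F(u \otimes w) \cong Fu \otimes Fw$; ordinary Yoneda then produces the isomorphism in $\cW$. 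Promoting its naturality to the enriched sense is the delicate point, discussed below.

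Granting the crux isomorphism, the tensoring is a short chain. Set $x \otimes w := x \otimes Fw$, using the given $\cV$-tensoring of $\cC$. Then, $\cW$-naturally in $y \in \cC$,
\[
  \underline{G_*\cC}(x \otimes w, y) \;=\; G\,\underline{\cC}(x \otimes Fw, y) \;\cong\; G\,\underline{\cV}\bigl(Fw, \underline{\cC}(x,y)\bigr) \;\cong\; \underline{\cW}\bigl(w, G\,\underline{\cC}(x,y)\bigr) \;=\; \underline{\cW}\bigl(w, \underline{G_*\cC}(x,y)\bigr),
\]
where the first equality is the definition of the hom-objects of $G_*\cC$, the first isomorphism is $G_*$ applied to the $\cV$-natural tensoring isomorphism of $\cC$ (and $G_*$, being a $2$-functor, carries $\cV$-natural isomorphisms of $\cV$-functors in $y$ to $\cW$-natural isomorphisms of $\cW$-functors in $y$), and the second isomorphism is the crux isomorphism with $v = \underline{\cC}(x,y)$, precomposed with the $\cW$-functor $y \mapsto \underline{G_*\cC}(x,y)$; this last step is where $\cW$-naturality of the crux isomorphism in $v$ is needed. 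This exhibits $G_*\cC$ as tensored over $\cW$ with $x \otimes w = x \otimes Fw$. The cotensoring statement $x^w := x^{Fw}$ follows from the identical chain run with the $\cV$-cotensoring isomorphism of $\cC$ and with the outer hom-objects taken $\cW$-naturally in the variable $x$.

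More structurally, one may package the same computation as a composite of enriched adjunctions: as recorded in the remark following the definition of tensored $\cV$-category, a $\cV$-tensoring on $\cC$ amounts to a $\cV$-adjunction $(x \otimes -) \dashv \underline{\cC}(x,-)$ between $\underline{\cV}$ and $\underline{\cC}$ for each $x$; applying $G_*$ gives a $\cW$-adjunction between $G_*\underline{\cV}$ and $\underline{G_*\cC}$, and precomposing its left adjoint with the canonical $\cW$-functor $\underline{\cW} \to G_*\underline{\cV}$, $w \mapsto Fw$ — well defined on hom-objects precisely because $F$ is strong monoidal, and left $\cW$-adjoint to $v \mapsto Gv$, this last statement being the enriched form of the crux isomorphism — and then composing adjunctions yields the $\cW$-adjunction witnessing the tensoring, and dually the cotensoring.

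I expect the main obstacle to be exactly the phenomenon flagged in Remark~\ref{rem:UnenrichedTensorHom}: upgrading object-level and underlying-hom-set isomorphisms to genuine $\cW$-enriched natural isomorphisms. Concretely, one must carry out the derivation of $G\,\underline{\cV}(Fw,v) \cong \underline{\cW}(w,Gv)$ so that the naturality in $v$ is $\cW$-naturality — so that it may legitimately be precomposed with the $\cW$-functor $\underline{\cC}(x,-)$ after change of base — and check that each isomorphism in the chain is natural in the enriched sense. None of these verifications is deep, but keeping the enrichment present at every stage, rather than passing to underlying categories, is where the actual content lies.
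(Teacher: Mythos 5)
The paper does not give its own proof of this proposition --- it simply cites \cite{Riehl14}*{Theorem~3.7.11} --- so the comparison is really against Riehl's argument, which your structural version reproduces faithfully: apply the 2-functor $G_*$ to the $\cV$-adjunction $(x\otimes -) \dashv \ul{\cC}(x,-)$ to get a $\cW$-adjunction between $G_*\ul{\cV}$ and $\ul{G_*\cC}$, then compose with the $\cW$-adjunction $F \dashv G$ between $\ul{\cW}$ and $G_*\ul{\cV}$. This is the right proof, and your identification of the crux isomorphism $G\ul{\cV}(Fw,v) \cong \ul{\cW}(w,Gv)$ as the technical heart, together with the warning that Yoneda only delivers the unenriched version of it, is exactly on point.

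Two things worth making explicit. First, the enriched form of your crux isomorphism is precisely the statement that $F \dashv G$ lifts to a $\cW$-adjunction $\ul{\cW} \rla G_*\ul{\cV}$, which is the paper's Proposition~\ref{pr:prob8} and Riehl's Corollary~3.7.12. In Riehl that corollary is \emph{derived from} Theorem~3.7.11, so if you want your proof to stand on its own you must prove the $\cW$-adjunction independently: check that the strong monoidal structure of $F$ makes $F \colon \ul{\cW} \to G_*\ul{\cV}$ a $\cW$-functor (adjoint transpose of $F\ul{\cW}(w,w')\otimes Fw \cong F(\ul{\cW}(w,w')\otimes w) \to Fw'$), that the lax structure of $G$ makes $G \colon G_*\ul{\cV} \to \ul{\cW}$ a $\cW$-functor, and that the unit and counit of the underlying adjunction are $\cW$-natural. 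None of that requires the tensoring, so there is no circularity --- but the verification is real content and your proposal currently only gestures at it. Second, a small notational slip: when you ``precompose with the $\cW$-functor $y \mapsto \ul{G_*\cC}(x,y)$,'' the functor you actually want to substitute into the crux isomorphism (whose variable lives in $G_*\ul{\cV}$) is the base-changed $G_*\bigl(\ul{\cC}(x,-)\bigr) \colon \ul{G_*\cC} \to G_*\ul{\cV}$, not the hom-$\cW$-functor $\ul{G_*\cC}(x,-)$ valued in $\ul{\cW}$; they agree on objects but are different $\cW$-functors with different codomains.
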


\begin{cor}
    The strong monoidal adjunction $F \dashv G$ of Proposition~\ref{pr:etcpreserv} is a $\cW$-adjunction with respect to the induced $\cW$-category structure on $\cV$, i.e., on $G_*\cV$.
\end{cor}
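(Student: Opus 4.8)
The plan is to derive the corollary from Proposition~\ref{pr:etcpreserv}, combined with the remark that for a tensored $\cV$-category $\cC$ and an object $x \in \cC$ the $\cV$-functor $x \otimes - \colon \ul{\cV} \to \ul{\cC}$ is $\cV$-left adjoint to $\underline{\cC}(x,-) \colon \ul{\cC} \to \ul{\cV}$, specialized to $\cC = G_*\cV$ and $x = \unit_{\cV}$.

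First I would apply Proposition~\ref{pr:etcpreserv} to the strong monoidal adjunction $F \dashv G$ with $\cC = \cV$ itself, which is tensored and cotensored over itself since it is closed; this makes $G_*\cV$ tensored over $\cW$, with $v \otimes w = v \otimes Fw$. The tensor--hom remark applied to the object $\unit_{\cV} \in G_*\cV$ then produces a $\cW$-adjunction
\[
\unit_{\cV} \otimes - \colon \cW \rla G_*\cV \colon \underline{G_*\cV}(\unit_{\cV},-).
\]
Next I would identify the two $\cW$-functors with $F$ and $G$. On the left, the left unitor of $\cV$ gives a natural isomorphism $\unit_{\cV} \otimes w = \unit_{\cV} \otimes Fw \cong Fw$; transporting the $\cW$-functor structure of $\unit_{\cV} \otimes -$ across it promotes $F$ to a $\cW$-functor $\cW \to G_*\cV$ and makes the isomorphism $\cW$-natural. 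On the right, closedness of $\cV$ gives a $\cV$-natural isomorphism $\underline{\cV}(\unit_{\cV},v) \cong v$; applying the change-of-base $2$-functor $G_*$ yields a $\cW$-natural isomorphism $\underline{G_*\cV}(\unit_{\cV},-) \cong G$, where $G \colon G_*\cV \to \cW$ carries the canonical $\cW$-functor structure induced by its lax monoidal structure. Hence $F \dashv G$ is a $\cW$-adjunction. Finally, passing to underlying ordinary categories recovers the original adjunction $F \dashv G$, which is immediate from the construction.

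I expect the only real work to be bookkeeping: checking that the displayed isomorphisms are genuinely $\cW$-natural and compatible with the enriched unit and counit (the triangle identities), which amounts to tracking the coherence isomorphisms of the strong monoidal functor $F$ through the formula $v \otimes w = v \otimes Fw$ for the induced tensoring. As an alternative that avoids citing the tensor--hom remark, one can directly construct the $\cW$-natural isomorphism $\underline{G_*\cV}(Fw,v) = G\bigl(\underline{\cV}(Fw,v)\bigr) \cong \underline{\cW}(w,Gv)$ via the Yoneda-style computation
\[
\cW\bigl(u, G\,\underline{\cV}(Fw,v)\bigr) \cong \cV\bigl(Fu, \underline{\cV}(Fw,v)\bigr) \cong \cV(Fu \otimes Fw, v) \cong \cV\bigl(F(u \otimes w), v\bigr) \cong \cW(u \otimes w, Gv) \cong \cW\bigl(u, \underline{\cW}(w,Gv)\bigr),
\]
whose only non-formal ingredient is the strong monoidality of $F$, and then verify that this hom-object isomorphism is $\cW$-natural and compatible with composition.
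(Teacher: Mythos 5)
Your proposal is correct and takes the route the paper intends: the paper states this Corollary without an explicit proof and, when the same implication reappears as the $(\Leftarrow)$ direction of Proposition~\ref{pr:prob8}, simply cites \cite{Riehl14}*{Corollary~3.7.12}, whose argument is exactly the one you give --- specialize the enriched tensor--hom $\cW$-adjunction $x \otimes {-} \dashv \underline{G_*\cV}(x,-)$ on $G_*\cV$ to $x = \unit_{\cV}$ and identify the two sides with $F$ and $G$ via the unit isomorphisms. Your alternative Yoneda-style hom-set computation is simply the reversal of the paper's own Yoneda argument for the $(\Rightarrow)$ direction of Proposition~\ref{pr:prob8}, so it too is consistent with the paper.
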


In the following proposition, we see how to transport the tensoring and the cotensoring along an equivalence of categories.

\begin{prop}\label{pr:EquivCoTen}
    Let $\cD$ be a $\cV$-enriched category tensored and cotensored over a closed symmetric monoidal category $\cV$, and an equivalence of categories $F \colon \cC \rla \cD \colon G$.
    \begin{enumerate}
        \item We can transport the tensoring from $\cD$ to $\cC$ by
        \begin{equation*}
            c \otimes v := G(Fc\otimes v), \ \text{for all } c \in \cC  \text{ and } v \in \cV.
        \end{equation*}
        \item We can transport the cotensoring from $\cD$ to $\cC$ by
        \begin{equation*}
            c^v := G((Fc)^v), \ \text{for all } c \in \cC \text{ and }  v \in \cV.
        \end{equation*}
    \end{enumerate}
\end{prop}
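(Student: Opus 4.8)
The first step is to make precise the $\cV$-enrichment of $\cC$: since $F$ is only assumed to be an equivalence of underlying categories, the cleanest reading is to transport the enrichment of $\cD$ along it, setting $\underline{\cC}(c,c') := \underline{\cD}(Fc, Fc')$ with composition and identities inherited from $\cD$. This makes $\cC$ a $\cV$-category and exhibits $F$ as a fully faithful $\cV$-functor; being in addition essentially surjective, $F$ is then an equivalence of $\cV$-categories. For the computations below it is enough to know that $G$ is a pseudo-inverse of $F$ on underlying categories, with a natural isomorphism $\epsilon \colon FG \cong \id_{\cD}$ in $\cD_0$ --- we will not need $G$ itself to be a $\cV$-functor, since we only ever apply it to objects.

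For part (1), fix $c \in \cC$ and $v \in \cV$, and consider the chain of isomorphisms in $\cV$, $\cV$-natural in $y \in \cC$:
\[
\underline{\cC}(c \otimes v, y) \;=\; \underline{\cD}\bigl(FG(Fc \otimes v),\, Fy\bigr) \;\ral{\cong}\; \underline{\cD}(Fc \otimes v,\, Fy) \;\ral{\cong}\; \underline{\cV}\bigl(v,\, \underline{\cD}(Fc, Fy)\bigr) \;=\; \underline{\cV}\bigl(v,\, \underline{\cC}(c, y)\bigr).
\]
The two equalities are the definition of $\underline{\cC}$ together with the formula $c \otimes v := G(Fc \otimes v)$. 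The first isomorphism is obtained by applying the contravariant hom $\cV$-functor $\underline{\cD}(-, Fy)$ to the isomorphism $\epsilon_{Fc \otimes v} \colon FG(Fc \otimes v) \to Fc \otimes v$ of $\cD_0$; it is $\cV$-natural in $y$ because a morphism of $\cD_0$ induces a $\cV$-natural transformation between the representable $\cV$-functors it determines. The second isomorphism is the structure isomorphism of the tensoring of $\cD$ over $\cV$, which is $\cV$-natural in $Fy$, hence in $y$. The composite is therefore a $\cV$-natural isomorphism of hom-\emph{objects} in $\cV$ --- not merely of hom-sets, cf.\ Remark~\ref{rem:UnenrichedTensorHom} --- so $\cC$ is tensored over $\cV$ by the stated formula. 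Part (2) is symmetric: for $x \in \cC$ one runs
\[
\underline{\cC}(x, c^v) \;=\; \underline{\cD}\bigl(Fx,\, FG((Fc)^v)\bigr) \;\ral{\cong}\; \underline{\cD}\bigl(Fx,\, (Fc)^v\bigr) \;\ral{\cong}\; \underline{\cV}\bigl(v,\, \underline{\cD}(Fx, Fc)\bigr) \;=\; \underline{\cV}\bigl(v,\, \underline{\cC}(x, c)\bigr),
\]
now $\cV$-natural in $x$, using the counit isomorphism $\epsilon_{(Fc)^v}$ in the second variable of $\underline{\cD}$ and the structure isomorphism of the cotensoring of $\cD$.

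The one point requiring genuine care --- and the step I expect to be the main, if modest, obstacle --- is the verification of $\cV$-naturality at each stage, in particular that applying a hom $\cV$-functor to the underlying-categorical isomorphism $\epsilon$ produces a $\cV$-natural transformation; this is exactly the subtlety flagged in Remark~\ref{rem:UnenrichedTensorHom}, and it is what the naive hom-set computation cannot see. Once this is in place there is nothing left to check: unitality and associativity of the transported tensoring hold automatically since $\cV$ is closed (the lemma preceding Proposition~\ref{pr:etcpreserv}), and dually for the cotensoring. Alternatively, one may simply note that $F$ is an equivalence of $\cV$-categories and appeal to the general principle that $\cV$-equivalences transport tensorings and cotensorings, the formulas in the statement being precisely those this transport yields.
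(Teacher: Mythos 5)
The paper states Proposition~\ref{pr:EquivCoTen} without proof (it is treated as routine), so there is no official argument to compare against; but your proposal is correct and gives exactly the argument one would expect. Your opening clarification --- that $\cC$ must carry the transported $\cV$-enrichment $\underline{\cC}(c,c') := \underline{\cD}(Fc,Fc')$ for the statement to make sense --- is the right reading of the hypothesis, and it is the one used in the paper's application (the ``global enrichment'' in Proposition~\ref{pr:TALBOT}). The chain
\[
\underline{\cC}(c\otimes v,y)=\underline{\cD}(FG(Fc\otimes v),Fy)\cong\underline{\cD}(Fc\otimes v,Fy)\cong\underline{\cV}(v,\underline{\cD}(Fc,Fy))=\underline{\cV}(v,\underline{\cC}(c,y))
\]
is correct, and you rightly identify the one non-trivial check: that precomposition by the counit isomorphism $\epsilon_{Fc\otimes v}$ of $\cD_0$ is $\cV$-natural in $y$, not merely natural on hom-sets. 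That follows because $\underline{\cD}(-,-)$ is a $\cV$-bifunctor, so a fixed morphism of $\cD_0$ (equivalently a map $\unit\to\underline{\cD}(d,d')$) in the contravariant slot induces a $\cV$-natural transformation of representables --- exactly the subtlety you flag via Remark~\ref{rem:UnenrichedTensorHom}. The dual argument for the cotensoring and the appeal to the unitality/associativity lemma for closed $\cV$ are both in order. No gaps.
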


\subsection{Properties of the Dold--Kan correspondence}\label{sec:mpDKc}

Here we recall the monoidal structures on chain complexes and simplicial modules, cf.\ \cite{MayP12}*{Example~16.3.1}. 
For background on simplicial objects, see \cite{Weibel94}*{\S 8.1} or \cite{GoerssJ09}*{\S I.1}.

\begin{nota}
    Let $\Delta$ denote the simplex category, with objects $[n] = \{ 0 < 1 < \cdots < n \}$ and morphisms the order-preserving functions $[m] \to [n]$. For a category $\cC$, denote the category of simplicial (resp.\ cosimplicial) objects in $\cC$ by
    \[
    s\cC = \Fun(\De^{\opp},\cC) = \cC^{\De^{\opp}} \quad \text{and} \quad c\cC = \Fun(\De,\cC) = \cC^{\De}.
    \]
\end{nota}

\begin{defn}\label{def:tenCh} 
Let $R$ be a commutative ring, and $X,Y \in \Ch(R)$ chain complexes of $R$-modules.
\begin{enumerate}
	\item %
		The \Def{tensor product} $X \ot Y$ of chain complexes is defined by:
	\begin{equation*}
		(X \otimes Y)_n := \bigoplus_{i \in \Z} X_i \otimes_R Y_{n-i}, \quad \text{with} \quad d(x \otimes y) :=  d(x) \otimes y + (-1)^{|x|} x \otimes d(y).
	\end{equation*}
	\item The \Def{hom complex} is defined by:
	\begin{equation*}
		\HOM_{\mathrm{Ch}(R)}(X,Y)_n := \prod_{i\in \Z} \Hom_R(X_i,Y_{i+n}), \quad \text{with} \quad (df)(x) := d(f(x)) - (-1)^{|f|} f(d(x)).
	\end{equation*}
	\item The \Def{good truncation} $\tau_{\geq 0} C$ is defined by: 
	\begin{equation*}
		(\tau_{\geq 0}C)_n := \begin{cases}
			C_n &\text{if}\ n\geq 1 \\
			\ker(d_0) &\text{if}\ n=0 \\
			0 &\text{otherwise.} \\
		\end{cases}
	\end{equation*}
\end{enumerate}
\end{defn}

\begin{lem}\label{lem:unboundmonoidal}
Let $R$ be a commutative ring.
	\begin{enumerate}
		\item The category $\Ch(R)$ of unbounded chain complexes of $R$-modules endowed with the tensor product $\otimes$ of chain complexes and the hom complex $\HOM_{\Ch(R)}$ %
		is a closed symmetric monoidal category. The tensor unit $R[0]$ is the chain complex with $R$ concentrated in degree $0$.
		\item The category $\Ch_{\geq 0}(R)$ endowed with the tensor product $\otimes$ of chain complexes and the truncated hom complex $\HOM_{\Ch_{\geq 0}(R)} := \tau_{\geq 0}(\HOM_{\Ch(R)})$ %
		is a closed symmetric monoidal category.
		\item The category $s\Mod{R}$ of simplicial $R$-modules is a closed symmetric monoidal category. The tensor product $A \ot B$ is the degreewise tensor product $(A \otimes B)_n := A_n \otimes_R B_n$. %
		The tensor unit is the constant simplicial $R$-module $c(R)$. %
		The internal hom of simplicial $R$-modules $A$ and $B$ is the simplicial $R$-module $\HOM_{s\Mod{R}}(A,B)$ given in degree $n$ by the $R$-module
		\begin{equation*}
			\HOM_{s\Mod{R}}(A, B)_n := \Hom_{s\Mod{R}}(A \otimes R\Delta^n, B).
		\end{equation*}
	\end{enumerate}
\end{lem}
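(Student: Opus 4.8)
The plan is to treat parts (1) and (2) together, building on the classical symmetric monoidal structure on unbounded chain complexes, and to handle (3) separately; in all three cases the monoidal coherence is inherited degreewise (resp.\ levelwise) from $(\Mod{R}, \ot_R, R)$, and the only real content is the tensor--hom adjunction, which by Definition only needs to be checked on hom-sets.

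\textbf{Part (1).} For the symmetric monoidal structure, the associator and left/right unitors on $X \ot Y$ are induced degreewise from those of $(\Mod{R},\ot_R,R)$, and the symmetry is the Koszul-signed swap $x \ot y \mapsto (-1)^{|x||y|}\, y \ot x$; one checks that these are chain maps and that the pentagon, triangle, and hexagon identities hold (the signs cancel in pairs). The unit is $R[0]$ because $(R[0] \ot X)_n = R \ot_R X_n = X_n$. For closedness I would exhibit $\Ch(R)(X \ot Y, Z) \cong \Ch(R)(X, \HOM_{\Ch(R)}(Y,Z))$, natural in all three variables. The key observation is that a degree-$0$ element of $\HOM_{\Ch(R)}(Y,Z)$ is a family of $R$-linear maps $Y_i \to Z_i$, and it is a cycle precisely when that family commutes with the differentials, i.e.\ is a chain map; more generally the differential on $\HOM_{\Ch(R)}(Y,Z)$ records the deviation from being a chain map. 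Granting the evident hom of graded $R$-modules identity coming from repeated tensor--hom over $R$, one then checks that a graded map $X \ot Y \to Z$ is a chain map iff the corresponding graded map $X \to \HOM_{\Ch(R)}(Y,Z)$ is, by matching the two Leibniz formulas of Definition~\ref{def:tenCh}. This makes $- \ot Y$ left adjoint to $\HOM_{\Ch(R)}(Y,-) \colon \Ch(R) \to \Ch(R)$.

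\textbf{Part (2).} The full subcategory $\Ch_{\geq 0}(R) \subseteq \Ch(R)$ contains the unit $R[0]$ and is closed under $\ot$ (if $X,Y$ are concentrated in non-negative degrees then so is $X \ot Y$, by the formula $(X \ot Y)_n = \bigoplus_i X_i \ot_R Y_{n-i}$), so the symmetric monoidal structure of (1) restricts to it with coherence inherited. For closedness, the crucial point is that the good truncation is right adjoint to the inclusion $i \colon \Ch_{\geq 0}(R) \hookrightarrow \Ch(R)$: for $X \in \Ch_{\geq 0}(R)$ and $C \in \Ch(R)$, a chain map $iX \to C$ must carry $X_0$ into $\ker(d_0) = (\tau_{\geq 0}C)_0$ and is otherwise unconstrained, so $\Ch(R)(iX, C) \cong \Ch_{\geq 0}(R)(X, \tau_{\geq 0}C)$ naturally. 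Composing with part (1),
\[
\Ch_{\geq 0}(R)(X \ot Y, Z) \cong \Ch(R)\bigl(iX, \HOM_{\Ch(R)}(iY, iZ)\bigr) \cong \Ch_{\geq 0}(R)\bigl(X, \tau_{\geq 0}\HOM_{\Ch(R)}(iY, iZ)\bigr),
\]
naturally in all variables, so $- \ot Y$ is left adjoint to $\HOM_{\Ch_{\geq 0}(R)}(Y,-)$.

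\textbf{Part (3).} Since $s\Mod{R} = \Fun(\De^{\opp}, \Mod{R})$ with tensor product defined levelwise, the associator, unitors, and symmetry are induced levelwise from $(\Mod{R}, \ot_R, R)$, the unit is the constant functor $c(R)$, and coherence is immediate. The only content is closedness. Writing $R\Delta^n$ for the free simplicial $R$-module on the representable $\Delta^n$, functoriality of $\Hom_{s\Mod{R}}(B \ot R\Delta^{\bullet}, C)$ in $[\bullet]$ shows that $\HOM_{s\Mod{R}}(B,C)$ is a simplicial $R$-module. For the adjunction $\Hom_{s\Mod{R}}(A \ot B, C) \cong \Hom_{s\Mod{R}}(A, \HOM_{s\Mod{R}}(B,C))$ I would use the identification $A_n \cong \Hom_{s\Mod{R}}(R\Delta^n, A)$ (Yoneda together with the free--forgetful adjunction $s\Set \rla s\Mod{R}$), natural in $[n]$ and $A$: send $\psi \colon A \ot B \to C$ to the family whose value on $a \in A_n$ is the map $B \ot R\Delta^n \to C$ given in degree $m$ by $x \ot \sigma \mapsto \psi_m(\sigma^* a \ot x)$ for $\sigma \colon [m] \to [n]$; conversely send a compatible family $\phi_n \colon A_n \to \Hom_{s\Mod{R}}(B \ot R\Delta^n, C)$ to $\psi_n(a \ot b) := \phi_n(a)(b \ot \mathrm{id}_{[n]})$. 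One checks these are well defined (simplicial and $R$-linear), mutually inverse, and natural in $A$, $B$, $C$.

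The main obstacle I expect is bookkeeping rather than anything conceptual: the sign verifications of the coherence axioms in (1), and, more substantively, checking in (3) that the two assignments above are well defined and mutually inverse. This is exactly the ``enriching an unenriched adjunction'' subtlety flagged in Remark~\ref{rem:UnenrichedTensorHom}, but here it is lighter, since closedness in the sense of the Definition only requires the unenriched bijection $\cV(u \ot v, w) \cong \cV(u,[v,w])$, with the enriched tensor--hom relation then following formally from Proposition~\ref{pr:Internalize}.
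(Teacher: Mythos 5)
The paper does not actually prove Lemma~\ref{lem:unboundmonoidal}; it records it as background with a citation to \cite{MayP12}*{Example~16.3.1}, so there is no internal proof to compare against. Your argument is mathematically correct and follows the standard route: part~(1) is the classical Koszul-signed symmetric monoidal structure with the hom-complex adjoint verified via matching Leibniz rules, part~(2) restricts the structure using that $\Ch_{\geq 0}(R)$ is a full monoidal subcategory containing the unit and composes the tensor--hom adjunction with $\io \dashv \tau_{\geq 0}$, and part~(3) uses the representability $A_n \cong \Hom_{s\Mod{R}}(R\Delta^n, A)$ to exhibit the explicit mutually-inverse assignments. Your final remark is the right one: the definition of closed only requires the hom-set bijection $\cV(u\ot v, w)\cong\cV(u,[v,w])$, so you avoid the enriching subtlety of Remark~\ref{rem:UnenrichedTensorHom}; the enriched version then follows formally from Proposition~\ref{pr:Internalize}. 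This is essentially the content of the cited reference.
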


\begin{lem}\label{lem:NonCommutative}
Let $R$ be a ring (not necessarily commutative).
	\begin{enumerate}
		\item The category $\Ch(R)$ is enriched, tensored, and cotensored over the category $\Ch(\Z)$ of unbounded chain complexes of abelian groups.
		\item The category $\Ch_{\geq 0}(R)$ is enriched, tensored, and cotensored over the category $\Ch_{\geq 0}(\Z)$ of non-negatively graded chain complexes of abelian groups.
		\item The category $s\lMod{R}$ of simplicial left $R$-modules is enriched, tensored, and cotensored over the category $s\Ab$ of simplicial abelian groups.
	\end{enumerate}
\end{lem}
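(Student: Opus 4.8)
The plan is to treat all three parts uniformly, since each asserts that a category of the form $s\lMod{R}$ or $\Ch_{\geq 0}(R)$ or $\Ch(R)$ (module objects over a possibly noncommutative ring) is enriched, tensored, and cotensored over the corresponding ``abelian-group-valued'' base $\cV$ ($s\Ab$, $\Ch_{\geq 0}(\Z)$, or $\Ch(\Z)$), which is closed symmetric monoidal by Lemma~\ref{lem:unboundmonoidal}. Throughout, for a left $R$-module object $M$, a right $R$-module object $N$, and an object $A$ of $\cV$, one has both $N \ot_R M$ (an object of $\cV$) and, for the internal hom, an object $\HOM_{\cV}$ computed levelwise/degreewise as in Definition~\ref{def:tenCh} and Lemma~\ref{lem:unboundmonoidal}. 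First I would fix the enrichment: define $\underline{\cC}(M,N)$ to be the abelian-group object $\Hom_R(M,N) \subseteq \HOM_{\cV}(M,N)$, the sub-object of $R$-linear maps, equivalently the equalizer of the two maps $\HOM_{\cV}(M,N) \rightrightarrows \HOM_{\cV}(R \ot M, N)$ coming from the $R$-action and from restriction of scalars along $R \to \cV$. One checks this equalizer lies in $\cV$ (it does, since $\cV$ is complete and the forgetful functor creates limits) and that composition and units make $\cC$ into a $\cV$-category; this is routine.

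Next I would produce the tensoring and cotensoring. For $A \in \cV$ and $M \in \cC$, set $M \ot A := M \ot_{\Z} A$ with its evident levelwise/degreewise left $R$-action (acting on the $M$ factor), and set $M^A := \HOM_{\cV}(A, M)$ with the $R$-action again on $M$. The required $\cV$-natural isomorphisms
\[
\underline{\cC}(M \ot A, N) \cong \underline{\cV}(A, \underline{\cC}(M,N)) \cong \underline{\cC}(M, N^A)
\]
are obtained by restricting the enriched tensor-hom adjunction of $\cV$ (Proposition~\ref{pr:Internalize}) to the $R$-linear sub-objects on each side; the point is that an $R$-linear map out of $M \ot_\Z A$ is the same as a map $A \to \underline{\cC}(M,N)$ in $\cV$ landing in $R$-linear maps, which is exactly $\underline{\cV}(A, \underline{\cC}(M,N))$ since the $\cV$-internal-hom equalizer description of $\underline{\cC}$ is preserved by $\underline{\cV}(A,-)$ (as $\underline{\cV}(A,-)$ is a right adjoint, hence preserves the equalizer defining $\underline{\cC}(M,N)$). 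One must also verify $\cV$-naturality, not just naturality on underlying hom-sets — this is the subtlety flagged in Remark~\ref{rem:UnenrichedTensorHom} — but here it is automatic because every map in sight is defined as a restriction of a $\cV$-natural map in $\cV$.

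The main obstacle, and the one place requiring genuine care rather than bookkeeping, is the truncation in part (2): for $\Ch_{\geq 0}(R)$ over $\Ch_{\geq 0}(\Z)$ the cotensoring cannot simply be $\HOM_{\Ch(\Z)}(A,M)$, since that need not be non-negatively graded, so one must use the good truncation $M^A := \tau_{\geq 0}\,\HOM_{\Ch(\Z)}(A,M)$ compatibly with the truncated internal hom $\HOM_{\Ch_{\geq 0}(\Z)} = \tau_{\geq 0}(\HOM_{\Ch(\Z)})$ of Lemma~\ref{lem:unboundmonoidal}(2). The required adjunction isomorphism then follows because $\tau_{\geq 0}$ is right adjoint to the inclusion $\Ch_{\geq 0} \hookrightarrow \Ch$ and the tensor product of non-negatively graded complexes is already non-negatively graded, so truncation interacts correctly with $\ot$; I would check the degree-zero term $\ker(d_0)$ matches on both sides explicitly, which is the one genuinely new computation. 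Parts (1) and (3) are then the same argument with $\tau_{\geq 0}$ deleted (for $\Ch(R)$) or replaced by the fact that $s\Ab$ and $s\lMod R$ have no boundedness constraint at all (for simplicial modules, where moreover all structure is computed levelwise, making every verification immediate from the unenriched $R$-module case of Example~\ref{ex:Modules}).
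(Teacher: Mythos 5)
The paper states Lemma~\ref{lem:NonCommutative} without proof, treating it as a standard fact (it is the special case $\cA = \lMod{R}$ of Proposition~\ref{pr:Prop2}, also stated without proof, and the surrounding text points to \cite{MayP12}*{Example~16.3.1} and to \cite{Weibel94}, \cite{GoerssJ09} for background). So there is no author argument to compare against; I can only assess your proposal on its own terms.

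Your approach is sound and is essentially the folklore argument: realize $\underline{\cC}(M,N)$ as the sub-object of $R$-linear maps inside $\HOM_{\cV}(M,N)$, i.e., as the equalizer of the two maps to $\HOM_{\cV}(R\ot M,N)$ built from the $R$-actions on $M$ and on $N$; put the $R$-action on the $M$ (resp.\ $N$) slot for the tensor $M\ot_{\Z} A$ and cotensor $\HOM_{\cV}(A,M)$; and obtain the required $\cV$-natural isomorphisms by restricting the enriched tensor-hom adjunction of $\cV$ (Proposition~\ref{pr:Internalize}) to the $R$-linear sub-objects, using that $\underline{\cV}(A,-)$ preserves the defining equalizer. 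You correctly identify the pitfall of Remark~\ref{rem:UnenrichedTensorHom} and address it: the isomorphisms inherit $\cV$-naturality because composition in $\cC$ is the restriction of composition at the $\cV$ level. The handling of the truncation in part (2) is also right; the deferred check is that for $A$ non-negatively graded, $\tau_{\geq 0}\HOM_{\Ch(\Z)}(A,C)$ depends only on $C$ in non-negative degrees (including the $\ker(d_0)$ term, which only sees $C_0$ and $C_1$), so truncating before or after applying $\HOM$ gives the same answer.

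One small imprecision worth fixing: for part (3) you assert that for simplicial modules ``all structure is computed levelwise,'' but the internal hom in $s\Ab$ (hence your proposed cotensor) is not levelwise --- it is $\HOM_{s\Ab}(A,M)_n = \Hom_{s\Ab}(A\ot \Z\De^n, M)$, mirroring Lemma~\ref{lem:unboundmonoidal}(3). This does not damage the argument, since the equalizer/restriction machinery you set up never actually uses levelwise-ness, but the aside is misleading as written and should be dropped or corrected.
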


\begin{lem}\label{lem:GoodTruncation}
Let $R$ be a ring. 
	\begin{enumerate}
		\item The good truncation functor $\tau_{\geq 0} \colon \mathrm{Ch}(R) \to \mathrm{Ch}_{\geq 0}(R)$ is right adjoint to the inclusion functor $\io \colon \Ch_{\geq 0}(R) \inj \Ch(R)$.
		\item If moreover $R$ is commutative, then the inclusion functor $\io$ is strong monoidal, hence its right adjoint $\tau_{\geq 0}$ is lax monoidal.
	\end{enumerate}
\end{lem}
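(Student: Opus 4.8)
The plan is to treat the two parts separately, since part (1) is a purely categorical adjunction statement and part (2) is a monoidal upgrade that follows formally once part (1) is in place together with the known strong monoidality of the inclusion.

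For part (1), I would establish the adjunction $\io \dashv \tau_{\geq 0}$ by exhibiting a natural bijection $\Ch_{\geq 0}(R)(X, \tau_{\geq 0} C) \cong \Ch(R)(\io X, C)$ for $X \in \Ch_{\geq 0}(R)$ and $C \in \Ch(R)$. A chain map $\io X \to C$ is a family of $R$-linear maps $f_n \colon X_n \to C_n$ commuting with the differentials; since $X_n = 0$ for $n < 0$, the only data are the $f_n$ for $n \geq 0$, and the condition $d_0 f_0 = f_{-1} d_0 = 0$ forces $f_0$ to land in $\ker(d_0 \colon C_0 \to C_{-1}) = (\tau_{\geq 0} C)_0$. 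Conversely, any chain map $X \to \tau_{\geq 0} C$ composes with the canonical inclusion $\tau_{\geq 0} C \hookrightarrow C$ to give a chain map $\io X \to C$. These two assignments are mutually inverse and evidently natural in $X$ and $C$, so the adjunction follows. (Equivalently, one could quote the unit $\id \Rightarrow \tau_{\geq 0}\io$, which is an isomorphism since $\tau_{\geq 0}$ does nothing to a complex already concentrated in non-negative degrees, and the counit $\io\tau_{\geq 0} C \hookrightarrow C$, and check the triangle identities, but the hom-set description is cleanest.)

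For part (2), with $R$ commutative, I would first recall that the inclusion $\io \colon \Ch_{\geq 0}(R) \inj \Ch(R)$ is strong monoidal: the tensor product of two non-negatively graded complexes, computed by the formula in Definition~\ref{def:tenCh}(1), is again non-negatively graded and agrees with the tensor product formed in $\Ch(R)$, since in the sum $\bigoplus_i X_i \otimes_R Y_{n-i}$ only terms with $0 \le i \le n$ survive; likewise $\io$ sends the unit $R[0]$ of $\Ch_{\geq 0}(R)$ to the unit $R[0]$ of $\Ch(R)$, so the structure maps $\de$ and $\eta$ for $\io$ are identities. By Lemma~\ref{lem:Doctrinal} (doctrinal adjunction), the strong — in particular oplax — monoidal structure on the left adjoint $\io$ induces a canonical lax monoidal structure on the right adjoint $\tau_{\geq 0}$, with structure maps given by the explicit formulas of that lemma. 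This proves that $\tau_{\geq 0}$ is lax monoidal and completes the argument.

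I do not expect any genuine obstacle here; the only point requiring a moment's care is the identification of $(\tau_{\geq 0} C)_0$ with $\ker(d_0)$ and the verification that a chain map out of a non-negatively graded complex automatically factors through this kernel, which is the content of the adjunction in degree $0$. The monoidal part is then formal via Lemma~\ref{lem:Doctrinal}, provided one has first recorded the (easy) strong monoidality of $\io$.
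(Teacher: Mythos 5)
Your proof is correct, and the paper does not in fact supply an explicit proof of this lemma — it is stated as a standard fact within the preliminaries on the Dold--Kan correspondence. Your argument (the degree-zero component of a map out of a non-negatively graded complex is forced into $\ker(d_0)$, which gives the hom-set bijection; the inclusion is strong monoidal by inspection of the tensor formula; and then Lemma~\ref{lem:Doctrinal} on doctrinal adjunction gives the lax structure on $\tau_{\geq 0}$) is exactly the expected route and is complete. One cosmetic point: for the oplax structure on $\io$ the structure maps are $\de$ and the counit $\ep \colon \io(\unit) \to \unit$, not $\eta$; this does not affect the substance of the argument.
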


Given a ring $R$, 
the normalization functor $N \colon s\Mod{R} \to \Ch_{\geq 0}(R)$ and the denormalization functor $\Gamma \colon \Ch_{\geq 0}(R) \to s\Mod{R}$ form the Dold--Kan correspondence, described for instance in \cite{GoerssJ09}*{\S III.2} or \cite{Weibel94}*{\S 8.4}. The monoidal properties of Dold--Kan are described in detail in \cite{SchwedeS03equ}*{\S 2}. 
We recall the salient facts in an omnibus theorem.

\begin{thm}[Dold--Kan correspondence]\label{thm:DoldKan}
Let $R$ be a ring.
\begin{enumerate}
\item \label{item:Equivalence} The normalization and denormalization $N \colon s\Mod{R} \rla \Ch_{\geq 0}(R) \colon \Ga$ form an adjoint equivalence of categories.
\item \label{item:Homotopy} Two maps of simplicial $R$-modules $f,g \colon A \to B$ are homotopic if and only if their normalizations $Nf, Ng \colon N(A) \to N(B)$ are chain homotopic.
\item The homotopy groups of a simplicial $R$-module $A$ correspond to the homology groups of its normalization: $\pi_n(A) \cong H_n(NA)$.
\end{enumerate}
Now assume that $R$ is commutative, so that both $s\Mod{R}$ and $\Ch_{\geq 0}(R)$ are symmetric monoidal categories.
\begin{enumerate}
\setcounter{enumi}{3}
\item Normalization is lax monoidal via the Eilenberg--Zilber map (also called shuffle map)
    \begin{equation*}
        \EZ \colon N(A) \otimes N(B) \to N(A\otimes B), \quad \text{for any } A,B \in s\Mod{R}.
    \end{equation*}
\item Normalization is oplax monoidal via the Alexander--Whitney map
    \begin{equation*}
        \AW \colon N(A \otimes B) \to N(A) \otimes N(B), \quad \text{for any } A,B \in s\Mod{R}.
    \end{equation*}
\item The composite %
\[
\xymatrix{
N(A) \ot N(B) \ar[r]^-{\EZ} & N(A \ot B) \ar[r]^-{\AW} & N(A) \ot N(B) \\
}
\]
is the identity, and the composite $\EZ \circ \AW$ is naturally chain homotopic to the identity. Hence the chain complex $N(A) \otimes N(B)$ is a deformation retract of $N(A \otimes B)$.
\item The natural isomorphism of chain complexes $\ep \colon N \Ga(C) \ral{\cong} C$ is a monoidal natural transformation.
\item Any choice of natural isomorphism of simplicial $R$-modules $\eta \colon A \ral{\cong} \Ga N(A)$ is \emph{not} a monoidal transformation. 
\end{enumerate}
\end{thm}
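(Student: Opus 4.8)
The plan is to argue by contradiction, reducing the claim to the non‑invertibility of the shuffle map. Suppose $\eta \colon A \ral{\cong} \Ga N(A)$ is a natural isomorphism that is moreover a monoidal transformation. Here $N$ carries its lax monoidal structure given by the Eilenberg--Zilber map $\EZ$, the functor $\Ga$ carries a lax monoidal structure $\nabla$ (the one that is implicit in the previous item, making the composite $\Ga N$ lax monoidal and $\ep$ a monoidal transformation), and $\Ga N$ is given the resulting composite lax monoidal structure, with structure map
\[
\phi_{A,B} \colon \Ga NA \ot \Ga NB \xrightarrow{\ \nabla\ } \Ga(NA \ot NB) \xrightarrow{\ \Ga(\EZ_{A,B})\ } \Ga N(A \ot B).
\]
It is with respect to these structures, and the canonical strong monoidal structure on $\id_{s\Mod{R}}$, that $\eta$ is assumed monoidal.

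First I would extract from the binary coherence axiom for $\eta$ the identity $\eta_{A \ot B} = \phi_{A,B} \circ (\eta_A \ot \eta_B) = \Ga(\EZ_{A,B}) \circ \bigl(\nabla \circ (\eta_A \ot \eta_B)\bigr)$. Since $\eta_{A \ot B}$ is invertible, $\Ga(\EZ_{A,B})$ admits a right inverse, i.e.\ it is a split epimorphism, for every $A, B$. As $\Ga$ is fully faithful (being half of an equivalence of categories), it reflects split epimorphisms, so $\EZ_{A,B}$ is itself a split epimorphism. On the other hand, the identity $\AW_{A,B} \circ \EZ_{A,B} = \id$ exhibits $\EZ_{A,B}$ as a split monomorphism. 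A morphism that is both split mono and split epi is an isomorphism, so $\EZ_{A,B}$ is an isomorphism for all $A, B$; together with invertibility of the unit constraint $R[0] \to N(c(R)) \cong R[0]$, this would make $N$ \emph{strong} monoidal.

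It then remains to rule this out by exhibiting $A, B$ for which $\EZ_{A,B}$ is not invertible. Take $A = B = \widetilde R S^1$, the reduced free simplicial $R$-module on the simplicial circle $S^1 = \De^1/\del\De^1$. Then $N(\widetilde R S^1) \cong R[1]$, so $N(\widetilde R S^1) \ot N(\widetilde R S^1) \cong R[2]$, which has rank one in degree two. But $\widetilde R S^1 \ot \widetilde R S^1 \cong \widetilde R(S^1 \wedge S^1)$, and $S^1 \wedge S^1$ has exactly two non‑degenerate $2$‑simplices, so $N(\widetilde R S^1 \ot \widetilde R S^1)$ has rank two in degree two. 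For $R \neq 0$ no $R$‑linear map $R \to R^2$ is surjective, so $\EZ_{\widetilde R S^1,\, \widetilde R S^1}$ is not an isomorphism, contradicting the previous paragraph. Hence no natural isomorphism $\eta$ can be a monoidal transformation.

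The only genuinely content‑bearing ingredient is the last computation — the non‑invertibility of $\EZ$ — and any concrete witness would serve equally well (for instance comparing $N(A \ot A)$ with $N(A) \ot N(A)$ in low degrees for a suitable $A$); everything else is formal unwinding of monoidal coherence. The one point to be careful about is that $N$ is \emph{simultaneously} lax monoidal via $\EZ$ and oplax monoidal via $\AW$, and that it is precisely the lax structure $\EZ$ which appears in the composite lax structure on $\Ga N$, so that the coherence axiom for $\eta$ really does produce the split epimorphism $\Ga(\EZ_{A,B})$ used above.
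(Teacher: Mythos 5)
The paper states this theorem without proof — it is an omnibus recollection, with the monoidal facts attributed to Schwede--Shipley \S 2 — so there is no in-paper argument to compare against. Your proof is correct and is the expected one. The coherence square for a monoidal $\eta \colon \id \Rightarrow \Ga N$ does unwind to $\eta_{A\ot B} = \Ga(\EZ_{A,B}) \circ \nabla_{NA,NB} \circ (\eta_A \ot \eta_B)$, so if $\eta$ is an isomorphism then $\Ga(\EZ_{A,B})$, and hence $\EZ_{A,B}$ (since $\Ga$ is fully faithful), is a split epimorphism; together with $\AW \circ \EZ = \id$ from part~(6) this forces $\EZ_{A,B}$ to be an isomorphism for every pair $(A,B)$. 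Your rank count for $A = B = \widetilde{R}S^1$ is right: $N(\widetilde{R}S^1) \cong R[1]$, so $N(A) \ot N(B) \cong R[2]$ has rank one in degree two, while $S^1 \wedge S^1$ has exactly two non-degenerate, non-basepoint $2$-simplices (namely $(s_0 a, s_1 a)$ and $(s_1 a, s_0 a)$, where $a$ is the non-degenerate $1$-simplex of $S^1$), so $N(A \ot B) \cong N(\widetilde{R}(S^1 \wedge S^1))$ has rank two in degree two, and for $R \neq 0$ there is no surjection $R \to R^2$. Two minor remarks: the claim does implicitly require $R \neq 0$ (for $R = 0$ everything is trivially monoidal), which you correctly flag; and the aside about the unit constraint making $N$ strong monoidal is not needed — non-invertibility of $\EZ_{A,B}$ for a single pair already yields the contradiction.
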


For the record, 
Lemmas~\ref{lem:unboundmonoidal} and \ref{lem:NonCommutative} 
can be extended to 
more general abelian categories than 
the category $\Mod{R}$ of $R$-modules. %

\begin{prop}\label{pr:Prop1}
	Let $\cA$ be a bicomplete closed symmetric monoidal abelian category. 
The categories $\Ch(\cA)$, $\Ch_{\geq 0}(\cA)$, and $s\cA$ are closed symmetric monoidal categories.
\end{prop}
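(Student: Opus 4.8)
The plan is to reduce Proposition~\ref{pr:Prop1} to the $R$-module case (Lemmas~\ref{lem:unboundmonoidal} and \ref{lem:NonCommutative}) by checking that every construction used there only invokes the bicompleteness of $\cA$, the exactness of the tensor product in each variable, and the internal-hom adjunction. Concretely, I would proceed in three stages: first establish the symmetric monoidal structure on $\Ch(\cA)$, then the closedness, and finally deduce the $\Ch_{\geq 0}(\cA)$ and $s\cA$ cases from the $\Ch(\cA)$ case plus general nonsense.

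For $\Ch(\cA)$: the tensor product $(X\ot Y)_n := \bigoplus_{i\in\Z} X_i \ot Y_{n-i}$ makes sense because $\cA$ is cocomplete, and the Koszul-sign differential $d(x\ot y) := dx \ot y + (-1)^{|x|} x \ot dy$ is defined using the symmetry isomorphism of $\cA$ (the sign is a formal device encoding which component we land in; in an abelian category it is the map $\pm\id$, which exists since $-1\in\Z$ acts). Associativity, unitality (with unit $\unit_{\cA}[0]$), and the symmetry isomorphism all follow by transporting the corresponding coherence isomorphisms of $(\cA,\ot,\unit_{\cA})$ degreewise and checking compatibility with differentials --- a routine sign bookkeeping that is formally identical to the $\Mod{R}$ case. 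For closedness, set $\HOM_{\Ch(\cA)}(X,Y)_n := \prod_{i\in\Z}\underline{\cA}(X_i, Y_{i+n})$, which exists because $\cA$ is complete and has internal homs, with the usual differential; the adjunction $\Ch(\cA)(X\ot Y, Z)\cong \Ch(\cA)(X,\HOM(Y,Z))$ is verified degreewise from the adjunction in $\cA$, matching cycles on the left with chain maps on the right. The only subtlety is that one needs $\ot$ in $\cA$ to commute with the relevant (co)limits to identify the hom-sets; but $-\ot v$ is a left adjoint in a closed monoidal category, hence preserves all colimits, which is exactly what is needed.

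For $\Ch_{\geq 0}(\cA)$ I would use the good truncation $\tau_{\geq 0}\colon \Ch(\cA)\to\Ch_{\geq 0}(\cA)$, right adjoint to the inclusion $\io$ (Lemma~\ref{lem:GoodTruncation}(1), whose proof is purely diagrammatic and works verbatim over $\cA$), noting that $\io$ is strong monoidal since the tensor product of two non-negatively graded complexes is again non-negatively graded and the unit lies in $\Ch_{\geq 0}(\cA)$; hence $\Ch_{\geq 0}(\cA)$ inherits a symmetric monoidal structure and is closed with internal hom $\tau_{\geq 0}(\HOM_{\Ch(\cA)})$. For $s\cA$, the monoidal structure is degreewise, $(A\ot B)_n := A_n \ot B_n$, with unit the constant simplicial object $c(\unit_{\cA})$; this is symmetric monoidal because $(\cA,\ot,\unit_{\cA})$ is and everything is computed levelwise, and it is closed with internal hom $\HOM_{s\cA}(A,B)_n := \Hom_{s\cA}(A\ot \cA[\Delta^n], B)$ where $\cA[\Delta^n]$ denotes the free simplicial object on $\Delta^n$ (equivalently, one verifies the adjunction directly using that $\cA$ is bicomplete so $s\cA = \Fun(\Delta^{\opp},\cA)$ is bicomplete and the simplicial identities only involve finite (co)products).

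The main obstacle, such as it is, is not conceptual but organizational: making sure that at each point where the $\Mod{R}$ argument silently uses ``elements'' --- the Koszul signs, the (co)unit of the tensor-hom adjunction, the evaluation maps in the internal hom --- one replaces the element-theoretic manipulation by the corresponding universal-property / natural-transformation statement valid in any closed symmetric monoidal abelian category. A clean way to avoid this entirely is to invoke the general principle that if $\cA$ is closed symmetric monoidal and bicomplete, then so is $\Fun(\cI,\cA)$ for any small category $\cI$ (Day convolution is unnecessary here since $\ot$ is pointwise), giving $s\cA$ immediately; and $\Ch(\cA)$ can be obtained as a full symmetric monoidal subcategory of $\Fun(\cI,\cA)$ for a suitable category $\cI$, or treated directly as above. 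I expect the write-up to be short, citing \cite{SchwedeS03equ}*{\S 2} and \cite{MayP12}*{Example~16.3.1} for the parallel $\Mod{R}$ statements and remarking that the proofs go through mutatis mutandis.
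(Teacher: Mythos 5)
The paper actually gives \emph{no} proof of Proposition~\ref{pr:Prop1}: it is stated for the record as a routine extension of Lemmas~\ref{lem:unboundmonoidal} and \ref{lem:NonCommutative}, with no argument supplied. So there is no ``paper proof'' to compare against; what follows is a review of your outline on its own merits.

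Your overall strategy is the right one, and the $\Ch(\cA)$ and $\Ch_{\geq 0}(\cA)$ parts are essentially correct: the degreewise definitions of $\ot$ and $\HOM$ make sense under bicompleteness, the Koszul sign is available as $-\id$ in any abelian category, the adjunction check is a sign-chase that never leaves the universal-property level, and the $\Ch_{\geq 0}$ case does reduce cleanly to $\Ch$ via the strong monoidal inclusion $\io\dashv\tau_{\geq 0}$ (one checks directly that $\tau_{\geq 0}\HOM_{\Ch(\cA)}(\io Y,\io Z)$ represents $\Ch_{\geq 0}(\cA)(-\ot Y,Z)$ using $\io$ strong monoidal and the adjunction).

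There is, however, a genuine gap in the $s\cA$ case. Your formula
$\HOM_{s\cA}(A,B)_n := \Hom_{s\cA}(A\ot \cA[\Delta^n], B)$
does not typecheck: $\Hom_{s\cA}(-,-)$ is a hom-\emph{set} (or, since $\cA$ is abelian, a hom-\emph{group}), not an object of $\cA$. In the template case $\cA=\Mod{R}$ this distinction is invisible because $\Hom_{s\Mod{R}}(X,Y)$ is canonically an $R$-module --- that is exactly what Lemma~\ref{lem:unboundmonoidal}(3) uses, implicitly invoking the $\Mod{R}$-enrichment of $s\Mod{R}$. For a general $\cA$ you must instead use the $\cA$-enriched hom-object $\underline{\Hom}_{s\cA}(A\ot \unit_{\cA}\cdot\Delta^n, B)\in\cA$, where $\underline{\Hom}_{s\cA}(X,Y) = \int_{[m]\in\Delta}\underline{\cA}(X_m,Y_m)$ is the end computed in $\cA$ (this exists because $\cA$ is complete, and the self-enrichment of $\cA$ via $\underline{\cA}(-,-)$ induces an $\cA$-enrichment of $s\cA=\Fun(\Delta^{\opp},\cA)$). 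The same slip affects the ``general principle'' at the end: $\Fun(\cI,\cA)$ with the pointwise tensor product \emph{is} closed symmetric monoidal when $\cA$ is bicomplete closed symmetric monoidal, but the internal hom is \emph{not} pointwise --- $\underline{\cA}(G(i),H(i))$ is not even functorial in $i$ since $G$ appears contravariantly --- and one must use an enriched-representability (or end) formula of the above type. Once you replace the set-valued hom by the enriched hom-object everywhere, your proof goes through.
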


\begin{prop}\label{pr:Prop2}
	Let $\cA$ be a bicomplete abelian category.
	\begin{enumerate}
		\item $\Ch(\cA)$ is enriched, tensored, and cotensored over $\Ch(\Z)$.
		\item $\Ch_{\geq 0}(\cA)$ is enriched, tensored, and cotensored over $\Ch_{\geq 0}(\Z)$.
		\item $s\cA$ is enriched, tensored, and cotensored over $s\Ab$.
	\end{enumerate}
\end{prop}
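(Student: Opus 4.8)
The plan is to reduce each of the three assertions to the corresponding statements already recorded for the module case (Lemma~\ref{lem:NonCommutative}), using that every bicomplete abelian category embeds in a category of modules, or alternatively by working directly with the explicit formulas. I will describe the direct approach, since it is cleaner and self-contained. The key observation is that each of the three ambient categories $\cW \in \{\Ch(\Z), \Ch_{\geq 0}(\Z), s\Ab\}$ acts on the corresponding category $\cC \in \{\Ch(\cA), \Ch_{\geq 0}(\cA), s\cA\}$ by an elementary ``extension of scalars along $\Z$'' construction: for a chain complex of abelian groups $K$ and a chain complex $X$ in $\cA$, one forms the bigraded object $X \otimes K$ with $(X \otimes K)_n := \bigoplus_{i} X_i \otimes K_{n-i}$, where $X_i \otimes K_{n-i}$ denotes the copower (tensoring of the abelian-group-enriched category $\cA$ over $\Ab$, which exists since $\cA$ is bicomplete — this is the classical fact that an object of an $\Ab$-category can be tensored with a free abelian group, extended to arbitrary abelian groups using cokernels and coproducts). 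The simplicial case is analogous with the degreewise formula $(X \otimes K)_n := X_n \otimes K_n$.

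First I would establish, in each of the three cases, that $\cC$ is enriched over $\cW$: the hom-object $\underline{\cC}(X,Y) \in \cW$ is defined by the usual formula (the hom-complex $\HOM$ of Definition~\ref{def:tenCh} in the chain cases, with its differential, suitably truncated in the bounded case; and the simplicial hom-object $\underline{\cC}(X,Y)_n := \cC(X \otimes \Z\Delta^n, Y)$ in the simplicial case, where $\Z\Delta^n$ is the free simplicial abelian group on $\Delta^n$). One checks composition and units are $\cW$-morphisms; this is a routine verification identical in form to the module case, now phrased internally to $\cA$. Second, I would exhibit the tensoring $X \otimes K$ via the formulas above and verify the defining adjunction isomorphism $\underline{\cC}(X \otimes K, Y) \cong \underline{\cW}(K, \underline{\cC}(X,Y))$, $\cW$-naturally in $Y$; by the enriched Yoneda lemma it suffices to produce compatible unenriched natural isomorphisms after applying $\cW(\unit, -)$, i.e. $\cC(X \otimes K \otimes \Z\Delta^\bullet, Y) \cong \dots$, which unwinds to the standard tensor–hom manipulations with $\bigoplus$, $\prod$, and the copower adjunction in $\cA$. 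Third, the cotensoring $Y^K$ is defined dually (a suitable $\prod$ of powers of objects of $\cA$, which again exist by bicompleteness), and its adjunction isomorphism is verified the same way.

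The main obstacle I anticipate is purely organizational rather than conceptual: there are three cases, and in each of them one must confirm that the copower $A \otimes G$ and power $A^G$ of an object $A \in \cA$ by an abelian group $G$ genuinely exist and behave well (distribute over the (co)limits appearing in the degreewise formulas, satisfy the expected $\Z$-bilinearity, etc.). This is exactly the content of the phrase ``$\cA$ is an $\Ab$-category that is bicomplete'', but spelling it out carefully — especially checking that the differentials and face/degeneracy maps in the proposed (co)tensoring objects are well-defined and that the adjunction isomorphisms are compatible with them — is where the real work lies. A slicker alternative, which I would mention, is to invoke a Freyd–Mitchell-style embedding $\cA \hookrightarrow \Mod{R}$ that is exact and preserves all (co)limits and the monoidal structure, and then deduce everything from Lemma~\ref{lem:NonCommutative}; but since Freyd–Mitchell requires smallness hypotheses and does not obviously respect the monoidal structure, the direct argument is preferable and is what I would write up in full.
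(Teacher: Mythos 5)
The paper states Proposition~\ref{pr:Prop2} without proof: it is recorded ``for the record'' as the routine extension of Lemmas~\ref{lem:unboundmonoidal} and \ref{lem:NonCommutative} from $\Mod{R}$ to a general bicomplete abelian category, so there is no written argument to compare against. Your direct construction is exactly the intended one, and it is correct: the $\Ab$-enrichment of $\cA$ together with bicompleteness gives degreewise copowers $A \otimes G$ and powers $A^G$ for $A \in \cA$ and $G \in \Ab$, and the tensoring, cotensoring, and hom-object formulas in each of the three cases are assembled from these by coproducts and products, with the adjunction isomorphism verified by the standard unwinding you describe. You have also correctly identified both the real work (checking that differentials and simplicial structure maps commute with the copower/power adjunctions, and that the isomorphisms are $\cW$-natural) and the reason the Freyd--Mitchell shortcut does not help: beyond the smallness hypothesis, such an embedding is only exact and full and does not preserve the infinite products and coproducts appearing in the formulas.
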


\subsection{Model categories}

Background on model categories can be found in \cite{Hovey99}*{\S 1}, \cite{Riehl14}*{\S 2}, \cite{MayP12}*{\S 14}, or \cite{Hirschhorn03}*{\S 7--9}. 
As a matter of terminology, we assume that a model category has all small limits and colimits, but we do not assume that the factorizations are functorial.

For a model category $\cC$, denote by $\gamma \colon \cC \to \Ho(\cC)$ the localization functor, that is, the functor that inverts the weak equivalences in $\cC$. The hom-set $[x,y]$ in the homotopy category $\Ho(\cC)$ can be computed as
\[
[x,y] \cong \cC(x,y)/\!\sim
\]
where $\sim$ is the homotopy relation on the hom-set $\cC(x,y)$ \cite{Hovey99}*{\S 1.2}. We will use the following fact later.

\begin{prop}\label{pr:Saturation}
    Let $\cC$ be a model category and $f \colon x \to y$ a map in $\cC$. The following conditions on $f$ are equivalent.
    \begin{enumerate}
        \item $f$ is a weak equivalence.
        \item $\ga(f)$ is an isomorphism in $\Ho(\cC)$.
        \item For every fibrant object $z \in \cC$, the induced map on hom-sets $f^* \colon [y,z] \to [x,z]$ is a bijection.
        \item For every cofibrant object $a \in \cC$, the induced map on hom-sets $f_* \colon [a,x] \to [a,y]$ is a bijection.        
    \end{enumerate}
\end{prop}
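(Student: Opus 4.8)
The plan is to prove the chain of equivalences by going $(1) \Leftrightarrow (2)$, then $(2) \Rightarrow (3)$ and $(2) \Rightarrow (4)$, and finally $(3) \Rightarrow (1)$ and $(4) \Rightarrow (1)$, which closes the loop. The equivalence $(1) \Leftrightarrow (2)$ is essentially the defining property of the localization functor $\gamma \colon \cC \to \Ho(\cC)$: a weak equivalence is inverted by $\gamma$ by construction, and conversely the saturation property of model categories (a map whose image in $\Ho(\cC)$ is an isomorphism is already a weak equivalence in $\cC$) is standard — see \cite{Hovey99}*{\S 1} or \cite{Riehl14}*{\S 2}. I would simply cite this.

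For $(2) \Rightarrow (3)$: if $\gamma(f)$ is an isomorphism in $\Ho(\cC)$, then for any object $z$ the induced map $\Ho(\cC)(\gamma y, \gamma z) \to \Ho(\cC)(\gamma x, \gamma z)$ is a bijection. Now I would invoke the formula $[x,z] \cong \cC(x,z)/\!\sim$ together with the fact, recalled just before the statement, that hom-sets in $\Ho(\cC)$ are computed this way; when $z$ is fibrant (and after cofibrant-replacing the source, which does not change the relevant hom-set up to the homotopy relation), the hom-set $[y,z]$ in the homotopy category agrees with $\cC(y,z)/\!\sim$, so the abstract bijection $\Ho(\cC)(\gamma y,\gamma z)\cong[y,z]$ gives exactly the claim. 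The implication $(2) \Rightarrow (4)$ is dual, using that for cofibrant $a$ the hom-set $[a,x]$ is computed as $\cC(a,x)/\!\sim$ after fibrant replacement of the target.

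For the converses $(3) \Rightarrow (1)$ and $(4) \Rightarrow (1)$: here I would use a Yoneda-type argument in $\Ho(\cC)$. Assuming $(3)$, for every fibrant $z$ the map $f^*\colon [y,z]\to[x,z]$ is a bijection; since every object of $\Ho(\cC)$ is isomorphic to (the image of) a fibrant one, this says $\gamma(f)^*\colon \Ho(\cC)(\gamma y, -) \to \Ho(\cC)(\gamma x, -)$ is a natural isomorphism of functors on $\Ho(\cC)$, hence by the Yoneda lemma $\gamma(f)$ is an isomorphism, and then $(2)\Rightarrow(1)$ finishes. Similarly $(4)$ says $\gamma(f)_*$ is a natural isomorphism of corepresentable functors, again forcing $\gamma(f)$ to be an isomorphism.

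The main obstacle is the bookkeeping in $(2) \Rightarrow (3)$ and $(2) \Rightarrow (4)$: one must be careful that the description $[x,z]\cong \cC(x,z)/\!\sim$ is only literally valid when $x$ is cofibrant and $z$ is fibrant, so passing between $\Ho(\cC)(\gamma x, \gamma z)$ and the naive quotient $\cC(x,z)/\!\sim$ requires replacing the source by a cofibrant object (for $(3)$, where $z$ is already assumed fibrant) or the target by a fibrant object (for $(4)$, where $a$ is already assumed cofibrant), and checking that these replacements are compatible with $f^*$ resp.\ $f_*$. Everything else is formal. I would keep this proof short and lean heavily on the cited background, since all four conditions are variations on the standard saturation/Whitehead-type characterization of weak equivalences.
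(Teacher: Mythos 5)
Your proposal is correct and takes essentially the same route as the paper, which simply cites the saturation property of \cite{Hovey99}*{Theorem~1.2.10} for (1)$\iff$(2) and observes that (3)$\iff$(2)$\iff$(4) follow from the Yoneda embedding (using that every object of $\Ho(\cC)$ is isomorphic to the image of a fibrant, resp.\ cofibrant, object). One small remark: the ``bookkeeping'' you worry about in (2)$\implies$(3)/(4) is a non-issue, because the paper's notation $[y,z]$ already denotes the hom-set $\Ho(\cC)(\gamma y,\gamma z)$ in the homotopy category (the quotient description $\cC(y,z)/\!\sim$ is only an auxiliary computation valid when the source is cofibrant and the target fibrant), so those forward implications are immediate functoriality of $\gamma$ with no replacements needed; the fibrancy/cofibrancy hypotheses in (3) and (4) are there only to make the \emph{converse} directions go through via Yoneda, exactly as you use them.
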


\begin{proof}
    The equivalence (1) $\iff$ (2) is the saturation property \cite{Hovey99}*{Theorem~1.2.10}. The equivalences (3) $\iff$ (2) $\iff$ (4) follow from the Yoneda embedding.
\end{proof}

\subsection{Monoidal model categories and enriched model categories}\label{sec:mmcemc}

Most of the material we review in this section can be found in \cite{Hovey99}*{\S 4}, \cite{MayP12}*{\S 16}, and \cite{Riehl14}*{\S 11}. 

\begin{defn} 
A \Def{monoidal model category} is a model category $\cV$ equipped with the structure of a closed %
monoidal category $(\cV,\otimes,\unit)$ satisfying the following two compatibility conditions.
	\begin{itemize}
		\item[(i)] \textbf{Pushout-product axiom:} For every pair of cofibrations $i \colon a \to b$ and  
		$k \colon x \to y$ in $\cV$, their pushout-product %
		\[
        i\square k \colon (a \otimes y) \amalg_{a\otimes x} (b\otimes x) \to b \otimes y
		\]
		is itself a cofibration in $\cV$, which moreover is acyclic if $i$ or $k$ is.
		\item[(ii)] \textbf{Unit axiom:} For every cofibrant object $x \in \cV$ and every cofibrant replacement of the tensor unit $q\colon Q\unit \to \unit$, %
		the resulting morphism $\xymatrix{x\otimes Q\unit \ar[r]^-{x\otimes q} & x\otimes \unit \ar[r]^-{\cong} & x}$
		is a weak equivalence.
	\end{itemize}
A \Def{symmetric monoidal model category} is a monoidal model category such that $(\cV,\otimes,\unit)$ has the structure of a closed symmetric monoidal category.
\end{defn}

When using a monoidal category as base of enrichment, it is convenient to assume symmetry, which is why we focus on symmetric monoidal model categories in this paper. 

\begin{lem} \cite{MayP12}*{Lemma~16.4.5} 
    The pushout-product axiom is equivalent to the following axiom.
    
    \noindent \textbf{Pullback-power axiom}: For a cofibration $i \colon a \to b$ and a fibration $p \colon x \to y$ in $\cV$, their pullback-power %
	\[    
    (i^*,p_*) \colon [b,x] \to [a,x] \times_{[a,y]} [b,y]
    \]
    is a fibration in $\cV$, which moreover is acyclic if $i$ or $p$ is.
\end{lem}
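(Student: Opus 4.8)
The plan is to deduce the equivalence from the adjoint relationship between the pushout-product and the pullback-power, which in turn comes from the tensor-hom adjunction of the closed structure on $\cV$.

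\emph{Step 1: the key adjunction bijection.} Fix maps $i \colon a \to b$, $k \colon x \to y$, and $p \colon x' \to y'$ in $\cV$. I would first record the standard fact that there is a natural bijection between commutative squares
\[
\xymatrix{
(a \otimes y) \amalg_{a \otimes x} (b \otimes x) \ar[r] \ar[d]_{i \square k} & x' \ar[d]^{p} \\
b \otimes y \ar[r] & y'
}
\qquad\text{and}\qquad
\xymatrix{
a \ar[r] \ar[d]_{i} & [y,x'] \ar[d]^{(k^*,p_*)} \\
b \ar[r] & [x,x'] \times_{[x,y']} [y,y']
}
\]
under which a diagonal lift of the first square corresponds to a diagonal lift of the second. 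This is a diagram chase: applying $\cV(-, x')$ and $\cV(-, y')$ to the pushout defining the source of $i \square k$ turns it into a pullback of hom-sets, and the tensor-hom adjunction $\cV(b \otimes y, x') \cong \cV(b, [y,x'])$ (natural in all variables) identifies the resulting data with a square into the pullback $[x,x'] \times_{[x,y']} [y,y']$. The same reasoning, applied after transposing, shows this lifting problem is also equivalent to a lifting problem of $k$ against $(i^*, p_*)$; by symmetry of the pushout-product in $i$ and $k$, it suffices to use the displayed form. Consequently: $i \square k$ has the left lifting property against $p$ if and only if $i$ has the left lifting property against $(k^*, p_*)$.

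\emph{Step 2: from the pushout-product axiom to the pullback-power axiom.} Assume the pushout-product axiom. Let $i$ be a cofibration and $p$ a fibration; I must show $(i^*, p_*)$ is a fibration, acyclic if $i$ or $p$ is. A map is a fibration iff it has the right lifting property against all acyclic cofibrations, so let $k$ be an acyclic cofibration. By Step 1, $k$ lifts against $(i^*, p_*)$ iff $i \square k$ lifts against $p$; and $i \square k$ is an acyclic cofibration by the pushout-product axiom (since $k$ is), hence lifts against the fibration $p$. Thus $(i^*, p_*)$ is a fibration. If moreover $i$ is acyclic, take $k$ an arbitrary cofibration: then $i \square k$ is an acyclic cofibration (since $i$ is), and the same argument shows $(i^*,p_*)$ has the right lifting property against all cofibrations, i.e.\ is an acyclic fibration. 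If instead $p$ is acyclic, take $k$ an arbitrary cofibration: $i \square k$ is a cofibration, hence lifts against the acyclic fibration $p$, so again $(i^*, p_*)$ is an acyclic fibration.

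\emph{Step 3: the converse, and the main obstacle.} Assume the pullback-power axiom. Let $i, k$ be cofibrations; I must show $i \square k$ is a cofibration, acyclic if $i$ or $k$ is. A map is a cofibration iff it has the left lifting property against all acyclic fibrations, so let $p$ be an acyclic fibration. By Step 1, $i \square k$ lifts against $p$ iff $i$ lifts against $(k^*, p_*)$; and $(k^*, p_*)$ is an acyclic fibration by the pullback-power axiom (since $p$ is), so the cofibration $i$ lifts against it. The two acyclic cases are handled exactly as in Step 2, using that $(k^*, p_*)$ is a fibration whenever $k$ is a cofibration and $p$ a fibration, and an acyclic fibration whenever $k$ is moreover acyclic. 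The only real work is Step 1 — verifying that the tensor-hom adjunction genuinely intertwines the pushout defining the source of $i \square k$ with the pullback defining the target of $(k^*, p_*)$, compatibly with the two square maps and with diagonal fillers. Everything else is a bookkeeping exercise with lifting properties.
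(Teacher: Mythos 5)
Your proof is correct, and it is the standard argument: the two axioms are exchanged under the adjunction between the pushout-product $\square$ and the pullback-power $\widehat{\hom}$, which itself is inherited from the closed structure of $\cV$. The paper does not give its own proof of this lemma; it simply cites \cite{MayP12}*{Lemma~16.4.5}, where essentially the argument you outline appears (sometimes this package of facts is called the Joyal--Tierney calculus).

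A few small remarks. Your Step~1 states the bijection in the form ``$i \square k$ lifts against $p$ iff $i$ lifts against $(k^*,p_*)$,'' while Steps~2 and~3 sometimes use the transposed form ``$k$ lifts against $(i^*,p_*)$,'' which you correctly justify by the symmetry $i \square k \cong k \square i$; this symmetry relies on $\cV$ being symmetric monoidal, which is in force here, but it is worth making explicit since the definition of monoidal model category in the paper does not a priori assume symmetry. Also, the characterization of (acyclic) cofibrations by the left lifting property against (acyclic) fibrations, and dually, is what makes Steps~2 and~3 go through; you use it correctly but implicitly invoke the retract argument hidden inside. Finally, Step~1 is indeed where all the real content lives, and your honest flagging of it as ``the only real work'' is appropriate: a fully rigorous treatment would verify that the tensor-hom adjunction bijections assemble, naturally in the square, into the claimed correspondence of lifting problems, and that a diagonal filler on one side transposes to one on the other. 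This is a direct but slightly tedious diagram chase, carried out for instance in \cite{MayP12} and in Riehl's book. As a proof proposal, what you have written is sound and complete in outline.
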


\begin{ex}\label{ex:MonoidalModelCats}
Here are examples of symmetric monoidal model categories.
\begin{enumerate}
	\item The category $s\Set$ of simplicial sets with the Cartesian product $\x$ and the Kan--Quillen model structure \cite{Hovey99}*{Proposition~4.2.8}. 
	\item The category $s\Set_*$ of pointed simplicial sets with the smash product $\sm$ and the Kan--Quillen model structure \cite{Hovey99}*{Corollary~4.2.10}. 
	\item Any category of convenient topological spaces $\Top$ (for instance $k$-spaces) with the Cartesian product $\x$ and the Serre model structure, also called Quillen or $q$-model structure \cite{Hovey99}*{Proposition~4.2.11} \cite{MayP12}*{Theorem~17.2.2}. This also holds for the Str{\o}m model structure, also called Hurewicz or $h$-model structure \cite{MayP12}*{Theorem~17.1.1}.
	\item \label{item:sMod} For a commutative ring $R$, the category $s\Mod{R}$ %
	with the standard Quillen model structure \cite{SchwedeS03equ}*{\S 4.1}, %
	or the Hurewicz model structure \cite{Ngopnang24hur}*{Proposition~5.4}.
	\item \label{item:Chgeq0} The category $\Ch_{\geq 0}(R)$ %
	with the projective model structure \cite{SchwedeS03equ}*{\S 4.1}, %
	or the Hurewicz model structure \cite{Ngopnang24hur}*{Proposition~3.6}.
	\item The category $\Ch(R)$ %
	with the projective model structure \cite{Hovey99}*{Proposition~4.2.13} \cite{MayP12}*{Theorem~18.4.2}, %
	or the Hurewicz model structure \cite{MayP12}*{Theorem~18.3.1}.
	\item The categories of symmetric spectra or orthogonal spectra with the stable or positive stable model structures \cite{SchwedeS03equ}*{\S 7} \cite{MandellMSS01}*{Theorems~12.1, 14.2} \cite{SchwedeS00}*{\S 5}. %
	\item The category of $S$-modules \cite{SchwedeS03equ}*{\S 7} \cite{SchwedeS00}*{\S 5} \cite{EKMM97}*{\S III.3, VII.4}.
\end{enumerate}
\end{ex}

\begin{defn}\label{def:def-emc} 
Let $\cV$ be a symmetric monoidal model category.
	A \Def{\mbox{$\cV$-model} category} is a $\cV$-category $\cC$, which is tensored and cotensored over $\cV$, with the structure of a model category on the underlying category $\cC_0$ such that the  following two compatibility conditions are satisfied.
	\begin{itemize}
		\item[(i)] \textbf{External pushout-product axiom:} For every pair of cofibrations $i \colon x \to y$ in $\cC_0$ and $k \colon a \to b$ in $\cV$, their pushout-product %
		\[
		i \square k \colon (x\otimes b) \amalg_{x \otimes a} (y \otimes a) \to y \otimes b
		\] 
		is itself a cofibration in $\cC_0$, which moreover is acyclic if $i$ or $k$ is.
		\item[(ii)] \textbf{External unit axiom:} For every cofibrant object $x$ in $\cC_0$ and every cofibrant replacement of the tensor unit $q\colon Q\unit \to \unit$ in $\cV$, the resulting morphism $\xymatrix{x\otimes Q\unit \ar[r]^-{x\otimes q} & x\otimes \unit \ar[r]^-{\cong} & x}$ is a weak equivalence in $\cC_0$.
	\end{itemize}
\end{defn}

\begin{lem} \cite{MayP12}*{Lemma~16.4.5}
    The external pushout-product axiom is equivalent to each of the following axioms.
    \begin{enumerate}
        \item \textbf{External pullback-power axiom}: For a cofibration $i \colon a \to b$ in $\cV$ and a fibration $p \colon x \to y$ in $\cC_0$, their pullback-power %
		\[        
        (i^*,p_*) \colon x^b \to x^a \times_{y^a} y^b 
		\]        
        is a fibration in $\cC_0$, which moreover is acyclic if $i$ or $p$ is.
		\item \textbf{SM7}: For a cofibration $i \colon a \to b$ and a fibration $p \colon x \to y$ both in $\cC_0$, their pullback-power %
    \[
    (i^*,p_*) \colon \underline{\cC}(b,x) \to \underline{\cC}(a,x) \times_{\underline{\cC}(a,y)} \underline{\cC}(b,y) 
    \]
    is a fibration in $\cV$, which moreover is acyclic if $i$ or $p$ is.
    \end{enumerate}
\end{lem}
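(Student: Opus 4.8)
The plan is to observe that the three axioms are three avatars, obtained via the calculus of lifting properties for two-variable adjunctions, of one and the same two-variable adjunction: the one built from the tensoring, the cotensoring, and the enriched hom of $\cC$ over $\cV$,
\[
\cC_0(x \otimes v, y) \;\cong\; \cV\bigl(v,\underline{\cC}(x,y)\bigr) \;\cong\; \cC_0\bigl(x, y^v\bigr),
\]
natural in $x,y \in \cC$ and $v \in \cV$. That this is a genuine two-variable adjunction, rather than merely a string of hom-set bijections, is exactly what it means for $\cC$ to be tensored \emph{and} cotensored over $\cV$; compare Remark~\ref{rem:UnenrichedTensorHom}. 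Applying the Leibniz (pushout-product / pullback-power) construction to each of its three functors produces precisely the maps occurring in the statement: the map $i\square k \colon (x\otimes b)\amalg_{x\otimes a}(y\otimes a)\to y\otimes b$ in $\cC_0$ from $\otimes$, the external pullback-power $(i^*,p_*)\colon x^b \to x^a\times_{y^a}y^b$ in $\cC_0$ from $(-)^{(-)}$, and the SM7 map $\underline{\cC}(b,x)\to\underline{\cC}(a,x)\times_{\underline{\cC}(a,y)}\underline{\cC}(b,y)$ in $\cV$ from $\underline{\cC}(-,-)$. All the ambient pushouts and pullbacks exist since $\cC_0$ and $\cV$ are bicomplete.

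The one nonformal ingredient is the standard adjointness of these Leibniz constructions (a general fact about two-variable adjunctions; see e.g.\ \cite{Hovey99}*{\S 4.2} or \cite{Riehl14}*{\S 11}): for a map $i$ in $\cC_0$, a map $k$ in $\cV$, and a map $p$ in $\cC_0$, repeatedly transposing a commutative square and its diagonal fillers through the hom-set bijections displayed above yields natural bijections between the set of lifting problems of $i\square k$ against $p$, the set of lifting problems of $i$ against $(k^*,p_*)$, and the set of lifting problems of $k$ against the SM7 map; in particular any one of these three lifting problems is solvable if and only if the others are. I would check this once, purely from the universal properties of the (co)tensor and of the pushouts and pullbacks involved.

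Granting that, the equivalences fall out by quantification together with the usual fact that in a model category the cofibrations are the maps with the left lifting property against all acyclic fibrations, the acyclic cofibrations are those with the left lifting property against all fibrations, and dually for (acyclic) fibrations. Thus the external pushout-product axiom says: for every cofibration $i$ in $\cC_0$ and every cofibration $k$ in $\cV$, the map $i\square k$ lifts against every acyclic fibration of $\cC_0$, and lifts against every fibration of $\cC_0$ as soon as $i$ or $k$ is acyclic. Transposing ``$i\square k$ vs $p$'' into ``$i$ vs $(k^*,p_*)$'' and letting $i$ range over all (acyclic) cofibrations of $\cC_0$ converts this into: $(k^*,p_*)$ is a fibration of $\cC_0$, acyclic whenever $k$ or $p$ is, for every cofibration $k$ of $\cV$ and every fibration $p$ of $\cC_0$ --- that is, the external pullback-power axiom. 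Transposing instead into ``$k$ vs $\underline{\cC}(i,p)$'' and letting $k$ range over all (acyclic) cofibrations of $\cV$ converts it into SM7. In each case the acyclic versus non-acyclic bookkeeping matches automatically, because each of the four classes in play is cut out by lifting against the complementary class of its weak factorization system.

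The only real obstacle is keeping the three settings straight: matching variances so that the Leibniz construction of the right functor is \emph{literally} the map named in the axiom, tracking which variable is a cofibration and which a fibration in each setting, and arranging the acyclic cases so that all the implications close up into honest equivalences. Once the adjointness of Leibniz constructions is in place, there is no further homotopy-theoretic content to extract.
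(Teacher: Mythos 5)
Your proof is correct and is exactly the standard argument from the cited reference \cite{MayP12}*{Lemma~16.4.5} (as well as \cite{Hovey99}*{\S 4.2} and \cite{Riehl14}*{\S 11}): the three axioms are the three Leibniz-adjoint formulations of a single lifting condition attached to the two-variable adjunction $(\otimes, (-)^{(-)}, \underline{\cC}(-,-))$, and the equivalences follow by quantifying over (acyclic) cofibrations and (acyclic) fibrations and invoking the weak-factorization-system characterizations. The paper itself does not spell out a proof and simply cites May--Ponto, so there is no divergence in method to discuss; your write-up matches the cited source's approach.
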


\begin{ex}
A $s\Set$-model category is a simplicial model category, as introduced by Quillen \cite{Quillen67}*{\S II.2} \cite{GoerssJ09}*{\S II.3}. The external unit axiom was not needed there since the tensor unit $\unit_{s\Set} = \ast$ is cofibrant, as is every object in $s\Set$.
\end{ex}

\begin{ex}
Any symmetric monoidal model category $\cV$ is itself a $\cV$-model category.
\end{ex}

\begin{lem}\label{lem:cofibcoten}
    For any $\cV$-category $\cC$ equipped with a model structure satisfying the external pushout-product axiom, we have the following for any $x,y \in \cC$ and $v \in \cV$.
    \begin{enumerate}
        \item The tensor $x \otimes v \in \cC$ is cofibrant whenever $x$ and $v$ are cofibrant.
        \item The cotensor $x^v \in \cC$ is fibrant whenever $x$ is fibrant and $v$ is cofibrant.
        \item The hom object $\underline{\cC}(x,y) \in \cV$ is fibrant whenever $x$ is cofibrant and $y$ is fibrant.
    \end{enumerate}
\end{lem}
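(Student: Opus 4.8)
The plan is to deduce all three statements from the external pushout-product axiom and its equivalent reformulations (the external pullback-power axiom and SM7), each applied to the canonical (co)fibration out of the initial object $\emptyset_{\cC}$ or into the terminal object $\ast_{\cC}$ of $\cC_0$, and then to identify the resulting pushout-product domains and pullback-power codomains as initial or terminal objects using that the tensoring, cotensoring, and hom-objects preserve suitable (co)limits in each variable. Throughout I would use that $\cC$ is both tensored and cotensored over $\cV$; this is implicit, since the external pushout-product axiom presupposes a tensoring and parts (2)--(3) refer to the cotensor.

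For (1), I would apply the external pushout-product axiom to the cofibration $\emptyset_{\cC} \to x$ in $\cC_0$ (a cofibration since $x$ is cofibrant) and the cofibration $\emptyset_{\cV} \to v$ in $\cV$ (since $v$ is cofibrant). Its codomain is $x \otimes v$ and its domain is $(\emptyset_{\cC} \otimes v) \amalg_{\emptyset_{\cC} \otimes \emptyset_{\cV}} (x \otimes \emptyset_{\cV})$. Since $x \otimes - \colon \cV \to \cC$ is a left $\cV$-adjoint and $- \otimes v \colon \cC \to \cC$ is left adjoint to $(-)^{v}$ (using that $\cC$ is tensored and cotensored), both functors are cocontinuous, so each of the three corners is $\emptyset_{\cC}$; hence the domain is $\emptyset_{\cC}$ and $\emptyset_{\cC} \to x \otimes v$ is a cofibration, i.e., $x \otimes v$ is cofibrant.

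For (2), dually, I would apply the external pullback-power axiom to the cofibration $\emptyset_{\cV} \to v$ in $\cV$ and the fibration $x \to \ast_{\cC}$ in $\cC_0$ (a fibration since $x$ is fibrant). Its domain is $x^{v}$ and its codomain is $x^{\emptyset_{\cV}} \times_{(\ast_{\cC})^{\emptyset_{\cV}}} (\ast_{\cC})^{v}$. Using that $(-)^{(-)}$ sends colimits in the exponent variable to limits (so $x^{\emptyset_{\cV}} = (\ast_{\cC})^{\emptyset_{\cV}} = \ast_{\cC}$) and that $(-)^{v}$ is a right adjoint, hence continuous (so $(\ast_{\cC})^{v} = \ast_{\cC}$), this codomain is $\ast_{\cC}$; hence $x^{v} \to \ast_{\cC}$ is a fibration, i.e., $x^{v}$ is fibrant. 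For (3), I would apply SM7 to the cofibration $\emptyset_{\cC} \to x$ and the fibration $y \to \ast_{\cC}$, both in $\cC_0$, obtaining a fibration $\underline{\cC}(x,y) \to \underline{\cC}(\emptyset_{\cC}, y) \times_{\underline{\cC}(\emptyset_{\cC}, \ast_{\cC})} \underline{\cC}(x, \ast_{\cC})$ in $\cV$; since $\underline{\cC}(-,z)$ sends colimits to limits and $\underline{\cC}(c,-)$ is a right adjoint, all three corners are $\ast_{\cV}$, so $\underline{\cC}(x,y) \to \ast_{\cV}$ is a fibration, i.e., $\underline{\cC}(x,y)$ is fibrant.

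The only step with any content is the identification of these corners with initial/terminal objects, i.e., that $\otimes$, the cotensor, and $\underline{\cC}(-,-)$ preserve the appropriate (co)limits separately in each variable. This follows from the enriched adjunctions $x \otimes - \dashv \underline{\cC}(x,-)$ and $- \otimes v \dashv (-)^{v}$ together with the enriched Yoneda lemma, and I expect it to be routine bookkeeping rather than a genuine obstacle.
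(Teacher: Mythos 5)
The paper gives no proof of this lemma and treats it as a standard fact (it is essentially \cite{Hirschhorn03}*{Proposition~9.3.9} or \cite{Riehl14}*{remarks around Lemma~11.3.5}), so there is nothing to compare against line by line. Your proof is the canonical argument and it is correct: apply the pushout-product axiom to $\emptyset_{\cC}\to x$ and $\emptyset_{\cV}\to v$, the pullback-power axiom to $\emptyset_{\cV}\to v$ and $x\to\ast_{\cC}$, and SM7 to $\emptyset_{\cC}\to x$ and $y\to\ast_{\cC}$, then collapse the corner objects. The cocontinuity facts you need are correctly sourced from the two-variable adjunction $\cC(x\otimes v,y)\cong\cV(v,\ul{\cC}(x,y))\cong\cC(x,y^v)$: each of $x\otimes -$, $-\otimes v$, $(-)^v$, $\ul{\cC}(x,-)$ is a one-sided adjoint, and $\ul{\cC}(-,y)\colon\cC\to\cV^{\opp}$ is left adjoint to $y^{(-)}\colon\cV^{\opp}\to\cC$, so it sends colimits (in particular $\emptyset_{\cC}$) to limits. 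Invoking the enriched Yoneda lemma at the end is more than you need; the unenriched adjunctions suffice.

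Two small remarks worth keeping in mind. First, you are right that the lemma implicitly assumes $\cC$ is both tensored \emph{and} cotensored (even part~(1), taken alone, uses $-\otimes v\dashv(-)^v$ to know $\emptyset_{\cC}\otimes v\cong\emptyset_{\cC}$); the equivalence of the pushout-product, pullback-power, and SM7 axioms quoted just above the lemma also presupposes this. Second, the argument genuinely uses that each partial functor $x\otimes-$, $-\otimes v$, etc.\ preserves the empty (co)limit. For the weak (co)tensorings introduced later (Definition~\ref{def:wtwc}) this is not part of the axioms, so your proof would not transfer verbatim there; in the paper's actual applications this is harmless because the weak tensoring produced in Proposition~\ref{pr:PropW_ten_coPreserv} has the form $x\otimes Fw$ with $F$ a left adjoint and $\cC$ genuinely tensored, hence still preserves initial objects in each variable. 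As a proof of the lemma as stated, yours is complete.
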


For the next statement, see \cite{Hovey99}*{Theorems~4.3.2, 4.3.4}, \cite{Riehl14}*{Theorems~10.2.6, 10.2.12}, or \cite{MayP12}*{Remark~16.4.13}.

\begin{prop}\label{pr:Hocot}
    Let $\cV$ be a symmetric monoidal model category.
	\begin{enumerate}
		\item The homotopy category $\Ho(\cV)$ is a closed symmetric monoidal category.
		\item If $\cC$ is a $\cV$-model category, then the homotopy category $\Ho(\cC)$ %
		is enriched, tensored, and cotensored over $\Ho(\cV)$.
	\end{enumerate}
\end{prop}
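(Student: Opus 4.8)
The plan is to obtain every structure map on $\Ho(\cV)$, and on $\Ho(\cC)$, as the total derived functor of a Quillen bifunctor, using the standard fact that a Quillen adjunction of two variables descends to an adjunction of two variables on homotopy categories. For part~(1): the pushout-product axiom says precisely that $\ot\colon\cV\x\cV\to\cV$ is a left Quillen bifunctor, and together with the internal-hom adjunction (Proposition~\ref{pr:Internalize}) and the equivalent pullback-power axiom it forms an adjunction of two variables in which the tensor is left Quillen. By Ken Brown's lemma, $\ot$ preserves weak equivalences between cofibrant objects in each variable, so it has a total left derived functor $\ot^{\mathbb{L}}\colon\Ho(\cV)\x\Ho(\cV)\to\Ho(\cV)$ computed by $X\ot^{\mathbb{L}}Y\cong QX\ot QY$; dually $[-,-]$ has a total right derived functor $\mathbb{R}[-,-]$, and the derived unit and counit assemble these into an adjunction of two variables on $\Ho(\cV)$, so $\Ho(\cV)$ will be closed once it is known to be monoidal. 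The associativity and symmetry constraints of $\cV$ are natural isomorphisms of left Quillen bifunctors, hence descend to $\ot^{\mathbb{L}}$; using Lemma~\ref{lem:cofibcoten} to see that $QX\ot QY$ is again cofibrant, the pentagon and hexagon identities in $\Ho(\cV)$ reduce to those in $\cV$ after inserting cofibrant replacements. The one constraint that is not automatic is the unit, since $\unit$ need not be cofibrant --- but this is exactly what the unit axiom provides: it says $x\ot Q\unit\to x\ot\unit\cong x$ is a weak equivalence for cofibrant $x$, so $\ga(\unit)=\ga(Q\unit)$ is a unit object for $\ot^{\mathbb{L}}$ and the triangle identity in $\Ho(\cV)$ follows from that in $\cV$.

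For part~(2) one runs the same argument three times. The external pushout-product axiom makes the tensoring $-\ot-\colon\cC_0\x\cV\to\cC_0$ a left Quillen bifunctor, whose two right adjoints are the enriched hom $\ul{\cC}(-,-)\colon\cC_0^{\opp}\x\cC_0\to\cV$ and the cotensoring $(-)^{(-)}\colon\cV^{\opp}\x\cC_0\to\cC_0$; these are right Quillen bifunctors by the equivalent axioms SM7 and the external pullback-power axiom, respectively. Ken Brown's lemma yields total derived functors $\ot^{\mathbb{L}}\colon\Ho(\cC)\x\Ho(\cV)\to\Ho(\cC)$, $\mathbb{R}\ul{\cC}(-,-)\colon\Ho(\cC)^{\opp}\x\Ho(\cC)\to\Ho(\cV)$, and $\mathbb{R}(-)^{(-)}\colon\Ho(\cV)^{\opp}\x\Ho(\cC)\to\Ho(\cC)$, assembled into an adjunction of two variables on homotopy categories. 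The associativity and unitality data of the $\cV$-enrichment and of the tensoring (in particular $x\ot\unit\cong x$ and $x\ot(u\ot v)\cong(x\ot u)\ot v$) are natural isomorphisms of Quillen bifunctors and so descend; the action-unit isomorphism again invokes the external unit axiom, giving $x\ot^{\mathbb{L}}\ga(\unit)\cong x$, and the derived composition map makes $\mathbb{R}\ul{\cC}(-,-)$ the hom of a $\Ho(\cV)$-enrichment of $\Ho(\cC)$, with $\ot^{\mathbb{L}}$ and $\mathbb{R}(-)^{(-)}$ as its tensoring and cotensoring.

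The step I expect to be the real work is not any individual calculation but the passage from ``each bifunctor derives'' to ``the derived bifunctors form an adjunction of two variables'': one must check that the derived tensor--hom(--cotensor) isomorphisms are well defined and natural after localizing, which requires care about which argument is cofibrantly or fibrantly replaced and a compatibility of Ken Brown's lemma with the adjunctions. Bundled with this is the coherence bookkeeping (pentagon, hexagon, triangle, and their module analogues) in $\Ho(\cV)$ and $\Ho(\cC)$, all of which reduce to $\cV$ and $\cC_0$ but demand consistent choices of (co)fibrant replacement. Since this machinery is standard, the cleanest route is to assemble the argument by reference to \cite{Hovey99}*{Theorems~4.3.2 and 4.3.4}, \cite{Riehl14}*{Theorems~10.2.6 and 10.2.12}, or \cite{MayP12}*{Remark~16.4.13}, as the statement already does.
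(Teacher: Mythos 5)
Your proposal is correct and takes the same route the paper does: the paper itself supplies no proof of this proposition, only a pointer to \cite{Hovey99}*{Theorems~4.3.2, 4.3.4}, \cite{Riehl14}*{Theorems~10.2.6, 10.2.12}, and \cite{MayP12}*{Remark~16.4.13}, and your sketch is an accurate unpacking of the standard argument in those references (Quillen bifunctors, Ken Brown's lemma, the unit axiom for the unit constraint, and coherence descending to $\Ho$). You also correctly flag the genuine technical burden --- assembling the derived bifunctors into an adjunction of two variables with coherent constraints --- which is exactly why the paper defers to the literature rather than re-proving it.
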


\begin{defn}\label{def:wmqa}
	For $\cV$ and $\cW$ two monoidal model categories, a \Def{weak (or lax) monoidal Quillen adjunction} is a Quillen adjunction $F \colon \cW \rla \cV \colon G$ %
	equipped with the structure of a lax monoidal functor on the right adjoint $G$ %
	such that the induced oplax monoidal structure %
	on the left adjoint $F$ satisfies the following properties.
		\begin{enumerate}
			\item[(i)] For all cofibrant objects $x,y\in \cW$ the oplax monoidal transformation $F(x\otimes y)\xrightarrow[\sim]{\delta_{x,y}} F(x)\otimes F(y)$ is a weak equivalence in $\cV$.
			\item[(ii)] For some (hence any) cofibrant replacement of the tensor unit $q\colon Q\unit_{\cW} \to \unit_{\cW}$ in $\cW$, the composition $\xymatrix{F(Q\unit_{\cW}) \ar[r]^-{F(q)} & F(\unit_{\cW}) \ar[r]^-{\varepsilon} & \unit_{\cV}}$, with the  oplax monoidal counit $\varepsilon$, is a weak equivalence in $\cV$.
		\end{enumerate}	
	This is called a \Def{strong monoidal Quillen adjunction} if $F$ is a strong monoidal functor, that is the maps $\delta_{x,y}$ and $\varepsilon$ are isomorphisms. In this case the first condition above on $F$ is vacuous, and the second becomes vacuous if the unit object of $\cW$ is cofibrant. If a weak monoidal Quillen adjunction is also a Quillen equivalence it is called a \Def{weak monoidal Quillen equivalence}.
\end{defn}

\begin{rem}\label{rem:StrongMonoidal}
When the left adjoint $F \colon \cW \to \cV$ is strong monoidal, the conditions of a (weak or strong) monoidal Quillen pair reduce to the map $F(q) \colon F(Q\unit_{\cW}) \to F(\unit_{\cW})$ being a weak equivalence.
\end{rem}

For $R$ a commutative ring, the Dold--Kan correspondence gives rise to various weak monoidal Quillen equivalences that are not strong. %

\begin{ex}\label{ex:DK-WeakMonoidal} 
    Viewing the normalization as left adjoint, the adjunction $N \colon s\Mod{R} \rla \Ch_{\geq 0}(R) \colon \Ga$ is a weak monoidal Quillen equivalence with respect to the standard model structure on $s\Mod{R}$ (with weak equivalences and fibrations as in underlying simplicial sets) and the projective model structure on $\Ch_{\geq 0}(R)$ \cite{SchwedeS03equ}*{\S 4.3}. This also holds if we use the Hurewicz model structures on $s\Mod{R}$ and $\Ch_{\geq 0}{R}$, since the Alexander--Whitney map $\AW \colon N(A \ot B) \to N(A) \ot N(B)$ is a chain homotopy equivalence for all $A,B \in s\Mod{R}$. 

    Similarly, viewing the normalization as right adjoint, the adjunction $\Gamma \dashv N$ is a weak monoidal Quillen equivalence. This holds both for the standard projective model structures and the Hurewicz model structures \cite{SchwedeS03equ}*{\S 4.2}.
\end{ex}

\begin{ex}
    There is a weak monoidal Quillen equivalence between categories of symmetric spectra 
	\[		
		\Sp^{\Si}(\Ch_{\geq 0}(R)) \rla \Sp^{\Si}(s\Mod{R})
	\]
	that is a step in the stable Dold--Kan correspondence \cite{Shipley07}*{Proposition~4.4}. The right adjoint is given by (levelwise) normalization followed by a certain restriction functor.
\end{ex}

\begin{ques}
    Does \cite{Shipley07}*{Proposition~4.4} generalize to a weak monoidal Quillen pair between symmetric spectra objects $\Sp^{\Si}(\cW) \rla \Sp^{\Si}(\cV)$ for other weak monoidal Quillen pairs $F \colon \cW \rla \cV \colon G$?
\end{ques}

\begin{ex}
    The category of non-negatively graded cochain complexes $\Ch^{\geq 0}(R)$ admits the injective and Hurewicz model structures \cite{Bousfield03}*{\S 4.4} \cite{Ngopnang24hur}*{\S 6}, as well as the projective model structure \cite{CastiglioniC04}*{Definition~4.7} \cite{Jardine97}. 
    Using the corresponding model structures on cosimplicial modules via the dual Dold--Kan correspondence $N \colon c\Mod{R} \rla \Ch^{\geq 0}(R) \colon \Ga$, both adjunctions $N \dashv \Ga$ and $\Ga \dashv N$ become weak monoidal Quillen equivalences, for a total of six examples. 
        
    In \cite{CastiglioniC04}, Castiglioni and Corti\~nas replace $\Ga$ with a 
    derived-equivalent functor to produce a strong monoidal Quillen equivalence $\Ch^{\geq 0}(R) \rla c\Mod{R}$,
    cf.\ \cite{WhiteY19}*{\S 7.5}. 
\end{ex}

\begin{ex}
In \cite{DwyerK85}, Dwyer and Kan describe the category of \emph{duplicial $R$-modules} $\Mod{R}^{K^{\opp}} = \Fun(K^{\opp},\Mod{R})$, where $K$ is a small category containing both $\De$ and $\De^{\opp}$, so that a duplicial module $A$ has both an underlying simplicial module and a cosimplicial module on the same underlying graded module $\{ A_n \mid n \geq 0 \}$. On the other hand, a \emph{duchain complex} of $R$-modules $C$ has both a chain complex structure $\del \colon C_n \to C_{n-1}$ and cochain complex structure $\delta \colon C_n \to C_{n+1}$ on the same underlying graded module $\{ C_n \mid n \geq 0 \}$. There is a Dold--Kan type equivalence 
\[
N \colon \Mod{R}^{K^{\opp}} \rla R(\del,\delta) \colon \Gamma
\]
between duplicial modules and duchain complexes. %
Both adjunctions $N \dashv \Gamma$ and $\Gamma \dashv N$ are weak monoidal Quillen equivalences \cite{DwyerK85}*{Theorems~3.5, 5.1}.
\end{ex}

\begin{rem}
There is also a cubical Dold--Kan correspondence, between cubical $R$-modules with connections and $\Ch_{\geq 0}(R)$ \cite{BrownH03} \cite{BrownHS11}*{\S 14.8}. It seems plausible that this should be a weak monoidal Quillen equivalence. 
\end{rem}

\begin{ex}
Let $\cV$ be a model category that is a monoidal model category with respect to two monoidal structures $\boxtimes$ and $\ot$. The identity adjunction
\[
F = \Id \colon (\cV,\boxtimes) \rla (\cV,\ot) \colon \Id = G\\
\]
is automatically a Quillen equivalence, and it is weak monoidal if and only if there is a natural map $\mu \colon x \boxtimes y \to x \ot y$ that is a weak equivalence for $x,y \in \cV$ cofibrant, and a weak equivalence $\eta \colon \unit_{\boxtimes} \ral{\sim} \unit_{\ot}$, satisfying associativity and unitality. 
This situation arises for instance when comparing certain Day convolutions with pointwise products in diagram categories \cite{SagaveS13}*{Proposition~2.27} \cite{SagaveS21}*{Remark~3.6} \cite{Schwede18}*{Theorem~1.3.2}.
\end{ex}

\section{Preservation of tensoring and cotensoring}\label{sec:Ptc}

Given an adjunction $F \colon \cW \rla \cV \colon G$, the following lemma gives us a necessary and sufficient condition for which the change of enrichment along the right adjoint $G$ preserves the underlying category, that is, $(G_*\cC)_0 \cong \cC_0$ for any $\cV$-category $\cC$. In this case, any potential model structure on $\cC_0$ is preserved.

\begin{lem}\label{lem:sameunderlying}
	Let $F \colon \cW \rla \cV \colon G$. be an adjunction where the right adjoint $G$ is lax monoidal. The following are equivalent.
	\begin{enumerate}
		\item \label{undercat1} The map $\ep \colon F(\unit_{\cW}) \ral{\cong} \unit_{\cV}$ is an isomorphism. %
		\item \label{undercat2} $G$ commutes with the underlying set functors, that is, the following diagram commutes (up to natural isomorphism)
		\begin{equation}\label{underset}
			\xymatrix{
			\cV \ar[r]^-G \ar[dr]_-{\cV(\unit_{\cV},-)=U} & \cW \ar[d]^-{\cW(\unit_{\cW},-)=U} \\
			& \Set,
			}
		\end{equation}
        where $U$ is the underlying set functor.
		\item \label{undercat3} Change of enrichment along $G\colon\cV\to \cW$ preserves the underlying  category for every $\cV$-enriched category $\cC$, that is, the following diagram commutes (up to natural isomorphism)
		\begin{equation}\label{undercat}
			\xymatrix{
				\CAT{\cV} \ar[r]^-{G_*} \ar[dr]_-{U_*} & \CAT{\cW} \ar[d]^-{U_*} \\
				& \Cat,
			}
		\end{equation}
		\noindent where $U_*$ is the forgetful functor.
	\end{enumerate}
\end{lem}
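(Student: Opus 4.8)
I would establish the cycle $(1) \Rightarrow (2) \Rightarrow (3) \Rightarrow (1)$, taking care to carry monoidal structure on all the comparison maps. For $(1) \Rightarrow (2)$: for $v \in \cV$ there is a chain of natural isomorphisms
\[
U(Gv) = \cW(\unit_{\cW}, Gv) \cong \cV(F\unit_{\cW}, v) \xrightarrow{\ (\ep^{-1})^{*}\ } \cV(\unit_{\cV}, v) = U(v),
\]
the first being the adjunction isomorphism for $F \dashv G$ and the second the hypothesis that $\ep$ is invertible. Using the doctrinal-adjunction formula of Lemma~\ref{lem:Doctrinal}, which presents the lax unit $\eta \colon \unit_{\cW} \to G\unit_{\cV}$ of $G$ as the adjunct of $\ep$, one checks that this is an isomorphism of \emph{lax monoidal} functors and not merely of functors --- the form of $(2)$ needed below.

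For $(2) \Rightarrow (3)$: Lemma~\ref{lem:Compochange} gives $U_{*} \circ G_{*} = (U \circ G)_{*}$, and change of enrichment sends the monoidal natural isomorphism $U \circ G \cong U$ of $(2)$ to a natural isomorphism $U_{*} \circ G_{*} \cong U_{*}$ of functors $\CAT{\cV} \to \Cat$, which is the assertion that diagram~\eqref{undercat} commutes up to natural isomorphism. Concretely, for a $\cV$-category $\cC$ the category $(G_{*}\cC)_{0} = U_{*}(G_{*}\cC)$ has the same objects as $\cC$ and hom-sets $\cW(\unit_{\cW}, G\underline{\cC}(x,y)) \cong \cV(\unit_{\cV}, \underline{\cC}(x,y)) = \cC_{0}(x,y)$; that this bijection is compatible with composition and identities is exactly the monoidality recorded in $(2)$.

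For $(3) \Rightarrow (1)$: evaluate the natural isomorphism $U_{*} \circ G_{*} \cong U_{*}$ on the $\cV$-category $\ul{\cV}$, that is, $\cV$ enriched over itself by its internal hom (legitimate here since $\cV$ is closed in the intended applications). This yields an identity-on-objects isomorphism of categories $(G_{*}\ul{\cV})_{0} \cong \cV$, hence natural bijections $\cW(\unit_{\cW}, G[u,v]) \cong \cV(u,v)$; specializing to $u = \unit_{\cV}$ and using $[\unit_{\cV}, v] \cong v$ gives a natural isomorphism $\cW(\unit_{\cW}, Gv) \cong \cV(\unit_{\cV}, v)$. Composing with the adjunction isomorphism and invoking the Yoneda lemma produces an isomorphism $F\unit_{\cW} \cong \unit_{\cV}$ in $\cV$. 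It remains to upgrade this to the statement that $\ep$ itself is invertible: by Lemma~\ref{lem:Doctrinal} the left adjoint $F$ is oplax monoidal, so it sends $\unit_{\cW}$ with its canonical comonoid structure to a comonoid in $\cV$ whose underlying object is $F\unit_{\cW}$ and whose counit is $\ep$; and a comonoid whose underlying object is isomorphic to the tensor unit has invertible counit, since after transport along such an isomorphism the counit becomes an element of the commutative monoid $\cV(\unit_{\cV}, \unit_{\cV})$ which the counit axiom forces to be a unit.

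The main obstacle is the monoidal bookkeeping: the isomorphisms in $(2)$ and $(3)$ must be tracked as isomorphisms of lax monoidal functors, both so that they transport along change of enrichment and so that, in $(3) \Rightarrow (1)$, one can conclude that it is $\ep$ --- rather than some other isomorphism $F\unit_{\cW} \cong \unit_{\cV}$ --- that is invertible; the comonoid observation is the one small trick carrying out that last reduction. A secondary, purely generality-related point is that the test $\cV$-category $\ul{\cV}$ presupposes $\cV$ closed; in general one would feed $(3)$ a small explicit $\cV$-category realizing $\cV(\unit_{\cV}, v)$ for every $v$.
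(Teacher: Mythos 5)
Your proposal follows essentially the same route as the paper's proof: compare $\cW(\unit_{\cW}, Gv)$ to $\cV(\unit_{\cV}, v)$ via the adjunction and Yoneda for $(1) \Leftrightarrow (2)$, use Lemma~\ref{lem:Compochange} and the compatibility of $UG \cong U$ with the change of enrichment for $(2) \Rightarrow (3)$, and evaluate $(3)$ on $\cV$ viewed as a $\cV$-category (which does presuppose $\cV$ closed, as you note --- the paper makes the same choice) to go back. The organizational difference --- you run the cycle $(1) \Rightarrow (2) \Rightarrow (3) \Rightarrow (1)$ while the paper does $(1) \Leftrightarrow (2)$ and then $(2) \Leftrightarrow (3)$ --- is cosmetic.

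What you genuinely add is the comonoid argument. The paper's Yoneda step only produces \emph{some} isomorphism $F\unit_{\cW} \cong \unit_{\cV}$ and then silently identifies that with ``$\ep$ is an isomorphism'', which is what condition $(1)$ literally asserts; your observation that $F\unit_{\cW}$ carries a comonoid structure with counit $\ep$, and that a comonoid whose underlying object is isomorphic to the unit necessarily has invertible counit (by Eckmann--Hilton in $\cV(\unit_{\cV},\unit_{\cV})$), closes that gap cleanly. You are also right to flag that the natural isomorphism $UG \cong U$ must be \emph{monoidal} for $(2) \Rightarrow (3)$ to produce an isomorphism of categories, not merely a bijection of hom-sets; the paper elides this, and both proofs ultimately resolve it the same way, namely by first deducing $(1)$ and then using the canonical monoidal isomorphism built from $\ep$ and the doctrinal formula for $\eta_G$. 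One small caveat to be aware of: as you have written the cycle, the step $(2) \Rightarrow (3)$ starts from ``the monoidal natural isomorphism of $(2)$'', but $(2)$ as stated only asserts an unqualified natural isomorphism; to be fully rigorous you would first deduce $(1)$ from $(2)$ by the same Yoneda-plus-comonoid argument you use in $(3) \Rightarrow (1)$, and only then invoke the canonical monoidal isomorphism. This is exactly what the paper's ordering $(1) \Leftrightarrow (2)$ first, then $(2) \Rightarrow (3)$, arranges automatically.
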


\begin{proof}
	(1)$\iff$(2) The adjunction $(F\dashv G)$ gives us the following natural bijection
	\begin{equation}\label{under1}
		 \cV(F(\unit_{\cW}), v) \cong \cW(\unit_{\cW}, Gv) \quad \text{for all } v \in \cV,
	\end{equation}
	and the commutativity of \eqref{underset} gives us the following natural bijection
		\begin{equation}\label{under2}
		 \cV(\unit_{\cV}, v) \cong \cW(\unit_{\cW}, Gv) \quad \text{for all } v \in \cV.
	\end{equation}
	Hence $(\ref{under1})$ and \eqref{under2} gives us the following natural bijection
	\begin{equation*}
		 \cV(F(\unit_{\cW}), v)  \cong \cV(\unit_{\cV}, v) \iff F(\unit_{\cW})  \cong \unit_{\cV},\ \text{by the Yoneda lemma}.
	\end{equation*}
 
	\noindent (2)$\implies$(3) Since \Eqref{undercat2} is equivalent to \Eqref{undercat1}, $F(\unit_{\cW})  \cong \unit_{\cV}$ holds. Then the underlying categories are preserved and so the commutativity of $(\ref{undercat})$ holds on objects. On hom objects, we have
	\begin{align*}
		\underline{U_*G_*\cC}(x,y) & := \underline{(UG)_*\cC}(x,y), \ \text{by Lemma~\ref{lem:Compochange}} \\
								   & \cong \underline{U_*\cC}(x,y), \ \text{for all } x,y \in \cC,
	\end{align*}
	\noindent natural since $(\ref{underset})$ is.
	
		\noindent (3)$\implies$(2) The category $\cV$ is $\cV$-enriched as a monoidal category and we have 
		\begin{equation}\label{l4132}
		\xymatrix @R-0.4pc {
		& \underline{U_*G_*\cV}(\unit_{\cV},v) \ar[d]_-{:=} \ar[r]^-{\cong} & \underline{U_*\cV}(\unit_{\cV},v) \ar[d]^-{:=} & \\
		& UG\underline{\cV}(\unit_{\cV}, v) \ar[d]_-{\cong} & U\underline{\cV}(\unit_{\cV}, v) \ar[d]^-{\cong} & \\
		\cW(\unit_\cW,Gv) \ar@{=}[r] & : UGv \ar@{-->}[r]_-{\therefore\ \textcolor{blue}{\cong}} & Uv : \ar@{=}[r] & \cV(\unit_\cV,v),
		}
		\end{equation}
		\noindent for any $v\in \cV$, that is, the solid diagram induces an isomorphism $UGv \cong Uv$ given by the dashed map. All the maps in diagram \eqref{l4132} are natural in $v$, hence so is the isomorphism $UGv \cong Uv$.
\end{proof}

The equivalence (1)$\iff$(2) is also found in \cite{Kelly74}*{Proposition~2.1}. The implication (1)$\implies$(3) is mentioned in \cite{GuillouM20}*{Proposition~3.2}.

\begin{cor}\label{cor:StrongUnderlying}
    For any adjunction $F\dashv G$ where the left adjoint $F$ is strong monoidal, the right adjoint $G$ preserves underlying sets.
\end{cor}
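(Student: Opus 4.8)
The plan is to deduce Corollary~\ref{cor:StrongUnderlying} directly from Lemma~\ref{lem:sameunderlying}, using the doctrinal adjunction of Lemma~\ref{lem:Doctrinal} to supply the lax monoidal structure on $G$. First I would observe that if $F \colon \cW \to \cV$ is strong monoidal, then in particular the oplax counit $\ep \colon F(\unit_{\cW}) \to \unit_{\cV}$ is an isomorphism. By Lemma~\ref{lem:Doctrinal}, the oplax monoidal structure on $F$ transposes to a lax monoidal structure on $G$, so the adjunction $F \dashv G$ satisfies the hypotheses of Lemma~\ref{lem:sameunderlying}: the right adjoint is lax monoidal and condition~\eqref{undercat1} of that lemma holds.

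Next I would simply invoke the implication \eqref{undercat1}~$\implies$~\eqref{undercat2} of Lemma~\ref{lem:sameunderlying}, which says precisely that $G$ commutes with the underlying set functors, i.e.\ the triangle
\[
\xymatrix{
\cV \ar[r]^-G \ar[dr]_-{U} & \cW \ar[d]^-{U} \\
& \Set
}
\]
commutes up to natural isomorphism. Since the underlying set functor is $U = \cV(\unit,-)$ on each side, this is exactly the assertion that $G$ preserves underlying sets, which is the content of the corollary.

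There is really no obstacle here: the corollary is a formal consequence of the lemma together with the fact that a strong monoidal functor has an invertible counit. The only point requiring a word of care is that Lemma~\ref{lem:sameunderlying} is stated for an adjunction whose right adjoint is \emph{a priori} equipped with a lax monoidal structure, whereas the corollary starts from a strong monoidal \emph{left} adjoint; bridging this is precisely the role of Lemma~\ref{lem:Doctrinal} (doctrinal adjunction), and one should note that the lax structure on $G$ produced there is the one making $F \dashv G$ a monoidal adjunction, so that the hypothesis $\ep$ iso translates correctly. Beyond that, the argument is immediate.
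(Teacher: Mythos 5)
Your proof is correct and follows the paper's evident intent: the corollary is stated immediately after Lemma~\ref{lem:sameunderlying} precisely so that it falls out of the implication \eqref{undercat1}~$\implies$~\eqref{undercat2}, with doctrinal adjunction (Lemma~\ref{lem:Doctrinal}) supplying the lax monoidal structure on $G$ from the strong (hence oplax) monoidal structure on $F$. Your extra remark about bridging the hypotheses is a sensible point of care, and nothing is missing.
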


\begin{ex} 
The normalization $N$ and the denormalization $\Gamma$ defining the \mbox{Dold--Kan} correspondence $N \colon s\Mod{R} \rla \Ch_{\geq 0}(R) \colon \Ga$ %
are both not strong monoidal, but both preserve the tensor unit $\unit$, hence both preserve underlying sets by Lemma~\ref{lem:sameunderlying}. We can also see this directly. 
The underlying set of a simplicial $R$-module $A$ is given by its \mbox{$0$-simplices}, i.e., $U(A) = A_0$. 
The underlying set of a non-negatively graded chain complex $C$ is given by its $0$-cycles, i.e., $U(C) = Z_0(C) = C_0$. 
In particular, we have: 
\[
\begin{cases}
U(N(A)) = Z_0(N(A)) = A_0 = U(A) &\text{for all } A \in s\Mod{R} \\
U(\Gamma(C)) = \Gamma(C)_0 = C_0 = U(C) &\text{for all } C \in \Ch_{\geq 0}(R). \\
\end{cases}
\]
\end{ex}

\begin{rem}
Even when a left adjoint $F$ is strong monoidal, $F$ itself need not preserve underlying sets. For example, 
the geometric realization $|\cdot| \colon s\Set \to \Top$ is strong monoidal but does not preserve underlying sets, i.e., in general $U(|X_\bullet|) \ncong U(X_\bullet) = X_0$. 
For instance, the standard \mbox{$1$-simplex} $\De^{1}$ has underlying set $(\De^1)_0 = \{0,1\}$, whereas its geometric realization $\abs{\De^1}$ has underlying set an uncountable interval. 
Note that geometric realization $|\cdot|$ does not admit a left adjoint, since it doesn't preserve infinite products.
\end{rem}

The following proposition gives us a necessary and sufficient condition (namely, $F$ being strong monoidal), under which the tensoring or the cotensoring is preserved by a change of enrichment along the right adjoint of the lax monoidal adjunction $F \colon \cW \rla \cV \colon G$.

\begin{prop}\label{pr:coten-strong}
    	Let $\cV$ and $\cW$ be closed symmetric monoidal categories and $F \colon \cW \rla \cV \colon G$ an adjunction such that the right adjoint $G$ is lax monoidal.
    \begin{enumerate}
        \item \label{item:StrongTensoring} If $F$ is strong monoidal, then for a $\cV$-category $\cC$, the $\cW$-category $G_*\cC$ admits a tensoring or a cotensoring over $\cW$ given respectively by:
        \begin{equation*}
        x\otimes w := x\otimes Fw \quad \text{and} \quad x^w := x^{Fw}, \quad \text{for any } x \in \cC \text{ and } w \in \cW.
         \end{equation*}
        \item \label{item:NeedStrong} If $\ep \colon F(\unit_{\cW}) \ral{\cong} \unit_{\cV}$ is an isomorphism and the $\cW$-category $G_*\cV$ admits a tensoring or a cotensoring over $\cW$, then $F$ is strong monoidal.
    \end{enumerate}
\end{prop}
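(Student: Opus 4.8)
The plan is to deduce part (1) from \cite{Riehl14}*{Theorem~3.7.11} (cf.\ Proposition~\ref{pr:etcpreserv}) and to concentrate the work on part (2). For part (1), assume $F$ is strong monoidal and $\cC$ is tensored over $\cV$ (the cotensored case is dual, and the two arguments do not interact, which is the only point beyond Proposition~\ref{pr:etcpreserv}). Strong monoidality, via the adjunction $F\dashv G$, the enriched tensor--hom adjunction of Proposition~\ref{pr:Internalize}, and the enriched Yoneda lemma, produces $\cW$-natural isomorphisms $G\,\underline{\cV}(Fw,v')\cong\underline{\cW}(w,Gv')$, since $\cW\bigl(-,G\,\underline{\cV}(Fw,v')\bigr)\cong\cV\bigl(F(-)\otimes Fw,v'\bigr)\cong\cV\bigl(F((-)\otimes w),v'\bigr)\cong\cW\bigl(-,\underline{\cW}(w,Gv')\bigr)$. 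Setting $x\otimes w:=x\otimes_{\cV}Fw$ and composing this with the $\cV$-tensoring isomorphism of $\cC$ gives, $\cW$-naturally in $y$, an isomorphism $\underline{G_*\cC}(x\otimes_{\cV}Fw,y)\cong G\,\underline{\cV}\bigl(Fw,\underline{\cC}(x,y)\bigr)\cong\underline{\cW}\bigl(w,\underline{G_*\cC}(x,y)\bigr)$, which is the required tensoring; the cotensoring is obtained dually, and neither argument uses the other structure.

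For part (2), since $\ep$ is an isomorphism, Lemma~\ref{lem:sameunderlying} gives $(G_*\cV)_0=\cV$ and $U_{\cW}G\cong U_{\cV}$. Assume $G_*\cV$ is tensored over $\cW$; the cotensored case reduces to this one after a couple of applications of the enriched tensor--hom adjunction in $\cV$, so I treat only the tensored case. Applying $U_{\cW}$ to the defining isomorphism $G\,\underline{\cV}(\unit_{\cV}\otimes w,v')\cong\underline{\cW}(w,Gv')$ and using $U_{\cW}G\cong U_{\cV}$ yields $\cV(\unit_{\cV}\otimes w,v')\cong\cW(w,Gv')\cong\cV(Fw,v')$ naturally in $v'$, whence $\unit_{\cV}\otimes w\cong Fw$ in $\cV$ by the Yoneda lemma. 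So the tensoring supplies $\cW$-natural isomorphisms
\[
\Theta_{w,v'}\colon G\,\underline{\cV}(Fw,v')\ral{\cong}\underline{\cW}(w,Gv').
\]

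The crux is to compare $\Theta$ with the canonical comparison map $\theta_{w,v'}\colon G\,\underline{\cV}(Fw,v')\to\underline{\cW}(w,Gv')$ built from the oplax structure $\de$; unwinding with the Yoneda lemma and the tensor--hom adjunctions shows that $\theta$ is invertible for all $w,v'$ if and only if $\de_{w_1,w_2}$ is invertible for all $w_1,w_2$. Both $\Theta_{w,-}$ and $\theta_{w,-}$ are $\cW$-natural transformations out of the representable $\cW$-functor $\underline{G_*\cV}(Fw,-)\colon G_*\cV\to\cW$, hence by the enriched Yoneda lemma are classified by elements of $\cV(Fw,Fw)$; a direct check — using counitality of the oplax $F$ and the hypothesis that $\ep$ is an isomorphism — shows that $\theta$ is classified by $1_{Fw}$, while the transformation classified by $c\in\cV(Fw,Fw)$ is $\theta\circ\underline{G_*\cV}(c,-)$. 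Let $c_w$ classify $\Theta$. Since $\Theta$ is an isomorphism, precomposition with $c_w$ is a bijection of $\cV(Fw,v')$ for every $v'$, which forces $c_w$ to be invertible; therefore $\theta_{w,-}=\Theta_{w,-}\circ\underline{G_*\cV}(c_w^{-1},-)$ is an isomorphism, so $\de$ is invertible. Together with the hypothesis that $\ep$ is an isomorphism, this says $F$ is strong monoidal.

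I expect this last comparison to be the main obstacle: the tensoring on $G_*\cV$ is a priori only pinned down by its universal property, so one must argue that its comparison isomorphism $\Theta$ agrees, up to precomposition with an automorphism of $Fw$, with the canonical map $\theta$ — and this is exactly where the enriched Yoneda lemma does the real work and where the hypothesis $F(\unit_{\cW})\cong\unit_{\cV}$ is genuinely needed, both to identify the tensor object and in the identification $\theta\leftrightarrow 1_{Fw}$. Everything else is routine bookkeeping with the tensor--hom adjunctions in $\cV$ and $\cW$.
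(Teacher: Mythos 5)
Your part (1) is essentially the paper's approach: the paper simply cites \cite{Riehl14}*{Proposition~3.7.11} (via Proposition~\ref{pr:etcpreserv}), and your observation that the tensoring and cotensoring arguments are independent, so that ``or'' suffices, is a correct minor refinement. The extra unwinding you give toward the $\cW$-natural isomorphism is plausible flavour, though the $\cW$-naturality in $y$ is the nontrivial content that Riehl's theorem actually supplies (compare Remark~\ref{rem:UnenrichedTensorHom}).

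For part (2), your proof is correct in outline but takes a genuinely different route. Both you and the paper begin by taking underlying sets, using the adjunction $F\dashv G$, the tensor--hom adjunction, and Yoneda to produce $\unit_{\cV}\otimes w\cong Fw$. From there the paper proves the more general identity $v_1\otimes w\cong v_1\otimes Fw$ (not just $v_1=\unit$), specializes, and then deduces $F(w_1\otimes w_2)\cong Fw_1\otimes Fw_2$ from the associativity of the tensoring \cite{Riehl14}*{Lemma~3.7.7}, asserting at the end that the resulting isomorphism is natural, associative, and unital. You instead compare two $\cW$-natural transformations $\underline{G_*\cV}(Fw,-)\to\underline{\cW}(w,G-)$: the isomorphism $\Theta$ arising from the hypothesized tensoring, and the canonical map $\theta$ built from the oplax structure $\de$ on $F$ (the purely categorical part of the construction in Proposition~\ref{pr:prob9}, which carries no model-structure content and creates no circularity). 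Using the enriched Yoneda lemma, the fact that $U\theta$ is already the adjunction bijection (which does rely on counitality and $\ep$ being iso, as you say), and the fact that $U\Theta$ is a bijection, you conclude the classifying element $c_w$ is invertible and hence $\theta$ is invertible, hence $\de$ is invertible. This is a valid argument.

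What each approach buys: yours explicitly pins down that the \emph{given} oplax transformation $\de$ (the one obtained by doctrinal adjunction from the given lax structure on $G$) is an isomorphism, which is the precise content of ``$F$ is strong monoidal'' in this setting; the paper's argument constructs an isomorphism $F(w_1\otimes w_2)\cong Fw_1\otimes Fw_2$ via associativity but leaves implicit that it agrees with $\de$. The paper's route is more concrete and avoids the Yoneda bookkeeping. Two points in your write-up deserve expansion before it would read as a complete proof: the claim that $\theta$ is classified by $1_{Fw}$ (this is essentially the content of Lemma~\ref{lem:lastLem}, which you are in effect re-deriving), and the claim that $\theta$ is invertible if and only if $\de$ is (this follows by applying $\cW(w_1,-)$ to $\theta_{w_2,v}$ and inspecting the chain in \eqref{eq:IsoWeakAdj}, where the only non-invertible arrow is $\de^*$). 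Finally, the cotensored case is a dual argument rather than a literal reduction to the tensored one; the paper handles it the same way.
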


\begin{proof}
    \Eqref{item:StrongTensoring} See \cite{Riehl14}*{Proposition~3.7.11}.
    
	\Eqref{item:NeedStrong} Assume that $F(\unit_{\cW})\cong \unit_{\cV}$ and the $\cW$-category $G_*\cV$ admits a tensoring over $\cW$. We have the  following bijection for any $w \in \cW$ and $v_1, v_2 \in \cV$.
	\begin{align*}
		\Hom_{G_*\cV}(v_1\otimes w,v_2) & \cong \Hom_{\cW}(w,\underline{G_*\cV}(v_1,v_2)),\ \text{using the tensoring over $\cW$} \\
        \Hom_{\cV}(v_1 \otimes w,v_2) & \cong \Hom_{\cW}(w,G\underline{\cV}(v_1,v_2)),\ (G_*\cV)_0\cong\cV_0\ \text{since}\ F(\unit_{\cW})\cong \unit_{\cV} \\ %
		& \cong \Hom_{\cV}(Fw,\underline{\cV}(v_1,v_2)),\ \text{by the unenriched adjunction}\ F\dashv G \\
		& \cong \Hom_{\cV}(v_1\otimes Fw,v_2),\ \text{by the unenriched tensor-hom adjunction} \\
		\implies v_1 \otimes w & \cong v_1\otimes Fw,\ \text{by the unenriched Yoneda lemma applied to}\ \cV. %
	\end{align*}
	In particular for $v_1 = \unit_{\cV}, \unit_{\cV} \otimes w \cong \unit_{\cV} \otimes F(w) \cong F(w)$. Applying the associativity of the tensoring \cite{Riehl14}*{Lemma~3.7.7}, 
	we have the following for any $w_1,w_2 \in \cW$:
	\begin{equation*}
		\xymatrix @R-0.4pc {
		(\unit_{\cV} \otimes w_1) \otimes w_2 \ar[r]^-\cong \ar[d]_-\cong & \unit_{\cV} \otimes (w_1 \otimes w_2) \ar[d]^-\cong \\
			F(w_1) \otimes w_2 \ar[d]_-\cong & F(w_1\otimes w_2) \ar@{=>}[r] & F(w_1\otimes w_2) \cong F(w_1)\otimes F(w_2).\\
			F(w_1)\otimes F(w_2) \ar@{-->}[ur]_-{\textcolor{blue}{\therefore\ \cong}} & \\
		}
	\end{equation*}
	The isomorphism $F(w_1\otimes w_2) \cong F(w_1) \otimes F(w_2)$ is natural, associative, and unital. 

	The proof in the case of a cotensoring is similar.
\end{proof}

\begin{ex}\label{ex:DGcat}
Let $R$ be a commutative ring and $\cC$ a DG-category over $R$, i.e., a category enriched over $\Ch(R)$. By Lemma~\ref{lem:GoodTruncation}, the good truncation $\tau_{\geq 0} \colon \Ch(R) \to \Ch_{\geq 0}(R)$ has a strong monoidal left adjoint $\io$. By Proposition~\ref{pr:coten-strong}~\Eqref{item:StrongTensoring}, truncating each hom complex in $\cC$ yields a category $(\tau_{\geq 0})_* \cC$ that is enriched, tensored, and cotensored over $\Ch_{\geq 0}(R)$, with the same underlying category as $\cC$ (by Corollary~\ref{cor:StrongUnderlying}).

However, the $s\Mod{R}$-enriched category $\Ga_*(\tau_{\geq 0})_* \cC$ admits neither a tensoring nor a cotensoring over $s\Mod{R}$, by Proposition~\ref{pr:coten-strong}~\Eqref{item:NeedStrong}. Indeed, the left adjoint $N$ is oplax monoidal but not strong, i.e., the Alexander--Whitney map $\AW \colon N(A \ot B) \to N(A) \ot N(B)$ is not an isomorphism. It was observed in \cite{Lurie17}*{Warning~1.3.5.4} that the formula 
\[
X \ot A := X \ot N(A) \quad \text{for all } X \in \cC, \ A \in s\Mod{R}
\]
does not define a tensoring of $\Ga_*(\tau_{\geq 0})_* \cC$ over $s\Mod{R}$ for that reason.
\end{ex}

\section{Weak enriched adjunction}\label{sec:wea}

The proposition below gives a necessary and sufficient condition under which the lax monoidal adjunction $F \colon \cW \rla \cV \colon G$ lifts to a \mbox{$\cW$-adjunction} $F \colon \cW \rla G_* \cV \colon G$.

\begin{prop}\label{pr:prob8}
	The lax monoidal adjunction $F \colon \cW \rla \cV \colon G$ lifts to a \mbox{$\cW$-adjunction} $F \colon \cW \rla G_* \cV \colon G$ if and only if the left adjoint $F$ is strong monoidal.
\end{prop}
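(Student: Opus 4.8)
The plan is to treat the two implications separately: the ``if'' direction is essentially already established, and the ``only if'' direction will be reduced to Proposition~\ref{pr:coten-strong}~\Eqref{item:NeedStrong} by producing a tensoring of $G_*\cV$ over $\cW$ out of the hypothetical $\cW$-adjunction.

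\emph{If $F$ is strong monoidal.} Then by Proposition~\ref{pr:etcpreserv} the $\cW$-category $G_*\cV$ is canonically tensored and cotensored over $\cW$ (with $v \otimes w = v \otimes Fw$ and $v^w = v^{Fw}$), and the corollary immediately following Proposition~\ref{pr:etcpreserv} asserts precisely that, with respect to this structure on $G_*\cV$, the adjunction $F \dashv G$ is a $\cW$-adjunction. So this direction requires nothing new.

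\emph{Only if.} Suppose $F \dashv G$ lifts to a $\cW$-adjunction $F \colon \cW \rla G_*\cV \colon G$. Since the underlying ordinary adjunction of this $\cW$-adjunction must recover $F \dashv G$, in particular $(G_*\cV)_0 \cong \cV_0$, so Lemma~\ref{lem:sameunderlying} shows that $\ep \colon F(\unit_\cW) \to \unit_\cV$ is an isomorphism. Unwinding the definition of a $\cW$-adjunction, we have $\cW$-natural isomorphisms
\[
\underline{G_*\cV}(Fw, v) = G\!\left(\underline{\cV}(Fw, v)\right) \;\cong\; \underline{\cW}(w, Gv)
\]
in $\cW$, natural in $w \in \cW$ and $v \in \cV$. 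I claim this already forces $G_*\cV$ to be tensored over $\cW$, with tensoring defined by $v \otimes w := v \otimes Fw$, the tensor on the right being that of $\cV$. Indeed, by symmetry and the enriched tensor--hom adjunction in the closed symmetric monoidal category $\cV$ (Proposition~\ref{pr:Internalize}), there is a natural isomorphism $\underline{\cV}(v \otimes Fw, v') \cong \underline{\cV}(Fw, \underline{\cV}(v, v'))$ in $v'$; applying $G$ and then the displayed isomorphism (with the object there taken to be $\underline{\cV}(v, v')$) gives
\[
\underline{G_*\cV}(v \otimes Fw, v') \;\cong\; \underline{\cW}\!\left(w, \underline{G_*\cV}(v, v')\right),
\]
which is exactly the defining isomorphism of a tensoring of $G_*\cV$ over $\cW$. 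With $\ep$ an isomorphism and $G_*\cV$ tensored over $\cW$, Proposition~\ref{pr:coten-strong}~\Eqref{item:NeedStrong} applies and yields that $F$ is strong monoidal.

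The step needing the most care is the enriched naturality in the last paragraph: one must verify that every isomorphism used is $\cW$-natural (an isomorphism of $\cW$-functors in the variable $v'$), not merely a bijection of underlying hom-sets, and in particular that the $\cV$-enriched tensor--hom isomorphism survives the application of $G$ and the composition with the $\cW$-adjunction isomorphism. This is a routine check with the enriched Yoneda lemma and the properties of change of base, but it is where the real content sits. I would also want to pin down the bookkeeping behind the word ``lifts'', namely that a $\cW$-adjunction structure on $G_*\cV$ recovering $F \dashv G$ on underlying categories does force $(G_*\cV)_0 \cong \cV_0$, so that Lemma~\ref{lem:sameunderlying} may be invoked to conclude that $\ep$ is invertible.
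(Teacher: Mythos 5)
Your ``if'' direction is identical in substance to the paper's, which simply cites \cite{Riehl14}*{Corollary~3.7.12}; the corollary after Proposition~\ref{pr:etcpreserv} is a restatement of the same fact, so this is fine. For the ``only if'' direction you take a genuinely different route. The paper runs a direct chain of isomorphisms
\[
\underline{G_*\cV}(F(w\otimes w'),v)\cong\underline{\cW}(w\otimes w',Gv)\cong\cdots\cong\underline{G_*\cV}(Fw\otimes Fw',v)
\]
and invokes the enriched Yoneda lemma in $G_*\cV$ to conclude $F(w\otimes w')\cong Fw\otimes Fw'$. You instead first pin down $\ep\colon F(\unit_\cW)\to\unit_\cV$ as an isomorphism via Lemma~\ref{lem:sameunderlying}, then use the enriched tensor--hom adjunction of $\cV$ together with the hypothesized $\cW$-adjunction to manufacture a tensoring $v\otimes w:=v\otimes Fw$ of $G_*\cV$ over $\cW$, and hand off to Proposition~\ref{pr:coten-strong}~\Eqref{item:NeedStrong}. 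This buys you two things: (a) it reuses \ref{pr:coten-strong}~\Eqref{item:NeedStrong}, whose proof already contains the Yoneda-in-$\cV$ and associativity-of-tensoring bookkeeping, so you avoid redoing it; (b) it makes the unital half of ``strong monoidal'' explicit. The paper's proof as written only establishes that the comultiplication $\delta_{w,w'}$ is invertible and never returns to $\ep$, whereas strong monoidality requires both; your appeal to Lemma~\ref{lem:sameunderlying} supplies exactly that missing piece (and is also what makes the Yoneda conclusion land in $\cV$ rather than merely in $(G_*\cV)_0$). The one place I'd press you is the $\cW$-naturality check you yourself flag: the isomorphism $G\underline{\cV}(v\otimes Fw,v')\cong G\underline{\cV}(Fw,\underline{\cV}(v,v'))$ is $G_*$ applied to a $\cV$-natural transformation, which gives $\cW$-naturality of $\cW$-functors into $G_*\underline{\cV}$, whereas a tensoring of $G_*\cV$ over $\cW$ needs $\cW$-naturality of $\cW$-functors into $\underline{\cW}$; reconciling the two uses the canonical $\cW$-functor $G_*\underline{\cV}\to\underline{\cW}$ coming from the lax monoidal structure on $G$, and the compatibility $\underline{G_*\cV}(a,-)=\hat G\circ G_*\underline{\cV}(a,-)$. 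That is indeed routine but is the load-bearing identification, and it is also the step where the paper's own chain is tacitly relying on the same facts.
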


\begin{proof}
($\implies$) Let us show that the functor $F$ satisfies the natural isomorphism $F(w \otimes w') \cong Fw \otimes Fw'$, for any $w, w' \in \cW$.
 \begin{align*}
     \underline{G_*\cV}(F(w\otimes w'),v)       
& \cong \underline{\cW}(w\otimes w',Gv)\ \text{by enriched }F\dashv G \\
& \cong \underline{\cW}(w, \underline{\cW}(w',Gv))\ \text{by the tensor-hom adjunction on }\cW \\
& \cong \underline{\cW}(w,\underline{G_*\cV}(Fw',v))\ \text{by enriched }F\dashv G \\
& := \underline{\cW}(w,G\underline{\cV}(Fw',v))\ \text{by definition} \\
& \cong \underline{G_*\cV}(Fw, \underline{\cV}(Fw',v))\ \text{by  enriched }F\dashv G \\
& \cong \underline{G_*\cV}(Fw\otimes Fw',v)\ \text{by the tensor-hom adjunction on }\cV.
 \end{align*}
 Therefore, $F(w\otimes w')\cong Fw\otimes Fw'$ by the enriched Yoneda lemma and the naturality is given by the naturality of the unenriched adjunction $F\dashv G$. \\
($\impliedby$) The converse is found in \mbox{\cite{Riehl14}*{Corollary~3.7.12}}. 
\end{proof}

In the case of a weak monoidal Quillen adjunction $F \colon \cW \rla \cV \colon G$, the left adjoint $F$ is not necessarily strong. The adjunction $F \dashv G$ will lift to a weak version of the $\cW$-adjunction $F \colon \cW \rla G_*\cV \colon G$ defined as follows.

\begin{defn}
    Let $\cC$ and $\cD$ be $\cV$-model categories. A \Def{weak $\cV$-adjunction} $F \colon \cC \rla \cD \colon G$ consists of a natural map $\underline{\cD}(Fc,d) \to \underline{\cC}(c,Gd)$ in $\cV$ that is a weak equivalence for any cofibrant $c \in \cC$ and fibrant $d \in \cD$.
\end{defn}

\begin{prop}\label{pr:prob9}\label{pr:WeakAdjunction}
	Let $\cV$ and $\cW$ be symmetric monoidal model categories. Any weak monoidal Quillen adjunction $F \colon \cW \rla \cV \colon G$ lifts to a weak $\cW$-adjunction $F \colon \cW \rla G_*\cV \colon G$.
\end{prop}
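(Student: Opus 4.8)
The plan is to construct the comparison map $\underline{\cW}(w, Gv) \to \underline{\cW}(w', Gv)$... no wait, let me reconsider the direction. We need a natural map $\underline{G_*\cV}(Fw, v) \to \underline{\cW}(w, Gv)$ in $\cW$, i.e., a natural map $G\underline{\cV}(Fw,v) \to \underline{\cW}(w,Gv)$, that is a weak equivalence whenever $w$ is cofibrant in $\cW$ and $v$ is fibrant in $\cV$.

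First I would build the map. Mimicking the calculation in the proof of Proposition~\ref{pr:prob8}, but now only asking for a map rather than an isomorphism at the step where strong monoidality was used, the oplax counit $\ep$ and oplax transformation $\de$ of $F$ give us the wrong-way maps we need. Concretely, I would use the enriched tensor-hom adjunction on $\cV$ together with the lax monoidal structure $\mu$ on $G$: one gets $G\underline{\cV}(Fw, v)$, and by applying the unit $w \to GFw$ of the adjunction and the lax structure maps, assemble a map into $\underline{\cW}(w, Gv)$. Equivalently, by the (unenriched) adjunction $F \dashv G$ together with Proposition~\ref{pr:Internalize}, it suffices to produce a natural transformation $G\underline{\cV}(Fw,v) \ot w \to Gv$ in... hmm, that uses a tensoring over $\cW$ which $G_*\cV$ need not have. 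Cleaner: the $\cV$-functor $\underline{\cV}(Fw, -) \colon \ul{\cV} \to \ul{\cV}$ composed with change of base along $G$ gives a $\cW$-functor, and the evaluation/counit data of the enriched adjunction $F \dashv G$ at the level of $G_*\cV$ (which exists as a weak $\cW$-adjunction, circularly) — instead I will define the map directly via the formula from Lemma~\ref{lem:Doctrinal}: starting from $\id_{\cW} \Ra GF$ and $\mu$, the composite
\[
\underline{\cW}(w,w') \to \underline{\cW}(GFw, GFw') \to G\underline{\cV}(Fw, Fw')
\]
realizes $G_*F$ as lax; dualizing (using $\de$ and $\ep$ to go the other way) produces $G\underline{\cV}(Fw,v) \to \underline{\cW}(w, Gv)$ after setting $v$ and using the unit of $F \dashv G$. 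I would write this out as the composite
\[
G\underline{\cV}(Fw, v) \to G\underline{\cV}(Fw, v) \qquad\text{(to be filled in carefully)}
\]
— the precise diagram is the $\cW$-enriched analogue of the doctrinal-adjunction square, and naturality in $w$ and $v$ is automatic from naturality of $\eta$, $\ep$, $\de$, $\mu$.

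Next I would check it is a weak equivalence on the nose for $w$ cofibrant and $v$ fibrant. Here I would reduce to the two defining conditions (i) and (ii) of a weak monoidal Quillen adjunction (Definition~\ref{def:wmqa}), exactly as Schwede--Shipley do for the unenriched comparison. Since $\cV$ and $\cW$ are symmetric monoidal model categories, I can use the pushout-product/pullback-power axioms and Lemma~\ref{lem:cofibcoten} to know that the relevant hom-objects are fibrant, and then test the map against representables using the enriched Yoneda lemma together with Proposition~\ref{pr:Saturation}: a map in $\cV$ is a weak equivalence iff it induces bijections on $[a,-]$ for all cofibrant $a$. Unwinding, $[a, G\underline{\cV}(Fw,v)] \cong [Fa, \underline{\cV}(Fw, v)] \cong [\text{something}, v]$ and $[a, \underline{\cW}(w,Gv)] \cong [\text{something}, Gv]$, and the comparison becomes the unenriched weak monoidal comparison map for $F(a \ot w)$ versus $Fa \ot Fw$ (together with the unit condition when the unit replacement is involved), which is a weak equivalence by hypothesis (i) when $a, w$ are cofibrant, hence $F$ preserves them.

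The main obstacle I expect is bookkeeping: writing the comparison map so that it is manifestly natural and manifestly compatible with the enriched composition (so that it genuinely is a map of the relevant enriched structures, not just a map of hom-objects), and then cleanly reducing the "weak equivalence" claim to conditions (i)--(ii) without getting tangled in the non-invertibility of $\de$ and $\mu$. In particular, care is needed because $G$ is only lax (so $G$ applied to a weak equivalence between fibrant objects is a weak equivalence, using that $G$ is right Quillen, but $G$ of an arbitrary map need not be), so I must make sure every object I apply $G$ to is fibrant — which is where Lemma~\ref{lem:cofibcoten}(3) and the fibrancy of $v$ get used. A secondary subtlety is the precise role of the hypothesis $\ep \colon F(\unit_\cW) \xrightarrow{\cong} \unit_\cV$: while not assumed in this proposition's statement, the cleanest proof may factor through the unit-preservation consequences of (ii), so I would keep the argument uniform by invoking only (i) and (ii) directly.
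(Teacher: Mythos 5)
Your overall plan matches the paper's: construct a comparison map $\phi_{w,v}\colon \underline{G_*\cV}(Fw,v)\to\underline{\cW}(w,Gv)$ from the adjunction data and the oplax transformation $\delta$, then prove it is a weak equivalence for $w$ cofibrant and $v$ fibrant by testing against cofibrant representables via saturation (Proposition~\ref{pr:Saturation}) and reducing to condition (i) of Definition~\ref{def:wmqa}. However, you never actually produce the map: your construction ends in a placeholder ``(to be filled in carefully)'' after several aborted attempts, and the detour through Lemma~\ref{lem:Doctrinal} is not quite what is needed. The paper's clean route is a Yoneda-style construction: assemble a transformation of set-valued functors, natural in a dummy variable $w_1\in\cW$,
\[
\cW\bigl(w_1, G\underline{\cV}(Fw_2,v)\bigr) \cong \cV(Fw_1\otimes Fw_2, v) \xrightarrow{\ \delta^*\ } \cV\bigl(F(w_1\otimes w_2), v\bigr) \cong \cW\bigl(w_1, \underline{\cW}(w_2, Gv)\bigr),
\]
using twice the adjunction $F\dashv G$ and twice the tensor--hom adjunction; then set $w_1 = \underline{G_*\cV}(Fw_2,v)$ and take the image of the identity to obtain $\phi_{w_2,v}$. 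This is the content you gesture at but do not pin down. Two smaller points: you suggest condition (ii) of the weak monoidal Quillen pair enters the weak-equivalence check, but in fact only condition (i) is used (the unit condition (ii) plays no role in this proposition); and when you pass to the homotopy category, you need not just that $\underline{\cV}(Fw_2,v)$ is fibrant (Lemma~\ref{lem:cofibcoten}) but also the closed monoidal structure on $\Ho(\cV)$ and $\Ho(\cW)$ from Proposition~\ref{pr:Hocot} to make the derived tensor--hom isomorphisms legitimate.
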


\begin{proof}
    For any $w_1 \in \cW$, any cofibrant $w_2 \in \cW$ and any fibrant $v \in \cV$, we have  the following.
    \begin{align}
    \cW(w_1,\underline{G_*\cV}(Fw_2,v)) & := \cW(w_1,G\underline{\cV}(Fw_2,v)),\ \text{by definition} \nonumber \\
                           & \cong \cV(Fw_1,\underline{\cV}(Fw_2,v)),\ \text{by the adjunction}\ F\dashv G \nonumber \\
                           & \cong \cV(Fw_1\otimes Fw_2, v),\ \text{by the tensor-hom adjunction} \nonumber \\
                           & \xrightarrow[]{\delta^*} \cV(F(w_1\otimes w_2), v),\ \text{$\delta$ the comultiplication map of $F$} \nonumber \\
                           & \cong \cW(w_1\otimes w_2, Gv),\ \text{by the adjunction}\ F\dashv G \nonumber \\
                           & \cong \cW(w_1, \underline{\cW}(w_2, Gv)),\ \text{by the tensor-hom adjunction}. \label{eq:IsoWeakAdj}
  \end{align}
  For $w_1 = \underline{G_*\cV}(Fw_2,v)$, the image of the identity gives us the comparison map
  \begin{equation*}
      \underline{G_*\cV}(Fw_2,v)\xrightarrow[]{\phi_{w_2,v}} \underline{\cW}(w_2, Gv).
  \end{equation*}
  For $w_1,w_2$ cofibrant in $\cW$, $F(w_1\otimes w_2)\xrightarrow[\sim]{\delta}Fw_1\otimes Fw_2$ is a weak equivalence in $\cV$. Then applying $\underline{\cV}(-,v)$, with fibrant $v \in \cV$, we obtain the following weak equivalence
  \begin{equation*}
     \underline{\cV}(Fw_1\otimes Fw_2,v) \xrightarrow[\sim]{\delta^*} \underline{\cV}(F(w_1\otimes w_2),v),
  \end{equation*}
  since $\underline{\cV}(-,v)$, with $v$ fibrant, preserves weak equivalences between cofibrant objects in $\cV$ \cite{Riehl14}*{Lemma~9.2.3}.

  We want to show that the map $\phi_{w_2,v}$ constructed above is a weak equivalence when $w_2 \in \cW$ is cofibrant and $v\in \cV$ is fibrant. By saturation (Proposition~\ref{pr:Saturation}), %
  it suffices to show that the induced map 
  $[w_1,\underline{G_*\cV}(Fw_2,v)] \xrightarrow[]{(\phi_{w_2,v})_*} [w_1,\underline{\cW}(w_2, Gv)]$
  of hom-sets in $\Ho(\cW)$ is a bijection for any cofibrant $w_1 \in \cW$. By Proposition~\ref{pr:Hocot}, $\Ho(\cV)$ and $\Ho(\cW)$ are closed symmetric monoidal categories, which gives the following:
\begin{align*}
    [w_1,\underline{G_*\cV}(Fw_2,v)] & := [w_1,G\underline{\cV}(Fw_2,v)],\ \text{by definition} \\
                                         & \cong [w_1,\mathbf{R}G\left(\underline{\cV}(Fw_2,v)\right)],\ \text{since $\underline{\cV}(Fw_2,v)$ is fibrant} \\
                                         & \cong [\mathbf{L}F(w_1),\underline{\cV}(Fw_2,v)],\ \text{by the derived adjunction} \\
                                         & \cong [\mathbf{L}F(w_1)\otimes^{\mathbf{L}} F(w_2),v],\ \text{by the derived tensor-hom in}\ \Ho(\cV) \\
                                         & \cong [F(w_1)\otimes F(w_2),v],\ \text{since $w_1$ and $w_2$ are cofibrant} \\
                                         & \xrightarrow[\cong]{\delta^*} [F(w_1\otimes w_2),v],\ \text{since $\delta$ is a weak equivalence} \\
                                         & \cong [\mathbf{L}F(w_1\otimes w_2),v],\ \text{since $w_1\otimes w_2$ is cofibrant} \\
                                         & \cong [w_1\otimes w_2,\mathbf{R}G(v)],\ \text{by the derived adjunction} \\
                                         & \cong [w_1, \underline{\cW}(w_2, \mathbf{R}G(v))],\ \text{by the derived tensor-hom in}\ \Ho(\cW) \\
                                         & \cong [w_1, \underline{\cW}(w_2, Gv)],\ \text{since $v$ is fibrant},
\end{align*}
where $\mathbf{R}G$ and $\mathbf{L}F$ are respectively the right and the left derived functor of the adjunction \mbox{$F \dashv G$}.
\end{proof}

The following lemma is used in the proof of Proposition~\ref{pr:PropW_ten_coPreserv}.

\begin{lem}\label{lem:lastLem} 
Consider the map $\cW(w_1,\underline{G_*\cV}(Fw_2,v))\to \cW(w_1, \underline{\cW}(w_2,Gv))$ constructed in Equation~\eqref{eq:IsoWeakAdj}. In the special case $w_1 = Gv$ and $w_2 = \unit_{\cW}$, we have
\begin{align*}
    \cW(Gv,\underline{G_*\cV}(F\unit_{\cW},v)) & \to \cW(Gv, \underline{\cW}(\unit_{\cW},Gv)) \\
    G(\varepsilon^*\circ \eta) & \mapsto \Tilde{\eta},
\end{align*}
with $\xymatrix{v \ar[r]^-{\eta}_-\cong & \underline{\cV}(\unit_{\cV},v) \ar[r]^-{\varepsilon^*} & \underline{\cV}(F\unit_\cW, v)}$ and where $\tilde{\eta} \colon Gv \to \ul{\cW}(\unit_{\cW}, Gv)$ denotes the canonical isomorphism in $\cW$.
\end{lem}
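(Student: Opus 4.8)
The plan is to establish the claimed correspondence by tracing the element $G(\ep^* \circ \eta)$ forward through the six steps of the chain~\eqref{eq:IsoWeakAdj}, specialized to $w_1 = Gv$ and $w_2 = \unit_{\cW}$, and checking that the output is $\tilde\eta$. To keep notation straight — in the statement, $\eta$ denotes the canonical isomorphism $v \cong \ul{\cV}(\unit_{\cV},v)$ and $\ep$ the oplax counit $\ep \colon F\unit_{\cW} \to \unit_{\cV}$ — I would write $\zeta \colon \id_{\cW} \Ra GF$ and $\xi \colon FG \Ra \id_{\cV}$ for the unit and counit of the adjunction $F \dashv G$. Two standard facts do most of the work: (a) the canonical isomorphism $v \cong \ul{\cV}(\unit_{\cV},v)$ (and likewise in $\cW$) is by definition the tensor--hom transpose of the right unitor $\rho_v$; and (b) the triangle identity $G(\xi_v) \circ \zeta_{Gv} = \id_{Gv}$.

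Carrying out the trace: after the definitional identification $\ul{G_*\cV}(F\unit_{\cW},v) := G\ul{\cV}(F\unit_{\cW},v)$ and the $F\dashv G$ transpose, naturality of $\xi$ turns $G(\ep^*\circ\eta)$ into the composite $FGv \xrightarrow{\xi_v} v \xrightarrow{\eta} \ul{\cV}(\unit_{\cV},v) \xrightarrow{\ep^*} \ul{\cV}(F\unit_{\cW},v)$ (Steps 1--2). Taking the tensor--hom untranspose over $\cV$, and using fact (a) together with the observation that $\ep^* = \ul{\cV}(\ep,v)$ untransposes to precomposition by $\ep$ in the tensor variable, this becomes $FGv \ot F\unit_{\cW} \xrightarrow{\id\,\ot\,\ep} FGv\ot\unit_{\cV} \xrightarrow{\rho_{FGv}} FGv \xrightarrow{\xi_v} v$ (Step 3). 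Precomposing with $\de_{Gv,\unit_{\cW}} \colon F(Gv\ot\unit_{\cW}) \to FGv\ot F\unit_{\cW}$, the counitality axiom for the oplax monoidal structure on $F$ identifies $\rho_{FGv}\circ(\id\,\ot\,\ep)\circ\de_{Gv,\unit_{\cW}}$ with $F(\rho_{Gv})$, so we are left with $F(Gv\ot\unit_{\cW}) \xrightarrow{F(\rho_{Gv})} FGv \xrightarrow{\xi_v} v$ (Step 4). Cotransposing along $F\dashv G$, naturality of $\zeta$ and then the triangle identity (b) collapse this to the right unitor $\rho_{Gv} \colon Gv\ot\unit_{\cW} \to Gv$ (Step 5), and finally the tensor--hom transpose over $\cW$ carries $\rho_{Gv}$ to the canonical isomorphism $\tilde\eta \colon Gv \to \ul{\cW}(\unit_{\cW},Gv)$ by fact (a) again (Step 6). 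This is the claimed image.

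I expect the only genuinely non-formal point to be Step 4: the map $\de^*$ is the one step of~\eqref{eq:IsoWeakAdj} that is not invertible, and it is precisely the counitality axiom of the oplax monoidal structure on $F$ — equivalently, the defining formula for the lax unit of $G$ in Lemma~\ref{lem:Doctrinal} — that accounts for it. Everything else is careful bookkeeping: naturality of $\zeta$ and $\xi$, the triangle identities, bifunctoriality of $\ot$, and the identification of the canonical isomorphisms with transposes of unitors. The other pitfall is purely notational, since $\eta$ and $\ep$ are overloaded between the adjunction data and the unitor/oplax-counit data; keeping the two sets of symbols disjoint (as above) avoids sign- and direction-errors. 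As a byproduct, the computation shows $\phi_{\unit_{\cW},v}\circ G(\ep^*\circ\eta) = \tilde\eta$ for the comparison map $\phi_{\unit_{\cW},v}$ of Proposition~\ref{pr:prob9}.
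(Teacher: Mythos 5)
Your proposal is correct and follows essentially the same route as the paper's proof: both trace $G(\ep^*\circ\eta)$ through the six maps of~\eqref{eq:IsoWeakAdj} step by step, both isolate the $\delta^*$ step as the one place requiring the counitality equation for the oplax monoidal structure on $F$, and both close with the triangle identity for $F\dashv G$ and the tensor--hom transpose of the unitor. The only differences are cosmetic — your relabeling of the adjunction (co)unit as $\zeta,\xi$ to avoid the notational clash, and a slightly different factoring of the middle composite (you split the paper's $\rho_v\circ(\epsilon\otimes\varepsilon)$ as $\xi_v\circ\rho_{FGv}\circ(\id\otimes\ep)$ via naturality of the unitor), which the paper instead packages as $\mathrm{ev}\circ(\eta\otimes\id)\circ(\epsilon\otimes\varepsilon)$.
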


\begin{proof}
    From the series of maps in \eqref{eq:IsoWeakAdj}, diagram chasing gives the following:
    \begin{equation*}
        \xymatrix @R-0.7pc {
             & & \cW(Gv,G\underline{\cV}(F(\unit_{\cW}),v)) & & \ni & G(\varepsilon^*\circ \eta) \ar@{|->}[d] \\
             & \ar[r]^-{\cong} &  \cV(FGv,\underline{\cV}(F(\unit_{\cW}),v)) &  & \ni & \varepsilon^*\circ \eta \circ \epsilon_{v} \ar@{|->}[d] \\
                                              & \ar[r]^-{\cong} &  \cV(FGv\otimes F(\unit_{\cW}), v) &  & \ni & \mathrm{ev}\circ (\eta\otimes \mathrm{id}) \circ (\epsilon\otimes \varepsilon) \ar@{|->}[d] \\
                                              & \ar[r]^-{\delta^*} & \cV(F(Gv\otimes \unit_{\cW}), v) \ar[d]_-{\cong} & \ar[l]_-{\cong}^-{F(\Tilde{\rho})^*} \cV(FGv, v) \ar[d]_-{\cong} & \ni &  \epsilon \ar@{|->}[d] \\
                                              & \ar[r]^-{\cong} & \cW(Gv\otimes \unit_{\cW}, Gv) & \ar[l]_-{\cong}^-{\Tilde{\rho}^*} \cW(Gv, Gv) & \ni &  \mathrm{id}\ar@{|->}[d] \\
                                              & \ar[r]^-{\cong} & \cW(Gv, \underline{\cW}(\unit_{\cW}, Gv)) &  & \ni & \Tilde{\eta}. 
        }
    \end{equation*}
    The step with $\delta^*$ relied on the counitality equation of the oplax monoidal functor $F$.
\end{proof}

The change of enrichment along a weak monoidal Quillen adjunction preserves the SM7 axiom, whenever the left adjoint preserves the tensor unit as stated by the following lemma.

\begin{lem}\label{lem:G*SM7}
    Let $\cV$ and $\cW$ be symmetric monoidal model categories. For any weak monoidal Quillen adjunction $F \colon \cW \rla \cV \colon G$ such that $\ep \colon F(\unit_{\cW}) \ral{\cong} \unit_{\cV}$ is an isomorphism, the change of enrichment along $G$ preserves SM7.
\end{lem}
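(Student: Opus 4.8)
**Proof proposal for Lemma~\ref{lem:G*SM7}.**

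The plan is to reduce the SM7 axiom for the $\cW$-enrichment $G_*\cC$ to the SM7 axiom already known for the $\cV$-enrichment $\cC$. Concretely, fix a cofibration $i\colon a\to b$ and a fibration $p\colon x\to y$ in $\cC_0$; note that by Lemma~\ref{lem:sameunderlying} the hypothesis $\ep\colon F(\unit_\cW)\xrightarrow{\cong}\unit_\cV$ guarantees $(G_*\cC)_0\cong\cC_0$, so ``cofibration'' and ``fibration'' are unambiguous. We must show that the pullback-power map
\[
(i^*,p_*)\colon \underline{G_*\cC}(b,x)\to \underline{G_*\cC}(a,x)\times_{\underline{G_*\cC}(a,y)}\underline{G_*\cC}(b,y)
\]
is a fibration in $\cW$, acyclic if $i$ or $p$ is. Since $\underline{G_*\cC}(-,-)=G(\underline{\cC}(-,-))$ by definition, and $G$ is a right adjoint (hence preserves pullbacks), the target of this map is $G$ applied to the pullback $\underline{\cC}(a,x)\times_{\underline{\cC}(a,y)}\underline{\cC}(b,y)$, and the map itself is exactly $G$ applied to the corresponding $\cV$-pullback-power map. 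So the statement becomes: $G$ sends the $\cV$-pullback-power map $\underline{\cC}(b,x)\to \underline{\cC}(a,x)\times_{\underline{\cC}(a,y)}\underline{\cC}(b,y)$ to a (resp.\ acyclic) fibration in $\cW$.

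Now I would invoke that $\cC$ is a $\cV$-model category, so its SM7 axiom (stated in the excerpt right after Definition~\ref{def:def-emc}) tells us that this $\cV$-pullback-power map is a fibration in $\cV$, acyclic if $i$ or $p$ is. Since $F\dashv G$ is a Quillen adjunction, the right adjoint $G$ preserves fibrations and acyclic fibrations. Therefore $G$ applied to this map is a fibration in $\cW$, acyclic whenever $i$ or $p$ is — which is precisely the SM7 axiom for $G_*\cC$. This completes the argument.

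The main subtlety — and the only place the hypothesis on $\ep$ is really used beyond identifying underlying categories — is making sure the SM7 axiom as phrased for $G_*\cC$ genuinely is ``$G$ of the SM7 map for $\cC$.'' This rests on two clean facts: (i) the hom-objects transform as $\underline{G_*\cC}(-,-)=G\,\underline{\cC}(-,-)$ by the very definition of change of enrichment, and (ii) $G$, being a right adjoint, commutes with the finite limit (a pullback of a cospan) appearing in the target, and with the induced comparison map. Once (i) and (ii) are in place, everything else is the formal interplay ``right Quillen functor preserves fibrations and acyclic fibrations,'' so there is no real obstacle; the proof is essentially a one-line reduction, and I expect the write-up to consist mainly of spelling out this identification carefully.
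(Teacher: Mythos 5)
Your proposal is correct and follows essentially the same argument as the paper's proof: identify $(G_*\cC)_0 \cong \cC_0$ via Lemma~\ref{lem:sameunderlying}, observe that the pullback-power map for $G_*\cC$ is $G$ applied to the pullback-power map for $\cC$ (using $\underline{G_*\cC} = G\,\underline{\cC}$ and the fact that $G$, as a right adjoint, preserves the pullback), and then use that $G$ is right Quillen to conclude. The paper records this reduction as a small commuting diagram of identifications, but the content and the order of steps are the same as yours.
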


\begin{proof}
    Consider a $\cV$-enriched model category $\cC$ satisfying SM7. The category $G_*\cC$ is a $\cW$-enriched category since $G$ is a lax monoidal functor. Also, $G_*\cC$ has the same underlying category as $\cC$ by Lemma~\ref{lem:sameunderlying}, i.e., $(G_*\cC)_0 = \cC_0$, since \mbox{$F(\unit_{\cW}) \cong \unit_{\cV}$}. In particular, the underlying category $(G_*\cC)_0$ of $G_*C$ has the same model structure as $\cC_0$. Let us show that the model category $G_*\cC$ satisfies SM7. Let $i \colon a \hookrightarrow b$ be a cofibration and $p \colon x \twoheadrightarrow y$ be a fibration in $(G_*\cC)_0 = \cC_0$. We have
	\begin{equation*}
		\xymatrix @R-0.4pc {
			\underline{G_*\cC}(b,x) \ar[r]^-{(i^*,p_*)} \ar@{=}[dd]_-{\text{\textcircled1}} & \underline{G_*\cC}(a,x)\times_{\underline{G_*\cC}(a,y)} \underline{G_*\cC}(b,y) \ar@{=}[d]^-{\text{\textcircled3}} \\
		& G\underline{\cC}(a,x)\times_{G\underline{\cC}(a,y)} G\underline{\cC}(b,y) \\
			G\underline{\cC}(b,x) \ar[r]_-{G(i^*,p_*)} & G \left( \underline{\cC}(a,x)\times_{\underline{\cC}(a,y)} \underline{\cC}(b,y)\right) \ar[u]^-{\cong}_-{\text{\textcircled2}}
		}
	\end{equation*}
where the \textcircled{1} and \textcircled{3} are given by the definition of the change of enrichment. The isomorphism \textcircled{2} holds since $G$ preserves limits as a right adjoint functor. Since the $\cV$-enriched model category $\cC$ satisfies SM7, the map 
 \[(i^*,p_*) \colon \ul{\cC}(b,x) \to \ul{\cC}(a,x) \times_{\ul{\cC}(a,y)} \ul{\cC}(b,y)\] 
 is a fibration. Hence $G(i^*,p_*)$ is also a fibration since $G \colon \cV \to \cW$ is a right Quillen functor. The same argument works for acyclic fibrations.
\end{proof}

\section{Weak tensoring and weak cotensoring}\label{sec:swtwc}

\begin{defn}\label{def:wtwc}
	Let $\cC$ be a $\cV$-enriched model category satisfying SM7. 
 \begin{enumerate}
    \item A \Def{weak tensoring} of $\cC$  over $\cV$ is a bifunctor $- \otimes - \colon \cC \times \cV \to \cC$, satisfying the external pushout-product axiom, together with a natural map 
     \begin{equation}
         \varphi_{v,x,y} \colon \underline{\cC}(x \otimes v,y) \to \underline{\cV}(v,\underline{\cC}(x,y))
     \end{equation}
     which is a weak equivalence in $\cV$ for cofibrant $v \in \cV$, cofibrant $x \in \cC$, and fibrant $y \in \cC$, and a natural map $\rho_x \colon x \otimes \unit \to x$, for any $x \in \cC$, making the following commute:
     \begin{equation*}
         \xymatrix{
         \underline{\cC}(x\otimes \unit, y) \ar[d]_-{\varphi_{\unit,x,y}} & \ar[l]_-{\rho_x^*} \underline{\cC}(x,y) \ar[dl]^-\cong \\ 
         \underline{\cV}(\unit,\underline{\cC}(x,y)).}
     \end{equation*}
     \item A \Def{weak cotensoring} of $\cC$  over $\cV$ is a bifunctor $(-)^-\colon\cC\times \cV^{\mathrm{op}} \to \cC$, satisfying the external pullback-power axiom, together with natural map 
     \begin{equation}
         \psi_{v,x,y}\colon\underline{\cC}(x,y^v)\to \underline{\cV}(v,\underline{\cC}(x,y))
     \end{equation}
     which is a weak equivalence in $\cV$ for cofibrant $v \in \cV$, cofibrant $x \in \cC$ and fibrant $y \in \cC$, and a natural map $\eta_y \colon y \to y^\unit$, for any $y \in \cC$, making the following diagram commute:
     \begin{equation*}
         \xymatrix{
         \underline{\cC}(x, y^\unit) \ar[d]_-{\psi_{\unit,x,y}} & \ar[l]_-{(\eta_y)_*} \underline{\cC}(x,y) \ar[dl]^-\cong \\
         \underline{\cV}(\unit,\underline{\cC}(x,y)).
         }
     \end{equation*}
    \item A weak tensoring is said to be:
 \begin{itemize}
    \item \Def{set-compatible} if the structure map $\varphi_{v,x,y}$ induces a bijection of underlying sets
    \[
    U\varphi_{v,x,y} \colon U\underline{\cC}(x \otimes v,y) \ral{\cong} U\underline{\cV}(v,\underline{\cC}(x,y))
    \]
    for all $v \in \cV$ and $x,y \in \cC$.
    \item \Def{unital} if the natural map $\rho_x \colon x \otimes \unit \to x$ is an isomorphism for all $x \in \cC$.
\end{itemize}
Similarly for a weak cotensoring.
 \end{enumerate}
\end{defn}

\begin{ex}\label{ex:ct2wct}
    Tensoring and cotensoring are a special kind of weak tensoring and weak cotensoring, respectively, 
    which are moreover set-compatible and unital.
\end{ex}

\begin{rem}
    Given a weak tensoring that is set-compatible, the natural bijection $\cC(x \otimes v,y) \cong \cV(v,\underline{\cC}(x,y))$ yields an unenriched adjunction $x \otimes - \colon \cV \rla \cC \colon \underline{\cC}(x,-)$. However, the left adjoint $x \otimes -$ need not lift to a $\cV$-functor, cf.\ Remark~\ref{rem:UnenrichedTensorHom}.
\end{rem}

The change of enrichment along a weak monoidal Quillen adjunction preserves the weak tensoring and the weak cotensoring, whenever the left adjoint preserves the tensor unit as stated by the following result.

\begin{prop}\label{pr:PropW_ten_coPreserv}
	Let $\cV$ and $\cW$ be symmetric monoidal model categories. For any weak monoidal Quillen adjunction $F \colon \cW \rla \cV \colon G$ such that $\ep \colon F(\unit_{\cW}) \ral{\cong} \unit_{\cV}$ is an isomorphism, the change of enrichment $G_*$ along the right adjoint $G \colon \cV \to \cW$ preserves the weak tensoring and the weak cotensoring. 
    
    If moreover the weak tensoring on $\cC$ is set-compatible (resp.\ unital), then so is the induced weak tensoring on $G_*\cC$; likewise for the weak cotensoring.
\end{prop}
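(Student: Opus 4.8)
The plan is to equip $G_*\cC$ with the weak tensoring given by the formula $x \otimes w := x \otimes Fw$ for $x \in \cC$ and $w \in \cW$ (the $\cV$-tensoring of $\cC$ evaluated at $Fw \in \cV$), mirroring Proposition~\ref{pr:coten-strong}~\Eqref{item:StrongTensoring}, and to build its structure map by splicing $G$ applied to the old structure map with the comparison map of the weak $\cW$-adjunction of Proposition~\ref{pr:WeakAdjunction}. First I would record that $G_*\cC$ is a $\cW$-enriched model category satisfying SM7: it is $\cW$-enriched because $G$ is lax monoidal, it has the same underlying category, and hence the same model structure, as $\cC$ by Lemma~\ref{lem:sameunderlying} (using that $\ep$ is an isomorphism), and it satisfies SM7 by Lemma~\ref{lem:G*SM7}; so the notion of weak tensoring over $\cW$ applies to it. Writing $\phi_{w,v} \colon \underline{G_*\cV}(Fw,v) \to \underline{\cW}(w,Gv)$ for the comparison map constructed in the proof of Proposition~\ref{pr:WeakAdjunction}, I define $\varphi^{\cW}_{w,x,y} \colon \underline{G_*\cC}(x \otimes Fw, y) \to \underline{\cW}(w, \underline{G_*\cC}(x,y))$ to be the composite $\phi_{w,\underline{\cC}(x,y)} \circ G\varphi_{Fw,x,y}$, using $\underline{G_*\cC}(x \otimes Fw, y) = G\underline{\cC}(x \otimes Fw, y)$ and $\underline{G_*\cC}(x,y) = G\underline{\cC}(x,y)$, and set $\rho^{\cW}_x \colon x \otimes F\unit_{\cW} \to x$ to be $\rho_x \circ (x \otimes \ep)$. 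Dually I put $x^w := x^{Fw}$, with $\psi^{\cW}_{w,x,y} := \phi_{w,\underline{\cC}(x,y)} \circ G\psi_{Fw,x,y}$ and $\eta^{\cW}_y := (y^\ep) \circ \eta_y$. Naturality of $\varphi^{\cW}$ and $\psi^{\cW}$ in all variables is routine from naturality of $\varphi$, $\psi$, and $\phi$.

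Next I would check the external pushout-product axiom for $- \otimes F(-) \colon \cC \times \cW \to \cC$. Since $F$ is a left Quillen functor it sends (acyclic) cofibrations in $\cW$ to (acyclic) cofibrations in $\cV$, and the pushout-product of a cofibration $i$ in $\cC_0$ with $Fk$, for $k$ a cofibration in $\cW$, is precisely the external pushout-product of $i$ with $Fk$ for the $\cV$-tensoring on $\cC$, which is governed by the axiom already assumed on $\cC$; dually, $F$ left Quillen together with the external pullback-power axiom on $\cC$ gives the external pullback-power axiom for $(-)^{F(-)}$. Then I would verify that $\varphi^{\cW}_{w,x,y}$ is a weak equivalence in $\cW$ whenever $w \in \cW$ is cofibrant, $x \in \cC$ is cofibrant, and $y \in \cC$ is fibrant. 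In that case $Fw$ is cofibrant in $\cV$, so $x \otimes Fw$ is cofibrant and the three objects $\underline{\cC}(x,y)$, $\underline{\cC}(x \otimes Fw, y)$, and $\underline{\cV}(Fw, \underline{\cC}(x,y))$ are all fibrant in $\cV$ by Lemma~\ref{lem:cofibcoten} and the pullback-power axiom applied to the internal hom. Hence $\varphi_{Fw,x,y}$ is a weak equivalence between fibrant objects, so $G\varphi_{Fw,x,y}$ is a weak equivalence in $\cW$ because the right Quillen functor $G$ preserves weak equivalences between fibrant objects (Ken Brown's lemma), while $\phi_{w,\underline{\cC}(x,y)}$ is a weak equivalence by Proposition~\ref{pr:WeakAdjunction}; the composite $\varphi^{\cW}_{w,x,y}$ is therefore a weak equivalence, and dually for $\psi^{\cW}_{w,x,y}$.

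The step I expect to be the main obstacle is the unit triangle. For the weak tensoring I must show that $\varphi^{\cW}_{\unit_{\cW},x,y} \circ (\rho^{\cW}_x)^*$ equals the canonical isomorphism $\underline{G_*\cC}(x,y) \xrightarrow{\cong} \underline{\cW}(\unit_{\cW}, \underline{G_*\cC}(x,y))$. Using naturality of $\varphi$ in the $\cV$-slot along $\ep \colon F\unit_{\cW} \to \unit_{\cV}$, together with the unit triangle axiom of the weak tensoring of $\cC$ (namely $\varphi_{\unit_{\cV},x,y} \circ \rho_x^* = \eta$, the canonical isomorphism in $\cV$), the composite $\varphi_{F\unit_{\cW},x,y} \circ (x \otimes \ep)^* \circ \rho_x^*$ collapses to $\ep^* \circ \eta$, so that $\varphi^{\cW}_{\unit_{\cW},x,y} \circ (\rho^{\cW}_x)^* = \phi_{\unit_{\cW}, \underline{\cC}(x,y)} \circ G(\ep^* \circ \eta)$. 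This is exactly the map computed in Lemma~\ref{lem:lastLem} (applied with $v = \underline{\cC}(x,y)$), which identifies it with the canonical isomorphism $\tilde\eta$, as desired; the cotensoring triangle is entirely dual, using Lemma~\ref{lem:lastLem} in the same way. This is the only place where a genuine diagram chase is needed, and it is the reason Lemma~\ref{lem:lastLem} was isolated.

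It remains to treat the last sentence of the statement. Unitality is immediate: $x \otimes \ep$ is the functor $x \otimes -$ applied to the isomorphism $\ep$, hence an isomorphism, so $\rho^{\cW}_x = \rho_x \circ (x \otimes \ep)$ is an isomorphism whenever $\rho_x$ is; similarly $y^\ep$ is an isomorphism, so $\eta^{\cW}_y$ is an isomorphism whenever $\eta_y$ is. For set-compatibility, apply the underlying-set functor $U_{\cW} = \cW(\unit_{\cW},-)$ to $\varphi^{\cW}_{w,x,y} = \phi_{w,\underline{\cC}(x,y)} \circ G\varphi_{Fw,x,y}$. Since $G$ preserves underlying sets (Lemma~\ref{lem:sameunderlying}), we have $U_{\cW} \circ G \cong U_{\cV}$, so $U_{\cW}(G\varphi_{Fw,x,y}) \cong U_{\cV}(\varphi_{Fw,x,y})$, a bijection by hypothesis. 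Unwinding the construction of $\phi$ in the proof of Proposition~\ref{pr:WeakAdjunction} (specializing the variable $w_1$ there to $\unit_{\cW}$) exhibits $U_{\cW}(\phi_{w,v})$ as a composite of bijections and the map $(\delta_{\unit_{\cW},w})^*$, and the oplax comultiplication $\delta_{\unit_{\cW},w} \colon F(\unit_{\cW} \otimes w) \to F\unit_{\cW} \otimes Fw$ is an isomorphism because counitality of the oplax functor $F$ writes $F$ of the left unitor $\unit_{\cW} \otimes w \cong w$ as $\delta_{\unit_{\cW},w}$ followed by $\ep \otimes \id_{Fw}$ and the left unitor of $\cV$, with $\ep$ an isomorphism. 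Hence $U_{\cW}(\varphi^{\cW}_{w,x,y})$ is a bijection, and the cotensoring case is identical.
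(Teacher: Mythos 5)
Your proposal is correct and takes essentially the same approach as the paper: you use the same formulas $x \otimes Fw$ and $x^{Fw}$, build the structure maps by splicing $G\varphi$ (resp.\ $G\psi$) with the comparison map from Proposition~\ref{pr:WeakAdjunction}, verify the pushout-product/pullback-power axioms via $F$ being left Quillen, and close the unit triangle by combining naturality of $\varphi$ with Lemma~\ref{lem:lastLem} exactly as the paper does. Your treatment of set-compatibility is slightly more explicit than the paper's (you observe that the relevant $\delta_{\unit_{\cW},w}$ is an isomorphism by counitality since $\ep$ is), but it is the same argument.
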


\begin{proof}
\textbf{SM7:} Note that $G_*\cC$ satisfies SM7, by Lemma~\ref{lem:G*SM7}.

\textbf{Weak tensoring:} Assume that $\cC$ is weakly tensored over $\cV$. Let us show that the bifunctor $- \otimes - \colon G_*\cC \times  \cW \to G_*\cC$ given by $x \otimes w := x \otimes Fw$ satisfies the external pushout-product axiom. Let $i \colon a \to b$ be a (acyclic) cofibration in $\cW$ and $k \colon x \to y$ be a (acyclic) cofibration in $(G_*\cC)_0 = \cC_0$. The pushout-product $i \square k$ of $i$ and $k$ given by
 \begin{equation*}
     \xymatrix{
     x \otimes b \amalg_{x\otimes a} y \otimes a \ar@{=}[d] \ar[r]^-{k \square i} & y \otimes b \ar@{=}[d] \\
     x \otimes Fb \amalg_{x\otimes Fa} y \otimes Fa \ar[r]^-{k \square Fi} & y \otimes Fb \\
     } 
 \end{equation*}
is a (acyclic) cofibration in $(G_*\cC)_0 = \cC_0$ since $Fi \colon Fa \to Fb$ is a (acyclic) cofibration in $\cV$ as $F$ is a left Quillen functor and the bifunctor $- \otimes - \colon \cC \times \cV \to \cC$ satisfies the external pushout-product axiom.
 
 Define the natural map $\Tilde{\varphi}_{w,x,y} \colon \underline{G_*\cC}(x \otimes w,y) \to \underline{\cW}(w,G\underline{\cC}(x,y))$ to be the composite
\begin{equation}\label{eq:InducedTensoring}
     \underline{G_*\cC}(x\otimes w,y) := \underline{G_*\cC}(x\otimes Fw,y)\xrightarrow[]{G\varphi_{Fw,x,y}}\underline{G_*\cV}(Fw,\underline{\cC}(x,y))\xrightarrow[]{}\underline{\cW}(w,G\underline{\cC}(x,y)),
\end{equation}
 where the last arrow is the natural weak equivalence thanks to the existing weak \mbox{$\cW$-adjunction} by Proposition~\ref{pr:prob9}, given by the weak monoidal Quillen adjunction $F \dashv G$. For a cofibrant object $w \in \cW$, $Fw \in \cV$ is also cofibrant and so the map $\Tilde{\varphi}_{w,x,y}$ is a weak equivalence, for cofibrant objects $x \in \cC$ and $w \in \cW$, and the fibrant object $y \in \cC$, since the map $G\varphi_{Fw,x,y}$ is a weak equivalence as the right Quillen functor $G$ preserves weak equivalences between fibrant objects. Consider the natural map \mbox{$\Tilde{\rho}_x \colon x \otimes \unit_{\cW} \to x$} given by the composite 
 \begin{equation}\label{eq:InducedUnitor}
     \xymatrix{x\otimes \unit_{\cW}: \ar@/_2pc/[rrr]_-{\Tilde{\rho}_x} \ar@{=}[r] & x\otimes F(\unit_{\cW})\ar[r]^-{x\otimes \varepsilon} & x\otimes \unit_{\cV}\ar[r]^-{\rho_x} & x},
 \end{equation}
 where $F\unit_{\cW}\xrightarrow[]{\varepsilon} \unit_{\cV}$ is the counit map of the oplax monoidal functor $F$. The unitality condition follows from the following commutative diagram 
 \begin{equation*}
         \xymatrix @C-0.9pc @R-1.2pc {
            G\underline{\cC}(x \otimes \unit_{\cW},y): \ar@{=}[r] \ar@/_3.5ex/[ddddr]_-{\Tilde{\varphi}} &  G\underline{\cC}(x\otimes F(\unit_{\cW}),y)\ar[dd]_-{G\varphi} & & \ar[ll]_-{G(\mathrm{id}\otimes \varepsilon)^*} G\underline{\cC}(x\otimes \unit_{\cV},y) \ar[dd]^-{G\varphi}  & &  \ar[ll]_-{G\rho^*} G\underline{\cC}(x,y) \ar[ddll]^-{G\eta}_-\cong \\
            & & \text{\textcircled1} & & \\
            & G\underline{\cV}(F(\unit_{\cW}),\underline{\cC}(x,y)) \ar[dd]_-{\psi}  & &  \ar[ll]_-{G\varepsilon^*} G\underline{\cV}(\unit_{\cV},\cC(x,y)) & \\
            & & \text{\textcircled2} & & \\
            & \underline{\cW}(\unit_{\cW},G\underline{\cC}(x,y)) & & \ar[ll]_-{\Tilde{\eta}}^-\cong G\underline{\cC}(x,y), \ar[uu]^-{\cong}_-{G\eta} & 
         }
     \end{equation*}
 where the square \textcircled{1} is commutative by the naturality of $\varphi$ and the square \textcircled{2} is commutative by Lemma~\ref{lem:lastLem}, where $v = \underline{\cC}(x,y)$.
 
\textbf{Weak cotensoring:} Now, assume that $\cC$ is weakly cotensored over $\cV$. 
 The induced weak cotensoring on $G_*\cC$ is the bifunctor $(-)^- \colon G_*\cC \times \cW^{\mathrm{op}} \to G_*\cC$ given by $y^w := y^{Fw}$. Take as natural map \mbox{$\Tilde{\psi}_{w,x,y} \colon \underline{G_*\cC}(x, y^w) \to \underline{\cW}(w,G\underline{\cC}(x,y))$} the composite
 \begin{equation*}
     \underline{G_*\cC}(x, y^w):=G\underline{\cC}(x, y^{Fw})\xrightarrow[]{G\psi_{Fw,x,y}}G\underline{\cV}(Fw,\underline{\cC}(x,y))\xrightarrow[]{}\underline{\cW}(w,G\underline{\cC}(x,y)).
 \end{equation*}
Take as natural map $\Tilde{\eta}_x \colon x \to x^{\unit_{\cW}}$ the composite
 \begin{equation*}
     \xymatrix{
     x \ar@/_1.8pc/[rrr]_-{\Tilde{\eta}_x} \ar[r]^-{\eta_x} & x^{\unit_{\cV}}\ar[r]^-{x^{\varepsilon}} & x^{F(\unit_{\cW})}\ar@{=}[r] & :x^{\unit_{\cW}}. \\
     }
 \end{equation*} 
The verification of the axioms is dual to the weak tensoring case proved above.

\textbf{Set-compatible:} Assume that the structure map $\varphi_{v,x,y} \colon \underline{\cC}(x \otimes v,y) \to \underline{\cV}(v,\underline{\cC}(x,y))$ induces a bijection on underlying sets. Then so does the map $G\varphi_{Fw,x,y}$ in \eqref{eq:InducedTensoring}, since $G$ preserves underlying sets. The next step in the composite $\underline{G_*\cV}(Fw,\underline{\cC}(x,y)) \to \underline{\cW}(w,G\underline{\cC}(x,y))$ induces on underlying sets the bijection of hom-sets $\cV(Fw,\underline{\cC}(x,y)) \cong \cW(w,G\underline{\cC}(x,y))$, by Proposition~\ref{pr:WeakAdjunction}. 

\textbf{Unital:} Assume that the structure map $\rho_x \colon x \ot \unit_{\cV} \ral{\cong} x$ is an isomorphism. Then so is the composite $\tilde{\rho}_x$ in \eqref{eq:InducedUnitor}, since $\ep \colon F(\unit_{\cW}) \ral{\cong} \unit_{\cV}$ was assumed an isomorphism.
\end{proof}

\section{Unit axiom}\label{sec:ua}

In this section, we present some alternate forms of the external unit axiom (see Definition~\ref{def:def-emc}) not necessarily involving the tensoring. %
The following statement 
generalizes a known fact about simplicial model categories \cite{GoerssJ09}*{Proposition~II.3.10} and %
$\cV$-model categories \cite{Riehl14}*{Theorem~10.2.12}.

\begin{prop}\label{pr:GeneGJ09PropII.3.10}
    Let $\cC$ be a $\cV$-enriched model category satisfying SM7 equipped with a weak tensoring and cotensoring that are set-compatible. Let $x$ and $y$ be respectively cofibrant and fibrant objects of $\cC$, and $v$ be a cofibrant object of $\cV$. Then there are natural bijections of hom-sets in $\Ho(\cV)$ given by
    \begin{equation*}
      [v,\underline{\cC}(x,y)] \cong [x\otimes v, y] \quad \text{and} \quad [v,\underline{\cC}(x,y)] \cong [x, y^v].
  \end{equation*}
\end{prop}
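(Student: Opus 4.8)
The plan is to promote the set-compatible weak tensoring and cotensoring to honest \emph{unenriched} Quillen adjunctions on underlying categories, and then to read both bijections off the derived adjunctions; notably, the monoidal structure of $\Ho(\cV)$ plays no role here.

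\textbf{Step 1: unenriched (bi)adjunctions.} First I would extract the unenriched data. Applying the underlying-set functor $U=\cV(\unit,-)$ to the structure maps $\varphi_{v,x,y}$ and $\psi_{v,x,y}$ and invoking set-compatibility yields natural bijections of hom-sets $\cC(x\otimes v,y)\cong\cV(v,\underline{\cC}(x,y))\cong\cC(x,y^v)$, natural in $x,y\in\cC$ and $v\in\cV$. Reading these off for a fixed object produces unenriched adjunctions $x\otimes-\colon\cV\rla\cC\colon\underline{\cC}(x,-)$ for each $x\in\cC$, and $-\otimes v\colon\cC\rla\cC\colon(-)^v$ for each $v\in\cV$. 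As a left (resp.\ right) adjoint, $-\otimes v$ preserves the initial object and $(-)^v$ preserves the terminal object, so $0_{\cC}\otimes v\cong 0_{\cC}$ and $(\ast_{\cC})^v\cong\ast_{\cC}$ for all $v$.

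\textbf{Step 2: the Quillen conditions.} Next I would fix $x\in\cC$ cofibrant and $y\in\cC$ fibrant. Feeding the cofibration $0_{\cC}\to x$ and an (acyclic) cofibration $k\colon a\to b$ of $\cV$ into the external pushout-product axiom of the weak tensoring, and using $0_{\cC}\otimes a\cong 0_{\cC}\otimes b\cong 0_{\cC}$ from Step~1, the pushout-product collapses to $x\otimes k\colon x\otimes a\to x\otimes b$, which is therefore an (acyclic) cofibration of $\cC_0$. Hence $x\otimes-\colon\cV\to\cC_0$ is left Quillen and $\underline{\cC}(x,-)$ is right Quillen. Dually, feeding the fibration $y\to\ast_{\cC}$ and an (acyclic) cofibration $i\colon a\to b$ of $\cV$ into the external pullback-power axiom of the weak cotensoring, and using $(\ast_{\cC})^a\cong(\ast_{\cC})^b\cong\ast_{\cC}$, one obtains that $y^{i}\colon y^{b}\to y^{a}$ is an (acyclic) fibration of $\cC_0$. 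Repackaging $v\mapsto y^v$ as a functor $F_y\colon\cV\to\cC_0^{\opp}$, this says $F_y$ is left Quillen; by Step~1 it is left adjoint to $G_y:=\underline{\cC}(-,y)\colon\cC_0^{\opp}\to\cV$.

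\textbf{Step 3: pass to homotopy categories.} Finally, deriving $x\otimes-\dashv\underline{\cC}(x,-)$ gives $\Ho(\cV)\bigl(v,\mathbf{R}\underline{\cC}(x,-)(y)\bigr)\cong\Ho(\cC)\bigl(\mathbf{L}(x\otimes-)(v),y\bigr)$; since $v$ is cofibrant, $\mathbf{L}(x\otimes-)(v)\cong x\otimes v$ (cofibrant by Lemma~\ref{lem:cofibcoten}(1)), and since $y$ is fibrant, $\mathbf{R}\underline{\cC}(x,-)(y)\cong\underline{\cC}(x,y)$ (fibrant by Lemma~\ref{lem:cofibcoten}(3)), giving $[v,\underline{\cC}(x,y)]\cong[x\otimes v,y]$. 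Likewise, deriving $F_y\dashv G_y$, using $\Ho(\cC_0^{\opp})=\Ho(\cC)^{\opp}$ together with $y^v$ fibrant for $v$ cofibrant (Lemma~\ref{lem:cofibcoten}(2)) and $x$ cofibrant, gives $[v,\underline{\cC}(x,y)]\cong[x,y^v]$. Naturality in $x,y,v$ is inherited from the natural bijections of Step~1. The hard part will be Step~2: since a weak tensoring is not an enriched adjunction and $\varphi,\psi$ need not be isomorphisms in $\cV$, one cannot simply invoke ``$x\otimes-$ is left Quillen for $x$ cofibrant'', and the device that makes it work is to first obtain the biadjunction $-\otimes v\dashv(-)^v$ so as to annihilate the $0_{\cC}\otimes(-)$ and $(\ast_{\cC})^{(-)}$ terms before applying the external pushout-product and pullback-power axioms to $0_{\cC}\to x$ and $y\to\ast_{\cC}$.
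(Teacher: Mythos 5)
Your proof is correct, and it takes a cleaner formal route than the paper's. The paper also starts from the set-compatibility bijection $\cC(x\otimes v,y)\cong\cV(v,\underline{\cC}(x,y))$, identifies both sides with $[x\otimes v,y]$ and $[v,\underline{\cC}(x,y)]$ via the homotopy-quotient description of hom-sets (using Lemma~\ref{lem:cofibcoten}), and then proves by hand that the bijection descends to homotopy classes: given a right homotopy $x\otimes v\to\Path(y)$, it is transposed to $v\to\underline{\cC}(x,\Path(y))$ and one checks that $\underline{\cC}(x,\Path(y))$ is a path object for $\underline{\cC}(x,y)$, invoking that $\underline{\cC}(x,-)$ is right Quillen; the converse goes dually through a cylinder $\Cyl(v)$. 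You instead package the same right/left Quillen facts into honest unenriched Quillen adjunctions $x\otimes-\dashv\underline{\cC}(x,-)$ and $y^{(-)}\dashv\underline{\cC}(-,y)\colon\cV\rla\cC_0^{\opp}$ and simply cite the derived-adjunction bijection on cofibrant/fibrant representatives, which buys a one-line Step~3 in place of the path/cylinder bookkeeping. You are also noticeably more careful than the paper on a point it glosses over: the paper asserts that $\underline{\cC}(x,-)$ is a right Quillen adjoint of $x\otimes-$ without explaining why the external pushout-product axiom, applied to $0\to x$, collapses to $x\otimes k$; your biadjunction device $-\otimes v\dashv(-)^v$ (which genuinely needs set-compatibility of \emph{both} the weak tensoring and the weak cotensoring) is exactly what supplies $0_{\cC}\otimes v\cong 0_{\cC}$ and $(\ast_{\cC})^v\cong\ast_{\cC}$. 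The only slightly hand-waved point is the final naturality-in-$x$ claim, since the adjunction itself varies with $x$; but the paper's proof has the same implicit issue and your remark that naturality of $\varphi,\psi$ is what propagates through is the right thing to say.
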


\begin{proof}
    We prove the first bijection; the second is dual. 
    Since the weak tensoring is set-compatible, we have a bijection of hom sets:
    \begin{equation*}
    \xymatrix @C+1.5pc {
    \cC(x\otimes v, y) \ar[d]_-{\text{quotient}} \ar[r]^-\cong & \cV(v,\underline{\cC}(x,y)) \ar[d]^-{\text{quotient}} \\
    \cC(x\otimes v, y)/\!\sim \ar[d]_-\cong \ar@{-->}[r]^-{\textcolor{red}{?}}_-{\cong}
 & \cV(v, \underline{\cC}(x,y))/\!\sim \ar[d]^-\cong \\
    [x\otimes v, y] & [v, \underline{\cC}(x,y)],
    }
  \end{equation*}
  where the two downward maps under the quotient maps are bijections since $x \otimes v$ is cofibrant and $\underline{\cC}(x,y)$ is fibrant by Lemma~\ref{lem:cofibcoten}. Hence, it suffices to show that for any $f,g \colon x \otimes v \to y$ in $\cC$ and their corresponding maps $f',g' \colon v \to \underline{\cC}(x,y)$ in $\cV$:
  \begin{equation*}
      f \sim g \ \text{in} \ \cC \iff f' \sim g'\ \text{in} \ \cV.
  \end{equation*}
  ($\implies$) Assume $f \sim g$ in $\cC$. 
There is a right homotopy 
	\begin{equation*}
		\xymatrix @C+0.8pc {
		& y \\
		x \otimes v \ar[r]^-{H} \ar@/^/[ur]^-{f} \ar@/_/[dr]_-{g} & \Path(y) \ar[u]_-{ev_0} \ar[d]^-{ev_1} & \\
		& y \\
		}  	
	\end{equation*}
for some path object for $y$
  \begin{equation*}
      \xymatrix@C+1.2pc{
      y \ar[r]^-{\sim}_-{c} \ar@/_2pc/[rr]_-{\Delta} & \Path(y) \ar@{->>}[r]^-{(ev_0,ev_1)} & y \times y.
      }
  \end{equation*}
  By the (unenriched) tensor-hom adjunction, the right homotopy corresponds to a diagram in $\cV$
  \begin{equation*}
      \xymatrix @C+0.8pc {
      & \underline{\cC}(x,y) \\
      v \ar[r]^-{H'} \ar@/^1pc/[ur]^-{f'} \ar@/_1pc/[dr]_-{g'} & \underline{\cC}(x,\Path(y)) \ar[u]_-{(ev_0)_*} \ar[d]^-{(ev_1)_*} & \\
       & \underline{\cC}(x,y). \\ 
      }
  \end{equation*}
  It suffices to show that 
  \begin{equation*}
  \xymatrix@C+1.5pc{
    \underline{\cC}(x,y) \ar[r]^-{\underline{\cC}(x,c)}_-{=c_*} \ar@/^2.5pc/[rr]^-{\underline{\cC}(x,\Delta)} \ar@/_/[drr]_-\Delta & \underline{\cC}(x,\Path(y)) \ar[r]^-{\underline{\cC}(x,(ev_0,ev_1))}_-{=(ev_0,ev_1)_*} & \underline{\cC}(x,y\times y) \ar[d]^-\cong \\
    & & \underline{\cC}(x,y)\times \underline{\cC}(x,y)
    }
 \end{equation*}
 is a path object for $\underline{\cC}(x,y)$. Since $x$ is cofibrant, then by \cite{Riehl14}*{Lemma~9.2.3} the functor $\underline{\cC}(x,-)\colon \cC\to \cV$ preserves weak equivalences between fibrant objects and any fibration, as a right Quillen adjoint of $x\otimes - $. But $\Path(y)$ is fibrant, since $y\times y$ is, and $(ev_0,ev_1)$ is a fibration. Hence $c_*$ is a weak equivalence and $(ev_0,ev_1)_*$ is a fibration.

 ($\impliedby$) Assume $f'\sim g'$ in $\cV$. Take a left homotopy $H \colon \Cyl(v) \to \underline{\cC}(x,y)$ from $f'$ to $g'$ and proceed with the dual argument as above.
\end{proof}

\begin{lem}
    Let $\cC$ be a $\cV$-enriched model category satisfying SM7. For $x \in \cC$ cofibrant and $y \in \cC$ fibrant, there is a canonical map of sets
    \begin{equation*}%
        [x,y] \to [\unit,\ul{\cC}(x,y)]
    \end{equation*}
    where $[x,y]$ denotes the hom-set in $\Ho(\cC)$ and the right side denotes the hom-set in $\Ho(\cV)$.
\end{lem}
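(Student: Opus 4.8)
The plan is to build the map at the level of hom-sets and then check that it descends to homotopy classes. By definition of the underlying category, $\cC(x,y) = \cV(\unit,\underline{\cC}(x,y))$; write $\hat f\colon \unit\to\underline{\cC}(x,y)$ for the map in $\cV$ corresponding to a morphism $f\colon x\to y$ in $\cC$. This identification is functorial in the target, i.e.\ $\widehat{p\circ f}=\underline{\cC}(x,p)\circ\hat f$ for any $p\colon y\to y'$, since the $\cV$-functor $\underline{\cC}(x,-)$ restricts on underlying categories to the ordinary covariant hom-functor. Since SM7 is equivalent to the external pushout-product axiom, $\underline{\cC}(x,y)$ is fibrant in $\cV$ by Lemma~\ref{lem:cofibcoten}; and since $x$ is cofibrant and $y$ fibrant, $[x,y]=\cC(x,y)/\!\sim$. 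Composing $f\mapsto\hat f$ with the localization $\gamma\colon\cV\to\Ho(\cV)$ yields a map $\cC(x,y)\to\Ho(\cV)(\unit,\underline{\cC}(x,y))=[\unit,\underline{\cC}(x,y)]$, and the only thing left is to check that $\gamma(\hat f)=\gamma(\hat g)$ whenever $f\sim g$ in $\cC$.

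For that step, given homotopic $f,g\colon x\to y$ with $x$ cofibrant and $y$ fibrant, there exist a good path object for $y$, i.e.\ a factorization of the diagonal $y\xrightarrow{\,s\,}\Path(y)\xrightarrow{(p_0,p_1)}y\times y$ with $s$ an acyclic cofibration and $(p_0,p_1)$ a fibration, together with a right homotopy $H\colon x\to\Path(y)$ from $f$ to $g$, so $p_0H=f$ and $p_1H=g$. Here $\Path(y)$ is fibrant, hence $s$ is a weak equivalence between fibrant objects, with $p_0s=p_1s=\id_y$. Applying $\underline{\cC}(x,-)$ and writing $(p_i)_\ast=\underline{\cC}(x,p_i)$ and $s_\ast=\underline{\cC}(x,s)$, functoriality of $f\mapsto\hat f$ gives $(p_0)_\ast\circ\hat H=\hat f$ and $(p_1)_\ast\circ\hat H=\hat g$ in $\cV$, while $(p_i)_\ast\circ s_\ast=\underline{\cC}(x,p_is)=\id$. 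The homotopical input is that $\underline{\cC}(x,-)$ preserves weak equivalences between fibrant objects when $x$ is cofibrant: this uses SM7 applied to the cofibration $\emptyset\to x$ — which makes $\underline{\cC}(x,-)$ send acyclic fibrations to acyclic fibrations, once one knows $\underline{\cC}(\emptyset,-)$ is the terminal object of $\cV$ so that the pullback in SM7 collapses — together with Ken Brown's lemma, exactly as in the proof of Proposition~\ref{pr:GeneGJ09PropII.3.10} and \cite{Riehl14}*{Lemma~9.2.3}. Consequently $\gamma(s_\ast)$ is invertible in $\Ho(\cV)$, and from $(p_i)_\ast s_\ast=\id$ we obtain $\gamma((p_0)_\ast)=\gamma(s_\ast)^{-1}=\gamma((p_1)_\ast)$, whence
\[
\gamma(\hat f)=\gamma((p_0)_\ast)\,\gamma(\hat H)=\gamma((p_1)_\ast)\,\gamma(\hat H)=\gamma(\hat g).
\]
Thus $f\mapsto\gamma(\hat f)$ descends to the required map $[x,y]\to[\unit,\underline{\cC}(x,y)]$, and it is canonical because it is assembled only from $\gamma$ and the definitional identification $\cC(x,y)=\cV(\unit,\underline{\cC}(x,y))$, with no choices made.

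The hard part will be the homotopical input just highlighted. Since we are assuming only SM7 and not a (weak) tensoring, we cannot cite the adjoint $x\otimes-$ to deduce that $\underline{\cC}(x,-)$ is right Quillen, so preservation of acyclic fibrations has to be extracted directly from SM7 via the cofibration $\emptyset\to x$; this also needs (or should be recorded as a standing convention) that $\underline{\cC}(\emptyset,-)$ is the terminal object of $\cV$, so that the pullback target in SM7 reduces to $\underline{\cC}(x,-)$ of the codomain. Everything else is routine bookkeeping with the underlying-category identification and the universal property of the localization $\gamma$.
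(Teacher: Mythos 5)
Your proof follows the same route as the paper's: identify $\cC(x,y)=\cV(\unit,\underline{\cC}(x,y))$, then show that $f\sim g$ in $\cC$ forces $\gamma(\hat f)=\gamma(\hat g)$ in $\Ho(\cV)$ via a right homotopy and a path object for $y$; the homotopical input in both cases is that $\underline{\cC}(x,-)$ preserves weak equivalences between fibrant objects when $x$ is cofibrant. The one place you diverge is how you justify that input. The paper simply says ``the proof of Proposition~\ref{pr:GeneGJ09PropII.3.10} shows that the dotted arrow is well-defined,'' but that proposition's proof explicitly uses the (set-compatible weak) tensoring to exhibit $\underline{\cC}(x,-)$ as the right Quillen adjoint of $x\otimes-$ via \cite{Riehl14}*{Lemma~9.2.3}. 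The present lemma, as stated, assumes only SM7 and no tensoring, so the paper's citation is not a perfect hypothesis match. You notice this and instead try to squeeze the preservation property out of SM7 alone, by taking $i\colon\emptyset\to x$; you correctly point out that this only collapses the pullback-power to $p_*\colon\underline{\cC}(x,-)$ under the extra assumption that $\underline{\cC}(\emptyset,-)$ is terminal in $\cV$ (i.e.\ $\emptyset$ is an enriched initial object). That assumption is not a consequence of SM7 by itself; it does hold whenever $\cC$ is (weakly) tensored, which is the case in every application of this lemma in the paper (cf.\ Proposition~\ref{pr:ImplicationsUnit} and Definition~\ref{def:wvmod}). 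So your proof is sound modulo the same implicit hypothesis the paper is using, and you have identified a small imprecision in the lemma's stated hypotheses that would be worth repairing (either add a weak tensoring to the hypotheses, or record the conical-initial-object convention). One minor stylistic difference: rather than verifying that $\underline{\cC}(x,\Path(y))$ is itself a path object as the paper does, you only invert $\gamma(s_*)$ in $\Ho(\cV)$ and use $(p_i)_*s_*=\id$ to force $\gamma((p_0)_*)=\gamma((p_1)_*)$; this is equivalent but slightly leaner.
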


\begin{proof}
    Since $\cC$ is the underlying category of the $\cV$-enriched category $\ul{\cC}$, we have:
    \[
        \xymatrix @R-0.5pc {
            \cC(x, y) \ar[d]_-{\text{quotient}} \ar[r]^-{\cong} & \cV(\unit, \underline{\cC}(x,y)) \ar[dd]^{\ga} \ar[r]^-{q^*} & \cV(Q\unit, \underline{\cC}(x,y)) \ar[d]^-{\text{quotient}} \\
    \cC(x,y)/\!\sim \ar[d]_{\cong} & & \cV(Q\unit, \underline{\cC}(x,y))/\!\sim \ar[d]^-\cong \\
    [x,y] \ar@{-->}[r]^-{\textcolor{red}{?}} & [\unit, \underline{\cC}(x,y)] \ar[r]^-{q^*}_-{\cong} & [Q\unit, \underline{\cC}(x,y)].
        }
    \]
    The proof of Proposition~\ref{pr:GeneGJ09PropII.3.10} shows that the dotted arrow is well-defined. If two maps $f,g \colon x \to y$ in $\cC$ are homotopic, then the corresponding maps $f',g' \colon \unit \to \ul{\cC}(x,y)$ in $\cV$ are (right) homotopic.
\end{proof}

The following statement generalizes the one for simplicial model categories from \cite{GoerssJ09}*{Lemma~II.3.15}. %

\begin{prop}\label{pr:hmtpYoneda}
    Let $\cC$ be a $\cV$-enriched model category satisfying SM7. Let $f \colon a \to b$ be a map in $\cC$ between cofibrant objects. 
    If $f$ is a weak equivalence, then for any fibrant object $z \in \cC$, the induced map on hom objects $f^* \colon \underline{\cC}(b,z) \to \underline{\cC}(a,z)$ is a weak equivalence in $\cV$.
\end{prop}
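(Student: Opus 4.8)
The plan is to deduce the statement from Ken Brown's lemma applied to the contravariant functor $\underline{\cC}(-,z)\colon\cC_0^{\opp}\to\cV$, with the SM7 axiom supplying the needed input. The first ingredient is the consequence of SM7 that does the real work: for any cofibration $i\colon a'\to b'$ in $\cC_0$ and any fibrant object $z$, the induced map $i^*\colon\underline{\cC}(b',z)\to\underline{\cC}(a',z)$ is a fibration in $\cV$, acyclic whenever $i$ is acyclic. (Apply SM7 to $i$ and the fibration $z\to\ast$; the target of the pullback-power then collapses to $\underline{\cC}(a',z)$.) In particular $\underline{\cC}(-,z)$ carries acyclic cofibrations of $\cC_0$ to weak equivalences of $\cV$.

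Next I would run the coproduct-factorization argument underlying Ken Brown's lemma. Form the map $(f,\id_b)\colon a\amalg b\to b$ and factor it as $a\amalg b\xrightarrow{\,j\,}c\xrightarrow{\,q\,}b$ with $j$ a cofibration and $q$ an acyclic fibration. Since $a$ and $b$ are cofibrant, the coprojections $\iota_a\colon a\to a\amalg b$ and $\iota_b\colon b\to a\amalg b$ are cofibrations (they are pushouts of $\emptyset\to b$ and $\emptyset\to a$), hence $j_1:=j\iota_a\colon a\to c$ and $j_2:=j\iota_b\colon b\to c$ are cofibrations between cofibrant objects, with $c$ cofibrant. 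From $qj_1=f$, $qj_2=\id_b$, and the fact that $q$ is a weak equivalence, two-out-of-three shows $j_1$ and $j_2$ are weak equivalences, hence acyclic cofibrations. Applying the first ingredient, $j_1^*\colon\underline{\cC}(c,z)\to\underline{\cC}(a,z)$ and $j_2^*\colon\underline{\cC}(c,z)\to\underline{\cC}(b,z)$ are acyclic fibrations, in particular weak equivalences; then $j_2^*q^*=(qj_2)^*=\id_{\underline{\cC}(b,z)}$ forces $q^*$ to be a weak equivalence by two-out-of-three, and therefore $f^*=(qj_1)^*=j_1^*\circ q^*$ is a composite of weak equivalences in $\cV$, as desired. (Alternatively, one may simply invoke Ken Brown's lemma, e.g.\ \cite{Hovey99}*{Lemma~1.1.12}, for $\underline{\cC}(-,z)$.)

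The main difficulty here is organizational rather than computational. The naive strategy --- factor $f$ as an acyclic cofibration followed by an acyclic fibration and treat the two pieces separately --- disposes of the acyclic cofibration piece immediately via the first ingredient, but stalls on the acyclic fibration piece: choosing a section $s$ of such a map $p\colon c\to b$ (possible since $b$ is cofibrant) together with a left homotopy $\id_c\simeq sp$ only reduces the claim that $p^*$ is a weak equivalence to the claim that $\underline{\cC}(\Cyl(c),z)$ is a path object for $\underline{\cC}(c,z)$, which in turn again requires knowing that a map induced by an acyclic fibration between cofibrant objects is a weak equivalence. Ken Brown's coproduct trick in the second step is precisely what breaks this regress; the point to be careful about is the variance, namely that SM7 must be fed the acyclic \emph{cofibration} case on the $\cC$-side.
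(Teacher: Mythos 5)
Your proof is correct and follows essentially the same route as the paper. Both deduce from SM7 that $\underline{\cC}(-,z)$ carries (acyclic) cofibrations of $\cC_0$ to (acyclic) fibrations of $\cV$ and then exploit the Ken Brown factorization for maps between cofibrant objects; the paper cites \cite{GoerssJ09}*{Lemma~II.8.4} for that factorization, while you construct it directly via the coproduct trick.
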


We will investigate the converse implication as a ``detection property'' below.

\begin{proof}
By \cite{GoerssJ09}*{Lemma~II.8.4}, %
the map $f \colon a \to b$ between cofibrant objects in $\cC$ has a factorization
\begin{equation*}
    \xymatrix{
    & x \ar[d]^-q \\
    a \ar[ur]^-j \ar[r]_-f & b 
    }
\end{equation*}
such that $j$ is a cofibration and $q$ is a left inverse to a trivial cofibration $i \colon b \to x$. 
If $f \colon a \to b$ is a weak equivalence, then the map $j \colon a \to x$ is a trivial cofibration, and hence induces a trivial fibration $j^* \colon \underline{\cC}(x,z) \to \underline{\cC}(a,z)$ for any fibrant object $z$. Similarly, the trivial cofibration $i$ induces a trivial fibration $i^*$, so that the map $q^* \colon \underline{\cC}(b,z) \to \underline{\cC}(x,z)$ is a weak equivalence. Therefore, $f^* = j^*q^*$ is a weak equivalence.
\end{proof}

\begin{prop}\label{pr:ImplicationsUnit}
    Let $\cC$ be a $\cV$-enriched model category satisfying SM7 and weakly tensored over~$\cV$. Consider the following conditions.
 \begin{enumerate}
    \item \textbf{$\pi_0$ of mapping space}: For any cofibrant $x \in \cC$ and fibrant $y \in \cC$, the canonical map $[x,y] \to [\unit,\underline{\cC}(x,y)]$ is a bijection.
    \item \label{item:detprop} \textbf{Detection property}: If a map $f \colon x \to y$ between cofibrant objects in $\cC$ is such that the restriction map $f^* \colon \underline{\cC}(y,z) \ral{\sim} \underline{\cC}(x,z)$ is a weak equivalence in $\cV$ for all fibrant object $z \in \cC$, then $f$ is a weak equivalence in $\cC$.
    \item \textbf{External unit axiom}: For any cofibrant replacement $q \colon Q\unit \ral{\sim} \unit$ and cofibrant object $x \in \cC$, the composite $x \otimes Q\unit \ral{x \ot q} x \ot \unit \ral{\rho_x} x$ is a weak equivalence in $\cC$.
\end{enumerate}
The implications (1) $\implies$ (2) $\implies$ (3) hold. If moreover the weak tensoring on $\cC$ is set-compatible, then all three conditions become equivalent.
\end{prop}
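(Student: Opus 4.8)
The strategy is to prove the two implications in general and then close the loop under the set-compatibility hypothesis, mimicking the classical simplicial argument but substituting the weak structure maps wherever an isomorphism was used before.

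\textbf{Step 1: (1) $\implies$ (2).} Suppose $f \colon x \to y$ is a map between cofibrant objects such that $f^* \colon \underline{\cC}(y,z) \to \underline{\cC}(x,z)$ is a weak equivalence in $\cV$ for every fibrant $z$. I would apply the underlying $\pi_0$, i.e.\ the functor $[\unit,-] \colon \Ho(\cV) \to \Set$. Since $f^*$ is a weak equivalence, $[\unit,\underline{\cC}(y,z)] \to [\unit,\underline{\cC}(x,z)]$ is a bijection. By hypothesis (1), this identifies with the map $[y,z] \to [x,z]$, so $f^* \colon [y,z] \to [x,z]$ is a bijection for all fibrant $z$. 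By the saturation property (Proposition~\ref{pr:Saturation}, the equivalence (1)$\iff$(3)), $f$ is a weak equivalence in $\cC$. I need to check naturality so that the identification $[x,z]\cong[\unit,\underline{\cC}(x,z)]$ is compatible with precomposition by $f$; this is where the naturality clause of the canonical map in the preceding lemma is used.

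\textbf{Step 2: (2) $\implies$ (3).} Let $q \colon Q\unit \ral{\sim} \unit$ be a cofibrant replacement and $x \in \cC$ cofibrant. Set $g \colon x \otimes Q\unit \to x$ to be the composite $x \otimes Q\unit \ral{x \ot q} x \ot \unit \ral{\rho_x} x$; both source and target are cofibrant (the source by the external pushout-product axiom for the weak tensoring, via Lemma~\ref{lem:cofibcoten}, and $Q\unit$ being cofibrant; $\rho_x$ need not be an isomorphism but that is harmless). To apply (2) I must show $g^* \colon \underline{\cC}(x,z) \to \underline{\cC}(x \otimes Q\unit, z)$ is a weak equivalence for all fibrant $z$. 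I would factor $g^*$ through the weak tensoring structure map: the composite $\underline{\cC}(x,z) \ral{\cong} \underline{\cV}(\unit, \underline{\cC}(x,z)) \ral{q^*} \underline{\cV}(Q\unit, \underline{\cC}(x,z))$, where the second map is a weak equivalence because $q$ is a weak equivalence between cofibrant objects and $\underline{\cV}(-,\underline{\cC}(x,z))$ sends such to weak equivalences (using that $\underline{\cC}(x,z)$ is fibrant, by Lemma~\ref{lem:cofibcoten}, and \cite{Riehl14}*{Lemma~9.2.3}). Then I compare this with $\underline{\cC}(x \otimes Q\unit, z) \ral{\varphi_{Q\unit,x,z}} \underline{\cV}(Q\unit, \underline{\cC}(x,z))$, which is a weak equivalence since $Q\unit$ and $x$ are cofibrant and $z$ is fibrant. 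The commuting triangle in Definition~\ref{def:wtwc} (the unitality square for $\rho_x$, together with naturality of $\varphi$ in the $\cV$-variable applied to $q$) forces $g^*$ to be a weak equivalence by two-out-of-three. Then (2) gives that $g$ is a weak equivalence, which is exactly (3).

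\textbf{Step 3: (3) $\implies$ (1) under set-compatibility.} With set-compatibility, Proposition~\ref{pr:GeneGJ09PropII.3.10} gives a natural bijection $[x \otimes v, y] \cong [v, \underline{\cC}(x,y)]$ for cofibrant $v,x$ and fibrant $y$. Taking $v = Q\unit$ and using that the external unit axiom (3) makes $x \otimes Q\unit \to x$ a weak equivalence between cofibrant objects, precomposition induces a bijection $[x,y] \ral{\cong} [x \otimes Q\unit, y]$. On the other side, $q^* \colon [\unit, \underline{\cC}(x,y)] \to [Q\unit, \underline{\cC}(x,y)]$ is a bijection since $q$ is a weak equivalence. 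Chasing the diagram (exactly the square appearing in the lemma before this proposition, now with all four corners identified) shows the canonical map $[x,y] \to [\unit, \underline{\cC}(x,y)]$ is the composite of these bijections, hence a bijection. This yields (1), closing the cycle $(1) \Rightarrow (2) \Rightarrow (3) \Rightarrow (1)$.

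\textbf{Main obstacle.} The routine implications are the two general ones; the subtle point is Step~3, namely verifying that the canonical map $[x,y] \to [\unit,\underline{\cC}(x,y)]$ genuinely coincides with the composite of the two bijections coming from Proposition~\ref{pr:GeneGJ09PropII.3.10} and from $q^*$. This requires knowing that the bijection of Proposition~\ref{pr:GeneGJ09PropII.3.10} is compatible, under $v = \unit$, with the unenriched adjunction isomorphism $\cC(x,y) \cong \cV(\unit,\underline{\cC}(x,y))$ — which is where set-compatibility (not merely the existence of the weak tensoring) is essential, since it is precisely what supplies the unenriched adjunction $\cC(x \otimes v, y) \cong \cV(v, \underline{\cC}(x,y))$ descending to homotopy classes. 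I would isolate this compatibility as the crux of the diagram chase.
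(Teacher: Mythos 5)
Your proof follows essentially the same structure as the paper's: (1) $\Rightarrow$ (2) via applying $[\unit,-]$ and saturation, (2) $\Rightarrow$ (3) via the diagram comparing $\rho_x^*$ and $(x\ot q)^*$ with the structure map $\varphi$ and two-out-of-three, and (3) $\Rightarrow$ (1) by passing the same diagram to homotopy classes and invoking Proposition~\ref{pr:GeneGJ09PropII.3.10}. One small inaccuracy in Step~2: you justify that $q^* \colon \underline{\cV}(\unit,\underline{\cC}(x,z)) \to \underline{\cV}(Q\unit,\underline{\cC}(x,z))$ is a weak equivalence by claiming $q$ is ``a weak equivalence between cofibrant objects,'' but $\unit_{\cV}$ is not assumed cofibrant, so the Ken Brown argument does not apply directly; the correct justification (as in the paper) is the unit axiom of the monoidal model category $\cV$. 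With that citation swapped in, the argument is complete and matches the paper's.
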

\begin{proof}
    \textbf{(1) $\implies$ (2)}. Let $f \colon a \to b$ be a map in $\cC$ between cofibrant objects such that 
\begin{equation}\label{eq:fstart}
    f^* \colon \underline{\cC}(b,z)\to \underline{\cC}(a,z) 
\end{equation}
is a weak equivalence in $\cV$ for all fibrant object $z \in \cC$. %
Applying $[\unit,-]$ to $\eqref{eq:fstart}$, we obtain the following commutative diagram of hom-sets:
\begin{equation*}
    \xymatrix{
       [\unit, \underline{\cC}(b,z)] \ar[r]^-{(f^*)_*}_-{\cong} & [\unit, \underline{\cC}(a,z)] \\
       [b,z] \ar[u]^-\cong \ar[r]^-{f^*}_-{\blue{\therefore \, \cong}} & [a,z] \ar[u]_-\cong
       }
\end{equation*}
where the vertical maps are bijections by the $\pi_0$ of mapping space assumption. Hence, the map $f \colon a \to b$ %
is a weak equivalence, by Proposition~\ref{pr:Saturation}.

\textbf{(2) $\implies$ (3)}. For cofibrant $x \in \cC$ and fibrant $y \in \cC$, consider the commutative diagram in $\cV$:
\begin{equation}\label{eq:WeakTensoring}
    \xymatrix{
    \ul{\cC}(x,y) \ar[dr]_{\cong} \ar[r]^-{\rho_x^*} & \ul{\cC}(x \ot \unit,y) \ar[d]^-{\phy} \ar[r]^-{(x \otimes q)^*} & \ul{\cC}(x\otimes Q\unit, y) \ar[d]^{\phy}_{\sim} \\
    & \ul{\cV}(\unit, \ul{\cC}(x,y)) \ar[r]^-{q^*}_-{\sim} & \ul{\cV}(Q\unit, \ul{\cC}(x,y)) \\
    }
\end{equation}
where the bottom map is a weak equivalence by the unit axiom %
in $\cV$. By 2-out-of-3, the top composite $\ul{\cC}(x,y) \to \ul{\cC}(x \ot Q\unit,y)$ is a weak equivalence. Since we assumed the detection property, the map $Q\unit \ot x \to x$ is a weak equivalence in $\cC$. 

\textbf{Set-compatible case: (3) $\implies$ (1)}. From diagram~\eqref{eq:WeakTensoring}, taking underlying sets %
and going to the homotopy categories yields a commutative diagram of hom-sets:
\[
    \xymatrix{
    [x,y] \ar[dr]_{\blue{\therefore \, \cong}} \ar[r]^-{\rho_x^*} & [x \ot \unit,y] \ar[d] \ar[r]^-{(x \otimes q)^*} & [x\otimes Q\unit, y] \ar[d]^{\cong} \\
    & [\unit, \ul{\cC}(x,y)] \ar[r]^-{q^*}_-{\cong} & [Q\unit, \ul{\cC}(x,y)] \\
    }
\]
By Proposition~\ref{pr:GeneGJ09PropII.3.10}, the vertical map on the right is a bijection $[x\otimes Q\unit, y] \cong [Q\unit, \ul{\cC}(x,y)]$. Since we assumed the unit axiom, the top composite is a bijection, hence the map $[x,y] \ral{\cong} [\unit, \ul{\cC}(x,y)]$ is a bijection.
\end{proof}

For reference, we include the dual statement.

\begin{prop}
    Let $\cC$ be a $\cV$-enriched model category satisfying SM7 and weakly cotensored over~$\cV$. Consider the following conditions.
 \begin{enumerate}
    \item \textbf{$\pi_0$ of mapping space}: For any cofibrant $x \in \cC$ and fibrant $y \in \cC$, the canonical map $[x,y] \to [\unit,\underline{\cC}(x,y)]$ is a bijection.
    \item \textbf{Detection property, dual form}: If a map $f \colon x \to y$ between fibrant objects in $\cC$ is such that the induced map $f_* \colon \underline{\cC}(a,x) \ral{\sim} \underline{\cC}(a,y)$ is a weak equivalence in $\cV$ for all cofibrant object $a \in \cC$, then $f$ is a weak equivalence in $\cC$.
    \item \textbf{External unit axiom, dual form}: For any cofibrant replacement $q \colon Q\unit \ral{\sim} \unit$ and fibrant object $y \in \cC$, the composite $y \ral{\eta_y} y^{\unit} \ral{q^*} y^{Q\unit}$ is a weak equivalence in $\cC$.
\end{enumerate}
The implications (1) $\implies$ (2) $\implies$ (3) hold. If moreover the weak cotensoring on $\cC$ is set-compatible, then all three conditions become equivalent.
\end{prop}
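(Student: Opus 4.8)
The plan is to deduce this from the non-dual Proposition~\ref{pr:ImplicationsUnit} by systematically dualizing: replace the weak tensoring $-\otimes-$ by the weak cotensoring $(-)^{-}$, the unitor $\rho_x\colon x\otimes\unit\to x$ by the co-unitor $\eta_y\colon y\to y^{\unit}$, the structure map $\varphi$ by $\psi$, and ``cofibrant objects being detected'' by ``fibrant objects being detected'', then re-run each of the three implications. Throughout I would use Lemma~\ref{lem:cofibcoten} (together with the external pullback-power axiom built into Definition~\ref{def:wtwc}) to know that $\underline{\cC}(x,y)$ is fibrant in $\cV$ for $x$ cofibrant and $y$ fibrant, and that $y^{v}$ is fibrant for $y$ fibrant and $v$ cofibrant. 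For (1) $\implies$ (2): given $f\colon x\to y$ between fibrant objects such that $f_*\colon\underline{\cC}(a,x)\to\underline{\cC}(a,y)$ is a weak equivalence in $\cV$ for every cofibrant $a$, I would apply $[\unit,-]$; since weak equivalences become isomorphisms in $\Ho(\cV)$, the maps $[\unit,\underline{\cC}(a,x)]\to[\unit,\underline{\cC}(a,y)]$ are bijections, and the $\pi_0$-of-mapping-space hypothesis identifies them with $f_*\colon[a,x]\to[a,y]$, so $f$ is a weak equivalence by the cofibrant-object criterion of Proposition~\ref{pr:Saturation} (dual to the fibrant-object criterion used in Proposition~\ref{pr:ImplicationsUnit}).

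For (2) $\implies$ (3): I would fix cofibrant $x$ and fibrant $y$ and write the dual of diagram~\eqref{eq:WeakTensoring}: the top row $\underline{\cC}(x,y)\xrightarrow{(\eta_y)_*}\underline{\cC}(x,y^{\unit})\xrightarrow{(q^*)_*}\underline{\cC}(x,y^{Q\unit})$ mapping down, via $\psi_{\unit,x,y}$ and $\psi_{Q\unit,x,y}$, to $\underline{\cV}(\unit,\underline{\cC}(x,y))\xrightarrow{q^*}\underline{\cV}(Q\unit,\underline{\cC}(x,y))$. The left triangle commutes by the weak-cotensoring unit diagram of Definition~\ref{def:wtwc}, the square by naturality of $\psi$ in the $\cV^{\opp}$-variable, the right-hand $\psi_{Q\unit,x,y}$ is a weak equivalence since $Q\unit$ is cofibrant (and $x$ cofibrant, $y$ fibrant), and the bottom $q^*$ is a weak equivalence by the unit axiom in $\cV$ applied to the fibrant object $\underline{\cC}(x,y)$. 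Two-out-of-three then makes $(q^*\circ\eta_y)_*\colon\underline{\cC}(x,y)\to\underline{\cC}(x,y^{Q\unit})$ a weak equivalence in $\cV$ for every cofibrant $x$; since $y$ and $y^{Q\unit}$ are fibrant, the dual detection property (2) applied to $q^*\circ\eta_y\colon y\to y^{Q\unit}$ shows it is a weak equivalence in $\cC$, which is (3).

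For the set-compatible case (3) $\implies$ (1): I would take underlying sets in the same diagram — set-compatibility of the weak cotensoring makes $U\psi_{\unit,x,y}$ and $U\psi_{Q\unit,x,y}$ bijections — and pass to the homotopy categories, obtaining a commutative hom-set square with canonical diagonal $[x,y]\to[\unit,\underline{\cC}(x,y)]$, bottom bijection $q^*\colon[\unit,\underline{\cC}(x,y)]\to[Q\unit,\underline{\cC}(x,y)]$ (by Proposition~\ref{pr:Saturation}, as $q$ is a weak equivalence into the fibrant object $\underline{\cC}(x,y)$), right-hand bijection $[x,y^{Q\unit}]\cong[Q\unit,\underline{\cC}(x,y)]$ furnished by the second bijection of Proposition~\ref{pr:GeneGJ09PropII.3.10} (whose proof uses only the set-compatible weak cotensoring, via the structure map $\psi$), and top composite $[x,y]\to[x,y^{Q\unit}]$ which is a bijection by Proposition~\ref{pr:Saturation} since $y\to y^{Q\unit}$ is a weak equivalence by (3) and $x$ is cofibrant. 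Composing the diagonal with the bottom bijection and chasing the square through the right-hand and top bijections forces $[x,y]\to[\unit,\underline{\cC}(x,y)]$ to be a bijection.

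I do not anticipate a real obstacle: the content is entirely carried by Proposition~\ref{pr:ImplicationsUnit}, Proposition~\ref{pr:Saturation}, Lemma~\ref{lem:cofibcoten}, Proposition~\ref{pr:GeneGJ09PropII.3.10}, and Definition~\ref{def:wtwc}, all already stated in dual-ready form. The only mild care points are checking that $y^{Q\unit}$ is fibrant — so that the dual detection property applies — which follows from the external pullback-power axiom of the weak cotensoring, and confirming that the homotopy-category bijection borrowed from Proposition~\ref{pr:GeneGJ09PropII.3.10} is the one induced by $\psi$, which is exactly what its (dual) proof delivers.
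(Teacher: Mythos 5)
Your proposal is correct and matches the paper's intent exactly: the paper states this proposition "for reference" as the formal dual of Proposition~\ref{pr:ImplicationsUnit} and offers no separate proof, so the expected argument is precisely the systematic dualization you carry out (cotensor for tensor, $\eta_y\colon y\to y^{\unit}$ for $\rho_x\colon x\otimes\unit\to x$, $\psi$ for $\varphi$, cylinder/left-homotopy for path/right-homotopy, and the cofibrant-object criterion of Proposition~\ref{pr:Saturation} in place of the fibrant one). Your care points — fibrancy of $y^{Q\unit}$ via Lemma~\ref{lem:cofibcoten} and the external pullback-power axiom, the bottom weak equivalence in $\cV$ coming from the unit axiom via Ken Brown, and the fact that the second bijection of Proposition~\ref{pr:GeneGJ09PropII.3.10} only uses the set-compatible weak cotensoring — are exactly the places where a duality argument could silently break, and you have checked them all.
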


\begin{prop}\label{pr:unitpresev}
    Let $F \colon \cW \ral \cV \colon G$ be a weak monoidal Quillen adjunction such that the map $\ep \colon F(\unit_{\cW}) \ral{\cong} \unit_{\cV}$ is an isomorphism.
    \begin{enumerate}
        \item \label{item:unitpresev1} The change of enrichment along $G$ preserves the external unit axiom.
        \item \label{item:unitpresev2} The change of enrichment along $G$ preserves the $\pi_0$ of mapping space axiom.
        \item \label{item:unitpresev3} If $G$ reflects weak equivalences between fibrant objects, then the change of enrichment along $G$ preserves the detection property.
    \end{enumerate}
\end{prop}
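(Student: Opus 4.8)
The plan is to verify each of the three axioms for the $\cW$-category $G_*\cC$ by pulling it back to the corresponding axiom for the $\cV$-category $\cC$. Throughout I use that $(G_*\cC)_0\cong\cC_0$ with the same model structure (Lemma~\ref{lem:sameunderlying}, since $\ep$ is an isomorphism), that $\underline{G_*\cC}(x,y)=G\underline{\cC}(x,y)$ and that this is fibrant in $\cW$ whenever $\underline{\cC}(x,y)$ is fibrant in $\cV$, i.e.\ whenever $x$ is cofibrant and $y$ fibrant (Lemma~\ref{lem:cofibcoten} together with $G$ being right Quillen), and that the induced weak tensoring on $G_*\cC$ has $x\otimes w=x\otimes Fw$ with unit comparison $\tilde\rho_x=\rho_x\circ(x\otimes\ep)$ (Proposition~\ref{pr:PropW_ten_coPreserv}). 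I also record that $F(q)\colon F(Q\unit_{\cW})\to F(\unit_{\cW})\cong\unit_{\cV}$ is a weak equivalence with cofibrant source: $F(Q\unit_{\cW})$ is cofibrant as $F$ is left Quillen, and $\ep\circ F(q)$ is a weak equivalence by condition~(ii) of Definition~\ref{def:wmqa}, so $F(q)$ is one because $\ep$ is invertible.

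For \Eqref{item:unitpresev1}, unravelling the induced structure shows that the composite appearing in the external unit axiom for $G_*\cC$, namely $x\otimes Q\unit_{\cW}\xrightarrow{x\otimes q}x\otimes\unit_{\cW}\xrightarrow{\tilde\rho_x}x$, equals $\rho_x\circ\bigl(x\otimes(\ep\circ F(q))\bigr)\colon x\otimes F(Q\unit_{\cW})\to x$. Here $\ep\circ F(q)\colon F(Q\unit_{\cW})\to\unit_{\cV}$ is a weak equivalence with cofibrant source, hence a cofibrant approximation of $\unit_{\cV}$. Since the external pushout-product axiom forces the external unit axiom for $\cC$ over $\cV$ to be independent of the chosen cofibrant approximation (via Ken Brown's lemma applied to $x\otimes-$, cf.\ Lemma~\ref{lem:cofibcoten}), this composite is a weak equivalence in $\cC_0$, as required.

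For \Eqref{item:unitpresev3}, let $f\colon x\to y$ be a map between cofibrant objects of $(G_*\cC)_0=\cC_0$ such that $f^*=G(f^*)\colon G\underline{\cC}(y,z)\to G\underline{\cC}(x,z)$ is a weak equivalence in $\cW$ for every fibrant $z\in\cC$. Since $\underline{\cC}(x,z)$ and $\underline{\cC}(y,z)$ are fibrant in $\cV$ and $G$ reflects weak equivalences between fibrant objects, $f^*\colon\underline{\cC}(y,z)\to\underline{\cC}(x,z)$ is a weak equivalence in $\cV$ for every fibrant $z$, so $f$ is a weak equivalence by the detection property for $\cC$ over $\cV$.

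For \Eqref{item:unitpresev2}, fix $x$ cofibrant and $y$ fibrant. Then $\underline{\cC}(x,y)$ is fibrant in $\cV$, so $\mathbf{R}G(\underline{\cC}(x,y))=G\underline{\cC}(x,y)=\underline{G_*\cC}(x,y)$ in $\Ho(\cW)$; the derived adjunction $\mathbf{L}F\dashv\mathbf{R}G$ together with the isomorphism $\mathbf{L}F(\unit_{\cW})\cong\unit_{\cV}$ in $\Ho(\cV)$ (valid because $F(Q\unit_{\cW})\to\unit_{\cV}$ is a weak equivalence) then yields a bijection $[\unit_{\cW},\underline{G_*\cC}(x,y)]\cong[\unit_{\cV},\underline{\cC}(x,y)]$ between hom-sets in $\Ho(\cW)$ and $\Ho(\cV)$. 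The remaining point is to check that this bijection identifies the canonical map $[x,y]\to[\unit_{\cW},\underline{G_*\cC}(x,y)]$ with the canonical map $[x,y]\to[\unit_{\cV},\underline{\cC}(x,y)]$; this is a diagram chase through the construction of these maps (which factors through $Q\unit$ since the unit need not be cofibrant), using that the identification $(G_*\cC)_0\cong\cC_0$ is compatible with the unit--hom isomorphisms $\cW(\unit_{\cW},G-)\cong\cV(\unit_{\cV},-)$ via the triangle $UG\cong U$ of Lemma~\ref{lem:sameunderlying}. Granting this, the canonical map for $G_*\cC$ is a bijection because the one for $\cC$ is. The main obstacle I anticipate is precisely this last bookkeeping: statements \Eqref{item:unitpresev1} and \Eqref{item:unitpresev3} are comparatively formal once the induced weak tensoring and the behaviour of $G$ on fibrant objects and hom-objects are in hand, whereas \Eqref{item:unitpresev2} requires carefully matching the indirectly-defined canonical maps under the derived adjunction.
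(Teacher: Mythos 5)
Your proof is correct and follows essentially the same route as the paper for all three parts. A few small comments: for part \Eqref{item:unitpresev1}, the Ken Brown's lemma detour is unnecessary, since the external unit axiom in Definition~\ref{def:def-emc} and Proposition~\ref{pr:ImplicationsUnit} is already quantified over \emph{every} cofibrant replacement of $\unit_{\cV}$, so once you observe that $\ep\circ F(q)\colon F(Q\unit_{\cW})\to\unit_{\cV}$ is a weak equivalence with cofibrant source the axiom for $\cC$ applies directly; the paper's proof does exactly this. For part \Eqref{item:unitpresev2} the paper also establishes the bijection via the derived adjunction $\mathbf{L}F\dashv\mathbf{R}G$ in $\Ho$ and does not spell out the compatibility of the resulting bijection with the canonical map $[x,y]\to[\unit_{\cW},\underline{G_*\cC}(x,y)]$, so the bookkeeping concern you raise is a fair observation about the paper's own level of detail rather than a gap only in your argument; it is resolved by the standard fact that for cofibrant $w$ and fibrant $v$ the derived adjunction bijection agrees with the unenriched adjunction bijection $\cV(Fw,v)\cong\cW(w,Gv)$ after passing to homotopy classes, combined with the underlying-set compatibility $UG\cong U$ of Lemma~\ref{lem:sameunderlying}. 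Part \Eqref{item:unitpresev3} matches the paper verbatim.
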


\begin{proof} 
\Eqref{item:unitpresev1} Assume that the weak $\cV$-tensoring on $\cC$ satisfies the external unit axiom, that is, the composite $x \otimes Q\unit \ral{x \ot q} x \ot \unit \ral{\rho_x} x$  is a weak equivalence for $x$ cofibrant. For the weak $\cW$-tensoring on $G_*\cC$, the composite
\begin{equation*}
    \xymatrix{
    x\otimes Q\unit_{\cW} := x\otimes F(Q\unit_{\cW}) \ar[r]^-{x\otimes \Tilde{q}} & x\otimes \unit_{\cV} \ar[r]^-{\rho_x} & x
    }
\end{equation*}
is a weak equivalence since $\Tilde{q}$ is the weak equivalence $F(Q\unit_{\cW}) \ral{F(q)} F(\unit_{\cW}) \ral{\varepsilon} \unit_{\cV}$, %
making $F(Q\unit_{\cW})$ a cofibrant replacement of $\unit_{\cV}$.

\Eqref{item:unitpresev2} Let $x,y \in \cC$, with $x$ cofibrant and $y$ fibrant. By assumption, $\cC$ satisfies the $\pi_0$ of mapping space axiom, which yields:
        \begin{align*}
            [x,y] &\cong [\unit_\cV,\underline{\cC}(x,y)] \\
                  &\xrightarrow[\cong]{\Tilde{q}^*} [F(Q\unit_\cW),\underline{\cC}(x,y)],\ \text{since } \Tilde{q} \text{ is a weak equivalence} \\
                  &\cong [\mathbf{L}F(\unit_\cW),\underline{\cC}(x,y)] \\
                  &\cong [\unit_\cW,\mathbf{R}G\underline{\cC}(x,y)],\ \text{by the derived adjunction} \\
                  &\cong [\unit_\cW,G\underline{\cC}(x,y)],\ \text{since }\underline{\cC}(x,y)\text{ is fibrant} \\
                  &= [\unit_\cW,\underline{G_*\cC}(x,y)].   
        \end{align*}
\Eqref{item:unitpresev3} Assume that $G$ reflects weak equivalences between fibrant objects and that $\cC$ satisfies the detection property. Let $f \colon a \to b$ be in $(G_*\cC)_0 = \cC_0$, with cofibrant objects $a,b \in \cC$. Assume that $Gf^* \colon \underline{G_*\cC}(b,z) \xrightarrow[]{\sim} \underline{G_*\cC}(a,z)$, is a weak equivalence for all fibrant $z \in \cC$. 
Since $G$ reflects weak equivalences between fibrant objects, 
the restriction map $f^* \colon \underline{\cC}(b,z) \to \underline{\cC}(a,z)$ is a weak equivalence in $\cV$. Hence the map $f$ is a weak equivalence, as the $\cV$-enriched model category $\cC$ satisfies the detection property. Therefore, the category $G_*\cC$ satisfies the detection property also.
\end{proof}

\begin{ex}
    The following are examples of right Quillen functors $G\colon\cD\to \cC$ that reflect weak equivalences between fibrant objects.
    \begin{enumerate}
        \item Quillen equivalences \cite{Hovey99}*{Corollary~1.3.16}. %
        \item Let $\cC$ be a model category and $G \colon \cD \to \cC$ a right adjoint. If $\cD$ admits the right-induced model structure along $G$, then $G$ reflects weak equivalences.
    \end{enumerate}
\end{ex}

The following proposition gives an analogue of Proposition~\ref{pr:Hocot} for a weak (co)tensoring. %

\begin{prop}\label{pr:PropHoenrich}
    Let $\cC$ be a $\cV$-enriched model category satisfying SM7 and the $\pi_0$ of mapping space axiom.
    \begin{enumerate}
        \item \label{item:HoEnrich} The homotopy category $\Ho(\cC)$ inherits a $\Ho(\cV)$-enrichment. %
        \item \label{item:HoTensor} If the category $\cC$ is weakly tensored over $\cV$, then $\Ho(\cC)$ inherits a tensoring over $\Ho(\cV)$. 
        \item \label{item:HoCotensor} If the category $\cC$ is weakly cotensored over $\cV$, then $\Ho(\cC)$ inherits a cotensoring over $\Ho(\cV)$.
    \end{enumerate}
\end{prop}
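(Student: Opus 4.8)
The plan is to imitate the proof of Proposition~\ref{pr:Hocot}, replacing the hom-, tensor-, and cotensor-functors of a genuine $\cV$-model category by their derived versions, and using the structure maps $\varphi$ and $\psi$ of the weak (co)tensoring --- which are weak equivalences only on suitable (co)fibrant inputs --- in place of the honest adjunction isomorphisms. Write $\gamma\colon\cV\to\Ho(\cV)$ for the localization, and recall from Proposition~\ref{pr:Hocot}(1) that $\Ho(\cV)$ is closed symmetric monoidal, with $\otimes^{\mathbf{L}}$ computed on cofibrant objects and internal hom $\underline{\Ho(\cV)}(-,-)$ computed on a cofibrant object and a fibrant object; recall also that every object of $\Ho(\cC)$ is isomorphic to a cofibrant-fibrant one and that $[x,y]=\cC(x,y)/\!\sim$ when $x$ is cofibrant and $y$ is fibrant.

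For \textbf{part (1)}, if $x,y$ are cofibrant-fibrant then $\underline{\cC}(x,y)$ is fibrant in $\cV$ by Lemma~\ref{lem:cofibcoten}, so I would set $\underline{\Ho(\cC)}(x,y):=\gamma\,\underline{\cC}(x,y)$ and extend to all objects by cofibrant-fibrant replacement; this is well-defined up to canonical isomorphism in $\Ho(\cV)$ by Proposition~\ref{pr:hmtpYoneda} and its dual. The composition $\underline{\cC}(y,z)\otimes\underline{\cC}(x,y)\to\underline{\cC}(x,z)$ of $\cC$ descends after precomposing with the comparison map $\gamma\,\underline{\cC}(y,z)\otimes^{\mathbf{L}}\gamma\,\underline{\cC}(x,y)\to\gamma\bigl(\underline{\cC}(y,z)\otimes\underline{\cC}(x,y)\bigr)$ and applying $\gamma$; the identities $\unit_{\cV}\to\underline{\cC}(x,x)$ give units after $\unit_{\Ho(\cV)}\cong\gamma(\unit_{\cV})$, and associativity and unitality of the composition descend from $\cC$ using naturality of the comparison maps. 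Finally, the underlying category of the resulting $\Ho(\cV)$-category has hom-sets $\Ho(\cV)(\unit_{\Ho(\cV)},\underline{\Ho(\cC)}(x,y))=[\unit_{\cV},\underline{\cC}(x,y)]$, which the $\pi_0$ of mapping space axiom identifies with $[x,y]$ compatibly with composition; hence it is $\Ho(\cC)$.

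For \textbf{part (2)}, the external pushout-product axiom makes $-\otimes-\colon\cC\times\cV\to\cC$ a left Quillen bifunctor, so it has a left-derived bifunctor $-\otimes^{\mathbf{L}}-\colon\Ho(\cC)\times\Ho(\cV)\to\Ho(\cC)$ computed on cofibrant objects (cf.\ the references to Proposition~\ref{pr:Hocot}). For cofibrant-fibrant $x,y$ and cofibrant $v$, Lemma~\ref{lem:cofibcoten} makes $x\otimes v$ cofibrant and $\underline{\cC}(x,y)$ fibrant, so $\underline{\Ho(\cC)}(x\otimes^{\mathbf{L}}v,y)=\gamma\,\underline{\cC}(x\otimes v,y)$ and $\underline{\Ho(\cV)}\bigl(v,\underline{\Ho(\cC)}(x,y)\bigr)=\gamma\,\underline{\cV}\bigl(v,\underline{\cC}(x,y)\bigr)$, and applying $\gamma$ to the structure weak equivalence $\varphi_{v,x,y}$ yields the tensoring isomorphism
\[
\underline{\Ho(\cC)}(x\otimes^{\mathbf{L}}v,y)\;\xrightarrow{\;\cong\;}\;\underline{\Ho(\cV)}\bigl(v,\underline{\Ho(\cC)}(x,y)\bigr),
\]
which is $\Ho(\cV)$-natural in $y$ because $\varphi_{v,x,-}$ is. For the unit I would invoke Proposition~\ref{pr:ImplicationsUnit}: the $\pi_0$ of mapping space axiom implies the detection property and hence the external unit axiom, so $x\otimes Q\unit\xrightarrow{x\otimes q}x\otimes\unit\xrightarrow{\rho_x}x$ is a weak equivalence, giving $x\otimes^{\mathbf{L}}\unit_{\Ho(\cV)}\cong x$ in $\Ho(\cC)$; associativity of the action then follows formally from Proposition~\ref{pr:Internalize} on $\Ho(\cV)$ and the enriched Yoneda lemma. \textbf{Part (3)} is entirely dual: the external pullback-power axiom gives a right-derived cotensoring bifunctor, $\gamma\,\psi_{v,x,y}$ provides the cotensoring isomorphism, and the cotensoring analogue of Proposition~\ref{pr:ImplicationsUnit} supplies the dual external unit axiom.

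The objects and comparison maps above are immediate; the step I expect to be the \textbf{main obstacle} is promoting this derived data to a genuine $\Ho(\cV)$-enriched, tensored, and cotensored structure: establishing the $\Ho(\cV)$-naturality of the tensoring (and cotensoring) isomorphism --- the subtlety of Remark~\ref{rem:UnenrichedTensorHom}, now at the homotopy level --- and the coherence of the composition and of the associator and unitors of $\otimes^{\mathbf{L}}$ with respect to the derived monoidal structure on $\Ho(\cV)$. This is precisely the bookkeeping underlying Proposition~\ref{pr:Hocot}; I expect it to carry over, the one genuinely new input being that ``$\varphi$ and $\psi$ are weak equivalences on (co)fibrant objects'' suffices, because Lemma~\ref{lem:cofibcoten} guarantees that every object to which they are applied is already (co)fibrant.
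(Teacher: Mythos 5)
Your proposal is correct and follows essentially the same route as the paper's proof: define the enrichment by $\mathbf{R}\underline{\cC}(x,y)=\gamma\,\underline{\cC}(Qx,Ry)$, identify the underlying category via the $\pi_0$ of mapping space axiom, and obtain the (co)tensoring isomorphism by applying $\gamma$ to the structure map $\varphi$ (resp.\ $\psi$), which Lemma~\ref{lem:cofibcoten} guarantees is a weak equivalence on the relevant (co)fibrant objects, following the template of \cite{Riehl14}*{Theorem~10.2.12}. Your extra step through Proposition~\ref{pr:ImplicationsUnit} for unitality is harmless but unnecessary, since unitality and associativity already follow from the tensoring isomorphism plus the enriched Yoneda lemma, which is what the paper relies on.
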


\begin{proof} 
The proof is similar to \cite{Riehl14}*{Theorem~10.2.12}. Here we adapt the argument to a weak (co)tensoring. 

    \Eqref{item:HoEnrich} %
The enrichment of $\Ho(\cC)$ over %
$\Ho(\cV)$ is given by the total derived functor
    \begin{align}
        \mathbf{R}\underline{\cC}\colon\Ho(\cC)^\mathrm{op}\times \Ho(\cC) & \to \Ho(\cV) \nonumber\\
        \mathbf{R}\underline{\cC}(x,y) & = \gamma \underline{\cC}(Qx,Ry) \label{Hoenrichment},
    \end{align}
    where $\gamma \colon \cV \to \Ho(\cV)$ is the localization functor. %
    The underlying set of the hom-object in $\Ho(\cV)$ is
    \begin{flalign*}
    && U \mathbf{R}\underline{\cC}(x,y) &= [\unit, \mathbf{R}\underline{\cC}(x,y)] && \\
    && &\cong [\unit, \underline{\cC}(Qx,Ry)] && \\
    && &\cong [Qx,Ry] &&\text{by the $\pi_0$ of mapping space axiom} \\
    && &\cong [x,y]. &&
    \end{flalign*}
    Hence the underlying category of the $\Ho(\cV)$-enriched category $\Ho(\cC)$ coincides with the usual homotopy category $\Ho(\cC)$. 
    
\Eqref{item:HoTensor} %
    Applying the functor $\gamma \colon \cV \to \Ho(\cV)$ to the weak tensoring %
    \begin{equation*}
		\varphi_{v,x,y}\colon\underline{\cC}(x\otimes v,y)\to \underline{\cV}(v,\underline{\cC}(x,y))
	\end{equation*}
    yields a natural map in $\Ho(\cV)$
    \begin{equation}\label{eq:enrichediso}
		\gamma \varphi_{v,x,y} \colon \gamma \underline{\cC}(x\otimes v,y) \to \gamma \underline{\cV}(v,\underline{\cC}(x,y))
	\end{equation}
	which is an isomorphism for cofibrant $v \in \cV$, cofibrant $x \in \cC$, and fibrant $y \in \cC$. %
By Lemma~\ref{lem:cofibcoten}, the tensoring of cofibrant objects is cofibrant and the hom object from cofibrant to fibrant objects is fibrant. Using this, let us show that the total derived functor $\otimes^{\mathbf{L}}$ of the weak tensoring of $\cC$ over $\cV$ defines a tensoring of $\Ho(\cC)$ over $\Ho(\cV)$. For $v \in \cV$ and $x,y \in \cC$, we have:
    \begin{align*}
        \mathbf{R}\underline{\cC}(x\otimes^{\mathbf{L}} v,y) &\cong \mathbf{R}\underline{\cC}(Qx \otimes Qv,y) \\
                                                  &\cong \ga \underline{\cC}(Qx \otimes Qv,Ry) \\
                                                  &\cong \ga \underline{\cV}(Qv, \underline{\cC}(Qx,Ry)) \ \text{by } \eqref{eq:enrichediso} \\
                                                  &\cong \mathbf{R}\underline{\cV}(v,\mathbf{R}\underline{\cC}(x,y)).
    \end{align*}
    The naturality is given by the naturality of $\mathbf{R}\underline{\cV}$ and $\mathbf{R}\underline{\cC}$.
        
\Eqref{item:HoCotensor} The proof for the cotensoring over $\Ho(\cV)$ is similar to the one of tensoring.
\end{proof}

\section{Preservation of the enriched model structure}\label{sec:Pems}

Collecting the ingredients from the previous sections, 
we now define 
a weak $\cV$-model category, which is like %
a $\cV$-model category where the (co)tensoring is replaced by a weak (co)tensoring.

\begin{defn}\label{def:wvmod}
   Let $\cV$ be a closed symmetric monoidal model category. A \Def{weak $\cV$-model category} is a $\cV$-enriched model category $\cC$ weakly tensored and %
   cotensored over $\cV$, satisfying SM7 and the $\pi_0$ of mapping space axiom. %
\end{defn}

\begin{rem}\label{rem:wm-m}
    A $\cV$-model category $\cC$ is in particular a weak $\cV$-model category.
\end{rem}

\begin{thm}\label{thm:maintheo}
	Let $F \colon \cW \rla \cV \colon G$ be a weak monoidal Quillen adjunction such that $\ep \colon F(\unit_{\cW}) \ral{\cong} \unit_{\cV}$ is an isomorphism. If $\cC$ is a weak $\cV$-model category, then $G_*\cC$ is a weak $\cW$-model category. 
    
    If moreover the weak (co)tensoring on $\cC$ is set-compatible (resp.\ unital), then so is the weak (co)tensoring on $G_*\cC$. 
    Those conditions hold in particular when $\cC$ is a $\cV$-model category.
\end{thm}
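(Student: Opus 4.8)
The plan is to assemble the results of Sections~\ref{sec:Ptc}--\ref{sec:ua}, since by Definition~\ref{def:wvmod} being a weak $\cW$-model category is a package of five assertions about $G_*\cC$: (a) it is $\cW$-enriched, (b) its underlying category carries a model structure, (c) it satisfies SM7, (d) it is weakly tensored and weakly cotensored over $\cW$, and (e) it satisfies the $\pi_0$ of mapping space axiom. So the proof is a matter of citing the appropriate earlier statement for each.

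First I would observe that $G_*\cC$ is a $\cW$-enriched category because $G$ is lax monoidal (the change-of-base construction), giving (a). For (b), since $\ep \colon F(\unit_{\cW}) \ral{\cong} \unit_{\cV}$ is an isomorphism, Lemma~\ref{lem:sameunderlying} yields $(G_*\cC)_0 \cong \cC_0$, so the model structure on $\cC_0$ transports verbatim. For (c), Lemma~\ref{lem:G*SM7} shows SM7 is preserved under $G_*$ (again using that $\ep$ is an isomorphism). For (d), Proposition~\ref{pr:PropW_ten_coPreserv} shows that $G_*\cC$ inherits a weak tensoring and a weak cotensoring over $\cW$, via the formulas $x \otimes w := x \otimes Fw$ and $y^w := y^{Fw}$, and moreover that these are set-compatible (resp.\ unital) whenever the given weak (co)tensoring on $\cC$ is; this is exactly the content of the second paragraph of the theorem. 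For (e), Proposition~\ref{pr:unitpresev}~\Eqref{item:unitpresev2} shows the $\pi_0$ of mapping space axiom is preserved by $G_*$. Combining (a)--(e) gives that $G_*\cC$ is a weak $\cW$-model category.

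For the final sentence, I would invoke Remark~\ref{rem:wm-m}, that a $\cV$-model category is in particular a weak $\cV$-model category, together with Example~\ref{ex:ct2wct}, that its genuine tensoring and cotensoring are set-compatible and unital weak (co)tensorings; the theorem then applies and delivers the stated set-compatibility and unitality of the weak (co)tensoring on $G_*\cC$.

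Honestly, the main point is that there is no real obstacle left at this stage: the substantive work — lifting the weak monoidal Quillen adjunction to a weak $\cW$-adjunction (Proposition~\ref{pr:prob9}) and then transporting the weak (co)tensoring structure maps along it while checking the external pushout-product axiom and the unitality compatibility (Proposition~\ref{pr:PropW_ten_coPreserv}, via Lemma~\ref{lem:lastLem}) — has already been done. The one thing to be careful about is checking that the hypotheses of each cited result genuinely hold here; in particular the assumption that $\ep$ is an isomorphism is used three times over, for Lemma~\ref{lem:sameunderlying}, for Lemma~\ref{lem:G*SM7}, and for the unitality clause of Proposition~\ref{pr:PropW_ten_coPreserv}.
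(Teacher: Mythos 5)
Your proof is correct and follows exactly the same decomposition as the paper's own proof: citing Lemma~\ref{lem:sameunderlying} for the underlying category, Lemma~\ref{lem:G*SM7} for SM7, Proposition~\ref{pr:PropW_ten_coPreserv} for the weak (co)tensoring and its set-compatible/unital clauses, Proposition~\ref{pr:unitpresev} for the $\pi_0$ of mapping space axiom, and Remark~\ref{rem:wm-m} with Example~\ref{ex:ct2wct} for the final sentence. No substantive differences.
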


\begin{proof} 
The  category $G_*\cC$ is a $\cW$-enriched category since $G$ is a lax monoidal functor. Also, $G_*\cC$ has the same underlying category as $\cC$ by Lemma~\ref{lem:sameunderlying}, since \mbox{$F(\unit_{\cW})\cong \unit_{\cV}$}. In particular, the underlying category of $G_*C$ still has a model structure. The category $G_*\cC$ satisfies SM7 by Lemma~\ref{lem:G*SM7}, and inherits a weak tensoring and weak cotensoring over $\cW$ by Proposition~\ref{pr:PropW_ten_coPreserv}. Finally, the category $G_*\cC$ satisfies the %
$\pi_0$ of mapping space axiom 
by Proposition~\ref{pr:unitpresev}.
\end{proof}

\begin{rem}
By Proposition~\ref{pr:unitpresev}, the analogue of Theorem~\ref{thm:maintheo} where we replace the $\pi_0$ of mapping space axiom with the external unit axiom is also true. If moreover $G$ reflects weak equivalences between fibrant objects, we could replace the $\pi_0$ of mapping space axiom with the detection property.
\end{rem}

\begin{ex}
Revisiting Example~\ref{ex:DGcat}, let $R$ be a commutative ring and $\cC$ a DG-category over $R$, i.e., a category enriched over $\Ch(R)$. The underlying %
homotopy theory of $\cC$ is obtained by changing the enrichment along the right adjoints displayed at the bottom:
\[
    \xymatrix{
        s\Set \ar@<+0.6ex>[r]^-{R(-)} & s\Mod{R} \ar@<+0.6ex>[l]^-{U} \ar@<+0.6ex>[r]^-{N} & \Ch_{\geq 0}(R) \ar@<+0.6ex>[l]^-{\Ga} \ar@<+0.6ex>[r]^-{\iota} & \Ch(R) \ar@<+0.6ex>[l]^-{\tau_{\geq 0}} \\
    }
\]
Since the $s\Set$-enriched category $U_* \Ga_* (\tau_{\geq 0})_* \cC$ has mapping spaces that are %
Kan complexes, it models an $\infty$-category \cite{Bergner07}. The underlying $\infty$-category of $\cC$ can also be constructed via the DG-nerve \cite{Lurie17}*{\S 1.3.1}. 
The middle adjunction is a weak monoidal Quillen equivalence, whereas the first and third adjunctions are strong monoidal Quillen pairs. If $\cC$ was in fact a DG \emph{model} category, i.e., a $\Ch(R)$-model category, then $\Ga_* (\tau_{\geq 0})_* \cC$ is a weak $s\Mod{R}$-model category and $U_* \Ga_* (\tau_{\geq 0})_* \cC$ is a weak $s\Set$-model category, by Theorem~\ref{thm:maintheo}.
\end{ex}

%
%

\section{Application: Comparing enrichments}\label{sec:App}

Via the Dold--Kan correspondence, we can produce two enrichments of $\mathrm{Ch}_{\geq 0}(R)$ over $s\Mod{R}$, for a commutative ring $R$: one by applying Dold--Kan globally to the whole category, the other by applying Dold--Kan locally to each hom complex.

\begin{defn} 
	The \Def{``global enrichment''} and \Def{``local enrichment''} of $\mathrm{Ch}_{\geq 0}(R)$ over $s\Mod{R}$ are defined by
		\begin{align*}
			\HOM^{\mathrm{global}}_{\Ch_{\geq 0}(R)}(C,D) &:= \HOM_{s\Mod{R}}(\Gamma(C),\Gamma(D)) = \Ga(D)^{\Ga(C)} \\
			\HOM^{\mathrm{local}}_{\Ch_{\geq 0}(R)}(C,D) &:= \Gamma \tau_{\geq 0}\HOM_{\Ch(R)}(C,D) = \Gamma \HOM_{\Ch_{\geq 0}(R)}(C,D) = \Ga(D^C)
		\end{align*}
for all $C,D \in \Ch_{\geq 0}(R)$. 
	Here, the bifunctors $\HOM_{s\Mod{R}}$ and $\HOM_{\Ch_{\geq 0}(R)}$ are the respective internal hom's.
\end{defn}

\begin{prop}\label{pr:TALBOT} 
\begin{enumerate}
	\item \label{item:GlobalTensor} Considering the ``global enrichment''\footnote{The ``global enrichment'', with the tensoring and cotensoring defined in part~\Eqref{item:GlobalTensor} of Proposition~\ref{pr:TALBOT}, is the one referred to on page~3 of \href{https://math.mit.edu/events/talbot/2012/2012TalbotExercises.pdf}{Background material for TALBOT 2012:  https://math.mit.edu/events/talbot/2012/2012TalbotExercises.pdf}.}of $\Ch_{\geq 0}(R)$ over $s\Mod{R}$, the formulas
\[
C \otimes A := N(\Gamma(C) \otimes A) \qquad \text{and} \qquad C^A := N\HOM_{s\Mod{R}}(A,\Gamma(C)) = N(\Gamma(C)^A)
\]
		define a tensoring and a cotensoring on $\Ch_{\geq 0}(R)$ over $s\Mod{R}$, for any $C \in \Ch_{\geq 0}(R)$ and $A \in s\Mod{R}$.
	\item \label{item:LocalWeakTensor} For the ``local enrichment'' of $\Ch_{\geq 0}(R)$ over $s\Mod{R}$, the formulas
\[
C \otimes A := C \ot N(A) \qquad \text{and} \qquad C^A := \HOM_{\Ch_{\geq 0}(R)}(N(A),C) = C^{N(A)}
\]
        define a weak tensoring and a weak cotensoring on $\Ch_{\geq 0}(R)$ over $s\Mod{R}$, for any $C \in \Ch_{\geq 0}(R)$ and $A \in s\Mod{R}$.
	\item \label{item:LocalNoTensor} There is neither a tensoring nor a cotensoring on $\Ch_{\geq 0}(R)$ over $s\Mod{R}$ associated to the ``local enrichment''.
\end{enumerate}
\end{prop}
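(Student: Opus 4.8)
The plan is to handle the three parts in order, leveraging the Dold--Kan correspondence (Theorem~\ref{thm:DoldKan}) and the transport results of Section~\ref{sec:Ptc} and Section~\ref{sec:swtwc}.

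For part~\Eqref{item:GlobalTensor}, the ``global enrichment'' is by definition the change of enrichment $\Ga_*$ applied to the $s\Mod{R}$-self-enrichment, precomposed with the identification of $\Ch_{\geq 0}(R)$ with $s\Mod{R}$ via denormalization. Concretely, $\Ga \colon \Ch_{\geq 0}(R) \to s\Mod{R}$ is an equivalence of categories by Theorem~\ref{thm:DoldKan}~\Eqref{item:Equivalence}, and $s\Mod{R}$ is tensored and cotensored over itself (being closed symmetric monoidal, by Lemma~\ref{lem:unboundmonoidal}). So I would simply invoke Proposition~\ref{pr:EquivCoTen}: transport the tensoring and cotensoring along the equivalence $\Ga \colon \Ch_{\geq 0}(R) \rla s\Mod{R} \colon N$. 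This gives $C \otimes A := N(\Ga(C) \otimes A)$ and $C^A := N(\Ga(C)^A) = N\HOM_{s\Mod{R}}(A, \Ga(C))$, matching the stated formulas. One should check that the $s\Mod{R}$-enrichment obtained by transport along the equivalence agrees with the ``global enrichment'' as defined, but this is immediate from the formula $\underline{\Ga_*(s\Mod{R})}(C,D) = \HOM_{s\Mod{R}}(\Ga C, \Ga D)$ and the way Proposition~\ref{pr:EquivCoTen} is set up.

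For part~\Eqref{item:LocalWeakTensor}, the ``local enrichment'' is $N_*$ applied to the $\Ch_{\geq 0}(R)$-self-enrichment of $\Ch_{\geq 0}(R)$ (which is tensored and cotensored over itself, again by Lemma~\ref{lem:unboundmonoidal}), where $N \colon s\Mod{R} \rla \Ch_{\geq 0}(R) \colon \Ga$ is viewed with $\Ga$ as the right adjoint; equivalently, it is the change of enrichment along the lax monoidal functor $N$ (right adjoint of $\Ga \dashv N$ in the sense of Example~\ref{ex:NRight}). By Example~\ref{ex:NRight}, $\Ga \colon \Ch_{\geq 0}(R) \rla s\Mod{R} \colon N$ is a weak monoidal Quillen equivalence, and by the Dold--Kan correspondence $N$ preserves the tensor unit (so $\ep$ is an isomorphism, cf.\ the example after Lemma~\ref{lem:sameunderlying}). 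Then Proposition~\ref{pr:PropW_ten_coPreserv} applies: since $\Ch_{\geq 0}(R)$ is a $\Ch_{\geq 0}(R)$-model category hence in particular has a tensoring and cotensoring, the change of enrichment $N_*$ yields a weak tensoring $C \otimes A := C \otimes \Ga'A$ where $\Ga'$ denotes the \emph{left adjoint} in the pair $\Ga \dashv N$, i.e.\ $C \otimes A = C \otimes \Ga(N(A)) \cong C \otimes N(A)$ after using $\Ga N \cong \id$—wait, more carefully: in the notation of Proposition~\ref{pr:PropW_ten_coPreserv} the left adjoint is $\Ga$, so the induced weak tensoring is $C \otimes A := C \otimes_{\Ch} \Ga(A)$; but this is not the stated formula. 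I should instead present the ``local enrichment'' as $N_*$ where $N$ is the \emph{right} adjoint of the pair $N \dashv \Ga$... I need to double-check which adjunction direction makes $N$ the lax functor doing the change of base. The hom-object of the local enrichment is $\Ga(D^C)$, i.e.\ $\Ga$ applied to the internal hom of $\Ch_{\geq 0}(R)$, so the change of base is along $\Ga$, lax monoidal as right adjoint of the pair $N \dashv \Ga$ (Example~\ref{ex:DK-WeakMonoidal}). So the relevant weak monoidal Quillen adjunction is $N \colon s\Mod{R} \rla \Ch_{\geq 0}(R) \colon \Ga$ with $\Ga$ as right adjoint; Proposition~\ref{pr:PropW_ten_coPreserv} then gives the induced weak tensoring $C \otimes A := C \otimes N(A)$ (the left adjoint being $N$), exactly as stated, and dually for the weak cotensoring $C^A = C^{N(A)} = \HOM_{\Ch_{\geq 0}(R)}(N(A), C)$. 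One still has to verify that $N$ preserves the tensor unit, which holds since $N(c(R)) \cong R[0]$, and that $\Ch_{\geq 0}(R)$ as a $\Ch_{\geq 0}(R)$-model category satisfies the hypotheses of Proposition~\ref{pr:PropW_ten_coPreserv}; these are routine.

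For part~\Eqref{item:LocalNoTensor}, the claim is that the ``local enrichment'' $\Ga_*(\Ch_{\geq 0}(R))$ admits no tensoring or cotensoring whatsoever. This follows from Proposition~\ref{pr:coten-strong}~\Eqref{item:NeedStrong}: applied to the adjunction $N \colon s\Mod{R} \rla \Ch_{\geq 0}(R) \colon \Ga$, if $\Ga_*(\Ch_{\geq 0}(R))$ were tensored or cotensored over $s\Mod{R}$—and taking $\cC = \Ch_{\geq 0}(R)$ self-enriched, so that $\Ga_* \cC$ on the object $\Ch_{\geq 0}(R) = \Ga_*\Ch_{\geq 0}(R)$... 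I need $\Ga_*\cV$ in the notation of that proposition with $\cV = \Ch_{\geq 0}(R)$, which is exactly the local enrichment of $\Ch_{\geq 0}(R)$ over $s\Mod{R}$—then the left adjoint $N$ would be strong monoidal. But $N$ is not strong monoidal: the Alexander--Whitney map $\AW \colon N(A \otimes B) \to N(A) \otimes N(B)$ is not an isomorphism in general (Theorem~\ref{thm:DoldKan}). Since $\ep \colon N(c(R)) \ral{\cong} R[0]$ is an isomorphism, the hypotheses of Proposition~\ref{pr:coten-strong}~\Eqref{item:NeedStrong} are met, and we conclude no tensoring or cotensoring exists; this is the same phenomenon observed in Example~\ref{ex:DGcat} and in \cite{Lurie17}*{Warning~1.3.5.4}.

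The main obstacle I anticipate is purely bookkeeping: making sure the two directions of the Dold--Kan adjunction, and which of $N$, $\Ga$ plays the role of ``left adjoint doing the tensoring'' versus ``lax functor doing the change of base'', are threaded consistently so that the induced formulas literally match the stated ones $C \otimes A = C \otimes N(A)$ and $C \otimes A = N(\Ga(C) \otimes A)$. Once the adjunction direction is pinned down in each case, parts~\Eqref{item:GlobalTensor}--\Eqref{item:LocalNoTensor} are immediate citations of Proposition~\ref{pr:EquivCoTen}, Proposition~\ref{pr:PropW_ten_coPreserv}, and Proposition~\ref{pr:coten-strong}~\Eqref{item:NeedStrong} respectively, together with the standard monoidal facts about Dold--Kan from Theorem~\ref{thm:DoldKan}.
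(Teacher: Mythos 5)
Your plan is correct and takes the paper's exact route: part~\Eqref{item:GlobalTensor} cites Proposition~\ref{pr:EquivCoTen}, part~\Eqref{item:LocalWeakTensor} cites Proposition~\ref{pr:PropW_ten_coPreserv} with the weak monoidal Quillen adjunction $N \dashv \Ga$ of Example~\ref{ex:DK-WeakMonoidal} (change of base $\Ga_*$ along the lax monoidal right adjoint $\Ga$), and part~\Eqref{item:LocalNoTensor} cites Proposition~\ref{pr:coten-strong}~\Eqref{item:NeedStrong} together with the failure of $N$ to be strong monoidal. You eventually sorted out the adjunction direction in part~\Eqref{item:LocalWeakTensor}, but the initial false start (``$N_*$'' and Example~\ref{ex:NRight}) should be removed, since the local hom-object is $\Ga(D^C)$, so the change of base is along $\Ga$ and the relevant pair is $N \dashv \Ga$ from the outset.
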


\begin{proof}
Part~\Eqref{item:GlobalTensor} is an instance of Proposition~\ref{pr:EquivCoTen}. 
Part~\Eqref{item:LocalWeakTensor} follows from Proposition~\ref{pr:PropW_ten_coPreserv} applied to the change of enrichment along $\Ga \colon \Ch_{\geq 0}(R) \to s\Mod{R}$, using Example~\ref{ex:DK-WeakMonoidal}. %
Part~\Eqref{item:LocalNoTensor} follows from Proposition~\ref{pr:coten-strong}, since the normalization $N \colon s\Mod{R} \to \Ch_{\geq 0}(R)$ is not strong monoidal, cf.\ Example~\ref{ex:DGcat}.
\end{proof}

Nevertheless, %
Opadotun \cite{Opadotun21}*{Theorem~6.3.1} proved that the ``global enrichment'' and ``local enrichment'' of $\Ch_{\geq 0}(R)$  over $s\Mod{R}$ %
are homotopy equivalent, more precisely part~\Eqref{item:SimplicialEnrichment} here: 

\begin{thm}\label{thm:Michael}
	Let $R$ be a commutative ring $R$.
    \begin{enumerate}
        \item \label{item:SimplicialEnrichment} For all chain complexes $C,D \in \Ch_{\geq 0}(R)$, the natural map of simplicial $R$-modules 
	\begin{equation*}
		\xymatrix @R-1pc {
		\AW^* \colon \Hom_{\Ch_{\geq 0}(R)}(C \otimes N(R \Delta^\bullet), D) \ar@{=}[d] \ar[r] & \Hom_{\Ch_{\geq 0}(R)}(N(\Gamma C\otimes R \Delta^\bullet), D) \ar@{=}[d] \\
		\Ga(D^C) & \Ga(D)^{\Ga(C)} \\
		}
	\end{equation*}
	is a homotopy equivalence, with homotopy inverse $\EZ^*$.
	 	\item \label{item:ChainEnrichment} For all simplicial modules $A,B \in s\Mod{R}$, the natural map of chain complexes $\AW^* \colon N(B)^{N(A)} \to N(B^A)$ is a chain homotopy equivalence, with homotopy inverse $\EZ^*$.
    \end{enumerate}
\end{thm}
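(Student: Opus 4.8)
The plan is to recognize both $\AW^*$ and $\EZ^*$ as precomposition maps and then play the two identities of Theorem~\ref{thm:DoldKan}(6) against each other: $\AW \circ \EZ$ is the identity \emph{on the nose}, while $\EZ \circ \AW$ is merely naturally chain homotopic to the identity. I would prove part~\Eqref{item:SimplicialEnrichment} directly and then obtain part~\Eqref{item:ChainEnrichment} by the symmetric argument (equivalently, by specializing to $C = N(A)$, $D = N(B)$, applying $N$, and using the natural isomorphism $\Ga N \cong \id$).

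First I would unwind the two simplicial $R$-modules degreewise. Since $R\Delta^n = R[\Delta^n]$ is free, the Dold--Kan adjunction $N \dashv \Ga$ gives a natural isomorphism $(\Ga X)_n \cong \Ch_{\geq 0}(R)\bigl(N(R\Delta^n), X\bigr)$ for $X \in \Ch_{\geq 0}(R)$; combined with the enriched tensor--hom adjunction in $\Ch_{\geq 0}(R)$ and the monoidal counit $\ep \colon N\Ga C \ral{\cong} C$, this yields natural isomorphisms of simplicial $R$-modules
\begin{align*}
\HOM^{\mathrm{local}}_{\Ch_{\geq 0}(R)}(C,D)_n &\cong \Hom_{\Ch_{\geq 0}(R)}\bigl(C \ot N(R\Delta^n),\, D\bigr), \\
\HOM^{\mathrm{global}}_{\Ch_{\geq 0}(R)}(C,D)_n &\cong \Hom_{\Ch_{\geq 0}(R)}\bigl(N(\Ga C \ot R\Delta^n),\, D\bigr),
\end{align*}
under which $\AW^*$ becomes precomposition with the Alexander--Whitney map $N(\Ga C \ot R\Delta^n) \to N(\Ga C) \ot N(R\Delta^n) \ral{\ep \ot \id} C \ot N(R\Delta^n)$, and $\EZ^*$ becomes precomposition with the corresponding shuffle map. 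Naturality of $\AW$ and $\EZ$ in their second variable makes the families $\{\AW_{\Ga C, R\Delta^n}\}_n$ and $\{\EZ_{\Ga C, R\Delta^n}\}_n$ into morphisms of cosimplicial chain complexes, so that $\AW^*$ and $\EZ^*$ are genuine maps of simplicial $R$-modules.

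Next I would compute the two composites. Theorem~\ref{thm:DoldKan}(6) gives $\AW \circ \EZ = \id$, whence $\EZ^* \circ \AW^* = (\AW \circ \EZ)^* = \id$ on $\HOM^{\mathrm{local}}_{\Ch_{\geq 0}(R)}(C,D)$. For the other composite, $\AW^* \circ \EZ^* = (\EZ \circ \AW)^*$, and the same part of Theorem~\ref{thm:DoldKan} furnishes a chain homotopy $\psi$, natural in both variables, with $\EZ \circ \AW - \id = d\psi + \psi d$. Naturality in $R\Delta^\bullet$ makes $\{\psi_{\Ga C, R\Delta^n}\}_n$ a degree-$(+1)$ morphism of cosimplicial graded $R$-modules on $N(\Ga C \ot R\Delta^\bullet)$, and applying $\Hom_{\Ch_{\geq 0}(R)}(-,D)$ to it should produce a simplicial homotopy between $\AW^* \circ \EZ^*$ and $\id$ on $\HOM^{\mathrm{global}}_{\Ch_{\geq 0}(R)}(C,D)$; equivalently, normalizing and invoking Theorem~\ref{thm:DoldKan}(2), one reads off a chain homotopy between $N(\AW^* \circ \EZ^*)$ and the identity. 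This exhibits $\AW^*$ as a simplicial homotopy equivalence with homotopy inverse $\EZ^*$, proving part~\Eqref{item:SimplicialEnrichment}; part~\Eqref{item:ChainEnrichment} then follows by the same recipe, the objects $N(B)^{N(A)}$ and $N(B^A)$ admitting the analogous degreewise descriptions and the same two identities applying.

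The hard part will be the last bookkeeping step: upgrading the \emph{degreewise, natural} chain homotopy $\EZ \circ \AW \simeq \id$ to an honest simplicial homotopy of simplicial $R$-modules (respectively, a chain homotopy between the normalized complexes), which requires care with how the cosimplicial face and degeneracy maps of $R\Delta^\bullet$ interact with $\psi$ and with the Koszul signs. The remaining steps are a routine, if somewhat tedious, unwinding of the Dold--Kan and tensor--hom adjunctions.
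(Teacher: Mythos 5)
Your approach differs genuinely from the paper's. The paper gives an ``alternate proof'' that bypasses all explicit homotopy calculus: it passes to the Hurewicz model structures (where every object is cofibrant and fibrant), invokes Proposition~\ref{pr:PropW_ten_coPreserv} to get the weak cotensoring comparison map $\tild{\psi}_{C,E,D}$ as a weak equivalence for all $C,E,D$, specializes to $E = R[0]$, and then identifies the resulting map as $\AW^*$. Your proposal is instead a hands-on degreewise computation, recognizing both maps as precompositions and playing $\AW \circ \EZ = \id$ against the natural chain homotopy $\EZ \circ \AW \simeq \id$; this is closer in spirit to the original argument of Opadotun and of \cite{Ngopnang24hur}*{Proposition~5.2} that the paper cites. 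Your degreewise identifications of $\Ga(D^C)_n$ and $(\Ga D^{\Ga C})_n$ and the bookkeeping $\EZ^* \circ \AW^* = \id$, $\AW^* \circ \EZ^* = (\EZ \circ \AW)^*$ are all correct.

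However, the step you flag as the hard one is where your sketch actually breaks. You propose to ``apply $\Hom_{\Ch_{\geq 0}(R)}(-,D)$'' to the degree-$(+1)$ operator $\psi_{\Ga C, R\Delta^n}$ and thereby read off a simplicial homotopy. But $\psi$ raises the \emph{internal} chain degree of $N(\Ga C \ot R\Delta^n)$ for a \emph{fixed} simplicial degree $n$; it does not shift $n$. In particular $\psi$ is not a chain map, so precomposition with it does not even define an endomorphism of the hom-set $\Hom_{\Ch_{\geq 0}(R)}(N(\Ga C \ot R\Delta^n), D)$, let alone a map $P_n \to P_{n+1}$ as a simplicial homotopy operator for $P = \Ga(D)^{\Ga(C)}$ would require. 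The chain homotopy on $N(\Ga(D)^{\Ga(C)})$ that the Dold--Kan translation demands must genuinely interweave $\psi$ with the cosimplicial (co)face and (co)degeneracy structure of $R\Delta^\bullet$, and your proposal does not supply such a construction. So while the plan is plausible and matches the original route, the mechanism for producing the homotopy is not yet a proof; by contrast, the paper's argument is designed precisely to sidestep this construction by working in a model structure where being a weak equivalence already \emph{is} being a homotopy equivalence.
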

Note that $\EZ^*$ is also a retraction to $\AW^*$, i.e., $\EZ^* \circ \AW^* = (\AW \circ \EZ)^* = \id$, so that $\Ga(D^C)$ is a deformation retract of $\Ga(D)^{\Ga(C)}$ 
and likewise $N(B)^{N(A)}$ is a deformation retract of $N(B^A)$.

The argument is also given in \cite{Ngopnang24hur}*{Proposition~5.2}, in the form of part~\Eqref{item:ChainEnrichment}. %
Note that parts~\Eqref{item:SimplicialEnrichment} and \Eqref{item:ChainEnrichment} are equivalent, by Theorem~\ref{thm:DoldKan}\Eqref{item:Equivalence}\Eqref{item:Homotopy}.

As an application of %
weak (co)tensorings, we obtain the following.

\begin{proof}[Alternate proof of Theorem~\ref{thm:Michael}]
Consider $\Ga \colon \Ch_{\geq 0}(R) \to s\Mod{R}$ with the Hurewicz model structures, where weak equivalences are the homotopy equivalences \cite{Ngopnang24hur}*{Proposition~3.3}. Change of enrichment of the self-enriched model category $\Ch_{\geq 0}(R)$ along $\Gamma$ yields the weak $s\Mod{R}$-model category $\Ga_* \Ch_{\geq 0}(R)$. %
The weak cotensoring 
is a map 
    \begin{equation}\label{DKwcoten}
        \tild{\psi}_{C,E,D} \colon \Gamma \HOM_{\Ch_{\geq 0}(R)}(E,D^C) \to \HOM_{s\Mod{R}}(\Gamma C, \Gamma \HOM_{\Ch_{\geq 0}(R)}(E,D))
    \end{equation}
    in $s\Mod{R}$ which is a weak equivalence when $C,E\in \Ch_{\geq 0}(R)$ are cofibrant and \mbox{$D \in \Ch_{\geq 0}(R)$} is fibrant. But in the Hurewicz model structure, every object is cofibrant and fibrant, so that the map~\eqref{DKwcoten} is a homotopy equivalence for all $C,D,E$. Specializing to $E = \unit_{\Ch_{\geq 0}(R)} = R[0]$, \eqref{DKwcoten} becomes 
    \begin{align*}
         \tild{\psi}_{C,\unit,D} \colon \Gamma \HOM_{\Ch_{\geq 0}(R)}\left(R[0],D^C\right) & \ral{\simeq} \HOM_{s\Mod{R}} \left(\Gamma C, \Gamma \HOM_{\Ch_{\geq 0}(R)} \left( R[0],D \right) \right) \\ 
                                    & \cong \HOM_{s\Mod{R}} \left(\Gamma C, \Gamma D \right),
    \end{align*}
    i.e, $\Gamma(D^C) \simeq \Gamma(D)^{\Gamma(C)}$. Inspecting the formula for the weak cotensoring $\tild{\psi}_{C,E,D}$ in Proposition~\ref{pr:PropW_ten_coPreserv} confirms that the map $\tild{\psi}_{C,\unit,D}$ is the restriction $\AW^*$ along Alexander--Whitney.

For the analogous homotopy equivalence $N(B)^{N(A)} \simeq N(B^A)$, apply the same argument to the change of enrichment along $N \colon s\Mod{R} \to \Ch_{\geq 0}(R)$.
\end{proof}

\begin{rem}
For a non-commutative ring $R$, the statements in this section are similar, using the enrichment, tensoring, and cotensoring of $s\Mod{R}$ over $s\Ab$ and of $\Ch_{\geq0}(R)$ over $\Ch_{\geq0}(\Z)$, as in Lemma~\ref{lem:NonCommutative}. %
\end{rem}

\section*{Part II. Sources of examples}\label{sec:Examples}

To expand the applicability of weak monoidal Quillen pairs, it would be useful to have a large supply of examples. In \cite{SchwedeS03equ}*{Propositions~3.16, 3.17}, Schwede and Shipley give sufficient conditions for a Quillen pair to be weak monoidal. Here we give some constructions to produce new weak monoidal Quillen pairs from old. We will look at diagram categories, equivariant homotopy, and Bousfield localizations.

\section{Diagram categories}

\begin{prop}\label{pr:DiagramCat}
Let $F \colon \cC \rla \cD \colon G$ be a %
Quillen adjunction and let $K$ be a small category. Consider the pointwise monoidal structures on $\cC^K$ and $\cD^K$. %
\begin{enumerate}
    \item The induced adjunction on diagram categories
\[
F^K \colon \cC^K \rla \cD^K \colon G^K
\]
is also a 
Quillen adjunction with respect to each of these model structures on diagram categories when they exist: %
projective, injective, or Reedy (if $K$ is a Reedy category).
    \item If moreover the original adjunction $F \dashv G$ is a Quillen equivalence, then so is the induced adjunction $F^K \dashv G^K$.
    \item If $\cC$ and $\cD$ are monoidal and $F \colon \cC \rla \cD \colon G$ is a weak monoidal Quillen adjunction, then so is the induced adjunction $F^K \dashv G^K$.
\end{enumerate}
\end{prop}

\begin{proof}
For~(1) and (2), see \cite{Hirschhorn03}*{Theorem~11.6.5, Proposition~15.4.1} \cite{Lurie09htt}*{Remark~A.2.8.6}.

(3) \textbf{Lax monoidal:} The right adjoint $G^K$ is lax monoidal, using the lax monoidal structure of $G$ pointwise.

\textbf{Oplax comultiplication:} The oplax monoidal structure of $F^K$ is that of $F$ pointwise. We want to show that the oplax monoidal transformation
\begin{equation}\label{eq:Comult}
\de \colon F^K(x \ot y) \to F^K(x) \ot F^K(y)
\end{equation}
is a weak equivalence in $\cD^K$ (i.e., a pointwise weak equivalence) when $x$ and $y$ are cofibrant in $\cC^K$. In all three cases (projective, Reedy, injective), being cofibrant in $\cC^K$ implies being pointwise cofibrant (which is the same as injective cofibrant). Thus in position $k \in K$, the map of $K$-shaped diagrams \eqref{eq:Comult} becomes
\[
\de \colon F(x_k \ot y_k) \ral{\sim} F(x_k) \ot F(y_k),
\]
which is a weak equivalence in $\cD$ by assumption on $F$.

\textbf{Unit condition:} %
The tensor unit $\unit_{\cC^K}$ in $\cC^K$ is pointwise, i.e., the constant diagram $c(\unit_{\cC})$. In all three cases (projective, Reedy, injective), a cofibrant replacement $q \colon Q \unit_{\cC^K} \ral{\sim} \unit_{\cC^K}$ is in particular pointwise cofibrant. Therefore, for all $k \in K$, the $k$\textsuperscript{th} component 
$q_k \colon (Q \unit_{\cC^K})_k \ral{\sim} \unit_{\cC}$ 
is a cofibrant replacement of $\unit_{\cC}$ in $\cC$. By assumption on $F$, the composite
\[
\xymatrix{
F (Q \unit_{\cC^K})_k \ar[r]^-{F(q_k)} & F(\unit_{\cC}) \ar[r]^-{\ep} & \unit_{\cD} \\
}
\]
is a weak equivalence in $\cD$. Hence the composite 
$F^K (Q \unit_{\cC^K}) \to F(\unit_{\cC^K}) \to \unit_{\cD^K}$ 
is a weak equivalence in $\cD^K$.
\end{proof}

Recall some sufficient conditions for the existence of model structures on a diagram category $\cC^K$.

\begin{prop}\label{pr:DiagramModel}
Let $\cC$ be a model category and $K$ a small category.
\begin{enumerate}
	\item The projective model structure on $\cC^K$ exists if the model category $\cC$ is cofibrantly generated \cite{Hirschhorn03}*{Theorem~11.6.1}.
	\item The injective model structure on $\cC^K$ exists if the model category $\cC$ is combinatorial \cite{Lurie09htt}*{Proposition~A.2.8.2}, 
	or more generally $\cC$ is locally presentable and has an accessible model structure \cite{HessKRS17}*{Theorem~3.4.1} \cite{GarnerKR20}. 
	
	For other generalizations, see \cite{BayehHKKRS15}*{\S 4.3} and \cite{Moser19}.
	\item The Reedy model structure on $\cC^K$ exists if $K$ is a Reedy category and $\cC$ is any model category \cite{Hovey99}*{Theorem~5.2.5} \cite{Hirschhorn03}*{Theorem~15.3.4} \cite{Lurie09htt}*{Proposition~A.2.9.19}.
\end{enumerate}
\end{prop}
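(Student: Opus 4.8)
Since all three parts are standard existence results, the plan is essentially to reduce each to the cited reference, indicating only the mechanism and isolating where the real work lies. In all three model structures the weak equivalences are detected objectwise; what differs is how the cofibrations and fibrations are specified.

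For part~(1), I would recall that the projective model structure on $\cC^K$ is right-transferred along the evaluation functor $\cC^K \to \prod_{\Obj K} \cC$, whose left adjoint sends a family supported at $k$ to the corepresentable-weighted diagram $F_k(-) = K(k,-)\cdot(-)$. The transferred structure has generating cofibrations $\{F_k(i) : i \in I,\ k \in \Obj K\}$ and generating acyclic cofibrations $\{F_k(j) : j \in J,\ k \in \Obj K\}$; the transfer succeeds because $\cC$ is cofibrantly generated, so the small object argument applies, and because colimits in $\cC^K$ are objectwise, so each $F_k(j)$ is an objectwise acyclic cofibration and the acyclicity hypothesis of the transfer theorem holds. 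This is \cite{Hirschhorn03}*{Theorem~11.6.1}.

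For part~(3), when $K$ is a Reedy category I would construct the Reedy model structure by the usual transfinite induction over the degree filtration of $K$, using its direct and inverse subcategories: cofibrations and fibrations are those maps whose relative latching and relative matching maps, respectively, are cofibrations and fibrations in $\cC$. This needs no cofibrant generation of $\cC$ whatsoever, which is exactly why the hypothesis on $\cC$ is dropped; see \cite{Hovey99}*{Theorem~5.2.5}, \cite{Hirschhorn03}*{Theorem~15.3.4}, or \cite{Lurie09htt}*{Proposition~A.2.9.19}.

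The main obstacle is part~(2). In the injective model structure the cofibrations are objectwise, but the fibrations are \emph{not}, and there is no explicit small generating set of acyclic cofibrations. One must invoke a set-theoretic argument: either combinatoriality of $\cC$, so that a cardinality bound produces such a set and the small object argument runs in the presentable setting \cite{Lurie09htt}*{Proposition~A.2.8.2}, or more generally local presentability together with accessibility of the model structure, which guarantees that the objectwise acyclic cofibrations form a cofibrantly generated class \cite{HessKRS17}*{Theorem~3.4.1} \cite{GarnerKR20}; for further relaxations of these hypotheses one consults \cite{BayehHKKRS15}*{\S 4.3} and \cite{Moser19}. The substantive point, then, is simply the recognition that~(2) genuinely requires set-theoretic input whereas~(1) and~(3) are comparatively formal.
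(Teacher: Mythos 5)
The paper gives no proof of this proposition at all: it is stated purely as a recollection of known facts with citations, which is appropriate since each part is a standard theorem in the literature. Your sketch is correct and usefully elaborates on what the paper delegates entirely to the references: the projective case via right transfer along objectwise evaluation with generating (acyclic) cofibrations of the form $F_k(i) = K(k,-)\cdot i$; the Reedy case via latching/matching maps and induction on degree, requiring no smallness hypothesis on $\cC$; and the injective case as the genuinely nontrivial one, where the set of generating acyclic cofibrations is produced by a cardinality argument requiring combinatoriality or, more generally, local presentability plus accessibility of the model structure. The only thing to flag is a matter of emphasis rather than correctness: since the paper offers no argument here, your write-up is strictly more informative than what the authors chose to include, and it accurately locates where the real work lies (part~(2)) versus what is formal (parts~(1) and~(3)).
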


\begin{rem}
If the small category $K$ is discrete, then the diagram category is a product $\cC^K \cong \prod_{k \in K} \cC$. In this case, both the projective and injective model structures exist and coincide with the product model structure. If moreover $\cC$ is a monoidal model category, then so is $\cC^K$. 
\end{rem}

\begin{rem}
    The $q$-model structure on $s\Mod{R}$ is combinatorial \cite{Balchin21}*{Proposition~4.4.4}, as is the $q$-model structure on $\Ch_{\geq 0}(R)$ \cite{Hovey99}*{Theorem~2.3.11} \cite{SchwedeS03equ}*{\S 4.1} \cite{MayP12}*{Theorem~18.4.2}. In general, the $h$-model structure on $\Ch_{\geq 0}(R)$ is not cofibrantly generated (in the unenriched sense) \cite{ChristensenH02}*{Corollary~5.12}, though it is cofibrantly generated in the $\Mod{R}$-enriched sense \cite{Riehl14}*{Examples~13.2.5, 13.4.5}.
\end{rem}

Now we recall some conditions for a diagram category to be a \emph{monoidal} model category.

\begin{prop}\label{pr:DiagramMonoidal}
Let $\cC$ be a monoidal model category and $K$ a small category. In each statement below, assume that $\cC^K$ admits the given model structure (projective, injective, or Reedy).
\begin{enumerate}
	\item In all three cases, the unit axiom holds in $\cC^K$.
	\item The injective model structure on $\cC^K$ satisfies the pushout-product axiom. 
	\item The projective model structure on $\cC^K$ satisfies the pushout-product axiom if $\cC$ is cofibrantly generated and $K$ has finite coproducts. %
\end{enumerate}
\end{prop}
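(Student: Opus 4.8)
The plan is to reduce each assertion to the corresponding property of $\cV$, exploiting that in all three diagram model structures the weak equivalences are detected pointwise and that both the tensor product and the tensor unit of $\cV^K$ are formed pointwise. For the unit axiom (part~(1)), let $X \in \cV^K$ be cofibrant and let $q \colon Q\unit_{\cV^K} \ral{\sim} \unit_{\cV^K} = c(\unit_\cV)$ be any cofibrant replacement of the unit. In each of the projective, injective, and Reedy structures a cofibrant object is in particular pointwise cofibrant, so for every $k \in K$ the component $q_k \colon (Q\unit_{\cV^K})_k \ral{\sim} \unit_\cV$ is a cofibrant replacement of $\unit_\cV$ in $\cV$, and $X_k$ is cofibrant. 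Since the tensoring is pointwise, the component at $k$ of $X \otimes Q\unit_{\cV^K} \to X$ is $X_k \otimes (Q\unit_{\cV^K})_k \to X_k$, which is a weak equivalence in $\cV$ by the unit axiom for $\cV$; as weak equivalences in $\cV^K$ are pointwise, the map is a weak equivalence. For the injective pushout-product axiom (part~(2)), recall that the injective cofibrations are precisely the pointwise cofibrations, hence the injective acyclic cofibrations are the pointwise acyclic cofibrations; given (acyclic) cofibrations $i,j$ in the injective structure, their pushout-product $i \square j$ is formed pointwise, so in position $k$ it is $i_k \square j_k$, a (acyclic) cofibration in $\cV$ by the pushout-product axiom for $\cV$, whence $i \square j$ is a pointwise, i.e.\ injective, (acyclic) cofibration.

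For part~(3), projective cofibrations are no longer pointwise, so I would argue on generators. With generating cofibrations $I$ and generating acyclic cofibrations $J$ for the cofibrantly generated category $\cV$, the projective model structure on $\cV^K$ is cofibrantly generated, with generating cofibrations $\{F_k i : k \in K,\ i \in I\}$ and generating acyclic cofibrations $\{F_k j : k \in K,\ j \in J\}$, where $F_k \colon \cV \to \cV^K$ is the left adjoint to evaluation at $k$, given by $(F_k v)_m = \coprod_{K(k,m)} v$. By the standard reduction (the pushout-product with a fixed map commutes with pushouts, transfinite composites, and retracts, so the pushout-product axiom need only be verified on a generating set), it suffices to show that $F_k i \square F_l i'$ is a projective cofibration for all $i,i' \in I$, and a projective acyclic cofibration when moreover $i' \in J$ (and symmetrically). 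Since $K$ has finite coproducts, the universal property $K(k,m) \times K(l,m) \cong K(k \amalg l, m)$ furnishes a natural isomorphism $F_k v \otimes F_l w \cong F_{k \amalg l}(v \otimes w)$, both sides being $\coprod_{K(k \amalg l, m)}(v \otimes w)$ in position $m$. As $F_{k \amalg l}$ is a left adjoint it preserves the pushout defining the pushout-product, so $F_k i \square F_l i' \cong F_{k \amalg l}(i \square i')$. Now $i \square i'$ is a (acyclic) cofibration in $\cV$ by the pushout-product axiom for $\cV$, and $F_{k \amalg l}$ is left Quillen for the projective structure (its right adjoint, evaluation at $k \amalg l$, preserves the pointwise fibrations and acyclic fibrations), hence preserves (acyclic) cofibrations; thus $F_k i \square F_l i'$ is a projective (acyclic) cofibration.

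Parts~(1) and~(2) are essentially formal. The substance lies in part~(3): one must set up the free-diagram functors $F_k$, establish the identity $F_k v \otimes F_l w \cong F_{k \amalg l}(v \otimes w)$ — which is exactly where the hypothesis that $K$ has finite coproducts enters — and invoke the standard but somewhat technical reduction of the pushout-product axiom to a generating set, taking care that the generating acyclic cofibrations of the projective structure really are of the form $F_k j$, which is where the cofibrant generation of $\cV$ is used.
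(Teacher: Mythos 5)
Your argument is correct, and for parts~(1) and~(2) it coincides with the paper's own proof: cofibrant objects in any of the three diagram model structures are pointwise cofibrant (giving the unit axiom pointwise), and in the injective structure the cofibrations and weak equivalences are pointwise, so the pushout-product axiom is inherited pointwise. For part~(3) the paper gives no argument at all --- it simply cites \cite{Yalin14} (arXiv preprint v3, Lemma~3.8) --- whereas you supply a complete proof. Your proof is the expected one and is sound: the reduction to generating (acyclic) cofibrations via the ``corner-map $\square$ commutes with cell attachments'' lemma, the free-diagram left Quillen functors $F_k \dashv \mathrm{ev}_k$ with $(F_k v)_m = \coprod_{K(k,m)} v$, and the isomorphism $F_k v \otimes F_l w \cong F_{k\amalg l}(v\otimes w)$ obtained from the universal property $K(k,m)\times K(l,m)\cong K(k\amalg l,m)$ together with closedness of $\cV$ (so that $\otimes$ distributes over the coproducts), yielding $F_k i \,\square\, F_l j \cong F_{k\amalg l}(i\square j)$ since $F_{k\amalg l}$ preserves pushouts. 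Two small remarks: (i) your reduction to generators requires $\cV$ closed (as the paper assumes in its definition of monoidal model category) so that $-\otimes X$ preserves colimits; and (ii) by symmetry of the pushout-product you need only treat one of the two acyclic cases $i\in I,\, j\in J$, as you indicate. The paper itself would benefit from the argument you wrote out, since the cited reference is only available in a preprint version.
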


\begin{proof}
Part~(1) holds since cofibrant objects in $\cC^K$ are in particular pointwise cofibrant. Part~(2) holds since cofibrations and weak equivalences are pointwise in the injective model structure. This was also observed in \cite{Barwick10}*{Proposition~4.51}. 
Part~(3) was observed by Yalin \cite{Yalin14}\footnote{The statement is not in the published version, but can be found in the arXiv preprint version~3, Lemma~3.8.} and Szumi{\l}o \cite{MO_Szumilo13}; we recall the argument for later use. If $I$ is a set of generating cofibrations in $\cC$, a set of generating cofibrations for the projective model structure on $\cC^K$ is given by 
\[
    \{ i \odot K(a,-) \mid i \in I, a \in K \}
\]
where $\odot \colon \cC \x \Set \to \cC$ denotes the canonical tensoring of $\cC$ over $\Set$ given by coproducts, i.e., $X \odot S = \coprod_{s \in S} X$. Take two generating cofibrations $i \odot K(a,-)$ and $j \odot K(b,-)$. Since the tensor product in $\cC$ preserves coproducts in each variable, we obtain the pushout-product
\begin{align*}
    \left( i \odot K(a,-) \right) \square \left( j \odot K(b,-) \right) &\cong (i \square j) \odot \left( K(a,-) \x K(b,-) \right) \\ 
    &\cong (i \square j) \odot K(a \amalg b,-), 
\end{align*}
which is a cofibration in $\cC^K$. The same argument works for acyclic cofibrations since weak equivalences in $\cC^K$ are pointwise.
\end{proof}

For conditions under which the Reedy model structure on $\cC^K$ %
is monoidal, see \cite{Barwick10}*{Theorem~3.51}. For alternate conditions making the projective model structure monoidal, see \cite{Barwick10}*{Proposition~4.52}.

\begin{rem}
The Dold--Kan correspondence has been used for various presheaf categories, for instance in \cite{RondigsO08}*{\S 2.2.4} and \cite{PavlovS19}*{\S 8}. 
\end{rem}

\section{Equivariant homotopy}

Throughout this section, let $G$ be a %
discrete group (possibly infinite) 
and let $\cO_G$ be its orbit category, which consists of transitive $G$-sets $G/H$ and $G$-equivariant functions $G/H \to G/K$ between them. We will work in the setup of Elmendorf's theorem \cite{Elmendorf83} \cite{May96}*{\S V.3, VI.6}; %
we follow the %
generalization in \cite{Stephan16}. 

\begin{defn}
For a model category $\cC$, consider the category of $G$-objects $\cC^G = \Fun(G,\cC)$, where $G$ is viewed as a one-object category. The \Def{fine model structure} on $\cC^G$ (if it exists) has as fibrations (resp.\ weak equivalences) the maps $f \colon X \to Y$ in $\cC^G$ such that the induced map on fixed points $f^H \colon X^H \to Y^H$ is a fibration (resp.\ weak equivalence) in $\cC$ for all subgroup $H \leq G$. 
\end{defn}

The fine model structure is also called the \emph{$\mathcal{F}$-model structure} for the family of subgroups $\mathcal{F} = \{ \text{all subgroups of } G \}$. %
The inclusion of the free orbit $i \colon \{G/e\} \inj \cO_G$ induces a Quillen pair
\begin{equation}\label{eq:Elmendorf}
i^* \colon \cC^{\cO_G^{\opp}} \rla \cC^G \colon i_*
\end{equation}
where $i^*$ is evaluation at the orbit $G/e$, and the right adjoint $i_*$ sends a $G$-object $X$ to its diagram of fixed point objects $\{ X^H \mid H \leq G \}$. We say that $\cC$ \Def{satisfies Elmendorf's theorem} if the Quillen pair~\eqref{eq:Elmendorf} is a Quillen equivalence. 
Sufficient conditions for the existence of the fine model structure are given in \cite{Stephan16}*{Proposition~2.6}.

\begin{lem}\label{lem:TrivialAction}
Let $\cC$ be a model category such that the fine model structure on $\cC^G$ exists.
\begin{enumerate}
    \item For any cofibrant object $X \in \cC$ and subgroup $H \leq G$, the $G$-object $X \odot G/H$ is cofibrant in $\cC^G$. This holds in particular for $X$ endowed with the trivial $G$-action, denoted $\Triv(X) = X \odot G/G$.
    \item If a map $f \colon X \to Y$ in $\cC$ is a fibration (resp.\ weak equivalence), then so is $\Triv(f) \colon \Triv(X) \to \Triv(Y)$ in $\cC^G$.    
\end{enumerate}
\end{lem}

\begin{proof}
Part~(1) follows from the fact that the functor $- \odot G/H \colon \cC \to \cC^G$ is left adjoint to the $H$-fixed points functor $(-)^H \colon \cC^G \to \cC$ \cite{Stephan16}*{Lemma~2.5}, and an acyclic fibration $p \colon Y \to Z$ in $\cC^G$ induces an acyclic fibration on fixed points $p^H \colon Y^H \to Z^H$. 
Part~(2) follows from the natural isomorphism $\eta \colon X \ral{\cong} \Triv(X)^H$ for any $X \in \cC$.
\end{proof}

\begin{lem}\label{lem:TensorTrivial}
Let $\cC$ be a cofibrantly generated monoidal model category that satisfies the ``cellularity conditions'' from  \cite{Stephan16}*{Proposition~2.6}, so that $\cC^G$ admits the fine model structure. 
\begin{enumerate}
    \item \label{item:CofibrantFixed} For any cofibrant $X \in \cC^G$ and subgroup $H \leq G$, the fixed points object $X^H$ is cofibrant in $\cC$.
    \item \label{item:TensorTrivial} For any cofibrant $X \in \cC^G$ and any object $Y \in \cC$, the natural map in $\cC$
\[
\xymatrix{
X^H \ot Y \ar[r]^-{\id \ot \eta}_-{\cong} & X^H \ot \Triv(Y)^H \ar[r]^-{\phy} & (X \ot \Triv(Y))^H \\
}
\]
is an isomorphism. Here $\phy \colon X^H \ot Y^H \to (X \ot Y)^H$ denotes the canonical map. 
\end{enumerate}
\end{lem}

\begin{proof}
Part~(1) was argued in the proof of \cite{Stephan16}*{Proposition~2.6}. 

For part~(2), 
the functors $- \ot Y \colon \cC \to \cC$ and $- \ot \Triv(Y) \colon \cC^G \to \cC^G$ preserve all (small) colimits. %
By the cellularity conditions~(i) and (ii), the fixed points functor $(-)^H \colon \cC^G \to \cC$ preserves the colimits that build the cell complex $X$ from cell attachments. Thus it suffices to check the case of a building block $X = A \odot G/K$ for some object $A \in \cC$ and subgroup $K \leq G$. That case follows from the cellularity condition~(iii):

\begin{align*}
    \left( X \ot \Triv(Y) \right)^H &= \left( (A \odot G/K) \ot \Triv(Y) \right)^H \cong \left( \coprod_{G/K} A \ot Y \right)^H \\
    &\cong \left( (A \ot Y) \odot G/K \right)^H \cong (A \ot Y) \odot (G/K)^H \quad \text{by condition (iii)} \\
    &\cong \coprod_{(G/K)^H} A \ot Y \cong \left( A \odot (G/K)^H \right) \ot Y \cong (A \odot G/K)^H \ot Y \quad \text{by condition (iii)} \\
    &= X^H \ot Y. \qedhere
\end{align*}
\end{proof}

The following argument was kindly suggested to us by David White \cite{MO_White26}; 
a variation can be found in \cite{MandellM02}*{Lemma~III.1.22}.

\begin{prop}\label{pr:GMonoidal}
Let $\cC$ be a cofibrantly generated monoidal model category. %
\begin{enumerate}
    \item The projective model structure on the presheaf category $\cC^{\cO_G^{\opp}}$ is a monoidal model structure.
\end{enumerate}
Now assume moreover that $\cC$ satisfies the ``cellularity conditions'' from  \cite{Stephan16}*{Proposition~2.6}.
\begin{enumerate}[resume]
    \item The fine model structure on $\cC^G$ is a monoidal model structure.
    \item The adjunction~\eqref{eq:Elmendorf} is a strong monoidal Quillen pair.
\end{enumerate}
\end{prop}

\begin{proof}
(1) We cannot invoke Proposition~\ref{pr:DiagramMonoidal} directly, since the orbit category $\cO_G$ doesn't have finite products. Nonetheless, as $G$-sets, a product of orbits decomposes as a disjoint union of $G$-orbits:
\[
G/H \x G/K \cong \coprod_{\al} G/L_{\al}.
\]
Hence a pushout-product of generating cofibrations in $\cC^{\cO_G^{\opp}}$
\[
\left( i \odot \cO_G(-,G/H) \right) \square \left( j \odot \cO(-,G/K) \right) \cong (i \square j) \odot \left( \cO_G(-,G/H) \x \cO_G(-,G/K) \right)
\]
becomes a coproduct of copies of $i \square j$ indexed by the presheaf of sets
\begin{align*}
    \cO_G(-,G/H) \x \cO_G(-,G/K) &= \Set^G(-,G/H) \x \Set^G(-,G/K) \\
    &\cong \Set^G(-,G/H \x G/K) \cong \Set^G(-, \coprod_{\al} G/L_{\al}) \\
    &\cong \coprod_{\al} \Set^G(-, G/L_{\al}) \qquad \text{since the input $(-)$ is a single orbit}\\
    &= \coprod_{\al} \cO_G(-, G/L_{\al}).
\end{align*}
Thus the pushout-product 
\[
\left( i \odot \cO_G(-,G/H) \right) \square \left( j \odot \cO(-,G/K) \right) \cong \coprod_{\al} (i \square j) \odot \cO_G(-,G/L_{\al})
\]
is a cofibration in $\cC^{\cO_G^{\opp}}$. The same argument works for acyclic cofibrations.

(2) \textbf{Pushout-product axiom.} The same argument as in part~(1) applies, using the generating cofibrations $i \odot G/H$ for the fine model structure on $\cC^G$.

\textbf{Unit axiom.} Let $q \colon Q\unit_{\cC} \ral{\sim} \unit_{\cC}$ be a cofibrant replacement of the tensor unit in $\cC$. By Lemma~\ref{lem:TrivialAction}, $\Triv(q) \colon \Triv(Q\unit_{\cC}) \to \Triv(\unit_{\cC}) = \unit_{\cC^G}$ is a cofibrant replacement in $\cC^G$. For $X \in \cC^G$ cofibrant, we want to show that the map
\[
\id \ot \Triv(q) \colon X \ot \Triv(Q\unit_{\cC}) \to X \ot \Triv(\unit_{\cC}) \cong X
\]
is a weak equivalence in $\cC^G$, i.e., a weak equivalence on all fixed points. For a subgroup $H \leq G$, consider the commutative diagram in $\cC$:
\[
\xymatrix @R-0.4pc {
X^H \ot Q\unit_{\cC} \ar[d]_{\id \ot \eta}^{\cong} \ar[r]^-{\id \ot q}_-{\blue{\sim}} & **[r] X^H \ot \unit_{\cC} \cong X^H \ar[d]^{\id \ot \eta}_{\cong}  \\
X^H \ot \Triv(Q\unit_{\cC})^H \ar[d]_{\phy}^{\blue{\cong}} \ar[r] & X^H \ot \Triv(\unit_{\cC})^H \ar[d]^{\phy}_{\cong} \\
\left( X \ot \Triv(Q\unit_{\cC}) \right)^H \ar[r]^-{(\id \ot \Triv(q))^H}_-{\blue{\therefore \, \sim}} & **[r] \left( X \ot \Triv(\unit_{\cC}) \right)^H \cong X^H. \\
}
\]
The top map is a weak equivalence by the unit axiom in $\cC$, since $X^H$ is cofibrant by Lemma~\ref{lem:TensorTrivial}\eqref{item:CofibrantFixed}. The map $\phy$ on the left is an isomorphism, by Lemma~\ref{lem:TensorTrivial}\eqref{item:TensorTrivial}. Hence the bottom map is a weak equivalence.

(3) The left adjoint $i^* \colon \cC^{\cO_G^{\opp}} \to \cC^G$ is strong monoidal, since the monoidal structure on both sides is pointwise, i.e., given presheaves $E,F \in \cC^{\cO_G^{\opp}}$, we have $(E \ot F)(G/e) \cong E(G/e) \ot F(G/e)$ in $\cC^G$.

\textbf{Unit condition.} Since the orbit category $\cO_G$ has a terminal object $G/G = \ast$, a constant presheaf $c(X)$ on a cofibrant object $X \in \cC$ is %
cofibrant in $\cC^{\cO_G^{\opp}}$. From a cofibrant replacement $q \colon Q\unit_{\cC} \ral{\sim} \unit_{\cC}$ in $\cC$, taking constant presheaves yields a cofibrant replacement in $\cC^{\cO_G^{\opp}}$
\[
c(q) \colon c(Q\unit_{\cC}) \ral{\sim} c(\unit_{\cC}) = \unit_{\cC^{\cO_G^{\opp}}}.
\]
Applying the left adjoint $i^*$ yields
\[
\xymatrix @R-0.6pc {
i^* c(Q\unit_{\cC}) \ar[d]_{\cong} \ar[r]^-{i^* c(q)} & i^* c(\unit_{\cC}) \ar[d]^{\cong} \\
\Triv(Q\unit_{\cC}) \ar[r]^-{\Triv(q)}_{\blue{\sim}} & \Triv(\unit_{\cC}), \\
}
\]
which is a weak equivalence in $\cC^G$, by Lemma~\ref{lem:TrivialAction}.
\end{proof}

\begin{prop}\label{pr:GObjects}
Let $\cC$ and $\cD$ be model categories that admit the fine model structure, and let $L \colon \cC \rla \cD \colon R$ be a Quillen pair.
\begin{enumerate}
    \item The induced adjunction on $G$-objects $L \colon \cC^G \rla \cD^G \colon R$ is also a Quillen pair.
    \item Assume that $L \colon \cC \rla \cD \colon R$ is a Quillen equivalence, and the left adjoint $L$ ``homotopically commutes with fixed points'', i.e., %
    for all cofibrant $G$-object $X \in \cC^G$, the canonical map $L(X^H) \to (LX)^H$ in $\cD$ is a weak equivalence for all subgroup $H \leq G$. Then the induced adjunction on $G$-objects $L \colon \cC^G \rla \cD^G \colon R$ is a Quillen equivalence.
    \item In the setup of part~(2), $\cC$ satisfies Elmendorf's theorem if and only if $\cD$ does.
    \item \label{item:TensorFixed} Assume that $\cC$ and $\cD$ are monoidal model categories, the left adjoint $L$ homotopically commutes with fixed points, and in both $\cC$ and $\cD$ the tensor product homotopically commutes with fixed points, i.e., for $X,Y \in \cC^G$ cofibrant, the canonical map $\phy \colon X^H \ot Y^H \to (X\ot Y)^H$ is a weak equivalence in $\cC$ for all subgroup $H \leq G$. If the original adjunction $L \colon \cC \rla \cD \colon R$ is weak monoidal, then so is the induced adjunction $L \colon \cC^G \rla \cD^G \colon R$.
\end{enumerate}
\end{prop}

\begin{proof}
(1) The claim follows from the fact that the right adjoint $R$ commutes with fixed point functors, i.e., $R(Y^H) \cong (RY)^H$, as observed in \cite{Stephan16}*{Example~2.19}.

(2) Let $f \colon X \to RY$ be a map in $\cC^G$, for cofibrant $X \in \cC^G$ and fibrant $Y \in \cD^G$. We have equivalent conditions:
\begin{itemize}
    \item[] $f \colon X \to RY$ is a weak equivalence in $\cC^G$.
    \item[$\stackrel{\text{def}}{\iff}$] $f^H \colon X^H \to (RY)^H$ is a weak equivalence in $\cC$ for all subgroup $H \leq G$.
    \item[$\iff$] $f^H \colon X^H \to R(Y^H)$ is a weak equivalence in $\cC$ for all subgroup $H \leq G$, since $R$ commutes with fixed points.
    \item[$\iff$] The adjunct map $\tild{f^H} \colon L(X^H) \to Y^H$ is a weak equivalence in $\cD$ for all subgroup $H \leq G$, since $L \dashv R$ is a Quillen equivalence and $X^H$ is cofibrant in $\cC$, by Lemma~\ref{lem:TensorTrivial}\eqref{item:CofibrantFixed}.
    \item[$\iff$] $\tild{f}^H \colon (LX)^H \to Y^H$ is a weak equivalence in $\cD$ for all subgroup $H \leq G$, since $L$ homotopically commutes with fixed points.
    \item[$\stackrel{\text{def}}{\iff}$] $\tild{f} \colon LX \to Y$ is a weak equivalence in $\cD^G$.
\end{itemize}
(3) Consider the diagram of adjunctions
\[
\xymatrix{
\cC^{\cO_G^{\opp}} \ar@<-0.6ex>[d]_{L} \ar@<0.6ex>[r]^-{i^*} & \cC^G \ar@<0.6ex>[l]^-{i_*} \ar@<-0.6ex>[d]_{L} \\
\cD^{\cO_G^{\opp}} \ar@<-0.6ex>[u]_{R} \ar@<0.6ex>[r]^-{i^*} & \cD^G \ar@<0.6ex>[l]^-{i_*} \ar@<-0.6ex>[u]_{R} \\
}
\]
where the right adjoints commute (hence the left adjoints commute). By part~(2), the vertical adjunction between $G$-objects $L \colon \cC^G \rla \cD^G$ is a Quillen equivalence. By Proposition~\ref{pr:DiagramCat}, the vertical adjunction between presheaf categories $L \colon \cC^{\cO_G^{\opp}} \rla \cD^{\cO_G^{\opp}} \colon R$ is a Quillen equivalence. Hence the top row is a Quillen equivalence if and only if the bottom row is.

(4) \textbf{Lax monoidal:} The induced right adjoint $R \colon \cD^G \to \cC^G$ is lax monoidal, since the monoidal structures are of underlying objects, i.e., pointwise in the diagram category $\cC^G = \Fun(G,\cC)$.

\textbf{Oplax comultiplication:} Let $X,Y \in \cC^G$ be cofibrant. We want to show that the comultiplication map $\de \colon F(X \ot Y) \to F(X) \ot F(Y)$ is a weak equivalence in $\cD^G$, i.e., a weak equivalence on all fixed points. Note that $X \ot Y$ is cofibrant in $\cC^G$ by Proposition~\ref{pr:GMonoidal}, while fixed points $X^H$ and $Y^H$ are cofibrant in $\cC$, by Lemma~\ref{lem:TensorTrivial}\eqref{item:CofibrantFixed}. Consider the commutative diagram in $\cD$:
\[
\xymatrix @R-0.4pc {
F(X \ot Y)^H \ar[r]^-{\de^H}_-{\blue{\therefore \, \sim}} & \left( F(X) \ot F(Y) \right)^H \\
F \left( (X \ot Y)^H \right) \ar[u]^{\al}_{\sim} & F(X)^H \ot F(Y)^H \ar[u]_{\phy}^{\sim} \\
F \left( X^H \ot Y^H \right) \ar[u]^{F(\phy)}_{\sim} \ar[r]^-{\de}_-{\sim} & F(X^H) \ot F(Y^H). \ar[u]_{\al \ot \al}^{\sim} \\
}
\]
The map $F(\phy)$ on the left is a weak equivalence since $\phy \colon X^H \ot Y^H \to (X \ot Y)^H$ is a weak equivalence between cofibrant objects. The map $\al \ot \al$ on the right is a weak equivalence since it is a tensor product of weak equivalences between cofibrant objects. Thus the top map $\de^H$ is a weak equivalence.

\textbf{Unit condition:} Take a cofibrant replacement $\Triv(q) \colon \Triv(Q\unit_{\cC}) \to \Triv(\unit_{\cC}) = \unit_{\cC^G}$  in $\cC^G$ as in the proof of Proposition~\ref{pr:GMonoidal}. Applying the left adjoint $L$ and composing with the counit map yields:
\[
\xymatrix @C+0.5pc @R-0.2pc {
L \Triv(Q\unit_{\cC}) \ar[d]_{\cong} \ar[r]^-{L \Triv (q)} & L \Triv(\unit_{\cC}) \ar[d]_{\cong} \ar[r]^-{\ep} & \Triv(\unit_{\cD}) \ar@{=}[d] \\
\Triv(L Q\unit_{\cC}) \ar[r]^-{\Triv (Lq)} & \Triv( L\unit_{\cC}) \ar[r]^-{\Triv(\ep)} & \Triv(\unit_{\cD}). \\
}
\]
The bottom composite $\Triv(\ep \circ Lq)$ is a weak equivalence, by the %
unit condition for $L \dashv R$ and Lemma~\ref{lem:TrivialAction}.
\end{proof}

The condition $\phy \colon X^H \ot Y^H \ral{\sim} (X \ot Y)^H$ holds for the Cartesian product $\ot = \x$, since $\phy$ is then an isomorphism. The condition %
need not hold for other monoidal structures.

\begin{ex}
For any commutative ring $R$ and non-trivial group $G$, the tensor product in $\Ch_{\geq 0}(R)$ does \emph{not} homotopically commute with fixed points. Via the isomorphism of categories $\Mod{R}^G \cong \Mod{RG}$, consider the free $RG$-module $R \odot G/e = RG$, for which the map $\phy \colon (RG)^G \ot_R (RG)^G \to (RG \ot RG)^G$ is not an isomorphism. Indeed, the two sides are:
\[
(RG)^G \ot (RG)^G \cong R \ot R \cong R \qquad \text{versus} \qquad (RG \ot_R RG)^G \cong R[G \x G]^G \cong RG.
\]
Since the chain complex concentrated in degree $0$ $R[0] \in \Ch_{\geq 0}(R)$ is cofibrant, so is the chain complex $RG[0] \in \Ch_{\geq 0}(RG)$, by Lemma~\ref{lem:TrivialAction}. But the map $\phy \colon (RG[0])^G \ot (RG[0])^G \to (RG[0] \ot RG[0])^G$ is not a weak equivalence in $\Ch_{\geq 0}(R)$. 
The same argument works for the monoidal model category $s\Mod{R}$. %
\end{ex}

Nonetheless, Dold--Kan does induce a weak monoidal Quillen equivalence on $G$-objects, even though the assumptions of Proposition~\ref{pr:GObjects}\eqref{item:TensorFixed} are not met.

\begin{lem}\label{lem:DoldKanG}
For any commutative ring $R$, %
the induced adjunctions $N \colon s\Mod{R}^{G} \rla \Ch_{\geq 0}(R)^{G} \colon \Ga$ and $\Ga \colon \Ch_{\geq 0}(R)^{G} \rla s\Mod{R}^{G} \colon N$ are weak monoidal. %
\end{lem}

\begin{proof}
We prove the case of the adjunction $N \dashv \Ga$; the argument for the adjunction $\Ga \dashv N$ is similar. We want to show that for all cofibrant $A,B \in s\Mod{R}^{G} \cong s\Mod{RG}$, the oplax comultiplication map
\[
\delta = \AW \colon N(A \ot B) \to N(A) \ot N(B) 
\]
is a weak equivalence in $\Ch_{\geq 0}(R)^{G} \cong \Ch_{\geq 0}(RG)$, i.e., a weak equivalence on all fixed points. But $\AW$ is a chain homotopy equivalence for all $A,B \in s\Mod{RG}$. 
Since the fixed points functor $(-)^H \colon \Mod{RG} \to \Mod{R}$ is additive, it preserves chain homotopy equivalences, so that the map
\[
\AW^H \colon N(A \ot B)^H \ral{\simeq} \left( N(A) \ot N(B) \right)^H
\]
is a chain homotopy equivalence in $\Ch_{\geq 0}(R)$.
\end{proof}

\begin{ex}\label{ex:EquivariantDoldKan}
For a commutative ring $R$, the $q$-model structures on $s\Mod{R}$ and $\Ch_{\geq 0}(R)$ are %
cofibrantly generated, 
hence the projective model structures exist on the %
presheaf categories $s\Mod{R}^{\cO_G^{\opp}}$ and $\Ch_{\geq 0}(R)^{\cO_G^{\opp}}$, by Proposition~\ref{pr:DiagramModel}. The fine model structures also exist on the categories of $G$-objects $s\Mod{R}^{G}$ and $\Ch_{\geq 0}(R)^{G}$ \cite{Stephan16}*{Example~2.19}. Stephan showed that if the group $G$ is non-trivial, then the model category $\Ch_{\geq 0}(R)$ does \emph{not} satisfy Elmendorf's theorem, hence $s\Mod{R}$ doesn't either, by Proposition~\ref{pr:GObjects}. In the commutative diagram of Quillen pairs
\[
\xymatrix{
s\Mod{R}^{\cO_G^{\opp}} \ar@/_1.2ex/[d]_{N} \ar@{}[d]|{\blue{\sim}} \ar@/^1.2ex/[r]^-{i^*} \ar@{}[r]|{\red{\not\sim}} & s\Mod{R}^G \ar@/^1.2ex/[l]^-{i_*} \ar@{}[d]|{\blue{\sim}} \ar@/_1.2ex/[d]_{N} \\
\Ch_{\geq0}(R)^{\cO_G^{\opp}} \ar@/_1.2ex/[u]_{\Ga} \ar@/^1.2ex/[r]^-{i^*} \ar@{}[r]|{\red{\not\sim}} & \Ch_{\geq 0}(R)^G \ar@/^1.2ex/[l]^-{i_*} \ar@/_1.2ex/[u]_{\Ga} \\
}
\]
the vertical Quillen equivalences are two different versions of the equivariant Dold--Kan correspondence. 
Both are weak monoidal, by Lemma~\ref{lem:DoldKanG}.
\end{ex}

\section{Bousfield localizations}

\begin{prop}\label{pr:CompositeMonoidal}
Let $\cU$, $\cV$, and $\cW$ be %
monoidal model categories. A composite of weak monoidal Quillen adjunctions
\[
\xymatrix{
\cW \ar@/^0.5pc/[r]^-{F} \ar@{}[r]|{\bot} & \cV \ar@/^0.5pc/[l]^-{G} \ar@/^0.5pc/[r]^-{F'} \ar@{}[r]|{\bot} & \cU \ar@/^0.5pc/[l]^-{G'} \\
}
\]
is also a weak monoidal Quillen adjunction.
\end{prop}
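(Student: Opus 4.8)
The plan is to verify the three clauses of Definition~\ref{def:wmqa} for the composite adjunction $F'F \colon \cW \rla \cU \colon GG'$; I expect everything to be formal once the oplax structure maps of the composite are pinned down. Write $\de^F, \ep^F$ and $\de^{F'}, \ep^{F'}$ for the oplax structure maps of $F$ and $F'$. First comes the bookkeeping: a composite of left Quillen functors preserves cofibrations and acyclic cofibrations, so $F'F$ is left Quillen and $F'F \dashv GG'$ is a Quillen adjunction; the right adjoint $GG'$ is lax monoidal as a composite of lax monoidal functors; and by the functoriality of the mate correspondence underlying Lemma~\ref{lem:Doctrinal}, the induced oplax structure on $F'F$ is the composite of the oplax structures of $F$ and $F'$, so that $\de^{F'F}_{x,y}$ is the composite
\[
F'F(x \ot y) \xrightarrow{F'(\de^F_{x,y})} F'(Fx \ot Fy) \xrightarrow{\de^{F'}_{Fx,Fy}} F'Fx \ot F'Fy
\]
for $x,y \in \cW$, and $\ep^{F'F}$ is the composite $F'F(\unit_{\cW}) \xrightarrow{F'(\ep^F)} F'(\unit_{\cV}) \xrightarrow{\ep^{F'}} \unit_{\cU}$.

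For clause (i), I would fix cofibrant $x, y \in \cW$ and use that left Quillen functors preserve cofibrant objects, together with the fact that in a monoidal model category the tensor of two cofibrant objects is cofibrant (pushout-product axiom applied to $\emptyset \to x$ and $\emptyset \to y$); this makes $Fx$, $Fy$, $Fx \ot Fy$, $x \ot y$, and $F(x \ot y)$ cofibrant in the relevant categories. Then $\de^{F'}_{Fx,Fy}$ is a weak equivalence by clause (i) for $F'$, and $\de^F_{x,y} \colon F(x\ot y) \to Fx \ot Fy$ is a weak equivalence between cofibrant objects of $\cV$ by clause (i) for $F$, so $F'(\de^F_{x,y})$ is a weak equivalence by Ken Brown's lemma (a left Quillen functor preserves weak equivalences between cofibrant objects, dual to \cite{Riehl14}*{Lemma~9.2.3}). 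Two-out-of-three then shows that $\de^{F'F}_{x,y}$ is a weak equivalence in $\cU$.

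For clause (ii), I would choose a cofibrant replacement $q \colon Q\unit_{\cW} \ral{\sim} \unit_{\cW}$ in $\cW$ and form the composite $\tild{q} \colon F(Q\unit_{\cW}) \xrightarrow{F(q)} F(\unit_{\cW}) \xrightarrow{\ep^F} \unit_{\cV}$, which is a weak equivalence by clause (ii) for $F$. Since $F(Q\unit_{\cW})$ is cofibrant, $\tild{q}$ is itself a cofibrant replacement of $\unit_{\cV}$, so --- using that clause (ii) holds for \emph{some, hence any} cofibrant replacement --- applying clause (ii) for $F'$ with the replacement $\tild{q}$ shows that $\ep^{F'} \circ F'(\tild{q}) \colon F'F(Q\unit_{\cW}) \to \unit_{\cU}$ is a weak equivalence. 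This map equals $\ep^{F'} \circ F'(\ep^F) \circ F'F(q) = \ep^{F'F} \circ F'F(q)$, verifying clause (ii) and finishing the argument.

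The one place that needs a small idea is this last step: instead of comparing $F(Q\unit_{\cW})$ against an externally chosen cofibrant replacement of $\unit_{\cV}$ (which would force an extra comparison argument), one observes that $F(Q\unit_{\cW})$ already \emph{is} such a replacement and feeds it straight into clause (ii) for $F'$. Everything else is routine naturality and two-out-of-three bookkeeping, so I anticipate no real obstacle.
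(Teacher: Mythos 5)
Your argument is correct and follows essentially the same route as the paper's: composite of Quillen adjunctions, composite of lax monoidal functors, then verify the two weak-monoidality clauses by combining Ken Brown's lemma (applied to the left Quillen functor $F'$) with the cofibrancy of the tensor of cofibrant objects, and by observing that $F(Q\unit_{\cW}) \to \unit_{\cV}$ is itself a cofibrant replacement of $\unit_{\cV}$. The only additional touch in your write-up is spelling out, via the mate correspondence in Lemma~\ref{lem:Doctrinal}, that the induced oplax structure on $F'F$ is the composite of the oplax structures; the paper treats that as implicit.
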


\begin{proof}
A composite of Quillen adjunctions is a Quillen adjunctions. The composite $GG' \colon \cU \to \cW$ of lax monoidal functors $G$ and $G'$ is also lax monoidal.

\textbf{Oplax comultiplication:} Let $x,y \in \cW$ be cofibrant objects. By assumption on $F$, the oplax structure map
$\de \colon F(x \ot y) \ral{\sim} F(x) \ot F(y)$ 
is a weak equivalence in $\cV$. Moreover, both $F(x \ot y)$ and $F(x) \ot F(y)$ are cofibrant by Lemma~\ref{lem:cofibcoten} and since the left Quillen functor $F$ preserves cofibrant objects. Applying the left Quillen functor $F'$ yields a weak equivalence $F(\de)$, and the composite
\[
\xymatrix{
F'F(x \ot y) \ar[r]^-{F(\de)}_-{\sim} & F'(Fx \ot Fy) \ar[r]^-{\de'}_-{\sim} & F'F(x) \ot F'F(y) \\ 
}
\]
is also a weak equivalence.

\textbf{Unit condition:} Let $q \colon Q\unit_{\cW} \ral{\sim} \unit_{\cW}$ be a cofibrant replacement of the tensor unit in $\cW$. By assumption on $F$, the composite 
\[
\xymatrix{
F(Q \unit_{\cW}) \ar[r]^-{F(q)} & F(\unit_{\cW}) \ar[r]^-{\ep} & \unit_{\cV} \\
}
\]
is a weak equivalence in $\cV$, hence a cofibrant replacement of $\unit_{\cV}$. By assumption on $F'$, the composite 
\[
\xymatrix{
F'F(Q \unit_{\cW}) \ar[r]^-{F'F(q)} & F'F(\unit_{\cW}) \ar[r]^-{F'(\ep)} & F'(\unit_{\cV}) \ar[r]^-{\ep'} & \unit_{\cU} \\
}
\]
is a weak equivalence in $\cU$.
\end{proof}

\begin{defn}\cite{Hirschhorn03}*{Definition~3.3.1}
    Let $\cC$ be a model category with model structure $(W,F,C)$. 
    \begin{enumerate}
        \item A \Def{left Bousfield localization} of $\cC$ is another model structure $(W',F',C')$ on $\cC$ satisfying $W \subseteq W'$ and with the same cofibrations $C = C'$.
        \item A \Def{right Bousfield localization} (also called a \emph{colocalization}) of $\cC$ is another model structure $(W',F',C')$ on $\cC$ satisfying $W \subseteq W'$ and with the same fibrations $F = F'$.
    \end{enumerate}
\end{defn}

\begin{defn}
Let $\cV$ be a monoidal model category. A (left or right) Bousfield localization $\cV'$ of $\cV$ is called a \Def{monoidal Bousfield localization} if $\cV'$ is also a monoidal model category, with the same monoidal structure as $\cV$.

Unlike \cite{White22}*{Definition~5.10}, we do not include the condition that all cofibrant objects of $\cV'$ be flat.
\end{defn}

\begin{prop}\label{pr:MonoidalBousfield}
Let $\cV$ be a monoidal model category. 
	\begin{enumerate}
		\item \label{item:LeftBousfield} If $\cV_{\loc}$ is a monoidal (left) Bousfield localization of $\cV$, then the Quillen pair $\id \colon \cV \rla \cV_{\loc} \colon \id$ is a strong monoidal Quillen pair.
		\item \label{item:RightBousfield} If $\cV_{\coloc}$ is a monoidal right Bousfield localization of $\cV$, then the Quillen pair $\id \colon \cV_{\coloc} \rla \cV \colon \id$ is a strong monoidal Quillen pair if and only if a cofibrant replacement $Q\unit$ of the tensor unit of $\cV$ is also cofibrant in $\cV_{\coloc}$.
	\end{enumerate}
\end{prop}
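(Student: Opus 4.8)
The plan is to exploit that in both parts of the proposition the two functors are the identity and the monoidal structure is literally unchanged, so $\id$ is (strictly) strong monoidal in either direction. By Remark~\ref{rem:StrongMonoidal}, once the relevant identity adjunction is known to be a Quillen adjunction, being a \emph{strong} monoidal Quillen pair reduces to a single condition: that $F(q)$ is a weak equivalence in the codomain of $F$, where $q \colon Q\unit \to \unit$ is a cofibrant replacement of the tensor unit \emph{in the domain} of $F$. So for each part I would first check the Quillen adjunction, then analyze this unit condition.

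\textbf{Part \Eqref{item:LeftBousfield}.} Here the left adjoint is $\id \colon \cV \to \cV_{\loc}$. I would first recall that a left Bousfield localization has the same cofibrations as $\cV$ and a larger class of weak equivalences, so $\id \colon \cV \to \cV_{\loc}$ preserves cofibrations and acyclic cofibrations and hence is left Quillen. A cofibrant replacement $q \colon Q\unit \to \unit$ in $\cV$ is a weak equivalence in $\cV$, hence a weak equivalence in $\cV_{\loc}$ because $W \subseteq W'$; thus $F(q) = q$ is a weak equivalence, and Remark~\ref{rem:StrongMonoidal} finishes it. No hypothesis beyond $\cV_{\loc}$ being a monoidal Bousfield localization enters, so this part is unconditional.

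\textbf{Part \Eqref{item:RightBousfield}.} Now the left adjoint is $\id \colon \cV_{\coloc} \to \cV$. I would first observe that a right Bousfield localization keeps the fibrations of $\cV$, hence also keeps the acyclic cofibrations (the maps with the left lifting property against fibrations), and that its cofibrations form a subclass of those of $\cV$; in particular every $\cV_{\coloc}$-cofibrant object is $\cV$-cofibrant, and $\id \colon \cV_{\coloc} \to \cV$ is left Quillen. By Remark~\ref{rem:StrongMonoidal}, strong monoidality is then equivalent to: for some (equivalently, any) cofibrant replacement $q' \colon Q'\unit \to \unit$ of the tensor unit \emph{in} $\cV_{\coloc}$, the map $q'$ is a weak equivalence in $\cV$. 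For ($\Leftarrow$) I would argue: if some cofibrant replacement $q \colon Q\unit \to \unit$ of the tensor unit of $\cV$ has source also cofibrant in $\cV_{\coloc}$, then $q$, being a weak equivalence in $\cV$, is one in $\cV_{\coloc}$, so it is a valid choice of $q'$, and $F(q) = q$ is a weak equivalence in $\cV$. For ($\Rightarrow$): pick any $\cV_{\coloc}$-cofibrant replacement $q' \colon Q'\unit \to \unit$; strong monoidality forces $q'$ to be a weak equivalence in $\cV$, and since $Q'\unit$ is $\cV$-cofibrant, $Q'\unit$ together with the map $q'$ is a cofibrant replacement of the tensor unit in $\cV$ that is moreover cofibrant in $\cV_{\coloc}$.

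The proof is mostly bookkeeping about which model structure each instance of ``cofibrant'' and ``weak equivalence'' refers to. The hard part will be keeping that bookkeeping honest, the one genuinely structural input being that a right Bousfield localization leaves the acyclic cofibrations unchanged (so passing to $\cV_{\coloc}$ only shrinks the class of cofibrant objects, and the identity out of $\cV_{\coloc}$ really is left Quillen); beyond that, the only nonroutine step is the ($\Rightarrow$) direction of Part \Eqref{item:RightBousfield}, where one must extract the asserted cofibrant replacement of $\unit_{\cV}$ from an arbitrary $\cV_{\coloc}$-cofibrant replacement of the unit.
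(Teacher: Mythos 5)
Your proposal is correct and follows essentially the same route as the paper: both parts reduce to Remark~\ref{rem:StrongMonoidal} together with the bookkeeping that a left Bousfield localization enlarges weak equivalences while keeping cofibrations, and that a right Bousfield localization keeps (acyclic) fibrations so that $\cV_{\coloc}$-cofibrant objects are $\cV$-cofibrant. The only cosmetic difference is in part~\Eqref{item:RightBousfield}: the paper packages the biconditional into a single factorization diagram and invokes \cite{Hirschhorn03}*{Proposition~3.3.15}, while you argue the two implications separately and, for $\Rightarrow$, directly exhibit the $\cV_{\coloc}$-cofibrant replacement as the desired cofibrant replacement of $\unit$ in $\cV$ --- equivalent, since ``a cofibrant replacement is $\cV_{\coloc}$-cofibrant'' holds for one iff for all.
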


\begin{proof}
In both cases, the left adjoint $\id$ is strong monoidal, so that Remark~\ref{rem:StrongMonoidal} applies.

\Eqref{item:LeftBousfield} Let $q \colon Q\unit \ral{\sim} \unit$ be a cofibrant replacement of the $\unit$ in $\cV$. Applying the identity functor yields the same map, which is also a weak equivalence in $\cV_{\loc}$.

\Eqref{item:RightBousfield} Let $q_c \colon Q_c \unit \ral{\sim_c} \unit$ be a cofibrant replacement of $\unit$ in $\cV_{\coloc}$. Since $Q_c \unit$ is also cofibrant in $\cV$, there is a factorization
\[
\xymatrix{
Q_c \ar@{-->}[dr]_{f} \ar[rr]^-{q_c}_{\sim_c} \unit & & \unit \\
& Q\unit \ar@{->>}[ur]_{q}^{\sim} & \\
}
\]
where $q \colon Q\unit \ral{\sim} \unit$ is an acyclic fibration from a cofibrant object. The map $q_c$ is a weak equivalence in $\cV$ if and only if the map $f \colon Q_c \unit \to Q \unit$ is, which holds if and only if $Q\unit$ is cofibrant in $\cV_{\coloc}$ \cite{Hirschhorn03}*{Proposition~3.3.15}.
\end{proof}

Combining Propositions~\ref{pr:CompositeMonoidal} and \ref{pr:MonoidalBousfield} yields the following.

\begin{cor}\label{cor:MonoidalBousfield}
Let $F \colon \cW \rla \cV \colon G$ be a weak monoidal Quillen adjunction between monoidal model categories. 
\begin{enumerate}
	\item If $\cV_{\loc}$ is a monoidal (left) Bousfield localization of $\cV$, then $F \colon \cW \rla \cV_{\loc} \colon G$ is a weak monoidal Quillen adjunction.
	\item If $\cW_{\coloc}$ is a monoidal right Bousfield localization of $\cW$ such that a cofibrant replacement $Q\unit_{\cW}$ of the tensor unit of $\cW$ is also cofibrant in $\cW_{\coloc}$, then $F \colon \cW_{\coloc} \rla \cV \colon G$ is a weak monoidal Quillen adjunction.
\end{enumerate}
\end{cor}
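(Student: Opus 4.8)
The plan is to obtain each case by composing the given weak monoidal Quillen adjunction $F \dashv G$ with the strong monoidal Quillen pair produced by Proposition~\ref{pr:MonoidalBousfield}, and then to apply the composition result, Proposition~\ref{pr:CompositeMonoidal}. The first thing I would record is the (immediate) remark that a strong monoidal Quillen pair is a special case of a weak monoidal Quillen adjunction: in Definition~\ref{def:wmqa}, when the left adjoint is strong monoidal the oplax structure maps $\de_{x,y}$ and $\ep$ are isomorphisms, so conditions (i) and (ii) there are automatic. I would also note up front that the identity functor carries the trivial (op)lax monoidal structure, so composing an adjunction with an identity adjunction changes neither the underlying adjunction nor its (op)lax monoidal data.

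For part (1), I would invoke Proposition~\ref{pr:MonoidalBousfield}\Eqref{item:LeftBousfield} to get that $\id \colon \cV \rla \cV_{\loc} \colon \id$ is a strong monoidal Quillen pair, hence a weak monoidal Quillen adjunction. Then I would apply Proposition~\ref{pr:CompositeMonoidal} to the composite with left adjoints $\cW \xrightarrow{F} \cV \xrightarrow{\id} \cV_{\loc}$ and right adjoints $\cV_{\loc} \xrightarrow{\id} \cV \xrightarrow{G} \cW$, concluding that this composite is a weak monoidal Quillen adjunction. The last point is bookkeeping: since the right-hand factor is an identity adjunction, the composite adjunction is literally $F \colon \cW \rla \cV_{\loc} \colon G$, with the same lax monoidal structure on $G$ and the same induced oplax structure on $F$ as before.

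For part (2) the argument is symmetric but uses the other half of Proposition~\ref{pr:MonoidalBousfield}. Here I would observe that the hypothesis ``a cofibrant replacement $Q\unit_{\cW}$ of the tensor unit of $\cW$ is also cofibrant in $\cW_{\coloc}$'' is precisely the condition appearing in Proposition~\ref{pr:MonoidalBousfield}\Eqref{item:RightBousfield}, so that $\id \colon \cW_{\coloc} \rla \cW \colon \id$ is a strong monoidal Quillen pair, hence a weak monoidal Quillen adjunction. Applying Proposition~\ref{pr:CompositeMonoidal} to the composite with left adjoints $\cW_{\coloc} \xrightarrow{\id} \cW \xrightarrow{F} \cV$ then yields that $F \colon \cW_{\coloc} \rla \cV \colon G$ is a weak monoidal Quillen adjunction. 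I do not expect a genuine obstacle in any of this: the substantive work — the pushout-product and unit axioms in the localized categories, and the oplax structure-map and oplax-counit conditions for composites — has already been carried out in Propositions~\ref{pr:CompositeMonoidal} and \ref{pr:MonoidalBousfield}. The only mild care needed is the identification of each composite adjunction with the stated one, which amounts to the observation that an identity adjunction is a unit for composition of (op)lax monoidal adjunctions.
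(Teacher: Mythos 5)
Your proof is correct and follows the route the paper itself indicates (the paper's whole justification is the single sentence ``Combining Propositions~\ref{pr:CompositeMonoidal} and \ref{pr:MonoidalBousfield} yields the following''), so we agree on the decomposition into a strong monoidal Quillen pair from Proposition~\ref{pr:MonoidalBousfield} followed by composition via Proposition~\ref{pr:CompositeMonoidal}, together with the bookkeeping that an identity adjunction is a unit for composition.

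One small inaccuracy worth fixing in the preamble: you say that when $F$ is strong monoidal, conditions (i) and (ii) of Definition~\ref{def:wmqa} ``are automatic.'' Condition (i) is indeed vacuous, but condition (ii) is not: strong monoidality only makes $\ep$ an isomorphism, so the composite $\ep \circ F(q)$ is a weak equivalence if and only if $F(q)$ is, which is not automatic (the paper's own remark after Definition~\ref{def:wmqa}, and Remark~\ref{rem:StrongMonoidal}, say exactly this — the unit condition becomes vacuous only when $\unit_{\cW}$ is cofibrant). The correct reason a strong monoidal Quillen adjunction is a weak one is simply that the paper defines ``strong monoidal Quillen adjunction'' as a weak monoidal Quillen adjunction with $F$ strong monoidal, so (ii) is part of the hypothesis, not a consequence. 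This is harmless for your argument, because Proposition~\ref{pr:MonoidalBousfield} genuinely produces strong monoidal Quillen pairs in the paper's sense (its proof explicitly checks the unit condition), and from there the composition step goes through exactly as you describe.
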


When are Bousfield localizations monoidal? Let us recall some facts observed for instance in \cite{White22}*{\S 5.4} or \cite{Lawson22}*{\S 12.2}.

\begin{lem}
Let $\cV$ be a monoidal model category.
\begin{enumerate}
	\item \label{item:LeftBousfieldMonoidal} A (left) Bousfield localization $\cV_{\loc}$ of $\cV$ is monoidal if and only if for all cofibrations $i$ and $j$ in $\cV$ with at least one of them a weak equivalence in $\cV_{\loc}$, the pushout-product $i \square j$ is a weak equivalence in $\cV_{\loc}$.
	\item \label{item:RightBousfieldMonoidal} A right Bousfield localization $\cV_{\coloc}$ of $\cV$ is monoidal if and only if the following conditions hold:
	\begin{itemize}
		\item[(i)] For all cofibrations $i$ and $j$ in $\cV_{\coloc}$, the pushout-product $i \square j$ is a cofibration in $\cV_{\coloc}$.
		\item[(ii)] For a cofibrant replacement $Q_c \unit \ral{\sim_c} Q\unit$ in $\cV_{\coloc}$ (where $q \colon Q\unit \ral{\sim} \unit$ is a cofibrant replacement in $\cV$) and every cofibrant object $x \in \cV_{\coloc}$, the map $x \ot Q_c \unit \to x \ot Q\unit$ is a weak equivalence in $\cV_{\coloc}$.
	\end{itemize}		
\end{enumerate}
\end{lem}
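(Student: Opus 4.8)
The plan is to verify, in each of the two cases, the pushout-product axiom and the unit axiom in the definition of monoidal model category, exploiting that a Bousfield localization of $\cV$ shares one of its two weak factorization systems with $\cV$. Throughout I will use the standard reduction that the unit axiom need only be checked for a single cofibrant replacement of the tensor unit (as invoked in Definition~\ref{def:wmqa}).

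\textbf{Left localization.} Since $\cV_{\loc}$ has the same cofibrations as $\cV$ and more weak equivalences, the pushout-product of two cofibrations is a $\cV$-cofibration, hence a $\cV_{\loc}$-cofibration, so the cofibration half of the pushout-product axiom in $\cV_{\loc}$ is automatic. An acyclic cofibration of $\cV_{\loc}$ is exactly a ($\cV$-)cofibration that is a $\cV_{\loc}$-weak equivalence, so the acyclic half of the axiom says precisely that $i \square j$ is a $\cV_{\loc}$-weak equivalence whenever $i$ or $j$ is such a map, which is the stated condition. For the unit axiom, a cofibrant replacement $q \colon Q\unit \to \unit$ in $\cV$ is also one in $\cV_{\loc}$ (same cofibrant objects, and $q$ remains a weak equivalence), and for cofibrant $x$ the map $x \otimes Q\unit \to x$ is a $\cV$-, hence $\cV_{\loc}$-, weak equivalence by the unit axiom in $\cV$; the reduction then yields the unit axiom in $\cV_{\loc}$. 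Conversely, if $\cV_{\loc}$ is monoidal, the acyclic half of its pushout-product axiom is exactly the displayed condition.

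\textbf{Right localization.} Here $\cV_{\coloc}$ has the same fibrations as $\cV$, so taking left orthogonals the acyclic cofibrations of $\cV_{\coloc}$ coincide with those of $\cV$, while, the acyclic fibrations only growing, the cofibrations of $\cV_{\coloc}$ form a subclass of those of $\cV$. Consequently the acyclic half of the pushout-product axiom in $\cV_{\coloc}$ is automatic: if $i,j$ are $\cV_{\coloc}$-cofibrations with $j$ acyclic, then $i$ is a $\cV$-cofibration and $j$ a $\cV$-acyclic cofibration, so $i \square j$ is a $\cV$-acyclic cofibration, which equals a $\cV_{\coloc}$-acyclic cofibration. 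The cofibration half is genuinely a hypothesis, since $i \square j$ is a priori only a $\cV$-cofibration — this is condition~(i). For the unit axiom, start from a $\cV$-cofibrant replacement $q \colon Q\unit \to \unit$ (an acyclic fibration), factor $\emptyset \to Q\unit$ in $\cV_{\coloc}$ to obtain a $\cV_{\coloc}$-cofibrant replacement $Q_c\unit \to Q\unit$ as in condition~(ii), and let $q_c$ be the composite $Q_c\unit \to Q\unit \to \unit$, which is a $\cV_{\coloc}$-weak equivalence from a $\cV_{\coloc}$-cofibrant object. For $x$ cofibrant in $\cV_{\coloc}$, the map $x \otimes q_c$ factors as $x \otimes Q_c\unit \to x \otimes Q\unit \to x$, whose first map is a $\cV_{\coloc}$-weak equivalence by condition~(ii) and whose second is the unit-axiom map of $\cV$, hence a $\cV_{\coloc}$-weak equivalence; two-out-of-three plus the reduction give the unit axiom in $\cV_{\coloc}$. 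Conversely, if $\cV_{\coloc}$ is monoidal then condition~(i) is immediate, and condition~(ii) follows by running the same factorization and applying two-out-of-three to the unit-axiom maps of $\cV_{\coloc}$ (for $Q_c\unit$) and of $\cV$ (for $Q\unit$).

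\textbf{Main obstacle.} The only non-formal point is the unit axiom for the right localization: the cofibrant replacements of $\unit$ in $\cV$ and in $\cV_{\coloc}$ genuinely differ, so the argument must bridge them via the factorization $Q_c\unit \to Q\unit \to \unit$ and two-out-of-three, and it relies on the fact that the unit axiom need only be verified for one cofibrant replacement. Everything else is bookkeeping with the two weak factorization systems and the inclusions of the classes of cofibrations, fibrations, and weak equivalences.
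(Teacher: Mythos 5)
Your proof is correct and follows essentially the same strategy as the paper's: exploit the fact that one of the two weak factorization systems is shared with $\cV$ (cofibrations for the left localization, fibrations — hence also acyclic cofibrations — for the right localization) so that all but one clause of the monoidal-model-category axioms is inherited automatically, and for the right localization handle the unit axiom via the factorization $Q_c\unit \to Q\unit \to \unit$ and two-out-of-three. The paper's own write-up is terser and defers the unit-axiom factorization argument to the proof of Proposition~\ref{pr:MonoidalBousfield}\Eqref{item:RightBousfield}, whereas you spell it out in place; the bookkeeping you do (acyclic cofibrations unchanged in $\cV_{\coloc}$, cofibrations a subclass, acyclic fibrations enlarged) matches the paper's and is sound.
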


\begin{proof}
In both cases, the remaining conditions hold automatically.
\end{proof}

For conditions under which a Bousfield localization is monoidal, see \cite{White22}*{Theorems~5.11, 5.12} and \cite{Barwick10}*{Proposition~4.47}. 
For results focusing on stable model categories, see \cite{BarnesR14}*{\S 6,7} and \cites{PolW20,PolW21}.

\begin{ex}
    Let $\cV$ be a monoidal category admitting two model structures $(W_q, F_q, C_q)$ and $(W_h, F_h, C_h)$ that satisfy $W_h \subseteq W_q$ and $F_h \subseteq F_q$. If $\cV$ is a monoidal model category with respect to both model structures, then $\cV$ is also a monoidal model category with respect to the mixed model structure $(W_q, F_h, C_m)$ \cite{Cole06}*{Proposition~6.6}. This makes $\cV_m$ a monoidal right Bousfield localization of $\cV_h$.
\end{ex}

\begin{ex}
    By Example~\ref{ex:MonoidalModelCats}~\Eqref{item:sMod}\Eqref{item:Chgeq0}, the mixed model structures on chain complexes $\Ch_{\geq 0}(R)_m$ and simplicial modules $s\Mod{R,m}$ are monoidal right Bousfield localizations of the Hurewicz model structures $\Ch_{\geq 0}(R)_h$ and $s\Mod{R,h}$ respectively. %
    By Example~\ref{ex:DK-WeakMonoidal} %
    and Corollary~\ref{cor:MonoidalBousfield}, the adjunctions
    \[
    N \colon s\Mod{R,m} \rla \Ch_{\geq 0}(R)_h \colon \Ga \quad \text{and} \quad \Ga \colon \Ch_{\geq 0}(R)_m \rla s\Mod{R,h} \colon N
    \]
    are weak monoidal Quillen adjunctions.
\end{ex}

\begin{bibdiv}
\begin{biblist}*{labels={alphabetic}}

\bib{Balchin21}{book}{
  author={Balchin, Scott},
  title={A handbook of model categories},
  series={Algebra and Applications},
  volume={27},
  publisher={Springer, Cham},
  date={2021},
  pages={xv+326},
  isbn={978-3-030-75034-3},
  isbn={978-3-030-75035-0},
  review={\MR {4385504}},
  doi={10.1007/978-3-030-75035-0},
}

\bib{BarnesR14}{article}{
  author={Barnes, David},
  author={Roitzheim, Constanze},
  title={Stable left and right Bousfield localisations},
  journal={Glasg. Math. J.},
  volume={56},
  date={2014},
  number={1},
  pages={13--42},
  issn={0017-0895},
  review={\MR {3137847}},
  doi={10.1017/S0017089512000882},
}

\bib{Barwick10}{article}{
  author={Barwick, Clark},
  title={On left and right model categories and left and right Bousfield localizations},
  journal={Homology Homotopy Appl.},
  volume={12},
  date={2010},
  number={2},
  pages={245--320},
  issn={1532-0073},
  review={\MR {2771591}},
  doi={10.4310/hha.2010.v12.n2.a9},
}

\bib{BayehHKKRS15}{article}{
  author={Bayeh, Marzieh},
  author={Hess, Kathryn},
  author={Karpova, Varvara},
  author={K\c edziorek, Magdalena},
  author={Riehl, Emily},
  author={Shipley, Brooke},
  title={Left-induced model structures and diagram categories},
  conference={ title={Women in topology: collaborations in homotopy theory}, },
  book={ series={Contemp. Math.}, volume={641}, publisher={Amer. Math. Soc., Providence, RI}, },
  isbn={978-1-4704-1013-1},
  date={2015},
  pages={49--81},
  review={\MR {3380069}},
  doi={10.1090/conm/641/12859},
}

\bib{Bergner07}{article}{
  author={Bergner, Julia E.},
  title={A model category structure on the category of simplicial categories},
  journal={Trans. Amer. Math. Soc.},
  volume={359},
  date={2007},
  number={5},
  pages={2043--2058},
  issn={0002-9947},
  review={\MR {2276611}},
  doi={10.1090/S0002-9947-06-03987-0},
}

\bib{Borceux94v2}{book}{
  author={Borceux, Francis},
  title={Handbook of Categorical Algebra 2: Categories and Structures},
  series={Encyclopedia of Mathematics and its Applications},
  volume={51},
  publisher={Cambridge University Press},
  date={1994},
}

\bib{Bousfield03}{article}{
  author={Bousfield, A. K.},
  title={Cosimplicial resolutions and homotopy spectral sequences in model categories},
  journal={Geom. Topol.},
  volume={7},
  date={2003},
  pages={1001--1053},
  issn={1465-3060},
  review={\MR {2026537}},
  doi={10.2140/gt.2003.7.1001},
}

\bib{BrownH03}{article}{
  author={Brown, Ronald},
  author={Higgins, Philip J.},
  title={Cubical abelian groups with connections are equivalent to chain complexes},
  journal={Homology Homotopy Appl.},
  volume={5},
  date={2003},
  number={1},
  pages={49--52},
  issn={1532-0081},
  review={\MR {1989612}},
}

\bib{BrownHS11}{book}{
  author={Brown, Ronald},
  author={Higgins, Philip J.},
  author={Sivera, Rafael},
  title={Nonabelian algebraic topology},
  series={EMS Tracts in Mathematics},
  volume={15},
  note={Filtered spaces, crossed complexes, cubical homotopy groupoids; With contributions by Christopher D. Wensley and Sergei V. Soloviev},
  publisher={European Mathematical Society (EMS), Z\"urich},
  date={2011},
  pages={xxxvi+668},
  isbn={978-3-03719-083-8},
  review={\MR {2841564}},
  doi={10.4171/083},
}

\bib{CastiglioniC04}{article}{
  author={Castiglioni, Jos{\'e} Luis},
  author={Corti{\~n}as, Guillermo},
  title={Cosimplicial versus DG-rings: a version of the Dold-Kan correspondence},
  journal={J. Pure Appl. Algebra},
  volume={191},
  date={2004},
  number={1-2},
  pages={119--142},
  issn={0022-4049},
  review={\MR {2048310}},
  doi={10.1016/j.jpaa.2003.11.009},
}

\bib{ChristensenH02}{article}{
  author={Christensen, J. Daniel},
  author={Hovey, Mark},
  title={Quillen model structures for relative homological algebra},
  journal={Math. Proc. Cambridge Philos. Soc.},
  volume={133},
  date={2002},
  number={2},
  pages={261--293},
  issn={0305-0041},
  review={\MR {1912401 (2003f:18012)}},
  doi={10.1017/S0305004102006126},
}

\bib{Cole06}{article}{
  author={Cole, Michael},
  title={Mixing model structures},
  journal={Topology Appl.},
  volume={153},
  date={2006},
  number={7},
  pages={1016--1032},
  issn={0166-8641},
  review={\MR {2203016}},
  doi={10.1016/j.topol.2005.02.004},
}

\bib{Dugger06}{article}{
  author={Dugger, Daniel},
  title={Spectral enrichments of model categories},
  journal={Homology Homotopy Appl.},
  volume={8},
  date={2006},
  number={1},
  pages={1--30},
  issn={1532-0073},
  review={\MR {2205213}},
}

\bib{DwyerK85}{article}{
  author={Dwyer, W. G.},
  author={Kan, D. M.},
  title={Normalizing the cyclic modules of Connes},
  journal={Comment. Math. Helv.},
  volume={60},
  date={1985},
  number={4},
  pages={582--600},
  issn={0010-2571},
  review={\MR {0826872}},
  doi={10.1007/BF02567433},
}

\bib{EKMM97}{book}{
  author={Elmendorf, A. D.},
  author={Kriz, I.},
  author={Mandell, M. A.},
  author={May, J. P.},
  title={Rings, modules, and algebras in stable homotopy theory},
  series={Mathematical Surveys and Monographs},
  volume={47},
  note={With an appendix by M. Cole},
  publisher={American Mathematical Society, Providence, RI},
  date={1997},
  pages={xii+249},
  isbn={0-8218-0638-6},
  review={\MR {1417719 (97h:55006)}},
}

\bib{Elmendorf83}{article}{
  author={Elmendorf, A. D.},
  title={Systems of fixed point sets},
  journal={Trans. Amer. Math. Soc.},
  volume={277},
  date={1983},
  number={1},
  pages={275--284},
  issn={0002-9947},
  review={\MR {0690052}},
  doi={10.2307/1999356},
}

\bib{GarnerKR20}{article}{
  author={Garner, Richard},
  author={K\c edziorek, Magdalena},
  author={Riehl, Emily},
  title={Lifting accessible model structures},
  journal={J. Topol.},
  volume={13},
  date={2020},
  number={1},
  pages={59--76},
  issn={1753-8416},
  review={\MR {3999672}},
  doi={10.1112/topo.12123},
}

\bib{GepnerH15}{article}{
  author={Gepner, David},
  author={Haugseng, Rune},
  title={Enriched $\infty $-categories via non-symmetric $\infty $-operads},
  journal={Adv. Math.},
  volume={279},
  date={2015},
  pages={575--716},
  issn={0001-8708},
  review={\MR {3345192}},
  doi={10.1016/j.aim.2015.02.007},
}

\bib{GoerssJ09}{book}{
  author={Goerss, Paul G.},
  author={Jardine, John F.},
  title={Simplicial homotopy theory},
  series={Modern Birkh\"auser Classics},
  note={Reprint of the 1999 edition [MR1711612]},
  publisher={Birkh\"auser Verlag, Basel},
  date={2009},
  pages={xvi+510},
  isbn={978-3-0346-0188-7},
  review={\MR {2840650}},
  doi={10.1007/978-3-0346-0189-4},
}

\bib{GuillouM20}{article}{
  author={Guillou, Bertrand J.},
  author={May, J. Peter},
  title={Enriched model categories and presheaf categories},
  journal={New York J. Math.},
  volume={26},
  date={2020},
  pages={37--91},
  review={\MR {4047399}},
}

\bib{Heine23}{article}{
  author={Heine, Hadrian},
  title={An equivalence between enriched $\infty $-categories and $\infty $-categories with weak action},
  journal={Adv. Math.},
  volume={417},
  date={2023},
  pages={Paper No. 108941, 140},
  issn={0001-8708},
  review={\MR {4554672}},
  doi={10.1016/j.aim.2023.108941},
}

\bib{HessKRS17}{article}{
  author={Hess, Kathryn},
  author={K\c edziorek, Magdalena},
  author={Riehl, Emily},
  author={Shipley, Brooke},
  title={A necessary and sufficient condition for induced model structures},
  journal={J. Topol.},
  volume={10},
  date={2017},
  number={2},
  pages={324--369},
  issn={1753-8416},
  review={\MR {3653314}},
  doi={10.1112/topo.12011},
}

\bib{Hirschhorn03}{book}{
  author={Hirschhorn, Philip S.},
  title={Model categories and their localizations},
  series={Mathematical Surveys and Monographs},
  volume={99},
  publisher={American Mathematical Society, Providence, RI},
  date={2003},
  pages={xvi+457},
  isbn={0-8218-3279-4},
  review={\MR {1944041 (2003j:18018)}},
}

\bib{Hovey99}{book}{
  author={Hovey, Mark},
  title={Model categories},
  series={Mathematical Surveys and Monographs},
  volume={63},
  publisher={American Mathematical Society, Providence, RI},
  date={1999},
  pages={xii+209},
  isbn={0-8218-1359-5},
  review={\MR {1650134 (99h:55031)}},
}

\bib{Jardine97}{article}{
  author={Jardine, J. F.},
  title={A closed model structure for differential graded algebras},
  conference={ title={Cyclic cohomology and noncommutative geometry}, address={Waterloo, ON}, date={1995}, },
  book={ series={Fields Inst. Commun.}, volume={17}, publisher={Amer. Math. Soc., Providence, RI}, },
  isbn={0-8218-0823-0},
  date={1997},
  pages={55--58},
  review={\MR {1478701}},
  doi={10.1090/fic/017/04},
}

\bib{Kelly74}{article}{
  author={Kelly, G. M.},
  title={Doctrinal adjunction},
  conference={ title={Category Seminar}, address={Proc. Sem., Sydney}, date={1972/1973}, },
  book={ series={Lecture Notes in Math.}, volume={Vol. 420}, publisher={Springer, Berlin-New York}, },
  date={1974},
  pages={257--280},
  review={\MR {0360749}},
}

\bib{Kelly05}{article}{
  author={Kelly, G. M.},
  title={Basic concepts of enriched category theory},
  note={Reprint of the 1982 original [Cambridge Univ. Press, Cambridge; MR0651714]},
  journal={Repr. Theory Appl. Categ.},
  number={10},
  date={2005},
  pages={vi+137},
  review={\MR {2177301}},
}

\bib{Lawson22}{article}{
  author={Lawson, Tyler},
  title={An introduction to Bousfield localization},
  conference={ title={Stable categories and structured ring spectra}, },
  book={ series={Math. Sci. Res. Inst. Publ.}, volume={69}, publisher={Cambridge Univ. Press, Cambridge}, },
  isbn={978-1-009-12329-7},
  date={2022},
  pages={301--344},
  review={\MR {4439766}},
}

\bib{Lurie09htt}{book}{
  author={Lurie, Jacob},
  title={Higher topos theory},
  series={Annals of Mathematics Studies},
  volume={170},
  publisher={Princeton University Press, Princeton, NJ},
  date={2009},
  pages={xviii+925},
  isbn={978-0-691-14049-0},
  isbn={0-691-14049-9},
  review={\MR {2522659 (2010j:18001)}},
  doi={10.1515/9781400830558},
}

\bib{Lurie17}{book}{
  author={Lurie, Jacob},
  title={Higher algebra},
  date={2017-09-18},
  status={Preprint},
}

\bib{Mandell03}{article}{
  author={Mandell, Michael A.},
  title={Topological Andr\'e-Quillen cohomology and $E_\infty $ Andr\'e-Quillen cohomology},
  journal={Adv. Math.},
  volume={177},
  date={2003},
  number={2},
  pages={227--279},
  issn={0001-8708},
  review={\MR {1990939}},
  doi={10.1016/S0001-8708(02)00017-8},
}

\bib{MandellM02}{article}{
  author={Mandell, M. A.},
  author={May, J. P.},
  title={Equivariant orthogonal spectra and $S$-modules},
  journal={Mem. Amer. Math. Soc.},
  volume={159},
  date={2002},
  number={755},
  pages={x+108},
  issn={0065-9266},
  review={\MR {1922205}},
  doi={10.1090/memo/0755},
}

\bib{MandellMSS01}{article}{
  author={Mandell, M. A.},
  author={May, J. P.},
  author={Schwede, S.},
  author={Shipley, B.},
  title={Model categories of diagram spectra},
  journal={Proc. London Math. Soc. (3)},
  volume={82},
  date={2001},
  number={2},
  pages={441--512},
  issn={0024-6115},
  review={\MR {1806878 (2001k:55025)}},
  doi={10.1112/S0024611501012692},
}

\bib{May96}{book}{
  author={May, J. P.},
  title={Equivariant homotopy and cohomology theory},
  series={CBMS Regional Conference Series in Mathematics},
  volume={91},
  note={With contributions by M. Cole, G. Comeza\~na, S. Costenoble, A. D. Elmendorf, J. P. C. Greenlees, L. G. Lewis, Jr., R. J. Piacenza, G. Triantafillou, and S. Waner},
  publisher={Conference Board of the Mathematical Sciences, Washington, DC; by the American Mathematical Society, Providence, RI},
  date={1996},
  pages={xiv+366},
  isbn={0-8218-0319-0},
  review={\MR {1413302}},
  doi={10.1090/cbms/091},
}

\bib{MayP12}{book}{
  author={May, J. P.},
  author={Ponto, K.},
  title={More concise algebraic topology},
  series={Chicago Lectures in Mathematics},
  note={Localization, completion, and model categories},
  publisher={University of Chicago Press, Chicago, IL},
  date={2012},
  pages={xxviii+514},
  isbn={978-0-226-51178-8},
  isbn={0-226-51178-2},
  review={\MR {2884233}},
}

\bib{Moser19}{article}{
  author={Moser, Lyne},
  title={Injective and projective model structures on enriched diagram categories},
  journal={Homology Homotopy Appl.},
  volume={21},
  date={2019},
  number={2},
  pages={279--300},
  issn={1532-0073},
  review={\MR {3923784}},
  doi={10.4310/HHA.2019.v21.n2.a15},
}

\bib{Muro14}{article}{
  author={Muro, Fernando},
  title={Homotopy theory of non-symmetric operads, II: Change of base category and left properness},
  journal={Algebr. Geom. Topol.},
  volume={14},
  date={2014},
  number={1},
  pages={229--281},
  issn={1472-2747},
  review={\MR {3158759}},
  doi={10.2140/agt.2014.14.229},
}

\bib{Ngopnang24hur}{article}{
  author={Ngopnang Ngomp\'e, Arnaud },
  title={The Hurewicz model structure on simplicial $R$-modules},
  journal={J. Homotopy Relat. Struct.},
  volume={19},
  date={2024},
  number={4},
  pages={701--723},
  issn={2193-8407},
  review={\MR {4819216}},
  doi={10.1007/s40062-024-00359-0},
}

\bib{Ngopnang24enr}{book}{
  author={Ngopnang Ngomp\'e, Arnaud},
  title={Enriched model categories and the Dold-Kan correspondence},
  note={Thesis (Ph.D.)--University of Regina},
  date={2024},
  pages={(no paging)},
}

\bib{Opadotun21}{book}{
  author={Opadotun, Michael},
  title={Simplicial Enrichment of Chain Complexes},
  note={Thesis (M.Sc.)--University of Regina},
  date={2021-08},
  pages={(no paging)},
  url={https://hdl.handle.net/10294/15021},
}

\bib{PavlovS18}{article}{
  author={Pavlov, Dmitri},
  author={Scholbach, Jakob},
  title={Admissibility and rectification of colored symmetric operads},
  journal={J. Topol.},
  volume={11},
  date={2018},
  number={3},
  pages={559--601},
  issn={1753-8416},
  review={\MR {3830876}},
  doi={10.1112/topo.12008},
}

\bib{PavlovS19}{article}{
  author={Pavlov, Dmitri},
  author={Scholbach, Jakob},
  title={Symmetric operads in abstract symmetric spectra},
  journal={J. Inst. Math. Jussieu},
  volume={18},
  date={2019},
  number={4},
  pages={707--758},
  issn={1474-7480},
  review={\MR {3963517}},
  doi={10.1017/s1474748017000202},
}

\bib{PolW20}{article}{
  author={Pol, Luca},
  author={Williamson, Jordan},
  title={The left localization principle, completions, and cofree $G$-spectra},
  journal={J. Pure Appl. Algebra},
  volume={224},
  date={2020},
  number={11},
  pages={106408, 33},
  issn={0022-4049},
  review={\MR {4104494}},
  doi={10.1016/j.jpaa.2020.106408},
}

\bib{PolW21}{article}{
  author={Pol, Luca},
  author={Williamson, Jordan},
  title={Corrigendum to ``The left localization principle, completions, and cofree $G$-spectra''},
  journal={J. Pure Appl. Algebra},
  volume={225},
  date={2021},
  number={9},
  pages={106647, 3},
  issn={0022-4049},
  doi={10.1016/j.jpaa.2020.106647},
}

\bib{Quillen67}{book}{
  author={Quillen, Daniel G.},
  title={Homotopical algebra},
  series={Lecture Notes in Mathematics},
  number={43},
  publisher={Springer-Verlag, Berlin-New York},
  date={1967},
  pages={iv+156 pp. (not consecutively paged)},
  review={\MR {0223432}},
}

\bib{Richter03}{article}{
  author={Richter, Birgit},
  title={Symmetry properties of the Dold-Kan correspondence},
  journal={Math. Proc. Cambridge Philos. Soc.},
  volume={134},
  date={2003},
  number={1},
  pages={95--102},
  issn={0305-0041},
  review={\MR {1937795}},
  doi={10.1017/S0305004102006370},
}

\bib{Riehl14}{book}{
  author={Riehl, Emily},
  title={Categorical homotopy theory},
  series={New Mathematical Monographs},
  volume={24},
  publisher={Cambridge University Press, Cambridge},
  date={2014},
  pages={xviii+352},
  isbn={978-1-107-04845-4},
  review={\MR {3221774}},
  doi={10.1017/CBO9781107261457},
}

\bib{RondigsO08}{article}{
  author={R{\"o}ndigs, Oliver},
  author={{\O }stv{\ae }r, Paul Arne},
  title={Modules over motivic cohomology},
  journal={Adv. Math.},
  volume={219},
  date={2008},
  number={2},
  pages={689--727},
  issn={0001-8708},
  review={\MR {2435654}},
  doi={10.1016/j.aim.2008.05.013},
}

\bib{SagaveS13}{article}{
  author={Sagave, Steffen},
  author={Schlichtkrull, Christian},
  title={Group completion and units in $\scr I$-spaces},
  journal={Algebr. Geom. Topol.},
  volume={13},
  date={2013},
  number={2},
  pages={625--686},
  issn={1472-2747},
  review={\MR {3044590}},
  doi={10.2140/agt.2013.13.625},
}

\bib{SagaveS21}{article}{
  author={Sagave, Steffen},
  author={Schwede, Stefan},
  title={Homotopy invariance of convolution products},
  journal={Int. Math. Res. Not. IMRN},
  date={2021},
  number={8},
  pages={6246--6292},
  issn={1073-7928},
  review={\MR {4251277}},
  doi={10.1093/imrn/rnz334},
}

\bib{Schwede18}{book}{
  author={Schwede, Stefan},
  title={Global homotopy theory},
  series={New Mathematical Monographs},
  volume={34},
  publisher={Cambridge University Press, Cambridge},
  date={2018},
  pages={xviii+828},
  isbn={978-1-108-42581-0},
  review={\MR {3838307}},
  doi={10.1017/9781108349161},
}

\bib{SchwedeS00}{article}{
  author={Schwede, Stefan},
  author={Shipley, Brooke E.},
  title={Algebras and modules in monoidal model categories},
  journal={Proc. London Math. Soc. (3)},
  volume={80},
  date={2000},
  number={2},
  pages={491--511},
  issn={0024-6115},
  review={\MR {1734325}},
  doi={10.1112/S002461150001220X},
}

\bib{SchwedeS03equ}{article}{
  author={Schwede, Stefan},
  author={Shipley, Brooke},
  title={Equivalences of monoidal model categories},
  journal={Algebr. Geom. Topol.},
  volume={3},
  date={2003},
  pages={287--334},
  issn={1472-2747},
  review={\MR {1997322}},
  doi={10.2140/agt.2003.3.287},
}

\bib{Shipley07}{article}{
  author={Shipley, Brooke},
  title={Morita theory in stable homotopy theory},
  conference={ title={Handbook of tilting theory}, },
  book={ series={London Math. Soc. Lecture Note Ser.}, volume={332}, publisher={Cambridge Univ. Press, Cambridge}, },
  date={2007},
  pages={393--411},
  review={\MR {2384618}},
  doi={10.1017/CBO9780511735134.015},
}

\bib{Stephan16}{article}{
  author={Stephan, Marc},
  title={On equivariant homotopy theory for model categories},
  journal={Homology Homotopy Appl.},
  volume={18},
  date={2016},
  number={2},
  pages={183--208},
  issn={1532-0073},
  review={\MR {3551501}},
  doi={10.4310/HHA.2016.v18.n2.a10},
}

\bib{MO_Szumilo13}{webpage}{
  title={MathOverflow: When is the projective model structure cartesian? When is the internal hom invariant?},
  author={Szumi\l {}o, Karol},
  url={https://mathoverflow.net/q/123731},
  organization={MathOverflow},
  date={2013-03-06},
}

\bib{Tabuada10}{article}{
  author={Tabuada, Gon\c calo},
  title={Differential graded versus simplicial categories},
  journal={Topology Appl.},
  volume={157},
  date={2010},
  number={3},
  pages={563--593},
  issn={0166-8641},
  review={\MR {2577490}},
  doi={10.1016/j.topol.2009.10.015},
}

\bib{Weibel94}{book}{
  author={Weibel, Charles},
  title={An introduction to homological algebra},
  series={Cambridge Studies in Advanced Mathematics},
  number={38},
  publisher={Cambridge University Press},
  date={1994},
}

\bib{White22}{article}{
  author={White, David},
  title={Monoidal Bousfield localizations and algebras over operads},
  conference={ title={Equivariant topology and derived algebra}, },
  book={ series={London Math. Soc. Lecture Note Ser.}, volume={474}, publisher={Cambridge Univ. Press, Cambridge}, },
  isbn={978-1-108-93194-6},
  date={2022},
  pages={180--240},
  review={\MR {4327101}},
}

\bib{MO_White26}{webpage}{
  title={MathOverflow: Monoidal model categories in equivariant homotopy},
  author={White, David},
  url={https://mathoverflow.net/q/510502},
  organization={MathOverflow},
  date={2026-04-21},
}

\bib{WhiteY19}{article}{
  author={White, David},
  author={Yau, Donald},
  title={Homotopical adjoint lifting theorem},
  journal={Appl. Categ. Structures},
  volume={27},
  date={2019},
  number={4},
  pages={385--426},
  issn={0927-2852},
  review={\MR {3975894}},
  doi={10.1007/s10485-019-09560-2},
}

\bib{Yalin14}{article}{
  author={Yalin, Sinan},
  title={Classifying spaces of algebras over a prop},
  journal={Algebr. Geom. Topol.},
  volume={14},
  date={2014},
  number={5},
  pages={2561--2593},
  issn={1472-2747},
  review={\MR {3276841}},
  doi={10.2140/agt.2014.14.2561},
}

\end{biblist}
\end{bibdiv}

\vspace*{8pt}

\end{document}